\newcommand{\mres}{\mathbin{\vrule height 1.6ex depth 0pt width
0.13ex\vrule height 0.13ex depth 0pt width 1.3ex}}
\newtheorem{thm}{Theorem}[section]
\newtheorem{lem}[thm]{Lemma}
\newtheorem{cor}[thm]{Corollary}
\newtheorem{prop}[thm]{Proposition}
\newtheorem{rmk}[thm]{Remark}
\newtheorem{qe}[thm]{Question}
\title[Minimal hypersurfaces on Spheres and cohomogeneity two min-max theory]{Regularity of Cohomogeneity two equivariant isotopy minimization problems and minimal hypersurfaces with large first Betti number on spheres}
\author{Dongyeong Ko}
\date{Dec 2025}
\address{Department of Mathematics, Massachusetts Institute of Technology, Cambridge, MA 02139}
\email{dyko@mit.edu}
\begin{document}
\maketitle

\begin{abstract}
    We prove the regularity of cohomogeneity two equivariant isotopy minimization problems. Based on this, we develop cohomogeneity two equivariant min-max theory for minimal hypersurfaces proposed by Pitts and Rubinstein in 1988. As an application, for $g \ge 1$ and $4 \le n+1 \le 7$, we construct minimal hypersurfaces $\Sigma_{g}^{n}$ on round spheres $\mathbb{S}^{n+1}$ with $(SO(n-1) \times \mathbb{D}_{g+1})$-symmetry. For sufficiently large $g$, $\Sigma_{g}^{n}$ is a sequence of minimal hypersurfaces with arbitrarily large Betti numbers of topological type $\#^{2g} (S^{1} \times S^{n-1})$ or $\#^{2g+2} (S^{1} \times S^{n-1})$, which converges to a union of $\mathbb{S}^{n}$ and a Clifford hypersurface $\sqrt{\frac{1}{n}}\mathbb{S}^{1} \times \sqrt{\frac{n-1}{n}} \mathbb{S}^{n-1}$ or $\sqrt{\frac{2}{n}}\mathbb{S}^{2} \times \sqrt{\frac{n-2}{n}} \mathbb{S}^{n-2}$. In particular, for dimensions $5$ and $6$, $\Sigma_{g}^{n}$ has a topological type $\#^{2g} (S^{1} \times S^{n-1})$.
\end{abstract}
\section{introduction}
It is a fundamental problem in differential geometry to construct and understand minimal hypersurfaces in a given ambient manifold. Among these, minimal hypersurfaces in the round sphere $\mathbb{S}^{n+1}$ have been extensively studied, yet many fundamental questions remain open, particularly in dimensions $n+1 \ge 4$. Embedded minimal hypersurfaces in $\mathbb{S}^{n+1}$ are the links of minimal cones with an isolated singularity at the origin, and therefore have its own significance.

There have been a number of constructions of minimal surfaces on a three dimensional round sphere $\mathbb{S}^{3}$. Starting from simplest examples such as an equatorial $2$-sphere $\mathbb{S}^{2}$ and Clifford torus $(1/\sqrt{2}) \mathbb{S}^{1} \times (1/\sqrt{2}) \mathbb{S}^{1}$ (up to congruence), various theories have yielded results on many different minimal surfaces. Lawson introduced classes of minimal surfaces by developing the theory of equivariant Plateau problem in \cite{lawson1972equivariant} (See related construction in \cite{choe2016new} and \cite{karcher1988new}). Brendle \cite{brendle2013embedded} proved the uniqueness of minimal torus on $\mathbb{S}^{3}$ and Marques-Neves \cite{marques2014min} proved Willmore conjecture and the area bound. Recently, there has been constructions by PDE gluing methods as in \cite{kapouleas2017minimal}, \cite{kapouleas2019minimal}, \cite{kapouleas2010minimal}, and \cite{kapouleas2022minimal}. The min-max method has produced examples of minimal surfaces in $\mathbb{S}^{3}$ as in \cite{ketover2020catenoid}, \cite{ketover2016equivariant}, and \cite{ketover2022flipping}. The eigenvalue optimization approach has produced new examples as in \cite{karpukhin2024embedded} and \cite{karpukhin2025large}. Here we note that many of such examples are obtained by `desingularizing' or `doubling' equatorial spheres or Clifford tori.

In higher dimensional round spheres $\mathbb{S}^{n+1}$ $(n+1 \ge 4)$, fewer examples have been constructed. Provided a closed manifold $(M^{n+1},g)$ with an isometry group $G$ and when a dimension of an orbit space $M/G$ is dimension $2$, Hsiang-Lawson \cite{hsiang1971minimal} constructed classes of minimal hypersurface by developing ODE method by finding geodesics on $M/G$ endowed with a weighted metric by volume of orbits (See \cite{otsuki1970minimal} for the method). In particular, Hsiang constructed infinite sequences of minimal hyperspheres on round spheres $\mathbb{S}^{n+1}$ are constructed in \cite{hsiang1983minimal, hsiang1983minimal2} and in Hsiang-Sterling \cite{hsiang1986minimal}, which prove nonuniqueness of minimal hyperspheres on high dimensional spheres and hence give a negative answer on the spherical Bernstein problem proposed by Chern. There are constructions by isoparametric hypersurfaces in \cite{cecil2007isoparametric}, \cite{chi2020isoparametric}, \cite{ki1987characterization}, and \cite{solomon1990harmonic}. The Cartan isoparametric hypersurfaces in \cite{ki1987characterization} and \cite{solomon1990harmonic} has a topological type $SO(3)/(\mathbb{Z}_{2} \times \mathbb{Z}_{2})$. More recently, Carlotto and Schulz \cite{carlotto2021minimal} constructed infinitely many distinct minimal hypertori on $\mathbb{S}^{4}$ by refining Hsiang's ODE analysis. Kapouleas-Zou \cite{kapouleas2024minimal} constructed a family of infinitely many minimal hypersurfaces on $\mathbb{S}^{4}$ whose first Betti number goes to infinity by doubling equator $\mathbb{S}^{3}$ with a gluing method.  

There has been a recent remarkable progress on the existence of minimal hypersurfaces via min-max theory. In particular, by Almgren-Pitts min-max theory, it is proven that there are infinitely many closed and embedded minimal hypersurfaces on every smooth manifold $(M^{n+1},g)$ in a series of papers by Marques-Neves \cite{marques2017existence}, Song \cite{song2023existence}, and Zhou's multiplicity one theorem \cite{zhou2020multiplicity} (See \cite{irie2018density}, \cite{liokumovich2018weyl}, and \cite{marques2019equidistribution} for geometric properties). Moreover, recently, Liu \cite{liu2021existence} and Wang (\cite{wang2022min}, \cite{wang2025generic}) developed equivariant version of Almgren-Pitts min-max theory with a Lie group isometry.

However, in contrast to its existence theory, only very few results are known on minimal hypersurfaces with prescribed topology on a closed manifold. In particular, regarding the topology of minimal hypersurfaces, there are questions posed by Hsiang in \cite{hsiang1993closed} as follows.
\begin{qe} [\cite{hsiang1993closed}, Problem 3] \label{q1} How to construct embedded closed minimal hypersurfaces in $\mathbb{S}^{4}$ which are of topological types different from the four known types, namely, $S^{3}$, $S^{1} \times S^{2}$ , $T^{3}$ and $SO(3)/\mathbb{Z}_{2} \times \mathbb{Z}_{2}$?
\end{qe}
\begin{qe} [\cite{hsiang1993closed}, Problem 6] \label{q2} Can $\mathbb{S}^{n+1}$ accommodate embedded closed minimal hypersurfaces of the diffeomorphism types of arbitrary connected sums of the products of spheres?
\end{qe}
As mentioned previously, even though Kapouleas-Zou \cite{kapouleas2024minimal} constructed minimal hypersurfaces in dimension $4$ whose diffeomorphism types are large connected sums of $S^{1} \times S^{2}$, which answer the first question, there has been little progress in answering these questions, in particular, in higher dimensions. The construction in this paper answers to Question \ref{q1} and \ref{q2} by producing minimal hypersurfaces which are arbitrarily large number of connected sums of $S^{1} \times S^{n-1}$ with min-max construction in dimension $4$ to $7$.

Although there are several works on the control of the topology of the hypersurface by Morse Index, area, and ambient geometry, such as in \cite{savo2010index}, \cite{ambrozio2018comparing}, and \cite{song2023morse}, it remains obscure to obtain minimal hypersurfaces with sharply controlled topology (See recent examples of minimal hypersurfaces with a large Betti number and low Morse Index in \cite{maximo2025ricci}). Moreover, the ODE method based on the cohomogeneity one theory may not be enough to provide examples of minimal hypersurfaces with complicated topology.

In dimension $3$, Simon-Smith min-max theory, for instance in Smith \cite{smith1983existence} and Colding-De Lellis \cite{colding2003min}, has provided constructions of minimal surfaces with controlled genus. The min-max theory is built on the regularity and the genus bound of the solution of isotopy minimization problem by Almgren-Simon \cite{almgren1979existence} and Meeks-Simon-Yau \cite{meeks1982embedded}. De Lellis-Pellandini \cite{de2010genus} and Ketover \cite{ketover2019genus} proved the upper bound of the genus of minimal surfaces obtained by Simon-Smith min-max construction. Notably, Wang-Zhou \cite{wang2023existence} developed a multiplicity one theorem for Simon-Smith min-max theory which gives a construction of four minimal spheres in bumpy metric on $S^{3}$ (See Sarnataro-Stryker \cite{sarnataro2023optimal} for the regularity result and Haslhofer-Ketover \cite{haslhofer2019minimal}). Moreover, Ketover developed equivariant version of Simon-Smith min-max theory in \cite{ketover2016equivariant} and its free boundary version \cite{ketover2016free} in dimension $3$ when the isometry group is given by finite group, which is proposed by Pitts and Rubinstein in their monograph \cite{pitts1987applications} without publishing proofs.

However, due to lack of the regularity of solutions of isotopy minimization problem, Simon-Smith min-max theory in dimension $3$ cannot be directly generalized to higher dimensional settings. By the example of White in \cite{white1983existence} in dimension $4$, even with a relaxation of isotopy minimization to homotopy minimization, a certain solution of an area-minimization problem has a singularity with positive $2$-dimensional Hausdorff measure. This example shows the difficulty to obtain smooth embedded minimal hypersurfaces with controlled topology with a min-max theory based on isotopy minimization in higher dimensions.

Instead of directly generalizing Simon-Smith theory, Pitts and Rubinstein proposed the construction of the family of minimal hypersurfaces on manifolds with Lie group symmetry in the same monograph, and outlined the construction of minimal hypersurfaces with arbitrarily large Betti numbers on high-dimensional round spheres $\mathbb{S}^{n+1}$.

\subsection{Main Results}
In this paper, we formulate and prove the regularity of cohomogeneity two equivariant isotopy minimization problems. Given a simply connected manifold $M^{n+1}$ for $4 \le n+1\le 7$ with a Lie group isometry $G$, where $G$ is a product of continuous Lie group $G_{c}$ and finite group $G_{f}$, we call $\Sigma^{n} \subset M^{n+1}$ as a $G$\textit{-equivariant minimal hypersurface} if $\Sigma$ is invariant under the group action $G$ on $M$. We restrict the dimension of orbit space $M/G$ as $\dim(M/G)=3$ for cohomogeneity $2$ setup, roughly speaking, where cohomogeneity is defined to be a dimension of $\Sigma/G$. Assuming that $M$ is simply connected and that the principal orbits by the group action $G_{c}$ are connected, it follows from topological arguments in \cite{bredon1972introduction} that the orbit space $M':=M/G_{c}$ is a (topological) $3$-manifold with boundary. We denote $\pi:M \rightarrow M/G_{c}$ to be a canonical projection map. 

We first prove the regularity of isotopy minimization problem when the isometry group is a continuous Lie group $G_{c}$. Let $U' \subset M'$ be an open convex set with boundary in $M'$ and a set of isotopy $\phi :[0,1] \times M' \rightarrow M'$ whose support is in $U'$ and each $\phi(t,M')$ is a diffeomorphism by $\mathcal{I}(U')$ for $t \in [0,1]$. We denote $U= \pi^{-1}(U')$. Then we prove the regularity of the minimizer of
\[
I := \inf_{\phi \in \mathcal{I}(U')} \mathcal{H}^{n}(\pi^{-1}(\phi(1,\Sigma') \cap M'))
\]
in $M$ and we call this as \textit{a minimization problem} $(\Sigma', \mathcal{I}(U'))$. Moreover, we denote $\{(\Sigma')^{i}\}$ to be \textit{a minimizing sequence} if $(\Sigma')^{i} = \phi^{i}(1,\Sigma')$ and $\mathcal{H}^{n}(\pi^{-1}(\phi^{i}(1,\Sigma') \cap M'))$ converges to $I$, where $\phi^{i} \in \mathcal{I}(U')$.  Notice that we consider the isotopy in the quotient manifold $M'$ and prove the regularity in the original manifold $M$. More specifically, we prove the following regularity result.
\begin{thm} \label{mainregularitycontiLie} Suppose $\{(\Sigma')^{i}\}$ to be a minimizing sequence for the minimization problem $(\Sigma', \mathcal{I}(U'))$ so that $(\Sigma')^{i}$ intersects $\partial M'$ transversally for each $i$, and $\Sigma^{i} : = \pi^{-1}((\Sigma')^{i})$ converges to a limit (in a varifold sense) to $V \in IV_{n}(M)$. Then the following holds:
\begin{enumerate}
    \item $V = \Gamma$, where $\Gamma$ is a compact, smooth and embedded $G_{c}$-equivariant minimal hypersurface in $\pi^{-1}(U') \cap M$ whose boundary is contained in $\pi^{-1}(\partial_{rel} U' \cap M') \cap M$;
    \item $V \mres (M \setminus U) =  \pi^{-1}(\Sigma') \mres (M \setminus U)$;
    \item $\Gamma$ is $G_{c}$-stable with respect to the pullback isotopy of $\mathcal{I}(U')$ ($G_{c}$-stability).
\end{enumerate}
\end{thm}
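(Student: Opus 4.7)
The plan is to reduce the equivariant problem on $M$ to a weighted isotopy minimization problem on the three-dimensional quotient $M'=M/G_{c}$, apply the classical Meeks-Simon-Yau regularity result at principal orbits, and extend regularity across singular orbits by an equivariant slice argument. By the coarea formula, for any $G_{c}$-invariant hypersurface $\Sigma=\pi^{-1}(\Sigma')\subset M$ one has
\[
\mathcal{H}^{n}\!\bigl(\Sigma\cap\pi^{-1}(U')\bigr)=\int_{\Sigma'\cap U'} V(x)\,d\mathcal{H}^{2}(x),
\]
where $V(x):=\mathcal{H}^{n-2}(\pi^{-1}(x))$ is the orbit-volume function, smooth and positive on the principal stratum $M'_{\mathrm{reg}}\subset M'$. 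Since $\dim\Sigma'=2$, this weighted area equals area in the conformal metric $\tilde g':=V\cdot g'$ on $M'_{\mathrm{reg}}$, so on $M'_{\mathrm{reg}}\cap U'$ the pullback problem $(\Sigma',\mathcal{I}(U'))$ coincides with a classical isotopy minimization problem for $2$-surfaces in the smooth Riemannian $3$-manifold $(M'_{\mathrm{reg}},\tilde g')$. Varifold convergence upstairs corresponds to weighted-varifold convergence downstairs, the limit $V$ is automatically $G_{c}$-invariant, and conclusion (2) is immediate because every isotopy $\phi^{i}$ is the identity outside $U'$.

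Given this reduction, conclusion (1) on the principal stratum follows directly from the Meeks-Simon-Yau regularity theorem applied in the smooth metric $\tilde g'$: on any precompact open set in $M'_{\mathrm{reg}}\cap U'$ the pushed-down sequence is a minimizing sequence under compactly supported isotopies, so $\pi_{*}V$ is represented by a smooth embedded stable minimal surface $\Gamma'$ with integer multiplicity, and its lift $\Gamma=\pi^{-1}(\Gamma')$ is a smooth embedded $G_{c}$-equivariant minimal hypersurface in $\pi^{-1}(U'\cap M'_{\mathrm{reg}})\cap M$. Conclusion (3) on $G_{c}$-stability is then essentially tautological: every compactly supported pullback isotopy $\phi\in\mathcal{I}(U')$ of $\Gamma$ produces a competitor of area $\ge I$, so the first variation of $\mathcal{H}^{n}(\pi^{-1}(\cdot))$ vanishes and the second variation is nonnegative along such deformations.

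The main obstacle is regularity across the singular locus, that is, extending $\Gamma$ smoothly across $\pi^{-1}(\partial M'\cap U')\cap M$, since the quotient metric $g'$ and the weight $V$ degenerate simultaneously on $\partial M'$ and Meeks-Simon-Yau does not apply directly downstairs there. Here one must work upstairs: by the slice theorem, each non-principal orbit $G_{c}\cdot p$ has an equivariant tubular neighborhood diffeomorphic to $G_{c}\times_{H_{p}}\nu_{p}$, and a $G_{c}$-invariant hypersurface is determined by an $H_{p}$-invariant hypersurface in the normal slice $\nu_{p}$ on which $H_{p}$ acts orthogonally. The isotopy-minimization property reduces locally to an $H_{p}$-equivariant isotopy minimization problem in $\nu_{p}$. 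In the ambient dimension range $4\le n+1\le 7$, classical Federer-Simons regularity rules out codimension-one singularities of area minimizers in the slice, and the equivariant reflection/extension technique used by Hsiang-Lawson and Brothers at singular orbits then yields smoothness of $\Gamma$ across the singular stratum; concentration of higher multiplicity at non-principal orbits is excluded by the maximum principle applied inside the slice. Carrying out this stratified analysis cleanly, and in particular ruling out pathologies analogous to White's four-dimensional example at positive-dimensional non-principal orbits, is where the equivariant structure is indispensable and where the bulk of the technical work lies.
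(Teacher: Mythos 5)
Your overall architecture (reduce to the weighted problem on $M'$, apply Meeks--Simon--Yau on the principal stratum, handle singular orbits via the slice theorem) matches the paper's, and your disposal of conclusions (2) and (3) is fine. The gap is in the singular-orbit step. You invoke ``classical Federer--Simons regularity for area minimizers in the slice,'' but the limit varifold is \emph{not} an area-minimizing current: it is only the limit of a sequence that is nearly minimizing within an isotopy class, which is a much weaker property. Before any tangent-cone analysis is even possible one must establish rectifiability and a density lower bound for the limit, and for an isotopy-minimizing sequence this does not come for free -- the paper obtains it through an equivariant version of the Meeks--Simon--Yau machinery: a $\gamma$-reduction to genus-zero pieces (needed so that the replacement arguments apply at all), a tube isoperimetric inequality, a replacement lemma, and a ``slice filigree'' lemma that kills small-area sheets near the singular orbit. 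Your proposal skips all of this, and in particular never reduces to genus zero, so the local structure you would need in the slice is not available.

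Once one does have an integral stationary limit, the tangent cone at a singular orbit is known only to be $G_{c}$-\emph{stable}, again not area-minimizing, so the correct classification tool is Simons' theorem on \emph{stable} minimal hypercones in $\mathbb{R}^{m+1}$, $m\le 6$, applied to the slice factor $C_{S}$ after upgrading $G_{c}$-stability to genuine stability of the two-sided cone (the paper's Lemma \ref{equivstablity}). An additional ingredient you are missing is the argument showing the projected tangent cone is a \emph{single} sheet: the paper rules out two disjoint sheets by a Frankel-property argument on the positively curved link of the slice, using the already-established regularity on principal orbits. The ``reflection/extension technique of Hsiang--Lawson'' you cite does not substitute for these steps, and the maximum principle does not exclude higher multiplicity at singular orbits -- the paper in fact allows integer multiplicity and only shows the support is a smooth connected $G_{c}$-equivariant minimal hypersurface.
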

We extend the regularity theory of isotopy minimization problem to the case of $G=G_{c} \times G_{f}$ and $G_{f} = \mathbb{Z}_{n}$. In dimension $3$, Ketover proved the regularity of minimization problem in the finite isometry group setting which was extended to corresponding min-max theory in \cite{ketover2016equivariant} and \cite{ketover2016free}, and this provides us the regularity outside of the singular orbits. We obtain the following regularity of minimizer of $\mathbb{Z}_{n}$-equivariant isotopy minimization problem as follows. We add an assumption that singular loci of $\mathbb{Z}_{n}$-isometry intersect $\partial M'$ on points $\{x_{i}' \}^{n}_{i=1} \in \partial M'$ only when $T_{x_{i}'}\partial M'$ is a smooth plane. This assumption enables us to obtain the regularity and is enough for our applications.

\begin{thm}
\label{mainregularitydiscLie} Suppose $\{(\Sigma')^{i}\}$ to be a minimizing sequence for the $\mathbb{Z}_{n}$-equivariant minimization problem $(\Sigma', \mathcal{I}(U'))$ so that $(\Sigma')^{i}$ intersects $\partial M'$ transversally for each $i$, and $\Sigma^{i} : = \pi^{-1}((\Sigma')^{i})$ converges to a limit (in a varifold sense) to $V \in IV_{n}(M)$. Then the following holds:
\begin{enumerate}
    \item $V = \Gamma$, where $\Gamma$ is a compact smooth embedded $(G_{c} \times \mathbb{Z}_{n})$-equivariant minimal hypersurface in $\pi^{-1}(U') \cap M$ whose boundary is contained in $\pi^{-1}(\partial U' \cap \text{Int}(M')) \cap M$;
    \item $V \mres (M \setminus U) =  \pi^{-1}(\Sigma') \mres (M \setminus U)$;
    \item $\Gamma$ is $(G_{c} \times \mathbb{Z}_{n})$-stable with respect to the pullback isotopy of $\mathcal{I}(U')$. ($(G_{c} \times \mathbb{Z}_{n})$-stability)
\end{enumerate}
\end{thm}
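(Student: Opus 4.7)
The plan is to decompose the regularity question according to the singular locus of the $\mathbb{Z}_{n}$-action on $M'$ and combine Theorem \ref{mainregularitycontiLie} with the finite-group equivariant regularity theory developed by Ketover in \cite{ketover2016equivariant, ketover2016free}. Let $F := \text{Fix}(\mathbb{Z}_{n}) \subset M'$, a union of submanifolds of dimension at most $2$ in the three-dimensional orbit space $M'$. I would establish conclusions (1)--(3) separately on the open set $U' \setminus F$, on a neighborhood of $F \cap \text{Int}(U')$, and on a neighborhood of the boundary points $F \cap \partial M'$.

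On $M' \setminus F$ the $\mathbb{Z}_{n}$-action is free. Given a small ball $B' \subset U' \setminus F$ whose $\mathbb{Z}_{n}$-translates $\{g \cdot B'\}_{g \in \mathbb{Z}_{n}}$ are pairwise disjoint, every ordinary isotopy supported in $B'$ can be symmetrized to a $\mathbb{Z}_{n}$-equivariant isotopy supported in $\bigsqcup_{g} g \cdot B'$ with identical area effect on $\pi^{-1}$, and conversely every $\mathbb{Z}_{n}$-equivariant isotopy in $U'$ restricts to an ordinary isotopy on $B'$. Hence the local minimization class matches the one handled by Theorem \ref{mainregularitycontiLie}, which yields conclusions (1) and (2) on $M \setminus \pi^{-1}(F)$; the full $(G_{c} \times \mathbb{Z}_{n})$-equivariance of $\Gamma$ on this open set is then forced by the $\mathbb{Z}_{n}$-invariance of the problem itself.

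Near an interior point $p \in F \cap \text{Int}(U')$, I would linearize the $\mathbb{Z}_{n}$-action and adapt the replacement/comparison procedure of \cite{ketover2016equivariant} to the weighted-area setting induced by $\pi : M \to M'$. The key ingredients are: (i) a $\mathbb{Z}_{n}$-equivariant cut-and-paste producing admissible comparison elements in $\mathcal{I}(U')$ with controlled area effect on $\pi^{-1}$; (ii) density bounds for the limit varifold at $\pi^{-1}(F)$ obtained via the same $G_{c}$-slicing arguments used to prove Theorem \ref{mainregularitycontiLie}; and (iii) an equivariant Allard-type regularity statement across the $\mathbb{Z}_{n}$-fixed stratum, which combined with (i)--(ii) upgrades $V$ to a smooth embedded $(G_{c} \times \mathbb{Z}_{n})$-equivariant minimal hypersurface in a neighborhood of $\pi^{-1}(p)$. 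Patching with the conclusion from the previous paragraph gives smoothness throughout the interior.

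The main obstacle is the boundary intersection $F \cap \partial M'$, where the non-principal $G_{c}$-locus and the $\mathbb{Z}_{n}$-fixed locus meet: two distinct symmetry reductions interact, and a naive reflection across $\partial M'$ would not produce a smooth extension. The hypothesis that $T_{x_{i}'}\partial M'$ is a smooth plane is tailored precisely to allow a $\mathbb{Z}_{n}$-equivariant doubling across $\partial M'$ near each $x_{i}'$ that extends the minimization problem smoothly to an open three-dimensional neighborhood, reducing the boundary case to the interior case already handled. Once smoothness and $(G_{c} \times \mathbb{Z}_{n})$-equivariance have been established, the $(G_{c} \times \mathbb{Z}_{n})$-stability in (3) follows from the minimization property together with an equivariant symmetrization that converts any $(G_{c} \times \mathbb{Z}_{n})$-equivariant pullback normal variation of $\Gamma$ into a genuine element of $\mathcal{I}(U')$ whose second variation of $\mathcal{H}^{n}(\pi^{-1}(\cdot))$ must therefore be nonnegative.
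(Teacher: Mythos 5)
Your decomposition into the free locus, the interior fixed locus, and the boundary intersection $F \cap \partial M'$ matches the paper's structure, and your treatment of the first two strata (symmetrizing local isotopies to reduce to Theorem \ref{mainregularitycontiLie} off the singular locus, and Ketover-type equivariant replacement at interior fixed points, which is Theorem \ref{discreteexcep} in the paper) is sound, as is the standard second-variation argument for conclusion (3).

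The gap is at $F \cap \partial M'$, which is precisely the new content of the theorem. Your proposed $\mathbb{Z}_{n}$-equivariant doubling across $\partial M'$ cannot reduce this stratum to the interior case: $\partial M'$ is not a boundary of the ambient manifold $M$ but the image of the singular $G_{c}$-orbits, where the weighted metric $g' = \mathcal{V}\,\overline{g}$ degenerates to zero. A reflection of $M'$ across $\partial M'$ corresponds to no isometry (or even smooth operation) on $M$, and the doubled problem would still carry a degenerate metric along the doubling locus, so the interior theory — which requires nondegeneracy — does not apply there. The paper instead works directly in $M$: at $x \in \pi^{-1}(\tilde{\mathcal{S}} \cap \partial M')$ it takes a tangent cone $C$, splits it via the slice theorem as $C_{\mathcal{S}} \times T_{x}(G(x))_{x}$, upgrades $(G_{c} \times \mathbb{Z}_{n})$-stability to genuine stability of $C_{\mathcal{S}}$ (Lemma \ref{equivstablity} together with Wang's two-sidedness argument), and invokes Simons' classification of stable minimal hypercones in dimension at most $7$; a Frankel-property argument on the link and the uniqueness of geodesics then force the cone to be a hyperplane meeting $\partial M'$ orthogonally. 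Moreover the paper distinguishes two cases that your proposal conflates: when $\mathbb{Z}_{n}$ acts freely on the relative boundary curves (Theorem \ref{znaction}) the conclusion is that $spt(V)$ avoids $\pi^{-1}(\tilde{\mathcal{S}} \cap \partial M')$ entirely — otherwise $spt(V)$ would be forced to contain $\pi^{-1}(\tilde{\mathcal{S}})$ and hence a fixed point of the $\mathbb{Z}_{2}$-generator on the boundary curves, a contradiction — while only in the non-free $\mathbb{Z}_{2}$ case (Theorem \ref{z2action}) does the support actually pass through these points and require the half-circle link analysis. Without these arguments the regularity at the interaction of the two symmetry reductions is not established.
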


Based on the previous regularity results on isotopy minimization problems, we develop the cohomogeneity two equivariant min-max theory. We obtain the general min-max construction of $G$-equivariant minimal hypersurfaces as follows.

\begin{thm} \label{mainminmax} [Cohomogeneity two min-max $G$-equivariant minimal hypersurfaces] On $M^{n+1}$ $(4 \le n+1 \le 7)$, suppose $G$ is a product of smooth Lie group $G_{c}$ whose orbits are connected and a finite group $G_{f}$, which acts on $M$ as an isometry whose principal orbits have a dimension $n-2$ where $M'=M/G_{c}$ is a $3$-manifold with boundary. Moreover, a singular axis $\tilde{\mathcal{S}}$ in $M'$ of a finite group action $G_{f}$ intersects $\partial M'$ only on smooth points and $G_{f}$ is an orientation preserving isometry on $M'$ and such that $M'/G_{f}$ is an orbifold with boundary. Let $\{ \Lambda_{t} \}_{t=0}^{1}$ be a $G_{f}$-equivariant sweepout of $M'$ whose sums of genus is bounded by $g \in \mathbb{N}$ and denote corresponding $G_{f}$-equivariant saturation by $\Pi$. Define the $G_{f}$-equivariant min-max width to be
\begin{equation*}
    W^{G_{f}}_{\Pi} = \inf_{\{ \Lambda_{t} \} \in \Pi} \sup_{t \in [0,1]} \mathcal{H}^{n}(\pi^{-1}(\Lambda_{t}))
\end{equation*}Then there exists a min-max sequence $\Sigma_{j} := \pi^{-1}(\Lambda_{t_{i}}^{i})$ converging as varifolds to $\Gamma = \sum_{j=1}^{k} n_{j} \Gamma_{j}$, satisfying the following conditions.
\begin{enumerate}
    \item  $\Gamma_{j}$ are smooth embedded pairwise disjoint $G$-equivariant minimal hypersurfaces for $1 \le j \le k$;
    \item  $W^{G_{f}}_{\Pi} = \sum_{j=1}^{k} n_{j} \mathcal{H}^{n}(\Gamma_{j})$;
    \item $\sum_{i=1}^{k} \text{genus}(\pi(\Gamma_{i})) \le g$.
\end{enumerate}
\end{thm}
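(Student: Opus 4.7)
The plan is to follow the Almgren-Pitts min-max scheme adapted to the cohomogeneity two equivariant setting, working primarily on the quotient $M'$ and transferring to $M$ through the regularity Theorems~\ref{mainregularitycontiLie} and \ref{mainregularitydiscLie}. Every quantity used in the construction (area, pull-tight deformation, almost-minimizing competitors) can be phrased on $M'$ with the orbital volume of $G_{c}$ as a weight, while the regularity of the limit is read off on $M$.

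First I would carry out an equivariant pull-tight. Starting from any minimizing sequence $\{\Lambda^{i}_{t}\} \in \Pi$, I construct a continuous deformation of sweepouts inside $\Pi$ so that, along the new sequence, every varifold limit attaining the width is stationary in $M$. The deformation vector fields on $M'$ are chosen $G_{f}$-invariant; since they are defined on the quotient, their pullback to $M$ is automatically $G_{c}$-invariant, and hence the $G$-equivariance of the sweepout is preserved. Next I would perform Pitts' combinatorial discretization and extract an almost-minimizing subsequence: following the $G_{f}$-equivariant version worked out in dimension three by Ketover in \cite{ketover2016equivariant, ketover2016free}, combined with the $G_{c}$-equivariant framework, one finds a min-max sequence $\Sigma_{j} = \pi^{-1}(\Lambda^{i}_{t_{i}})$ that is almost-minimizing with respect to $\mathcal{I}(U')$ in sufficiently small annular neighborhoods of every point of $M'$. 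Near the singular axis $\tilde{\mathcal{S}}$ of $G_{f}$, and in particular at its intersection with $\partial M'$, the hypothesis that $T_{x_{i}'}\partial M'$ is a smooth plane is precisely what enables the combinatorial interpolation to be carried out symmetrically.

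Third, I would invoke the regularity theorems. At each point the almost-minimizing condition furnishes a minimizing sequence for an isotopy problem $(\Sigma', \mathcal{I}(U'))$, to which Theorem~\ref{mainregularitycontiLie} applies in the regular region of the $G_{f}$-action and Theorem~\ref{mainregularitydiscLie} applies across the singular axis. Together with the equivariant stability provided by these theorems and the Schoen-Simon curvature estimates, which are available because $4 \le n+1 \le 7$, one obtains subsequential varifold convergence of $\Sigma_{j}$ to $\Gamma = \sum_{j=1}^{k} n_{j} \Gamma_{j}$, where each $\Gamma_{j}$ is a smooth embedded pairwise disjoint $G$-equivariant minimal hypersurface and $W^{G_{f}}_{\Pi} = \sum_{j=1}^{k} n_{j} \mathcal{H}^{n}(\Gamma_{j})$. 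This establishes conclusions (1) and (2).

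Finally, for the genus bound I would argue on the quotient. Every $\phi \in \mathcal{I}(U')$ is a diffeomorphism of $M'$, so it preserves the topology of $\Lambda^{i}_{t}$, and the sweepout bound $\sum \mathrm{genus}(\Lambda^{i}_{t}) \le g$ is inherited by the almost-minimizing representatives. Upper semicontinuity of genus under smooth convergence, up to finitely many catenoidal neckpinches in the spirit of De Lellis-Pellandini \cite{de2010genus} and Ketover \cite{ketover2019genus}, then yields $\sum_{j=1}^{k} \mathrm{genus}(\pi(\Gamma_{j})) \le g$. The step I expect to be the main obstacle is running the combinatorial almost-minimizing argument equivariantly under both $G_{c}$ and $G_{f}$ at points of $\tilde{\mathcal{S}} \cap \partial M'$, where the orbit space $M'/G_{f}$ carries both boundary and orbifold singularities; the smoothness assumption on $T_{x_{i}'}\partial M'$ is exactly what is needed to reduce this delicate configuration to an equivariant interpolation of the type already handled by the regularity theorems.
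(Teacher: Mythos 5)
Your overall route — equivariant pull-tight, extraction of an almost-minimizing min-max sequence in $G$-annuli à la Colding--De Lellis/Ketover, replacements furnished by the isotopy-minimization regularity theorems, and a lifting-lemma genus bound on the quotient — is the same architecture the paper uses (Propositions \ref{pulltight}, \ref{almostminimizing}, \ref{constreplacement}, Theorem \ref{minmaxreg}, and Theorem \ref{genusbound}). However, there is one genuine gap in your third step: you invoke ``the Schoen--Simon curvature estimates, which are available because $4 \le n+1 \le 7$'' to pass to the limit of the replacements. Schoen--Simon applies to \emph{stable} minimal hypersurfaces, but the replacements produced by Theorem \ref{mainregularitydiscLie} are only $(G_{c}\times \mathbb{Z}_{n})$-stable, and $G$-stability does not imply stability when the support contains a $\mathbb{Z}_{2}$-singular axis of $G_{f}$ (the equivariant variations are then constrained along the axis). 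The paper has to prove a separate compactness theorem for $G$-stable embedded minimal hypersurfaces (Lemma \ref{stablecptness} in the Appendix), which away from the axis reduces to Schoen--Simon via Lemma \ref{equivstablity}, but along a $\mathbb{Z}_{2}$-axis instead passes to the three-dimensional quotient and uses Choi--Schoen compactness with area and genus bounds (via Ilmanen's integrated Gauss--Bonnet). Moreover, even this compactness excludes the points $\pi^{-1}(\mathcal{S}_{0,\mathbb{Z}_{2}})$ where the $\mathbb{Z}_{2}$-axis meets $\partial M'$; there the paper runs a separate tangent-cone analysis (gluing two replacements by unique continuation and classifying the link as a half-great-circle meeting $T_{x'}\partial M'$ orthogonally, as in Theorem \ref{z2action} and Proposition \ref{goodimpliesregular}). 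Without this substitute compactness your argument does not close at exactly the points where the two symmetries interact, which you yourself flag as the delicate locus.

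A secondary, smaller imprecision: for conclusion (3) you appeal to ``upper semicontinuity of genus under smooth convergence,'' but the convergence is only in the varifold sense and the boundary metric on $M'$ is degenerate; the paper instead applies Simon's lifting lemma on an exhaustion $\Omega_{\epsilon_{i}}$ of the interior of $M'$ and lets $\epsilon_{i}\to 0$. Your citation of De Lellis--Pellandini and Ketover points at the right mechanism, but the exhaustion is needed because the lifting argument is only available where the quotient metric $g'$ is nondegenerate.
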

\begin{rmk} \label{indefinitebetti} We obtain a genus bound of the image in Theorem \ref{mainminmax}, however the min-max, even minimization procedure, may produce unexpected increase of the number of boundary component on $\partial M'$. This contrasts to the topological bound of Franz-Schulz \cite{franz2023topological} in the free boundary Simon-Smith setting. Since the metric on $\partial M'$ we are solving the isotopy minimization problem is degenerate, we cannot guarantee the bounded number of boundary components in the image in general. This may increase the first Betti number of resulting minimal hypersurfaces, and we only can produce minimal hypersurfaces with arbitrary large first Betti number on spheres for sufficiently large $g$.
\end{rmk}
\begin{rmk} In dimension $8$ or larger, by Simons' classification \cite{simons1968minimal} and Schoen-Simon \cite{schoen1981regularity}, the resulting minimal hypersurfaces may have codimension $7$ singularity sets. Since we focus on constructing minimal hypersurfaces with a topological control, We restrict the dimension of ambient manifolds to be $4$ to $7$. We expect that we still obtain equivariant minimal hypersurfaces with singular set whose codimension is larger than equal to $7$, and get topological control out of singular sets.
\end{rmk}
As an application of Theorem \ref{mainminmax}, we obtain minimal hypersurfaces with arbitrarily large first Betti number on round spheres $\mathbb{S}^{n+1}$ $(4 \le n+1 \le 7)$. We take $M= \mathbb{S}^{n+1}$, $G_{c} = SO(n-1)$, and $G_{f} = \mathbb{D}_{g+1}$ in Theorem \ref{mainminmax}, where $\mathbb{D}_{m}$ is a dihedral group with $2m$ elements. The following is a statement on the construction of min-max hypersurfaces with prescribed topology on round spheres for sufficiently large $g$.

\begin{thm} \label{mainminhypsphere} For dimension $4 \le n+1 \le 7$, there exists a sequence of minimal hypersurfaces $\Sigma_{g}$ on $\mathbb{S}^{n+1}$ such that $\Sigma_{g}$ has a symmetry $SO(n-1) \times \mathbb{D}_{g+1}$. For sufficiently large $g$, $\Sigma_{g}$ has a diffeomorphism type of connected sums $\#^{2g}(S^{1} \times S^{n-1})$ or $\#^{2g+2}(S^{1} \times S^{n-1})$ for $n+1=4, 7$, and $\#^{2g}(S^{1} \times S^{n-1})$ for $n+1=5, 6$. Hence, the first Betti number is $2g$ or $2g+2$ for $n=4,7$, and $2g$ for $n=5,6$. Moreover, $\Sigma_{g}$ converges to $S^{3} \cup (S^{2} \times S^{1})$ or $S^{3} \cup (S^{1} \times S^{2})$ for $n+1=4$, and in particular, $\Sigma_{g}$ converges to $S^{n} \cup (S^{n-2} \times S^{2})$ for $n+1=5, 6$ as $g \rightarrow \infty$.
\end{thm}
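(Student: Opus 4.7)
The plan is to apply the cohomogeneity-two equivariant min-max Theorem \ref{mainminmax} to $M=\mathbb{S}^{n+1}$ with $G_{c}=SO(n-1)$ acting by rotations on an $(n-1)$-dimensional coordinate subspace of $\mathbb{R}^{n+2}$, and $G_{f}=\mathbb{D}_{g+1}$ acting dihedrally on a complementary $2$-plane. First I would identify the orbit space $M'=\mathbb{S}^{n+1}/SO(n-1)$ as a closed $3$-ball whose boundary $\partial M'\cong\mathbb{S}^{2}$ is the image of the singular orbit, and check that the induced $\mathbb{D}_{g+1}$-action on $M'$ is orientation preserving with singular axis meeting $\partial M'$ only at two antipodal smooth points. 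This confirms that all the hypotheses of Theorem \ref{mainminmax} are in force.

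The second step is to construct a $\mathbb{D}_{g+1}$-equivariant sweepout $\{\Lambda_{t}\}_{t\in[0,1]}$ of $M'$ with uniformly bounded genus and with the width $W^{G_{f}}_{\Pi}$ carefully controlled. I would interpolate between degenerate slices at $t=0,1$ and a maximal slice close to the projection of the union of the equator $\mathbb{S}^{n}$ and the relevant Clifford hypersurface $\sqrt{1/n}\,\mathbb{S}^{1}\times\sqrt{(n-1)/n}\,\mathbb{S}^{n-1}$ (or, in the appropriate range, $\sqrt{2/n}\,\mathbb{S}^{2}\times\sqrt{(n-2)/n}\,\mathbb{S}^{n-2}$), after desingularizing the intersection circle by $2(g+1)$ or $2(g+1)+2$ small catenoidal necks arranged $\mathbb{D}_{g+1}$-symmetrically. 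A Ketover--Marques--Neves-type catenoid estimate then shows the sweepout width can be made arbitrarily close to the sum of the two limit areas, and in particular strictly below $2\,\mathcal{H}^{n}(\mathbb{S}^{n})$ and below the areas of all other $SO(n-1)$-invariant minimal hypersurfaces appearing in the Hsiang--Lawson classification.

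Feeding this sweepout into Theorem \ref{mainminmax} produces a smooth $(SO(n-1)\times\mathbb{D}_{g+1})$-equivariant minimal hypersurface $\Sigma_{g}$ with bounded genus of image. As $g\to\infty$ the width converges to $\mathcal{H}^{n}(\mathbb{S}^{n})+\mathcal{H}^{n}(\text{Clifford})$, so by a Sharp-type compactness argument $\Sigma_{g}$ converges as varifolds, up to subsequence, to exactly $\mathbb{S}^{n}\cup\text{Clifford}$ with multiplicity one; the strict width inequalities exclude multiplicity two and force the correct limit. To identify the diffeomorphism type of $\Sigma_{g}$ for large $g$, I would perform a tubular-neighborhood analysis around the intersection locus of the two limit pieces in $\mathbb{S}^{n+1}$: each of the $\mathbb{D}_{g+1}$-symmetric catenoidal necks corresponds to a surgery adding one $S^{1}\times S^{n-1}$ handle, so a direct handle count combined with a Mayer--Vietoris computation yields $\#^{2g}(S^{1}\times S^{n-1})$ or $\#^{2g+2}(S^{1}\times S^{n-1})$ according to whether additional necks form near the fixed points of $\mathbb{D}_{g+1}$ on $\partial M'$; the parity analysis of the $SO(n-1)$-action on the nearby slice rules out the latter case in dimensions $5$ and $6$.

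The main obstacle, foreshadowed in Remark \ref{indefinitebetti}, is ruling out uncontrolled creation of boundary components on $\partial M'$ during the isotopy minimization stage, since the quotient metric degenerates there and Franz--Schulz-type boundary genus bounds do not apply; this is precisely the mechanism by which $b_{1}(\Sigma_{g})$ can jump by $2$. To handle it I would combine the interior and boundary regularity from Theorem \ref{mainregularitydiscLie} with explicit barrier comparisons near the singular orbit to show that only an a priori bounded number of boundary loops persist, and that this number is exactly $2g$ or $2g+2$, thereby accounting for the two possible topological types. A secondary delicate point is verifying a Lusternik--Schnirelmann-type nontriviality of the constructed sweepout against the appropriate equivariant homotopy class, ensuring that the min-max procedure does not degenerate into the equator alone or a trivial limit.
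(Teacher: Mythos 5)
Your overall skeleton matches the paper: the same groups $SO(n-1)\times\mathbb{D}_{g+1}$, the same orbit space $\mathbb{B}^{3}$, a $\mathbb{D}_{g+1}$-equivariantly desingularized sweepout interpolating between the disk $D=\pi(S^{n})$ and the image of a Clifford hypersurface, and an appeal to Theorem \ref{mainminmax}. The most serious gap is that you never show the genus of the min-max limit is \emph{exactly} $g$. Theorem \ref{mainminmax} only gives $\mathrm{genus}(\pi(\Gamma))\le g$, and your ``direct handle count'' of the catenoidal necks applies to the sweepout slices, not to the minimal hypersurface the min-max procedure actually produces; a priori the genus could collapse to any value between $0$ and $g$, in which case the first Betti number need not be $2g$. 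The paper closes this by first ruling out genus zero via the two-piece property of Choe--Soret combined with the Mcgrath--Zou radial-graph argument (the image must contain the origin and the axes $L_{i}$ by the symmetry built into the sweepout, which is incompatible with genus zero), and then forcing genus exactly $g$ by a Riemann--Hurwitz argument exploiting the $\mathbb{D}_{g+1}$-symmetry. Your Lusternik--Schnirelmann nontriviality only addresses the width, not the genus.

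Second, the width comparison ``below the areas of all other $SO(n-1)$-invariant minimal hypersurfaces appearing in the Hsiang--Lawson classification'' is not available: no such classification with area bounds exists (Hsiang produced infinitely many invariant minimal hyperspheres by ODE methods). The paper instead identifies the nontrivial piece of the limit $\Sigma_{\infty}$ by combining the sweepout area bound $\mathcal{H}^{n}(S^{n})+\mathcal{H}^{n}(S^{n-2}\times S^{2})$ with Viana's Willmore-type inequality for antipodal minimal hypersurfaces, which pins down $S^{2}\times S^{n-2}$ only for $n+1\le 6$; this is precisely why the dichotomy $2g$ versus $2g+2$ persists in dimension $7$ (and in dimension $4$, where two distinct Clifford configurations share the symmetry), so your claim that strict width inequalities ``force the correct limit'' cannot hold in all dimensions. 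Relatedly, the mechanism deciding between $\#^{2g}$ and $\#^{2g+2}$ is neither a parity analysis of the $SO(n-1)$-action nor a barrier argument near the degenerate boundary: the paper counts boundary components of $\pi(\Sigma_{g})$ on $\partial\mathbb{B}^{3}$ via Allard regularity and graphical convergence to $\pi(\Sigma_{\infty})$ for large $g$ (one boundary component when the limit is $D$ union a sphere, three when it is $D$ union a cylinder), with each extra boundary component contributing one $S^{1}\times S^{n-1}$ summand and each unit of genus contributing two. Your proposed barrier comparison near the singular orbit runs against Remark \ref{indefinitebetti}, which explains that the degeneracy of the quotient metric precludes an a priori bound on boundary loops during the minimization stage.
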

We obtain the following lower bound of Morse Index of min-max $G$-equivariant minimal hypersurfaces by applying the Morse Index lower bound of minimal hypersurfaces on spheres by first Betti number by Savo \cite{savo2010index} (See Ambrozio-Carlotto-Sharp \cite{ambrozio2018comparing} for its generalization and recent Morse-theoretic characterizations on Hsiang hypersurfaces in dimension $4$ of Carlotto-Schulz-Wiygul \cite{carlotto2025index}). 
\begin{cor} \label{mainequivmorse} For dimension $4 \le n+1 \le 7$, there exists a sequence of minimal hypersurfaces $\Sigma_{g}$ on $\mathbb{S}^{n+1}$ such that $\Sigma_{g}$ is a $(SO(n-1) \times \mathbb{D}_{g+1})$-equivariant minimal hypersurface. Moreover, $\Sigma_{g}$ has a following Morse Index lower bound.
\begin{align*}
\text{Index}(\Sigma_{g}) \ge \frac{4g}{n(n+1)} + n+2. 
\end{align*}
    \end{cor}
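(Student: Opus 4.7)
The plan is to combine the topological conclusion of Theorem~\ref{mainminhypsphere} with Savo's quantitative Morse index inequality on spheres. More precisely, I would take as inputs (i) the first Betti number bound $b_{1}(\Sigma_{g}) \ge 2g$ from Theorem~\ref{mainminhypsphere}, which holds uniformly in $4 \le n+1 \le 7$ since even in the $2g+2$ cases the weaker bound $2g$ is still valid, and (ii) Savo's index estimate from \cite{savo2010index}, which states that every closed orientable embedded minimal hypersurface $\Sigma^{n} \subset \mathbb{S}^{n+1}$ satisfies
\begin{equation*}
\text{Index}(\Sigma) \ \ge \ \frac{2}{n(n+1)}\, b_{1}(\Sigma) + (n+2).
\end{equation*}
The additive $n+2$ arises from the $(n+2)$-dimensional space of conformal Killing fields on $\mathbb{S}^{n+1}$ restricted to $\Sigma$, and the coefficient $\tfrac{2}{n(n+1)}$ comes from averaging Savo's explicit test function construction over harmonic one-forms on $\Sigma$.

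Chaining the two inputs then gives
\begin{equation*}
\text{Index}(\Sigma_{g}) \ \ge \ \frac{2}{n(n+1)}\, b_{1}(\Sigma_{g}) + (n+2) \ \ge \ \frac{2 \cdot 2g}{n(n+1)} + (n+2) \ = \ \frac{4g}{n(n+1)} + n + 2,
\end{equation*}
which is exactly the claimed bound. So the entire argument is really just a substitution of the topological data into Savo's theorem.

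The only genuine thing to check is that $\Sigma_{g}$ satisfies the hypotheses of Savo's theorem, namely that it is closed, embedded, and orientable. Closedness and embeddedness are part of the min-max output in Theorem~\ref{mainminmax} applied in Theorem~\ref{mainminhypsphere}. Orientability follows because $\mathbb{S}^{n+1}$ is simply connected and orientable, so any smooth embedded two-sided hypersurface separates (by Alexander duality or the Jordan--Brouwer theorem) and is therefore orientable; two-sidedness is automatic from the triviality of $H^{1}(\mathbb{S}^{n+1};\mathbb{Z}/2)$. I expect no real obstacle here beyond this verification.

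If one wished to slightly sharpen the bound in the $n+1 \in \{4,7\}$ case where $b_{1}(\Sigma_{g}) = 2g+2$, one would simply replace $2g$ with $2g+2$ in the same display to gain an additive $\tfrac{4}{n(n+1)}$; I would not incorporate this into the statement since the bound $b_{1} \ge 2g$ is what holds uniformly in all four dimensions.
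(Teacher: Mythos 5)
Your proposal is correct and follows essentially the same route as the paper: the paper's proof of this corollary likewise just plugs the first Betti number bound $b_{1}(\Sigma_{g}) \ge 2g$ from Theorem \ref{mainminhypsphere} (together with Lemma \ref{genusg}) into Savo's index estimate, Theorem \ref{indexlowerbound}. Your additional verification of the hypotheses of Savo's theorem (closedness, embeddedness, orientability) and the observation that the weaker bound $2g$ suffices uniformly across all four dimensions match the paper's remarks.
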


\subsection{Outline of the proof}
We outline the proof of main results. For the regularity theorem of isotopy minimization problem with continuous Lie group symmetry (Theorem \ref{mainregularitycontiLie}), we focus on the regularity on singular orbits. On principal orbits, orbits with the maximal dimension, we can rely on the regularity theory of Almgren-Simon and Meeks-Simon-Yau. By Hsiang-Lawson's characterization on minimal surfaces on original manifold and on orbit spaces, we can consider an isotopy minimization problem on orbit space, which is a $3$-dimensional manifold with boundary with a degenerate metric. We develop the $G_{c}$-equivariant version of the topological reduction ($\gamma$-reduction) of Meeks-Simon-Yau on an orbit space to reduce the cases to minimization problem of genus zero surfaces on an orbit space. Then we develop the $G_{c}$-equivariant replacement lemma and filigree type lemma on the neighborhood of singular orbits to obtain a good convergence along the limit. Combining the topological arguments and tangent cone analysis based on the regularity on principal orbits, we obtain the regularity on singular orbits.

For the proof of $(G_{c} \times \mathbb{Z}_{n})$-equivariant isotopy minimization problem (Theorem \ref{mainregularitydiscLie}), the main part of proof is devoted to prove the regularity on the intersection of singular orbits and singular loci by finite group actions. As before, we can use the regularity on exceptional orbits, whose isotropy group is discrete, by applying the discrete group equivariant regularity theory developed by Ketover \cite{ketover2016equivariant}. We analyze the tangent cone on points of intersection between singular orbits and singular loci and prove that the tangent cone is hyperplane. We use the characterization of stable cone by Simons \cite{simons1968minimal} to prove the regularity in both minimization problems. 

Based on the regularity of an isotopy minimization problem, we build a cohomogeneity two min-max theory to construct min-max $(G_{c} \times G_{f})$-equivariant minimal hypersurfaces in Theorem \ref{mainminmax}. We set up $G_{f}$-equivariant Simon-Smith min-max by adopting Colding-De Lellis \cite{colding2003min} and Ketover \cite{ketover2016equivariant}. We sweep out the orbit space $M'= M/G_{c}$ by a one-parameter family of properly embedded $G_{f}$-equivariant surfaces. One ingredient we need to modify is the compactness of $G$-stable embedded minimal hypersurfaces. Since $G$-stability does not necessarily imply stability of minimal hypersurfaces, especially due to $\mathbb{Z}_{2}$-actions, we prove compactness theorem of $G$-stable embedded minimal hypersurfaces outside of the intersection of $\mathbb{Z}_{2}$-singular loci and singular orbits. This enables us to apply replacement argument in $G$-annuli to obtain the regularity. We obtain a genus control of images of minimal hypersurfaces in the orbit space manifold by applying \cite{de2010genus} and \cite{ketover2019genus}.

As an application, we construct minimal hypersurfaces with arbitrarily large first Betti number on round spheres $\mathbb{S}^{n+1}$ $(4 \le n+1 \le 7)$ (Theorem \ref{mainminhypsphere}). We take $M= \mathbb{S}^{n+1}$ and $G_{c} = SO(n-1)$, then $M/G_{c}$ is a $3$-dimensional ball $\mathbb{B}^{3}$ topologically. Note that the natural projection map $\pi : \mathbb{S}^{n+1} \rightarrow \mathbb{S}^{n+1}/SO(n-1)$ sends a hypersphere $S^{n}$, Clifford hypersurfaces $S^{n-1} \times S^{1}$ and $S^{n-2} \times S^{2}$ to a disk, a properly embedded cylinder and a sphere, respectively. We consider a one parameter family of $\mathbb{D}_{g+1}$-equivariant  genus $g$ surfaces which is a (topological) desingularization between a disk and a sphere. Then by applying Theorem \ref{mainminmax}, we obtain $(SO(n-1) \times \mathbb{D}_{g+1})$-equivariant min-max minimal hypersurfaces whose image over the projection map is bounded by $g$.

We apply a topological argument to prove that genus is not collapsed along the limiting process, which is inspired by the two-piece property of minimal hypersurfaces on round spheres by Choe-Soret \cite{choe2009first}. Every equatorial hypersphere separates any other minimal hypersurfaces into two connected components. Moreover, recently Mcgrath-Zou proved free boundary minimal surfaces with genus zero which are not a flat disk are radial graphs and do not pass the origin in an Euclidean $3$-ball, based on the free boundary version of the two-piece property in \cite{mcgrath2024areas}. Knowing that an image of a minimal hypersurface is a free boundary minimal surface on a $3$-ball with a weighted metric, we can modify Mcgrath-Zou's argument to rule out genus zero case since the image of our minimal hypersurfaces passes the origin by its symmetry. Other cases with positive genus $g$ can be ruled out by the dihedral symmetry $\mathbb{D}_{g+1}$.

As in Remark \ref{indefinitebetti}, the min-max procedure may increase the number of boundary components of the image of min-max minimal hypersurfaces, and this contributes to the increase of the first Betti number of minimal hypersurfaces. However, for sufficiently large $g$, we can obtain that a topological control by the symmetry of a limit minimal hypersurface $\Sigma_{\infty}$. $\Sigma_{\infty}$ has an $(SO(n-1) \times SO(2) \times \mathbb{Z}_{2})$-symmetry, and by applying Allard's regularity theorem, we deduce $2g$ or $2g+2$ as a first Betti number of minimal hypersurfaces. Moreover, we obtain $\Sigma_{\infty}$ is a union of an equatorial hypersphere $S^{n}$ and a Clifford hypersurface $\sqrt{n-2/n} \mathbb{S}^{n-2} \times \sqrt{2/n} \mathbb{S}^{2}$ for $4 \le n+1 \le 6$ by an area bound of a sweepout and Willmore type argument of Viana \cite{viana2023isoperimetry}. This gives that $\Sigma_{g}$ has $2g$ as a first Betti number when $g$ is sufficiently large for $n+1 = 5, 6$. Moreover, we obtain a lower bound of Morse Index of our min-max hypersurfaces in terms of $g$ by applying Savo's work \cite{savo2010index}.
\subsection{Topology of minimal hypersurfaces and further discussion}
Continuing from the previous discussion, the limit hypersurface gives a restriction on topology of minimal hypersurfaces for sufficiently large $g$, and we obtain that $\Sigma_{g}$ has a diffeomorphism type of $\#^{2g} (S^{2} \times S^{n-2})$ for $n+1 = 5,6$. However, there are two Clifford hypersurfaces, namely $\sqrt{1/3}  \mathbb{S}^{1} \times \sqrt{2/3} \mathbb{S}^{2}$ and $\sqrt{2/3}  \mathbb{S}^{2} \times \sqrt{1/3} \mathbb{S}^{1}$, in dimension $4$. Also, for $n+1= 7$, the lack of information on $(SO(n-1) \times SO(2) \times \mathbb{Z}_{2})$-equivariant minimal hypersurfaces whose area is next to an equatorial $\mathbb{S}^{n}$ gives an indefiniteness of topology. We know that Clifford hypersurface $\sqrt{1/2} \mathbb{S}^{3} \times \sqrt{1/2} \mathbb{S}^{3}$ achieves area next to an equatorial sphere $\mathbb{S}^{6}$ among minimal hypersurfaces with an antipodal symmetry, but this does not give a clue for our equivariant case. Using an ODE argument, we obtain $\Sigma_{\infty}$ to be a union of $S^{n-2} \times S^{2}$ or $S^{n-1} \times S^{1}$ with $S^{n}$ topologically. This gives a dichotomy of the first Betti number to be $2g$ or $2g+2$ for sufficiently large $g$.

In dimension $7$, we conjecture that the Clifford hypersurface $\sqrt{2/3} \mathbb{S}^{4} \times \sqrt{1/3} \mathbb{S}^{2}$ achieves the second least area among minimal hypersurfaces with $(SO(n-1) \times SO(2) \times \mathbb{Z}_{2})$-symmetry. By intuitions from Morse theoretic idea on the relation between area and (equivariant) Morse Index as in Mazet-Rosenberg \cite{mazet2017minimal} and Marques-Neves \cite{coda2016morse}, numerical simulations suggest that $\sqrt{2/3} \mathbb{S}^{4} \times \sqrt{1/3} \mathbb{S}^{2}$ and $\sqrt{5/6} \mathbb{S}^{5} \times \sqrt{1/6} \mathbb{S}^{1}$ are only such candidates, and thus $\sqrt{2/3} \mathbb{S}^{4} \times \sqrt{1/3} \mathbb{S}^{2}$ should achieve the least. However, we could not prove this mathematically, and the resulting Betti number is $2g$ for sufficiently large $g$ if the conjecture holds. 

We now discuss the comparison with the construction via doubling method, mainly with the recent construction by Kapouleas-Zou \cite{kapouleas2024minimal} in $\mathbb{S}^{4}$. Along the procedure to construct a sweepout in the quotient manifold $B^{3}$, we desingularize $D^{2} = \pi(S^{n})$ and $S^{2} = \pi(S^{n-2} \times S^{2})$ with $(g+1)$ cylindrical necks which are diffeomorphic to $S^{1} \times I$. Each neck corresponds to $S^{n-2} \times S^{1} \times {I}$ in the original manifold, and this contributes to the first Betti number by $2$. By $\mathbb{D}_{g+1}$-symmetry on the quotient manifold $B^{3}$, the increment of the boundary components also appears to be by an even number along the minimizing and min-max processes, and this gives us the first Betti number of resulting minimal hypersurfaces to be an even number.

On the other hand, the recent gluing construction of Kapouleas and Zou obtains new minimal hypersurfaces by doubling two equatorial hyperspheres $\mathbb{S}^{3}$ by adding hypercatenoidal necks $S^{2} \times I$. Each neck $S^{2} \times I$ contributes to the first Betti number by $1$ and achieves a simpler topology than $S^{1} \times S^{1} \times I$ in dimension $4$. This gluing construction is performed along $m \times m$ lattice contained in the Clifford torus $\sqrt{1/2} \mathbb{S}^{1} \times \sqrt{1/2} \mathbb{S}^{1}$, and they obtain a sequence of minimal hypersurfaces on $\mathbb{S}^{4}$ whose growth of the first Betti number is quadratic, more specifically, they obtain minimal hypersurfaces with the diffeomorphism type $\#^{m^{2}-1}(S^{2} \times S^{1})$ for large $m$ on $\mathbb{S}^{4}$.

We expect our regularity result of cohomogeneity two isotopy minimization problem and min-max construction to have further applications to construct new minimal hypersurfaces or singularity models of mean curvature flow, such as self-shrinkers and self-expanders. After the work of Buzano-Nguyen-Schulz \cite{buzano2024noncompact}, Ketover \cite{ketover2024self} constructed self-shrinkers whose asymptotic cones fatten in dimension $3$ by equivariant min-max construction with prisamtic symmetry $\mathbb{D}_{g+1} \times \mathbb{Z}_{2}$ (See the construction by Ilmanen-White \cite{ilmanen2024fattening} based on the resolution of multiplicity one conjecture by Bamler-Kleiner \cite{bamler2023multiplicity}). It would be interesting to find high dimensional analog of such self-shrinkers, and to classify self-shrinkers whose image by the projection map of symmetry $SO(n-1)$ acting on $\mathbb{R}^{n+1}$ having a low genus. Moreover, Shao and Zou constructed a sequence of self-expanders with unbounded genus in dimension $3$ by solving Meeks-Simon-Yau type equivariant isotopy minimization problem in \cite{shao2025self}. We expect there are further applications to construct new self-expanders in higher dimension and minimal hypersurfaces with Lie group symmetry on hyperbolic manifolds via isotopy minimization technique. Moreover, there will be more constructions of new minimal hypersurfaces on spheres other than minimal hypersurfaces in this paper.
\subsection{Organization} The organization of this paper is as follows. In Section $2$, we introduce the basic geometric and topological properties of orbit space of manifolds with an isometry group action. In Section $3$, we formulate and develop the regularity result for the isotopy minimization problem of an area functional of hypersurfaces over $3$-dimensional orbit manifolds. In Section $4$, we prove the regularity of isotopy minimization problem when the isometry group is a product of a connected Lie group and a finite isometry group. In Section $5$, we formulate and develop the cohomogeneity $2$ equivariant min-max theory. In Section $6$, we prove the genus bound of the quotient of $G$-equivariant min-max hypersurfaces. In Section $7$, we construct minimal hypersurfaces with arbitrarily large first Betti number on round spheres in dimension $4$ to $7$. In Appendix, we prove the compactness theorem of $G$-stable embedded minimal hypersurfaces to apply to develop cohomogeneity $2$ equivariant min-max theory.

\subsection{Acknowledgements}
The author would like to thank his advisor Daniel Ketover for suggesting this question and his helpful discussions and encouragements on this project. The author is grateful to Jacob Bernstein, Renato Bettiol, Jaigyoung Choe, Luis Atzin Franco, Niels Martin Møller, Guanhua Shao and Jonathan Zhu for inspiring discussions. The author appreciates to Mario Schulz for his interest and bringing Morse Index papers to the author's attention. The author is grateful to Tomasz Mrowka for explanations on topology. The author is grateful to Otis Chodosh, Nicos Kapouleas, Jared Marx-Kuo, and Jiahua Zou for their interests in this work. The author had partially supported by University and Louis Bevier fellowship, NSF grant DMS-1906385 and DMS-2405114.
\section{Quotient space with cohomogeneity two}
In this section, we introduce the basic notions on a quotient space of manifolds by an isometry action, and recall the construction of minimal submanifolds with a symmetry in Hsiang-Lawson \cite{hsiang1971minimal}. We consider a closed Riemannian manifold $(M^{n+1},g)$ and an isometry group $G$ which is a product of a connected Lie group $G_{c}$ and a finite isometry group $G_{f}$, acting as an isometry. We denote $M/G$ to be an \emph{orbit space} in this paper. We denote the $n$-dimensional Hausdorff measure on $M$ by $\mathcal{H}^{n}$. We also denote the $m$-th Betti number i.e. the rank of $m$-th homology group of topological manifold $X$ by $\beta_{m}(X)$.
\subsection{The orbit space and minimal submanifolds with symmetry}
We call that a submanifold $\Sigma \subset M$ is \emph{$G$-equivariant} if $\Sigma$ remains invariant along the action of $G$ i.e. $g(\Sigma)=\Sigma$ for $g \in G$. For each $x \in M$, let $G_{x} : = \{ g \in G \,|\, g \cdot x = x\}$ be an \emph{isotropy group} at $x$, and $(G_{x})$ be a conjugacy group of $G_{x}$ in $G$. Then we have a stratification of orbits by conjugacy groups and there exists a minimal conjugacy group $(H)$ over $\Sigma$. We define \emph{cohomogeneity} $k$ by $\dim \Sigma - (\dim(G)- \dim (H))$. Let us call $(H)$ by a \emph{principal orbit type} and the dimension of principal orbit is $\dim(G)- \dim (H)$. Note that the union of all orbits $M^{*} : = \{ x \in M \, | \, (G_{x}) = (H)\}$ is an open and dense submanifold of $M$. We call the orbit $(H')$ to be an \emph{exceptional orbit} if $(H)\neq (H')$ but $\dim H= \dim H'$. We call an orbit by a \emph{singular orbit} if the orbit is neither a principal orbit nor an exceptional orbit, and denote the set of points in a singular orbit by $M_{s} := \{ x \in M \, | \, x \text{ is in a singular orbit} \}$. Note that singular orbits have strictly lower dimension than principal orbit and exceptional orbits. 
\begin{rmk}
In case of absence of singular orbits in the sense of our definitions, Ketover \cite{ketover2016equivariant} developed the equivariant min-max theory to construct minimal surfaces on $3$-dimensional manifolds in Simon-Smith min-max setting with discrete isometry group.
\end{rmk}
Now we suppose the dimension of principal orbits to be $n-2$ in this paper and see the structure of the orbit space $M/G_{c}$. Hence, the cohomogeneity is $2$ in our cases. $M/G_{c}$ may not be a topological manifold (with boundary) and may have singular points (as a topological manifold) by \cite[Chapter IV]{bredon1972introduction}, while $M/G_{c}$ has a triangulation which also
 provides an equivariant triangulation of $M$ and we can stratify singular orbits (see Illman \cite{illman1978smooth} and \cite{illman1983equivariant} for more details). By Theorem 4.6 and Corollary 4.7 of \cite[Chapter IV]{bredon1972introduction}, if $M$ is simply connected and all orbits are connected, then $M/G_{c}$ is a topological $3$-manifold (with boundary). 
 
\noindent \textbf{Topological assumption.} We restrict $M$ to be simply connected and principal orbits by an isometry action by a continuous Lie group $G_{c}$ are connected. 
\begin{rmk} \label{scassumption}
In \cite[Chapter IV]{bredon1972introduction}, it is proven that the orbit space $M/G_{c}$ is a manifold with boundary outside a set of codimension $4$ if $H_{1}(M,\mathbb{Z}_{2})=0$ and a principle orbit is connected so that all orbits are connected. Hence from the assumption above, $M/G_{c}$ is a topological $3$-manifold (with boundary), which may have corners. In this paper, since this condition is sufficient for our applications, we only consider manifolds satisfying such conditions.
\end{rmk}
We formulate the metric structure on the orbit space $M/G_{c}$ to deal with minimal submanifolds. For convenience, let us put $M' = M/G_{c}$ and $\Sigma' = \Sigma/G_{c}$ for a $G_{c}$-equivariant hypersurface $\Sigma \subset M$. Let us first consider the induced metric $\overline{g}$ on $M^{*}/G_{c}$ from a given metric $g$ on $M^{*}$ to be a pushforward metric of a projection map $\pi : M \rightarrow M'$. Let us define it in more detail. Fix $x' \in M^{*}/G_{c}$ and take one point $x= \pi^{-1}({x'})$ in the preimage $M^{*}$. For $X',Y' \in T_{x'}(M^{*}/G_{c})$, there exist $X, Y \in T_{x}M^{*}$ satisfying $\pi_{\sharp}X= X'$ and $\pi_{\sharp}Y= Y'$. We define the metric $\overline{g}$ at $x$ by the metric satisfying
\begin{equation} \label{inducedmetric}
    \overline{g}(X',Y') = g(X,Y),
\end{equation}
which also gives the distance between the orbits on $M^{*}/G_{c}$. Since $M^{*}$ is an open and dense submanifold in $M$, we can define a continuous metric structure on $M'$ induced by one in $M^{*}/G_{c}$.

To discuss the relation between minimal submanifold on $M$ and $M'$, we introduce the \emph{volume function} $\mathcal{V}$ on $M'$. Consider the following volume function $\mathcal{V}: M' \rightarrow \mathbb{R}$:
\begin{equation*}
    \mathcal{V} = \begin{cases} Vol(\pi^{-1}(x')) \text{ if } x' \in M^{*}/G_{c},
    \\ m \cdot Vol(\pi^{-1}(x')) \text{ if } x' \text{ is in an exceptional orbit}, \\ 0 \text{ otherwise}
    \end{cases}
\end{equation*}
for $x' \in M'$ where $m = |H'/H|$ when $H'$ is an isotropy group of an exceptional orbit of $x$ and $H$ is that of a principal orbit. $\mathcal{V}$ is a continuous function on $M'$ and a differentiable on $M^{*}/G_{c}$. We define the metric $g'$ on $M'$ by
\begin{equation} \label{volumefunction}
    g' = \mathcal{V}^{\frac{2}{k}} \overline{g} = \mathcal{V} \overline{g}.
\end{equation}
Note that $g'$ is a Riemannian metric over $M^{*}/G_{c}$ and continuous metric degenerate to zero on the boundary $\partial M'$. The following theorem states the relation between minimal surfaces on $M'$ and $G_{c}$-equivariant minimal hypersurfaces on $M$. Also we call $\Sigma' \subset M'$ to be \emph{properly embedded} if $\partial \Sigma' \subset \partial M'$ and $\Sigma'$ is embedded in $M'$.  We denote the canonical deformation induced by (\ref{volumefunction}) by $\mathcal{D}: (M',\overline{g}) \rightarrow (M',g')$. Note that $\mathcal{D}$ is a conformal deformation.
\begin{thm} [\cite{hsiang1971minimal}, Theorem 2] \label{deformationminimal}
$\Sigma$ is a $G_{c}$-equivariant (embedded) minimal submanifold on $(M,g)$ if and only if $\Sigma'$ is a (embedded) minimal submanifold on $(M',g')$, and $T_{x'} \Sigma'$ is a subspace of $ T_{x'} M'$ for each point on the boundary $x' \in \partial \Sigma'$. In particular, in the case where $T_{x'} M'$ is a smooth plane, the boundary condition is that $\Sigma'$ orthogonally meets $\partial M'$.
\end{thm}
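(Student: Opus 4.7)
The plan is to prove the equivalence via a volume identity on the principal stratum, Palais's principle of symmetric criticality, and a slice-representation analysis at singular orbits. On the principal stratum $M^{*}$, the map $\pi$ is a smooth Riemannian submersion with fibers of $(n-2)$-dimensional volume $\mathcal{V}(x')$. For a $G_{c}$-equivariant hypersurface $\Sigma$, lifting an orthonormal frame of $T_{x'}\Sigma'$ to horizontal vectors and completing with a tangent basis of the fiber factorizes the volume element, so for any open $U \subset M^{*}/G_{c}$,
\[
\mathcal{H}^{n}\!\left(\Sigma \cap \pi^{-1}(U)\right) \;=\; \int_{U \cap \Sigma'} \mathcal{V}(x')\, dA_{\bar{g}}(x') \;=\; \int_{U \cap \Sigma'} dA_{g'}(x'),
\]
since $g' = \mathcal{V}\bar{g}$ with $\dim\Sigma' = 2$ gives $dA_{g'} = \mathcal{V}\,dA_{\bar{g}}$. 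Because $M \setminus M^{*}$ has $\mathcal{H}^{n}$-measure zero, this yields $\mathcal{H}^{n}(\Sigma) = \mathrm{Area}_{g'}(\Sigma')$ globally.

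Next I would differentiate. Every compactly supported normal variation of $\Sigma'$ in $M^{*}/G_{c}$ lifts uniquely to a $G_{c}$-invariant normal variation of $\Sigma$, and any $G_{c}$-invariant variation of $\Sigma$ descends, so differentiating the identity gives $\delta\mathcal{H}^{n}(\Sigma)(X) = \delta\,\mathrm{Area}_{g'}(\Sigma')(X')$ on matched pairs. By Palais's principle of symmetric criticality applied to the $G_{c}$-action on the space of embedded hypersurfaces, the $G_{c}$-invariant $\Sigma$ is critical under all compactly supported variations iff it is critical under $G_{c}$-invariant ones, which by the matched identity is iff $\Sigma'$ is critical for area in $(M',g')$ on its interior. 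Embeddedness transfers transparently between the two sides.

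The main obstacle, and the source of the stated boundary condition, is the behavior at singular orbits, where $\pi$ ceases to be a submersion and $g'$ degenerates. At a point $x' \in \partial \Sigma' \cap \partial M'$ whose orbit type is next-to-principal, I would work in a slice chart in which a neighborhood of the corresponding singular orbit is modeled on $G_{c} \times_{K} D^{s}$, with $K$ the isotropy acting linearly on a normal disk $D^{s}$, so that a $G_{c}$-invariant smooth hypersurface through the singular orbit corresponds to a $K$-invariant smooth submanifold of $D^{s}$ through the origin. The quotient $D^{s}/K$ is precisely the local model for $M'$ near $x'$, and by standard invariant theory, $K$-invariant smooth submanifolds of $D^{s}$ through the origin correspond exactly to smooth curves in $D^{s}/K$ meeting the image of the fixed locus orthogonally when that image is a smooth plane, and more generally to curves whose tangent sits inside the local cone $T_{x'}M'$ at corners. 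Since $\mathcal{V}$ vanishes on $\partial M'$, the boundary term $\oint \mathcal{V}\langle X,\nu\rangle\,ds$ in the first variation on $(M',g')$ is automatically zero, so the orthogonality does not come from stationarity on the quotient but from the smoothness requirement on $\Sigma$ at the singular orbit; verifying this clean orthogonality statement for every relevant slice type, and ruling out artifacts such as tangential approaches, is the hardest part of the argument.
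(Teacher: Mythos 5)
The paper offers no proof of this statement — it is imported from Hsiang--Lawson (Theorem 2 of \cite{hsiang1971minimal}) — so your proposal is being compared against their argument. Your interior argument is exactly theirs and is correct: the fibration/coarea identity $\mathcal{H}^{n}(\Sigma)=\mathrm{Area}_{g'}(\Sigma')$ (using $g'=\mathcal{V}^{2/k}\bar g$ with $k=2$, so the conformal factor scales $2$-dimensional area by $\mathcal{V}$), the bijection between $G_{c}$-invariant variations of $\Sigma$ and compactly supported variations of $\Sigma'$, and symmetric criticality (equivalently, averaging an arbitrary variation field over the compact group $G_{c}$) reduce minimality of $\Sigma$ to minimality of $\Sigma'$ away from $\partial M'$. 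Your diagnosis of the boundary condition is also the right one: since $\mathcal{V}$ vanishes on $\partial M'$ the first-variation boundary term $\oint\mathcal{V}\langle X,\nu\rangle\,ds$ is identically zero, so orthogonality is a regularity requirement on $\Sigma$ rather than a stationarity requirement on $\Sigma'$, which is precisely the point made in the remark following the theorem.

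The one step that does not close as written is the claimed ``exact'' correspondence, via invariant theory alone, between smooth $K$-invariant submanifolds of the slice $D^{s}$ through the origin and smooth curves in $D^{s}/K$ meeting the boundary orthogonally. Only one direction is purely invariant-theoretic: a smooth invariant hypersurface through a singular orbit has a $K$-invariant tangent hyperplane, which must contain every nontrivial isotypic component of the slice representation and hence projects to a curve containing the radial direction, i.e.\ meeting $\partial M'$ orthogonally. The converse fails at the level of smooth objects: in the model $\mathbb{R}^{n-1}\oplus\mathbb{R}^{2}$ with $K=SO(n-1)$ acting standardly on the first factor, the curve $\{y_{3}=r^{3}\}$ in the quotient $[0,\infty)\times\mathbb{R}^{2}$ meets the boundary orthogonally, yet its lift $\{x_{n+2}=|x|^{3}\}$ is only $C^{2,1}$. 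Orthogonality gives a $C^{1}$ lift in general; to promote it to a smooth one you must invoke minimality — either elliptic regularity for the resulting $C^{1}$ stationary graph upstairs, or the Hsiang--Lawson analysis showing that a minimal profile meeting the degenerate boundary orthogonally is even in the radial slice coordinate. Without that input the ``if'' direction of the equivalence is not established.
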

\begin{rmk}
The necessity of the orthogonality meeting condition for smooth boundary is discussed in \cite[Chapter III.2]{hsiang1971minimal} in cohomogeneity $1$ case to guarantee the full regularity. This directly applies to general cohomogeneity cases and so in cohomogeneity $2$ case we are concentrating on. On the other hand, at the nonsmooth point of the boundary, the intersection angle does not need to be a right angle. For instance, in Section $5$ of Lawson \cite{lawson1972equivariant}, he described examples of minimal submanifolds which are preimages of a geodesic passing the origin with an angle $\pi/2p$ in $2$-dimensional wedge. While his examples are on cohomogeneity $1$ setting in our notation, it is not hard to find such examples in $3$-dimensional wedge orbit spaces. See the example (5.2) of \cite{lawson1972equivariant} for further details.
\end{rmk}
Also we enumerate basic properties on the relation between stationarity and stability and those on equivariant settings. We call $V$ to be a $G_{c}$-varifold if $g_{\sharp}V=V$ for all $g \in G_{c}$ and $X$ to be a $G_{c}$-equivariant vector field if $g_{\sharp}X=X$. For a $G_{c}$-varifold $V$, we say the varifold $V$ is $G_{c}$-stationary if
\begin{equation*}
    \partial V(X) =0
\end{equation*}
for any $G_{c}$-equivariant vector field $X$. Moreover, we define $V$ to be a $G_{c}$-stable varifold if the area functional is stable under an $G_{c}$-equivariant normal variation supported on the regular part of the varifold. The equivalence between $G_{c}$-stationarity and stationarity is proven by Liu \cite{liu2021existence}.

\begin{lem}[\cite{liu2021existence}, Lemma 2.2]
A $G_{c}$-varifold $V$ is $G_{c}$-stationary in $(M^{n+1},g)$ if and only if it is stationary in $(M^{n+1},g)$.
\end{lem}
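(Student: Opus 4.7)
The plan is to prove the non-trivial direction by an averaging argument against the Haar measure on $G_c$. The forward direction (stationary $\Rightarrow$ $G_c$-stationary) is immediate since every $G_c$-equivariant vector field is in particular a vector field. For the converse, I will start from an arbitrary smooth vector field $X$ on $M$ and produce a $G_c$-equivariant vector field $\tilde X$ with $\partial V(\tilde X) = \partial V(X)$; then the $G_c$-stationarity hypothesis forces $\partial V(X)=0$.

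Since $G_c$ is a connected Lie group acting by isometries on the closed manifold $M$, its image in $\mathrm{Iso}(M)$ has compact closure, so without loss of generality $G_c$ is compact and carries a bi-invariant Haar probability measure $d\mu$. For each $g \in G_c$, define the vector field $Y_g(x) := (dg^{-1})_{gx} X(gx)$, and set
\begin{equation*}
\tilde X(x) := \int_{G_c} Y_g(x)\, d\mu(g).
\end{equation*}
A substitution $g \mapsto gh^{-1}$ in the defining integral, combined with the left-invariance of $d\mu$, gives $\tilde X(hx) = (dh)_x \tilde X(x)$, so $\tilde X$ is $G_c$-equivariant.

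The key identity is $\partial V(Y_g) = \partial V(X)$ for every $g \in G_c$. To see this, note that if $\phi_t$ is the flow of $X$, then $\psi_t := g^{-1} \circ \phi_t \circ g$ is the flow of $Y_g$. Because $V$ is $G_c$-invariant and $g$ is an isometry,
\begin{equation*}
\|(\psi_t)_\sharp V\| = \|(g^{-1})_\sharp (\phi_t)_\sharp g_\sharp V\| = \|(g^{-1})_\sharp (\phi_t)_\sharp V\| = \|(\phi_t)_\sharp V\|,
\end{equation*}
and differentiating at $t=0$ gives the identity. This step is the heart of the argument, and it is where both isometry (to preserve mass under $g^{-1}_\sharp$) and $G_c$-invariance of $V$ (to cancel $g_\sharp V = V$) are used simultaneously.

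Finally, I would combine the above by Fubini and linearity of first variation to conclude
\begin{equation*}
\partial V(\tilde X) = \int_{G_c} \partial V(Y_g)\, d\mu(g) = \partial V(X).
\end{equation*}
Since $\tilde X$ is $G_c$-equivariant and $V$ is $G_c$-stationary, $\partial V(\tilde X) = 0$, hence $\partial V(X) = 0$ for every smooth vector field $X$, proving stationarity of $V$. The only subtle points are the justification of Fubini (handled by the smoothness of $(g,x) \mapsto Y_g(x)$ and compactness of $G_c$) and the compactness of the relevant group, both of which are routine in this setup.
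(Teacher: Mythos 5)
Your averaging argument is correct and is essentially the standard proof of this fact; the paper itself gives no proof but simply cites Lemma 2.2 of Liu, whose argument is the same Haar-measure symmetrization you describe (pass to the compact closure of $G_c$ in $\mathrm{Iso}(M)$, average $X$ to an equivariant $\tilde X$, and use $g_\sharp V = V$ together with the isometry property to see that the first variation is unchanged). No gaps worth flagging.
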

Although the equivalence of $G_{c}$-stability and stability is proven for smooth embedded $2$-sided hypersurfaces by Wang \cite{wang2022min}, since stability operator is supported only on the regular part and we will only consider $2$-sided hypersurfaces, we can apply the lemma without modification later (see also Definition 6.1 in \cite{wang2024equivariant} on $G$-equivariant Morse Index of varifold).

\begin{lem}[\cite{wang2022min}, Lemma 2.8] \label{equivstablity} Let $V$ be a compact $G_{c}$-equivariant stationary $n$-dimensional varifold whose support is $2$-sided in $M$. Then $V$ is $G_{c}$-stable if and only if it is stable.
\end{lem}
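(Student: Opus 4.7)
The forward implication, that stability implies $G_c$-stability, is immediate: $G_c$-equivariant normal variations form a sub-class of all normal variations, so non-negativity of the second variation over the larger class forces it on the smaller class. The work lies in the converse.

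For the converse I would argue by contradiction. Suppose $V$ is $G_c$-stable but not stable. Since the support of $V$ is $2$-sided, pick a smooth global unit normal $\nu$ on the regular part $\Sigma^{\mathrm{reg}}$. Because $G_c$ is connected and acts continuously by isometries preserving $\mathrm{supp}(V)$, the pullback $g^{\ast}\nu$ lies in $\{+\nu,-\nu\}$ at every regular point and must agree with $\nu$ by connectedness, so $\nu$ is already $G_c$-equivariant. Hence normal variations are parametrised by scalar functions $u \in C_c^{\infty}(\Sigma^{\mathrm{reg}})$ via $X = u\nu$, with $X$ being $G_c$-equivariant if and only if $u$ is $G_c$-invariant. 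The assumed failure of stability gives a compactly supported $\varphi$ with strictly negative second variation, so on a sufficiently large compact exhausting subdomain $\Omega \subset \Sigma^{\mathrm{reg}}$ the first Dirichlet eigenvalue $\lambda_1(\Omega)$ of the Jacobi operator
\[
L = -\Delta_{\Sigma} - |A|^{2} - \mathrm{Ric}_{M}(\nu,\nu)
\]
satisfies $\lambda_1(\Omega) < 0$. Choose $\Omega$ to be $G_c$-invariant (possible since $G_c$ is compact: replace $\Omega$ by $G_c \cdot \Omega$).

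The key step is to upgrade the minimising eigenfunction to a $G_c$-invariant one. Let $u$ be the positive first Dirichlet eigenfunction of $L$ on $\Omega$, unique up to positive scaling by the Krein--Rutman theorem since $L$ is a self-adjoint linear elliptic operator on a compact domain. Because $G_c$ acts by isometries and $L$ involves only intrinsic curvature data and $\nu$ which are $G_c$-invariant, $L$ commutes with the $G_c$-action, so for each $g \in G_c$ the function $u \circ g^{-1}$ is another positive first eigenfunction on $\Omega$. Uniqueness then yields $u \circ g^{-1} = c(g)\, u$ for some $c(g) > 0$, and $c : G_c \to (\mathbb{R}_{>0},\times)$ is a continuous group homomorphism. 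Its image is a compact subgroup of $\mathbb{R}_{>0}$, hence trivial, so $c \equiv 1$ and $u$ is genuinely $G_c$-invariant. The associated variation $X = u\nu$ is $G_c$-equivariant and produces
\[
\delta^{2} V(X,X) \;=\; \int_{\Omega} u\, L u \;=\; \lambda_1(\Omega)\int_{\Omega} u^{2} \;<\; 0,
\]
contradicting $G_c$-stability.

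The main obstacle is precisely the eigenfunction-averaging step above: one has to rule out that the first eigenfunction transforms non-trivially under $G_c$, which is exactly where compactness of $G_c$ (so that the multiplicative character $c$ must be trivial) and the one-dimensionality of the principal eigenspace are both used in an essential way. A minor secondary issue is the choice of $G_c$-equivariant normal $\nu$; this is handled by connectedness of $G_c$, and if instead $G_c$ had components reversing the two sides of $\Sigma$ one would pass to the index-two subgroup preserving orientation of $\nu$, which does not affect the conclusion since $G_c$-invariance of $u$ with respect to any finite-index closed subgroup can be upgraded by averaging over the finite quotient.
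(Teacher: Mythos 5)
Your argument is correct and is essentially the standard eigenfunction argument that the paper itself does not reproduce --- it simply cites Lemma 2.8 of \cite{wang2022min}, whose proof runs along the same lines (simplicity and positivity of the first Dirichlet eigenfunction of the Jacobi operator on an invariant domain force it to be $G_{c}$-invariant, yielding an equivariant destabilizing variation). One small streamlining: since each $g \in G_{c}$ acts by isometries preserving $\Omega$, one has $\| u \circ g^{-1}\|_{L^{2}(\Omega)} = \| u \|_{L^{2}(\Omega)}$, which gives $c(g)=1$ directly without invoking compactness of $G_{c}$.
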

We collect notions about the local slice structure. For $x \in M$, we denote $G_{c}(x)$ and $(G_{c})_{x}$ to be the orbit of $x$ in $M$ and the isotropy group of $x$, respectively. Also, let $B_{G_{c}}^{\rho}(x)$ be a tubular neighborhood of $G_{c} (x)$. We call the set $S \subset M$ to be a slice at $x$ if $(G_{c})_{x}(S)=S$ and $G_{c} \times_{(G_{c})_{x}} S$ can be mapped diffeomorphically onto a tubular neighborhood $B^{\rho}_{G_{c}}(x)$ by the exponential map. We will use the following slice theorem later which is a fundamental fact on homogeneous Riemannian manifolds. 
\begin{thm} [Slice Theorem, \cite{hsiang1971minimal}, Chapter I Section 2] \label{slicethm} The normal bundle $\nu(G_{c}(x))$ of $G_{c}(x)$ is associated with the canonical $(G_{c})_{x}$-bundle $(G_{c})_{x} \rightarrow G_{c} \rightarrow G_{c}(x)$ with the action of $(G_{c})_{x}$ naturally acting on the space of normal vectors to $G_{c}(x)$ at $x$. Moreover, the exponential map sends a small disk bundle of $\nu(G_{c}(x))$ equivariantly onto a small tubular neighborhood of $G_{c}(x)$. 
\end{thm}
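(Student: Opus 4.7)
The plan is to build the bundle identification first and then transport everything through the Riemannian exponential map, using that $G_c$ acts by isometries so that $\exp$ intertwines with the action.

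First I would identify the orbit $G_c(x)$ with the homogeneous space $G_c/(G_c)_x$ via the orbit map $p:g\mapsto g\cdot x$; since $(G_c)_x$ is closed in $G_c$, $p$ descends to a $G_c$-equivariant smooth embedding of $G_c/(G_c)_x$ onto $G_c(x)\subset M$, with differential at the identity coset carrying $\mathfrak{g}_c/\mathfrak{g}_{c,x}$ isomorphically onto $T_x(G_c(x))$. Because every $h\in(G_c)_x$ fixes $x$, its differential $dh_x$ is a linear isometry of $T_xM$ preserving $T_x(G_c(x))$, and so descends to a linear action of $(G_c)_x$ on the normal fiber $\nu_x:=T_x(G_c(x))^{\perp}$. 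This is the isotropy (slice) representation.

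Next I would form the associated bundle $G_c\times_{(G_c)_x}\nu_x$, whose points are equivalence classes $[g,v]$ under $(gh,v)\sim(g,h\cdot v)$ for $h\in (G_c)_x$, and define
\begin{equation*}
\Phi:G_c\times_{(G_c)_x}\nu_x\longrightarrow \nu(G_c(x)),\qquad \Phi([g,v])=(g\cdot x,\,dg_x(v)).
\end{equation*}
A short check shows $\Phi$ is well-defined (the equivalence relation matches the cocycle), fiberwise linear, smooth, $G_c$-equivariant for left multiplication on the first factor, and a bijection covering the identification $G_c/(G_c)_x\cong G_c(x)$; hence it is a $G_c$-equivariant vector bundle isomorphism. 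This is the first assertion of the theorem.

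For the second assertion I would invoke the Riemannian exponential map $\exp:\nu(G_c(x))\to M$. Since each $g\in G_c$ is an isometry, one has $\exp_{gx}\circ dg_x=g\circ\exp_x$, so $\exp$ is automatically $G_c$-equivariant on $\nu(G_c(x))$. By the tubular neighborhood theorem, $\exp$ is a local diffeomorphism along the zero section; by continuity of the injectivity radius and compactness of the orbit $G_c(x)$ (which holds since $G_c$ is compact or because we only need the conclusion locally and the whole orbit is obtained by the $G_c$-action from a single fiber), there exists $\rho>0$ so that $\exp$ restricted to the $\rho$-disk bundle $D^{\rho}\nu(G_c(x))$ is a diffeomorphism onto an open $G_c$-invariant neighborhood $B^{\rho}_{G_c}(x)$. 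Pre-composing with $\Phi$ and declaring $S:=\exp_x(D^{\rho}\nu_x)$ gives the slice: $G_c\times_{(G_c)_x}S$ maps equivariantly and diffeomorphically onto $B^{\rho}_{G_c}(x)$ via $[g,s]\mapsto g\cdot s$.

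The only real obstacle is the uniform choice of $\rho$ on the whole orbit. I would handle this by first choosing $\rho_0>0$ for which $\exp$ is a diffeomorphism of the $\rho_0$-disk in $\nu_x$ onto a slice $S$ transverse to the orbit at $x$, and then propagating this by the $G_c$-action: $G_c$-equivariance of $\exp$ forces the same radius to work at every $gx$, so no extra compactness argument beyond that of the single orbit is required, and the global tubular neighborhood $B^{\rho}_{G_c}(x)$ is obtained simply as the $G_c$-saturation of $S$.
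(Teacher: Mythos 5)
The paper offers no proof of this statement; it is quoted verbatim as a classical fact from Hsiang--Lawson (Chapter I, Section 2 of \cite{hsiang1971minimal}), so there is no internal argument to compare against. Your proposal is the standard and correct proof: the associated-bundle identification $\Phi([g,v])=(g\cdot x, dg_x(v))$ is well defined precisely because the equivalence relation matches the isotropy representation, and equivariance of $\exp$ follows from $G_c$ acting by isometries. The only point worth tightening is your closing claim that ``no extra compactness argument is required'': equivariance of $\exp$ does give a uniform radius at which each fiberwise exponential is a diffeomorphism onto its slice, but global injectivity of $\exp$ on the disk bundle (i.e.\ that slices based at distinct orbit points do not overlap) is not a consequence of equivariance alone and genuinely uses compactness of the orbit via the tubular neighborhood theorem --- which you do correctly invoke earlier in the same paragraph, so the argument as a whole stands.
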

\subsection{Discrete equivariant setup on $3$-dimensional orbit manifolds} Let $G_{f}$ be an orientation-preserving discrete isometry acting on $M'$. We enumerate basic ingredients to develop the equivariant min-max construction to produce minimal hypersurfaces with controlled topology on $(M,g)$ with an isometry group $G = G_{c} \times G_{f}$, in particular when $G_{f}$ acts on a $3$-manifold with boundary $M'$.

Let us consider the discrete isometry action $G_{f}$ acting on $M'$. We collect the basic facts about discrete group actions in \cite{ketover2016equivariant} and \cite{ketover2016free}. For $x' \in M'$, we define the \emph{singular locus} to be a set of points whose isotropy group is nontrivial:
\begin{equation*}
    \tilde{\mathcal{S}} = \{ x' \in M' \, | \, (G_{f})_{x'} \neq \{ e \} \}.
\end{equation*}
To make all the tangent planes $G_{f}$-equivariant on the singular locus except for the points on corners, we consider $G_{f}$ to be a finite subgroup of $SO(3)$. By Lemma 3.3 in \cite{ketover2016equivariant}, we can restrict to the cases that our $G_{f}$-equivariant surfaces intersect singular loci with $\mathbb{Z}_{n}$ or $\mathbb{D}_{n}$-symmetry. Lemma 3.4 and Lemma 3.5 in \cite{ketover2016equivariant} give clue on the local structure of $\mathbb{Z}_{n}$ and $\mathbb{D}_{n}$-equivariant surface in $3$-dimensional manifolds, and we will apply these later. 

\noindent \textbf{Assumption on finite group action.} $\tilde{\mathcal{S}}$ intersects $\partial M'$ only on smooth points i.e. if $x' \in \tilde{\mathcal{S}} \cap \partial M'$ then $T_{x'} \partial M'$ is a smooth $2$-plane.

We put the additional assumption above in light of Lemma 3.4 and Lemma 3.5 in \cite{ketover2016equivariant}. Adding this condition is enough to construct minimal hypersurfaces in our examples. Notice that $\mathbb{D}_{n}$-type singular locus point can only be located in $\text{Int}(M')$.

To illustrate a clearer picture in general cases, let us define the projection map $\pi_{f}: M' \rightarrow M'/G_{f}$ and take $\pi_{f}(\tilde{\mathcal{S}}) = \mathcal{S}$. Then $\mathcal{S}$ has a trivalent graph structure (See Section 2.3.4 of \cite{kleiner2011geometrization}) on the neighborhood of smooth points and we have a local stratification:
\begin{equation*}
    \mathcal{S} \cap B_{r}(x')= \mathcal{S}_{0} \cup \mathcal{S}_{1},
\end{equation*}
where $x'$ is an image of a singular locus on $M'/G_{f}$ whose preimage points are not on the corner and $\mathcal{S}_{1}$ are geodesic segments and $\mathcal{S}_{0}$ consists of finitely many vertices. Three segments in $\mathcal{S}_{1}$ meet at the points of $\mathcal{S}_{0}$. The isotropy group surrounding each segment of $\mathcal{S}_{1}$ is $\mathbb{Z}_{n}$ for some $n \in \mathbb{Z}_{+}$. The neighborhood of each point of $\mathcal{S}_{0}$ has a singular cone structure over an orbifold. Our construction contains the case where $G_{f}$ being a trivial group, i.e. $G_{f}= \{ e \}$.
\section{Isotopy minimization problem associated with quotient space by continuous Lie group} We formulate and prove the problem of isotopy minimization of the area of $n$-dimensional equivariant hypersurfaces over $3$-dimensional quotient spaces by a connected Lie group. By the metric deformation (\ref{volumefunction}) and Theorem \ref{deformationminimal}, we consider the quotient space whose boundary is degenerate and does not achieve mean convexity, where we solve an isotopy minimization problem on $3$-manifold with boundary. Hence we cannot directly apply Meeks-Simon-Yau regularity theory \cite{meeks1982embedded} and its boundary regularity case in De Lellis-Pellandini \cite{de2010genus} and Li \cite{li2015general} (See also Jost \cite{jost1986existence}). We develop the regularity theory of the minimizer over $G_{c}$-equivariant smooth isotopy, so that the corresponding (continuous) isotopy on the quotient space $M'$. We develop arguments based on Meeks-Simon-Yau \cite{meeks1982embedded} (See Almgren-Simon \cite{almgren1979existence} for regularity of minimizers of Plateau problem, Grüter-Jost \cite{gruter1986embedded} for nondegenerate boundary case).

Let $U' \subset M'$ be an open set with boundary in $(M',\overline{g})$ satisfying
\begin{enumerate}
    \item $U'$ is homeomorphic to a unit $3$-ball in $\mathbb{R}^{3}$;
    \item the boundary of $\pi^{-1}(\partial_{rel} U' \cap M')$ is mean convex;
    \item $\partial_{rel} U'$ meets $\partial M'$ transversely and $U' \cap \partial M'$ is homeomorphic to a unit $2$-disk in $\mathbb{R}^{2}$.
\end{enumerate} 
We call an open set $U'$ satisfying (1)-(3) by \emph{an admissible open set} in $M'$. We denote the set of isotopy $\mathcal{I}(U')$ as
\begin{equation*}
    \mathcal{I}(U') := \{\phi \, | \, \phi :[0,1] \times M' \rightarrow M' \text{ with } \phi(0,\cdot) = \text{id, } \phi(t,\cdot)|_{M' \setminus U'} = \text{id, } \phi(t,M') \text{ is a diffeomorphism for } t \in [0,1]\}.
\end{equation*}
Let $\Sigma' \subset M'$ be a surface intersecting $\partial U'$ transversely. We now define a following minimization problem $(\Sigma', \mathcal{I}(U'))$: Denote
\begin{equation*}
    \alpha = \inf_{\phi \in \mathcal{I}(U')} \mathcal{H}^{n}(\pi^{-1}(\phi(1,\Sigma') \cap M')).
\end{equation*}
If a sequence of isotopies $\{ \phi^{i} \}_{i \in \mathbb{N}} \in \mathcal{I}(U')$ satisfies
\begin{equation*}
    \lim_{i \rightarrow \infty} \mathcal{H}^{n}(\pi^{-1}(\phi^{i}(1,\Sigma') \cap M')) = \alpha,
\end{equation*}
then we define $\{(\Sigma')^{i}\}:=\{ \phi^{i}(1,\Sigma') \}_{i \in \mathbb{N}}$ to be a \emph{minimizing sequence for the minimization problem} $(\Sigma', \mathcal{I}(U'))$.

We devote this section to prove the following regularity result of area minimizer of the minimization problem.
\begin{thm} \label{regularitycontiLie} Let $U'$ be an admissible open set. Suppose $\{(\Sigma')^{i}\}$ to be a minimizing sequence for the minimization problem $(\Sigma', \mathcal{I}(U'))$ so that $(\Sigma')^{i}$ intersects $\partial M'$ transversally for each $i$, and $\Sigma^{i} : = \pi^{-1}((\Sigma')^{i})$ converges to a limit (in a varifold sense) to $V \in IV_{n}(M)$. Then the following holds:
\begin{enumerate}
    \item $V = \Gamma$, where $\Gamma$ is a compact, smooth and embedded $G_{c}$-equivariant minimal hypersurface in $\pi^{-1}(U') \cap M$ whose boundary is contained in $\pi^{-1}(\partial_{rel} U' \cap M') \cap M$;
    \item $V \mres (M \setminus U) =  \pi^{-1}(\Sigma') \mres (M \setminus U)$;
    \item $\Gamma$ is $G_{c}$-stable with respect to the pullback isotopy of $\mathcal{I}(U)$ ($G_{c}$-stability).
\end{enumerate}
\end{thm}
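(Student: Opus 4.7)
The proof will mirror the Meeks--Simon--Yau regularity theorem, but carried out on the orbit space $M'$ endowed with the degenerate weighted metric $g'$ and then transferred back to $M$ via the projection $\pi$. The starting observation is that, by (\ref{volumefunction}) and Theorem \ref{deformationminimal}, $\mathcal{H}^n(\pi^{-1}(\Sigma'))$ equals the $g'$-area of $\Sigma'$ on $M^* \cap M'$, so the minimizing sequence $\{(\Sigma')^i\}$ is a $g'$-area minimizing sequence within the isotopy class $\mathcal{I}(U')$. The varifold limit $V$ on $M$ is then the $\pi^{-1}$-image of a limit varifold $V'$ on $M'$, and assertion (2) is immediate since each $\phi^i$ is the identity outside $U'$.

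Away from $\partial M'$ (i.e.\ on principal orbits), the metric $g'$ is smooth and $U' \cap \text{Int}(M')$ is an ordinary smooth $3$-manifold, so the Meeks--Simon--Yau scheme applies almost verbatim. The plan is first to perform an equivariant $\gamma$-reduction on $(\Sigma')^i$ inside $U'$: whenever the surface admits a compressing disk, replace $(\Sigma')^i$ by a modification of strictly lower genus while preserving (or decreasing) the $g'$-area, and preserving the $G_f$-trivial lift to $M$. This produces a modified minimizing sequence of surfaces in $U'$ that, after finitely many reductions, splits into unions of disks. Combining this with the standard Meeks--Simon--Yau replacement on balls in $\text{Int}(M')$ and the compactness of stable embedded minimal surfaces yields a smooth stable embedded minimal surface on $U' \cap \text{Int}(M')$ whose $\pi$-preimage is a smooth stable $G_c$-equivariant minimal hypersurface on the principal stratum of $U$.

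The core difficulty is regularity of $V$ at points of $M$ lying over $\partial M'$, where $g'$ degenerates. The plan is to perform the tangent cone analysis in the ambient manifold $(M,g)$ rather than in $(M',g')$. Fix $x \in \pi^{-1}(\partial M')$ and a tangent varifold $C = \lim_{r\to 0}(\eta_{x,r})_\sharp V$. To show that $C$ is stable, one needs that the isotopy minimization property survives the blow-up; this is precisely where a $G_c$-equivariant replacement lemma on tubes $B^\rho_{G_c}(x)$ provided by Theorem \ref{slicethm}, together with a filigree-type estimate ruling out accumulation of small area pieces near the singular orbit, is required. Once $C$ is shown to be a stationary, stable, $G_c$-invariant $n$-cone, the slice theorem forces it to split off the tangent direction to the singular orbit $G_c(x)$ and to be a cone over a stable minimal submanifold of low cohomogeneity in the normal slice. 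By Simons' classification of stable minimal cones in ambient dimension $n+1 \le 7$, $C$ must be a hyperplane. Allard's regularity theorem then upgrades $V$ to a smooth hypersurface in a neighborhood of $x$, and the $G_c$-invariance forces this regular piece to descend to a boundary piece of $\Gamma' \subset M'$ meeting $\partial M'$ in the Hsiang--Lawson boundary condition of Theorem \ref{deformationminimal}.

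Assertion (3) is then built into the construction: $\Gamma$ minimizes $G_c$-equivariant $n$-area under lifts of isotopies in $\mathcal{I}(U')$, so by definition it is $G_c$-stable, and Lemma \ref{equivstablity} upgrades this to genuine stability of the smooth two-sided piece on the principal stratum. The main obstacle will be making the equivariant replacement and filigree arguments rigorous in the presence of the degenerate weight $\mathcal{V}$ along $\partial M'$: one must ensure that competitors obtained by replacement in tubular neighborhoods of singular orbits are still produced by isotopies in $\mathcal{I}(U')$ (hence admissible), and that the $g'$-area loss under such replacements scales correctly so that a nontrivial singular tangent cone at $x$ would contradict the infimum property of the minimizing sequence. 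Once this blow-up stability is established, the stable cone classification plus Allard's theorem closes the argument.
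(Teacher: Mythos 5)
Your proposal follows essentially the same route as the paper: interior regularity via Meeks--Simon--Yau/Ketover on the nondegenerate part of $(M',g')$, an equivariant $\gamma$-reduction to genus-zero surfaces, equivariant replacement and filigree lemmas on slice tubes to get density bounds and blow-up stability at singular orbits, and then Simons' classification of stable cones plus Allard's theorem to show the tangent cone is a hyperplane and conclude smoothness. The only substantive detail your sketch elides is the paper's intermediate step showing the projected tangent cone is a single sheet (via a Frankel-property argument on the slice links) before the stable-cone classification is invoked, but this is a refinement within the same strategy rather than a different approach.
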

\subsection{Interior regularity à la Meeks-Simon-Yau and Ketover} The minimizer of an area functional in $(M',g')$ corresponds to the minimizer of $n$-dimensional area functional in $(M,g)$ by Theorem \ref{deformationminimal}. For the interior regularity, which directly gives a regularity except for points in singular orbits on $M$, we can apply the well-known regularity result of Meeks-Simon-Yau \cite{meeks1982embedded} and discrete equivariant orientation-preserving isotopy version of Ketover \cite{ketover2016equivariant} to deal with exceptional orbits. We will use the regularity on exceptional orbits to prove the regularity when discrete isometry actions are involved in Section 4. Note that there is no exceptional orbit when an isometry group is continuous Lie group $G_{c}$. Moreover, from our topological assumption and Theorem 3.12 in Chapter IV in \cite{bredon1972introduction}, which guarantees non-existence of non-orientable orbits, only orientation-preserving actions regarding exceptional orbits are in our consideration. Notice that $(M',g')$ is nondegenerate in the interior and the arguments developed of Meeks-Simon-Yau and Ketover are local and we obtain the following regularity.
\begin{thm}[Meeks-Simon-Yau \cite{meeks1982embedded}, Ketover \cite{ketover2016equivariant}] \label{intregular} Suppose discrete local isometries created by $G_{f}$ near an image of exceptional orbits in $M'$ are orientation-preserving isotopies. If $\{ (\Sigma')^{k} \}$ is a minimizing sequence for the isotopy minimization problem in the open set $x' \in U' \subset Int(M')$ whose preimage $\pi^{-1}((\Sigma')^{k})$ converges to a varifold $V$, then there exists an $G_{c} \times G_{f}$-equivariant embedded minimal hypersurface $\Gamma$ with $\pi(\overline{\Gamma} \setminus \Gamma) \subset \partial U'$ and $V= \Gamma$ in $\pi^{-1}(U')$.
\end{thm}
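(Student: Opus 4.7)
The plan is to treat three regions separately: the principal-orbit set $\pi^{-1}(M^*/G_c) \cap U$, the smooth part of the relative boundary $\pi^{-1}(\partial_{rel} U' \cap \text{Int}(M'))$, and the preimage of singular orbits $\pi^{-1}(\partial M' \cap U')$. Statement (2) is immediate since every $\phi^i \in \mathcal{I}(U')$ is supported in $U'$, so $\pi^{-1}((\Sigma')^i)$ agrees with $\pi^{-1}(\Sigma')$ outside $U$ and this persists in the varifold limit.

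For principal orbits, the Hsiang--Lawson deformation (\ref{volumefunction}) makes $(M^*/G_c, g')$ a smooth Riemannian $3$-manifold, and by Theorem \ref{deformationminimal} the isotopy-minimization problem in $(M,g)$ among $G_c$-equivariant competitors descends to a $2$-dimensional isotopy-minimization problem in $(M^*/G_c, g')$. Because the minimization is local, Theorem \ref{intregular} of Meeks--Simon--Yau applies at each point of $M^*/G_c$, giving a smooth embedded minimal surface image; pulling back by $\pi$ produces a smooth embedded $G_c$-equivariant minimal hypersurface on the principal-orbit part of $U$. Boundary regularity on the mean-convex smooth part of $\pi^{-1}(\partial_{rel} U')$ follows from the boundary version of Meeks--Simon--Yau in De Lellis--Pellandini and Li, again applied in the orbit space.

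The main step is regularity at points of singular orbits inside $U$, where the weight $\mathcal{V}^{2/k}$ vanishes and $(M',g')$ degenerates. The plan is (i) to carry out a $G_c$-equivariant $\gamma$-reduction on $(M', \overline{g})$, reducing to a minimizing sequence whose components in $U'$ are topological disks meeting $\partial M'$ transversally, by lifting Meeks--Simon--Yau's $1$-handle compressions to $G_c$-invariant compressions on the orbit space; (ii) to prove a $G_c$-equivariant replacement lemma on small $G_c$-saturated annuli around a singular-orbit point $p$, paired with a filigree-type lemma, so that on each such annulus the limit is replaced by a stable $G_c$-equivariant minimizer of the equivariant problem without changing the total mass; (iii) to run a tangent cone analysis at $p$, using the already-established smoothness on principal orbits. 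Any tangent varifold $C$ is a $G_c$-equivariant stable minimal cone whose regular part contains the full complement of the $G_c$-axis in the tangent space; by Simons' classification of stable minimal hypercones in dimensions $n+1 \le 7$, the only possibility is an integer multiple of a hyperplane. Allard's regularity theorem then promotes $V$ near $p$ to a $C^{1,\alpha}$ hypersurface, which the minimal-surface equation upgrades to smoothness, and the $G_c$-invariance of the minimizing sequence forces $\Gamma$ to be $G_c$-equivariant.

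Finally, $G_c$-stability in (3) follows because any compactly supported $G_c$-equivariant normal variation of $\Gamma$ in $U$ descends via $\pi$ to a continuous isotopy in $\mathcal{I}(U')$, and the infimum property of $\{(\Sigma')^i\}$ forces the second variation of $\pi^{-1}$-mass to be nonnegative; by Lemma \ref{equivstablity} this is equivalent to ordinary stability. The main obstacle is unambiguously the tangent-cone step at singular-orbit points: one must rule out nontrivial stable hypercones that are a priori compatible with both the $G_c$-action and the degenerate orbit-space geometry, and the equivariant $\gamma$-reduction must be set up so that $G_c$-invariant compressions remain available at every stage of the reduction.
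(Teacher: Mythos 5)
You have written a proof sketch for the wrong statement. Theorem \ref{intregular} concerns an open set $U'$ contained in $\mathrm{Int}(M')$, so $U'\cap\partial M'=\emptyset$ and there are \emph{no singular orbits} in $\pi^{-1}(U')$: the entire apparatus you build in your third paragraph ($G_c$-equivariant $\gamma$-reduction, replacement and filigree lemmas, tangent-cone analysis at singular-orbit points, Simons' classification) is the content of the \emph{later} theorems of Section 3 (Theorems \ref{firstcontireg}, \ref{secondcontireg}, \ref{regularitycontiLie}), not of this one. Likewise your "Statement (2)" and the $G_c$-stability claim "(3)" belong to Theorem \ref{regularitycontiLie}; Theorem \ref{intregular} asserts neither. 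The paper's proof of \ref{intregular} is essentially a citation: since $U'\subset\mathrm{Int}(M')$, the weighted metric $g'$ of (\ref{volumefunction}) is nondegenerate there, Theorem \ref{deformationminimal} converts the problem into a standard isotopy-minimization problem on a smooth $3$-manifold, and the local regularity theory of Meeks--Simon--Yau (resp.\ Ketover's discrete equivariant version) applies verbatim; one then pulls back by $\pi$.

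The genuine gap, measured against what Theorem \ref{intregular} actually claims, is that you never treat the \emph{exceptional orbits}. These are exactly the points of $\mathrm{Int}(M')$ with nontrivial finite isotropy induced by $G_f$, they are the reason the theorem carries the hypothesis that the discrete local isometries near images of exceptional orbits are orientation-preserving, and they are why the conclusion is a $G_c\times G_f$-equivariant hypersurface. Plain Meeks--Simon--Yau does not apply at such points because the minimizing sequence is constrained to be $G_f$-invariant, so one cannot use arbitrary comparison isotopies; this is precisely where Ketover's equivariant regularity theorem (for orientation-preserving finite group actions on $3$-manifolds) must be invoked, together with the observation from \cite{bredon1972introduction} (Remark after the theorem in the paper) that the topological assumption rules out non-orientable/orientation-reversing exceptional-orbit behavior. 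Your first paragraph's treatment of the principal-orbit part is correct and matches the paper, but without the exceptional-orbit case the proof of this particular statement is incomplete.
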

\begin{rmk}
Principal orbit corresponds to the cases when a discrete local isometry is an identity in Theorem \ref{intregular}. Hence Theorem \ref{intregular} reduces us to prove the regularity on singular orbits. The boundary regularity of the nondegenerate boundary case is discussed in \cite{gruter1986embedded}, \cite{jost1986existence}, \cite{de2010genus} and \cite{li2015general} before. However, we cannot apply their argument directly due to the degeneracy at boundary points in the weighted metric. We will rely on the interior regularity to prove the regularity on singular orbits.
\end{rmk}
\begin{rmk}
The regularity of the discrete equivariant minimizer by Ketover \cite{ketover2016equivariant} is proven on smooth $3$-manifold. In our case, $M'$ may have corners whose tangent cone has an orbifold singularity. However, we apply his regularity result in the interior parts in $M'$ and lift this to original manifold $M$.  
\end{rmk}
\subsection{$\gamma$-reductions on quotient space: topological deformations on orbit space} The aim of remaining parts of this section is to prove regularity on singular orbits. We reduce our local minimization problem to the case of genus zero surfaces on $M'$ by the adaptation of $\gamma$-reduction in Meeks-Simon-Yau \cite{meeks1982embedded} (De Lellis-Pellandini \cite{de2010genus} and Li \cite{li2015general} developed $\gamma$-reduction arguments in the mean convex boundary setting). Here we develop the quotient space version of $\gamma$-reduction.

We have the following version of the thin tube isoperimetric inequality of Lemma 1 in \cite{meeks1982embedded} and Lemma 4.2 of \cite{jost1986existence} for the boundary version (See also \cite{li2015general}) as a singular orbit version.

\begin{lem}  [Tube isoperimetric inequality] \label{tubeisopineq} There exists $r_{1}(M,G_{c}), \delta_{1}(M,G_{c}) \in (0,1)$ with the property that if $\Sigma$ is a $G_{c}$-equivariant closed hypersurface in $M$ and $\partial \Sigma' \subseteq \partial M'$ for $\Sigma'=\pi(\Sigma)$ and
\begin{equation*}
    \mathcal{H}^{n}(\Sigma \cap B^{r_{1}}_{G_{c}}(x)) < \delta_{1}^{n- \dim (G_{c} \cdot x)} r_{1}^{n- \dim (G_{c} \cdot x)} \text{ for each } x \in M
\end{equation*}
then there exists a unique compact set $K\subset M$ such that
\begin{enumerate}
    \item $\partial K = \Sigma$ i.e. $K$ is bounded by $\Sigma$; 
    \item $\mathcal{H}^{n+1}(K \cap B^{r_{1}}_{G_{c}}(x)) \le \delta_{1}^{n- \dim (G_{c} \cdot x)} r_{1}^{n+1- \dim (G_{c} \cdot x)}$ for each $x \in M$;
    \item $\mathcal{H}^{n+1} (K) \le c_{1} (\mathcal{H}^{n} (\Sigma))^{\frac{n+1}{n}}$, where $c_{1}$ depends only on $(M,g)$.
\end{enumerate}
\begin{proof}
We apply the slice lemma together with the lower semi-continuity of orbit type property. Hence, we can take a radius $r_{1}$ such that $(G_{c})_{x}$ achieves a maximal isotropy group in $B^{r_{1}}_{G_{c}}(x)$. We can also consider local triangulation on $M'$ which satisfies `homogeneous regularity' property in a slice sense as in Meeks-Simon-Yau \cite{meeks1982embedded}. Applying the isoperimetric inequality arguments in \cite{meeks1982embedded} on each cell and summing those up, we obtain (1) and (2). Since $\Sigma'$ splits $M'$ into two separated regions, we have a uniqueness of $K$.

Since the number of orbit types is finite, we can consider a global triangulation on $M'$ and apply the arguments in the proof of Lemma 1 in \cite{meeks1982embedded} in the same way. We obtain (3) by taking the minimum exponent among possible isoperimetric inequalities in each tubular neighborhood, which is a preimage of a cell, which enables us to take a uniform radius over $M'$.
\end{proof}
\end{lem}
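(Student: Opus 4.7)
The plan is to reduce the $G_c$-equivariant isoperimetric inequality on $M$ to the classical Meeks-Simon-Yau isoperimetric inequality (Lemma 1 of \cite{meeks1982embedded}) applied slicewise, using the Slice Theorem (Theorem \ref{slicethm}) to package each orbit fibration uniformly. By the lower semi-continuity of the isotropy type stratification and compactness of $M$, only finitely many orbit types occur; hence I may fix a uniform $r_1$ so that for every $x \in M$, the Slice Theorem equivariantly identifies $B^{r_1}_{G_c}(x)$ with the associated bundle $G_c \times_{(G_c)_x} S_x$, where $S_x$ has dimension $n+1-k$ with $k = \dim(G_c \cdot x)$, and so that $(G_c)_x$ is maximal among isotropies appearing in this neighborhood.

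Next, I would observe that $\Sigma \cap B^{r_1}_{G_c}(x)$ is the $G_c$-saturation of the $(G_c)_x$-invariant hypersurface $\Sigma \cap S_x$ of dimension $n-k$ inside $S_x$. The coarea/fiber-integration formula on the associated bundle converts the area hypothesis $\mathcal{H}^n(\Sigma \cap B^{r_1}_{G_c}(x)) < \delta_1^{n-k} r_1^{n-k}$ into the slicewise smallness condition $\mathcal{H}^{n-k}(\Sigma \cap S_x) < C \delta_1^{n-k} r_1^{n-k}$, which is precisely the threshold required to apply the flat-space isoperimetric inequality inside $S_x$. Applying this inequality slicewise produces a unique $K_{S_x} \subset S_x$ bounded by $\Sigma \cap S_x$ together with the corresponding volume bound, and saturating under $G_c$ gives the desired $K \cap B^{r_1}_{G_c}(x)$; the extra factor $r_1^{n+1-k}$ in (2) comes from the $(n+1-k)$-dimensional fiber volume in the slice, and matches the ambient $(n+1)$-dimensional Hausdorff measure after integration over the $k$-dimensional orbit direction.

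For the global conclusions (1) and (3), I would build an equivariant triangulation of $M'$ whose cells are of uniformly bounded size and each lie in such a tubular neighborhood (so that the slice identification is homogeneously regular in the Meeks-Simon-Yau sense). Summing the local inequalities cell by cell produces the global volume bound; because the exponents are controlled by finitely many possible orbit dimensions, the worst case determines a uniform $c_1 = c_1(M,g)$ independent of the particular $\Sigma$. Uniqueness of $K$ follows since each small $\Sigma'$ separates $M'$ into two regions and $G_c$-equivariance lifts the bounded side uniquely; this also handles the boundary contribution since admissibility and transversality of $\partial \Sigma' \subseteq \partial M'$ reduce the boundary case to a nondegenerate slicewise inequality as in \cite{jost1986existence}.

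The principal obstacle I expect is the bookkeeping of exponents in the slice decomposition, namely verifying that $\delta_1^{n-k} r_1^{n-k}$ (rather than $\delta_1^n r_1^n$) is the correct threshold so that the $k$-dimensional orbit volumes cancel cleanly when one passes from the slice estimate to the tube estimate, and showing that the resulting global constant $c_1$ depends only on $(M,g)$, independent of orbit stratification. A secondary subtlety is handling the cells where the boundary $\partial M'$ meets higher-codimension singular strata, which requires combining the boundary version of the slice isoperimetric inequality with the admissibility hypotheses to avoid a degeneration of the tube structure.
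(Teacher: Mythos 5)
Your proposal is correct and follows essentially the same route as the paper's proof: fix $r_{1}$ via the slice theorem and lower semi-continuity of orbit types so that the isotropy is maximal in each tube, reduce to the Meeks--Simon--Yau isoperimetric inequality slicewise over a homogeneously regular triangulation of $M'$, sum cell by cell, use finiteness of orbit types to extract a uniform constant $c_{1}$, and deduce uniqueness of $K$ from the fact that $\Sigma'$ separates $M'$. Your explicit fiber-integration bookkeeping of the exponents $\delta_{1}^{n-k}r_{1}^{n-k}$ versus $\delta_{1}^{n-k}r_{1}^{n+1-k}$ simply spells out what the paper leaves implicit.
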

\begin{rmk}
We apply (1) and (2) of Lemma \ref{tubeisopineq} in the sense that a preimage $\Sigma$ of a projected embedded local surface $\Sigma'$ with small area bounds a unique compact set $K$ which has a small uniform volume, while (3) comes from a standard isoperimetric inequality and global patching argument of Meeks-Simon-Yau in $M$. We proceed the proof of $\gamma$-reduction without the scaling as $r_{1}=1$ in \cite{li2015general} and \cite{meeks1982embedded}.
\end{rmk}
From now on, we fix $\delta_{1}>0$ and $r_{1}>0$ in Lemma \ref{tubeisopineq}. Suppose $0<\gamma<\delta_{1}^{n} r_{1}^{n} /9$. We adapt the definition and the statements from \cite{meeks1982embedded} and \cite{jost1986existence}. Let $\Sigma'_{1}$ and $\Sigma'_{2}$ be embedded surfaces with boundary in $M'$ whose boundary is on $\partial M'$. Also denote $U'$ to be an open subset of $M'$ whose slice of preimage is strictly convex. We define $\Sigma_{2}'$ to be a $(\gamma,U')$\emph{-reduction} of $\Sigma_{1}'$ if $\Sigma_{1}'$ and $\Sigma_{2}'$ satisfy the following:
\begin{enumerate}
    \item $\Sigma_{2}'$ is obtained by the surgery from $\Sigma_{1}'$ in $U'$ as one of the following cases: 
    
    \begin{enumerate}
        \item $\Sigma_{2}'$ is obtained by cutting away a neck from $\Sigma_{1}'$. That is $\Sigma_{1}' \setminus \Sigma_{2}'$ is homeomorphic to $\mathbb{S}^{1} \times (0,1)$ and $\Sigma_{2}' \setminus \Sigma_{1}'$ is homeomorphic to the disjoint union of two open disks. 
        \item  $\Sigma_{2}'$ is obtained by cutting away a half-neck from $\Sigma_{1}'$. That is $\Sigma_{1}' \setminus \Sigma_{2}'$ is homeomorphic to $[0,1] \times (0,1)$ and $\Sigma_{2}' \setminus \Sigma_{1}'$ is homeomorphic to the disjoint union of two open half-disks. 
    \end{enumerate}
    \item There exists a compact set $K'$ embedded in $U'$ which is homeomorphic to the unit closed $3$-ball with $\partial K' = \overline{\Sigma_{1}' \triangle \Sigma_{2}'} \cup Y'$ for some compact set $Y' \subset \partial M'$ ($\partial K = \overline{\Sigma_{1}' \triangle \Sigma_{2}'}$ modulo $\partial M'$).
    \item $\mathcal{H}^{n}(\pi^{-1}(\Sigma_{1}' \triangle \Sigma_{2}')) < 2 \gamma$.
    \item If $\Gamma'$ is a connected component of $\Sigma_{1}' \cap \overline{U'}$ containing $\Sigma_{1}' \setminus \Sigma_{2}'$, and $\Gamma' \setminus (\Sigma_{1}' \setminus \Sigma_{2}')$ is disconnected, then for each component of $\Gamma' \setminus  (\Sigma_{1}' \setminus \Sigma_{2}')$ satisfies either one of the following:
    \begin{enumerate}
        \item It is a genus zero surface contained in $U' \cap M'$ with area of preimage at least $\delta_{1}^{n}r_{1}^{n}/2$;
        \item It has a positive genus.
    \end{enumerate}
\end{enumerate}
We call $\Sigma'$ to be $(\gamma,U')$\emph{-irreducible} if there is no surface which can be obtained by a $(\gamma,U')$-reduction from $\Sigma'$. 

We also define a strong $(\gamma,U')$-reduction. $\Sigma_{1}'$ and $\Sigma_{2}'$ are properly embedded surfaces in $M'$. We call $\Sigma_{2}'$ to be a \emph{strong} $(\gamma,U')$\emph{-reduction} of $\Sigma_{1}'$ if there is an isotopy $\{ \psi_{s} \}_{s \in [0,1]} \in \mathcal{I}(U')$ such that
\begin{enumerate}
    \item $\Sigma_{2}'$ is a $(\gamma,U')$-reduction of $\psi_{1}(\Sigma_{1}')$; 
    \item $\Sigma_{2}' \cap (M' \setminus U') = \Sigma_{1}' \cap (M' \setminus U')$;
    \item $\mathcal{H}^{n}(\pi^{-1}(\psi_{1}(\Sigma_{1}') \triangle \Sigma_{1}') )<\gamma$.
\end{enumerate}
We also denote $\Sigma'$ to be \emph{strongly} $(\gamma,U')$\emph{-irreducible} if there does not exist a surface which is obtained by a strong $(\gamma,U')$-reduction from $\Sigma'$. By following the arguments in Remark 3.1 in \cite{meeks1982embedded}, we have the maximum number of the strong $(\gamma,U')$-reduction is finite:

\begin{prop} \label{gammareduction}
    Given any (possibly disconnected) properly embedded surface $\Sigma'$ on $M'$, and put $\Sigma_{0}'= \Sigma'$, then there exists a finite sequence of properly embedded surfaces $\{ \Sigma_{i}' \}_{i=0}^{k}$ such that
    \begin{enumerate}
        \item $\Sigma_{i+1}'$ is a strong $(\gamma,U')$-reduction of $\Sigma_{i}'$ for $i=0,..., k-1$;
        \item $\Sigma_{k}'$ is strongly $(\gamma,U')$-irreducible.
    \end{enumerate}
    Moreover, $k \le c$ for some constant $c$ which only depends on $genus(\Sigma' \cap M')$ and $\mathcal{H}^{n}(\pi^{-1}(\Sigma' \cap M'))/\delta_{1}^{n}r_{1}^{n}$, and
    \begin{equation*}
        \mathcal{H}^{n}(\pi^{-1}(\Sigma' \triangle \Sigma_{k}')) \le 3c \gamma.
    \end{equation*}
    \begin{proof}
    The proof follows from the arguments of Remark 3.1 in \cite{meeks1982embedded}.
    \end{proof}
\end{prop}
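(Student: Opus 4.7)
The plan is to adapt the termination argument of Meeks-Simon-Yau (Remark 3.1 of their paper) to the present orbit-space setting. The key point is to introduce a non-negative integer complexity $c(\Sigma')$ bounded above in terms of the initial genus and preimage area, such that $c$ strictly decreases under each strong $(\gamma,U')$-reduction; finiteness of the sequence and the bound on $k$ are then automatic, and the bound on $\mathcal{H}^n(\pi^{-1}(\Sigma' \triangle \Sigma_k'))$ will be a telescoping sum of the three-part area budget in the definition of a strong reduction.

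Concretely, one combines the total genus of $\Sigma' \cap M'$ with the number of \emph{essential} components, where a component $C$ is essential if it has positive genus or $\mathcal{H}^n(\pi^{-1}(C)) \ge \delta_1^n r_1^n / 2$. A weighted complexity of the schematic form
\[
c(\Sigma') = 2\,\text{genus}(\Sigma' \cap M') + \bigl(N^{\max} - N_{\text{ess}}(\Sigma')\bigr),
\]
with $N^{\max}$ a fixed upper bound depending only on $\text{genus}(\Sigma_0')$ and $\mathcal{H}^n(\pi^{-1}(\Sigma_0'))/(\delta_1^n r_1^n)$, is tailored so that (a) in the non-separating case of the surgery (1)(a)--(b), the genus of the affected component drops by one and $c$ strictly decreases; and (b) in the separating case, condition (4) of the reduction definition forces each newly-created piece to be essential, so $N_{\text{ess}}$ strictly increases and $c$ again strictly decreases. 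The hypothesis $\gamma < \delta_1^n r_1^n / 9$ ensures that area perturbations of order $3\gamma$ do not accidentally flip the large-area flag, so the essentialness labels behave stably under surgery.

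Bounding the initial $c$ gives the stated constant depending only on $\text{genus}(\Sigma')$ and $\mathcal{H}^n(\pi^{-1}(\Sigma'))/(\delta_1^n r_1^n)$: the number of essential components is controlled by (total genus) + (total area)/(threshold), and the total preimage area grows by at most $3\gamma$ per step, hence remains bounded throughout. The area bound $\mathcal{H}^n(\pi^{-1}(\Sigma' \triangle \Sigma_k')) \le 3 c \gamma$ then follows by summing condition (3) of the strong reduction (at most $\gamma$ from the ambient isotopy $\psi_1$ and at most $2\gamma$ from the neck surgery itself) over the at most $c$ steps.

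The main obstacle is that the relevant area is measured upstairs on $(M,g)$ via $\pi^{-1}$ rather than intrinsically on $(M',\overline g)$, so the Euclidean isoperimetric tool in Meeks-Simon-Yau must be replaced by the tube isoperimetric inequality of Lemma 3.1, which accounts for varying orbit dimension and is precisely what produces the threshold $\delta_1^n r_1^n$. A secondary subtlety is that $U'$ may meet $\partial M'$ transversally in a disk, introducing the half-neck surgery of case (1)(b) and the modulo-$\partial M'$ interpretation of $\partial K$; this is handled uniformly with the interior surgery, parallel to the boundary $\gamma$-reductions of De Lellis-Pellandini and Li cited earlier in the paper.
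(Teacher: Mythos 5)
Your proposal follows exactly the route the paper takes: the paper's proof is a direct citation of the termination argument in Remark 3.1 of Meeks--Simon--Yau, and your complexity function (weighted genus plus count of essential components, with the tube isoperimetric inequality of Lemma 3.1 supplying the area threshold $\delta_1^n r_1^n$ upstairs) together with the telescoped $3\gamma$-per-step area budget is precisely that argument adapted to the orbit-space and boundary setting. The proposal is correct and essentially coincides with the paper's intended proof.
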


For any properly embedded surface $\Sigma'$, let us denote
\begin{align*}
    E(\Sigma') = \mathcal{H}^{n}(\pi^{-1}(\Sigma'))- \inf_{\tilde{\Sigma} \in \mathcal{J}_{U'}(\Sigma')} \mathcal{H}^{n}(\pi^{-1}(\tilde{\Sigma})),
\end{align*}
where $\mathcal{J}_{U'}(\Sigma') = \{ \psi_{1}(\Sigma') | \{ \psi_{s}\}_{s \in [0,1]} \in \mathcal{I}(U') \}$ denotes all surfaces isotopic to $\Sigma'$. Here we define $\Sigma_{0}'$ to be the union of all connected components $\Lambda' \subset \Sigma' \cap \overline{U'}$ such that there exists $K_{\Lambda'} \subset U'$ diffeomorphic to the unit $3$-ball satisfying $\Lambda' \subset K_{\Lambda'}$ and $\partial K_{\Lambda'} \cap \Sigma' \cap M' = \emptyset$ in $M'$.

We now show the genus zero property of surfaces for strongly $(\gamma,U')$-irreducible surfaces $\Sigma'$. It directly follows from the boundary version of arguments of Meeks-Simon-Yau.

\begin{thm} \label{genuszero} Let $U' \subset M'$ be an admissible open set, and $A' \subset U'$ be a compact subset diffeomorphic to the unit $3$-ball. Suppose $\partial M'$ intersects both $\partial U'$ and $\partial A'$ transversally. Suppose $\Sigma' \subset M'$ to be a smooth, closed, and embedded surface such that
\begin{enumerate}
    \item $\Sigma'$ intersects both $\partial M'$ and $\partial A'$ transversally;
    \item $E(\Sigma') \le \gamma/4$ and $\Sigma'$ is strongly $(\gamma,U')$-irreducible;
    \item For each component $\Gamma'$ of $\Sigma' \cap \partial_{rel} A' \cap M'$, let $F_{\Gamma'}'$ be a component in $(\partial_{rel} A' \cap M') \setminus \Gamma'$ and $\mathcal{H}^{n}(\pi^{-1}(F_{\Gamma'}')) = \max \{\mathcal{H}^{n}(\pi^{-1}(F_{\Gamma'}')),\mathcal{H}^{n}(\pi^{-1}(\partial_{rel} A' \setminus F_{\Gamma'}'))\}$. We also suppose that $\sum_{j=1}^{q} \mathcal{H}^{n}(\pi^{-1}(F_{j}')) \le \gamma/8$, where $F_{j}' = F'_{\Gamma_{j}'}$ and $\Gamma_{1}', ..., \Gamma_{q}'$ is denoted to be the components of $\Sigma' \cap \partial_{rel} A' \cap M'$. $\Gamma'$ is either a closed Jordan curve in $M'$ or a Jordan arc with endpoints on $\partial M'$ and each $F'_{\Gamma_{j}'}$ is either a disk, a half-disk or an annulus. 
\end{enumerate}
Then $\mathcal{H}^{n}(\pi^{-1}(\Sigma_{0}')) \le E(\Sigma')$ and there exists pairwise disjoint connected closed genus zero surfaces $D_{1}', ..., D_{p}'$ with $D_{i}' \subset (\Sigma' \setminus \Sigma_{0}') \cap U'$, $\partial D_{i}' \setminus \partial M' \subset \partial A'$, and $(\cup_{i=1}^{p} D_{i}' ) \cap A' = (\Sigma' \setminus \Sigma_{0}') \cap A' $. Moreover,
\begin{equation*}
    \sum^{p}_{i=1} \mathcal{H}^{n}(\pi^{-1}(D_{i}')) \le \sum^{q}_{j=1} \mathcal{H}^{n}(\pi^{-1}(F_{j}')) + E(\Sigma').
\end{equation*}
\begin{proof}
    We apply Lemma \ref{tubeisopineq} in place of Lemma 1 in \cite{meeks1982embedded}. Other parts remain the same as the proof of Theorem 2 in \cite{meeks1982embedded} except we consider the boundary version.
\end{proof}
\end{thm}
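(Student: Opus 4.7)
The plan is to follow the proof of Theorem 2 in Meeks--Simon--Yau \cite{meeks1982embedded}, substituting the tube isoperimetric inequality Lemma \ref{tubeisopineq} for their Euclidean one, and incorporating the half-disk and half-neck surgery variants that arise when $\Sigma'$ meets $\partial M'$. The bound $\mathcal{H}^{n}(\pi^{-1}(\Sigma_{0}')) \le E(\Sigma')$ is essentially tautological from the definitions: each component $\Lambda' \subseteq \Sigma_{0}'$ bounds a $3$-ball $K_{\Lambda'} \subset U'$ disjoint from the rest of $\Sigma'$, so one can build an isotopy $\{\psi_{s}\} \in \mathcal{I}(U')$ supported in $K_{\Lambda'}$ which radially contracts $\Lambda'$ toward a point. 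The resulting competitor has area arbitrarily close to $\mathcal{H}^{n}(\pi^{-1}(\Sigma' \setminus \Sigma_{0}'))$, and the definition of $E(\Sigma')$ immediately yields the claim.

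For the genus zero conclusion, I would proceed component by component. Let $C'$ be a connected component of $(\Sigma' \setminus \Sigma_{0}') \cap \overline{A'}$ and assume for contradiction that either $C'$ has positive genus or it carries an essential compressing arc based on $\partial M'$. Dehn's lemma and the loop theorem applied in $\overline{A'} \setminus \Sigma'$ produce an embedded compressing disk, or a compressing half-disk, $D' \subset A'$. Using $D'$, I construct a strong $(\gamma, U')$-reduction of $\Sigma'$ in two moves: first, a thin push-off isotopy of a narrow collar of $\partial D'$ in $\Sigma'$ toward $D'$, chosen of symmetric-difference area strictly below $\gamma$; second, excise the resulting neck (or half-neck) and cap with two parallel copies of $D'$. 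Lemma \ref{tubeisopineq} controls the volume of the $3$-region swept and furnishes the compact set $K' \subset U'$ homeomorphic to the unit $3$-ball, which is the defining datum of a $(\gamma, U')$-reduction; the total symmetric difference has area below $2\gamma$ by design. This contradicts strong $(\gamma, U')$-irreducibility, so every such $C'$ is genus zero and boundary-incompressible in the sense required, forcing each nontrivial component $D_{i}'$ of $(\Sigma' \setminus \Sigma_{0}') \cap A'$ to be a closed genus zero surface with the claimed boundary structure.

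For the area estimate, I form a competitor $\tilde\Sigma'$ by replacing each $D_{i}'$ with a slight push-in of the smaller-area cap $F_{i}' \subset \partial_{rel} A' \cap M'$, glued by a thin collar to $\Sigma' \setminus \overline{A'}$. This competitor lies in $\mathcal{J}_{U'}(\Sigma')$ and satisfies
\begin{equation*}
\mathcal{H}^{n}(\pi^{-1}(\tilde\Sigma')) \le \mathcal{H}^{n}(\pi^{-1}(\Sigma')) - \sum_{i}\mathcal{H}^{n}(\pi^{-1}(D_{i}')) + \sum_{j}\mathcal{H}^{n}(\pi^{-1}(F_{j}')) + \varepsilon;
\end{equation*}
combined with the lower bound $\mathcal{H}^{n}(\pi^{-1}(\tilde\Sigma')) \ge \mathcal{H}^{n}(\pi^{-1}(\Sigma')) - E(\Sigma')$ and sending $\varepsilon \to 0$, this rearranges to the stated inequality. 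The principal obstacle is verifying that each surgery and push-off is an honest element of $\mathcal{I}(U')$, namely an orbit-space isotopy lifting to a controllable $G_{c}$-equivariant deformation in $M$, and that it yields a genuine $(\gamma, U')$-reduction; this becomes delicate near $\partial M'$ and near singular orbits, where the weighted metric $g'$ degenerates and the Euclidean isoperimetric estimate fails. The orbit-type-dependent exponent $\delta_{1}^{n-\dim G_{c} \cdot x}$ in Lemma \ref{tubeisopineq} is precisely what enables uniform isoperimetric control across all local slice models and makes the MSY surgery scheme go through in this degenerate setting.
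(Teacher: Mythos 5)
Your proposal is correct and follows essentially the same route as the paper, which simply defers to the proof of Theorem 2 in Meeks--Simon--Yau with Lemma \ref{tubeisopineq} substituted for their Lemma 1 and the boundary (half-disk/half-neck) modifications incorporated. Your expanded sketch --- contracting $\Sigma_{0}'$ inside its $3$-balls, producing a compressing (half-)disk via the loop theorem and deriving a strong $(\gamma,U')$-reduction to contradict irreducibility, and comparing with the pushed-in caps $F_{j}'$ for the area estimate --- is a faithful unpacking of exactly that argument.
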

\subsection{Area analysis} In this section, we introduce a $G_{c}$-equivariant version of the area comparison theorem, the replacement lemma and filigree type lemma. We now develop the area comparison argument which is a Lie group $G_{c}$-equivariant analog of Lemma 3 of \cite{meeks1982embedded} whose images on $M'$ are surfaces with boundary. First area comparison lemma follows from the local mean convexity of the tubular neighborhood of an orbit.

\begin{lem} [Area Comparison] \label{areacomparison}  Suppose $U \subset M$ to be a $G_{c}$-equivariant open mean convex set whose image $U'$ is diffeomorphic to $3$-ball satisfying the following conditions:
\begin{enumerate}
    \item $\{ U_{t} \}_{t \in [0,\theta r_{1}] }$ is a $1$-parameter family of $G_{c}$-equivariant open sets which is defined by $U_{t} =\{ x \, | \, d( x,U)<t \text{ and } x \in M \}$ where $\partial U_{t}$ has positive mean curvature for $0 \le t \le \theta r_{1}$.
    \item  The following isoperimetric inequality holds: For $t \ge \theta r_{1}/2$, if $\Gamma'$ is a Jordan curve or a Jordan arc in $\partial U'_{t})$ and its preimage $\pi^{-1}(\Gamma')$ divides $\partial U_{t}$ into two components $E_{1}$ and $E_{2}$, then there exists $k \in \{3, ..., n \}$ such that
    \begin{equation} \label{sliceisopineq}
        (\min(\mathcal{H}^{n}(E_{1}),\mathcal{H}^{n}(E_{2})))^{k-1} \le \beta \mathcal{H}^{n-1}(\pi^{-1}(\Gamma'))^{k}
    \end{equation}
    for some $\beta>0$ depending on $(M,g)$, $U$ and $\theta$.
\end{enumerate}
Also let
\begin{equation*}
    \delta_{2} = \min \{\delta_{1} , (4(1+ 4c_{1}))^{-1} \theta, (2n)^{-1} \beta^{- \frac{k-1}{n}} \theta^{\frac{k}{n}} \min (1, r_{1}^{-1}) \}.
\end{equation*}
Let $\Sigma'$ be a smooth embedded disk intersecting $\partial U'$ transversally, $\Sigma' \subset M' \setminus U'$, and $\Lambda'$ be a connected component of $\Sigma' \setminus U'$ with $\partial \Sigma' \cap \Lambda' = \emptyset$. Denote $\Sigma = \pi^{-1}(\Sigma')$ and $\Lambda = \pi^{-1}(\Lambda')$. Suppose 
\begin{equation} \label{smallareabound}
    \mathcal{H}^{n}(\Sigma)+\mathcal{H}^{n}(\partial U) \le \frac{1}{2^{n+2}} \delta_{2}^{n} r_{1}^{n}.
\end{equation}
Then there exists a $G_{c}$-equivariant hypersurface $F \subset \partial U$ whose image $F' := \pi(F) \subset \partial U'$ is a surface such that $\partial F' \cap M' = \Lambda' \cap \partial U' \cap M'$ and
\begin{equation} \label{areabound}
    \mathcal{H}^{n}(F) < \mathcal{H}^{n}(\Lambda \cap U_{\theta r_{1}}).
\end{equation}
Moreover, Then there exists unique compact set $K_{\Sigma}$ bounded by $F \cup \Lambda $ with
\begin{equation}\label{smallsetvolume}
     \mathcal{H}^{n+1}(K_{\Sigma}) < c_{1} \delta_{2}^{n+1} r_{1}^{n+1}.
\end{equation}
\begin{proof}
We can consider $F_{0}' \subset \partial U'$ such that $ \partial F_{0}' \setminus \partial M' = \partial \Lambda' \setminus \partial M'$ since $\partial U' \cap M'$ is simply connected. Let us take preimage $F_{0} = \pi^{-1} (F_{0}')$. Note that $F_{0}$ and $\Lambda$ are hypersurfaces with boundary on $M$. Then by (\ref{smallareabound}) and Lemma \ref{tubeisopineq}, we obtain a compact set $W$ with
\begin{equation*}
    \mathcal{H}^{n+1}(W) \le c_{1} \delta_{2}^{n+1} r_{1}^{n+1} \text{ and } \partial W = F_{0} \cup \Lambda.
\end{equation*}
Let us set $K_{\Sigma} = W \setminus U$m then we have \ref{smallsetvolume} by Lemma \ref{tubeisopineq} and (\ref{smallareabound}). For $t \ge 0$, we define
\begin{equation*}
    F_{t} = K_{\Sigma} \cap \partial U_{t} \text{ and } E_{t} = \Lambda \cap U_{t}.
\end{equation*}
By the mean convexity of $\partial U_{t}$ for $0 \le t \le \theta r_{1}$, we have $\Delta d(U,x) >0$ on $U_{\theta r_{1}}$. The divergence theorem implies the following for $0 \le t_{1} \le t_{2} \le \theta r_{1}$:
\begin{equation} \label{areadiff}
    \mathcal{H}^{n}(F_{t_{1}}) - \mathcal{H}^{n}(F_{t_{2}}) \le \int_{\overline{E_{t_{2}}}\setminus E_{t_{1}}} |\nabla d(U, \cdot) \cdot \nu| \le \mathcal{H}^{n}(E_{t_{2}}) - \mathcal{H}^{n}(E_{t_{1}}),
\end{equation}
where $\nu$ is the unit normal vector field of $\Lambda$ pointing out of $K_{\Sigma}$. We have (\ref{areadiff}) since $|\nabla d(U,\cdot)| \le 1$. Now we consider the case $|F_{\theta r_{1}}| \neq 0$. By following arguments of Lemma 3 in \cite{meeks1982embedded} with (\ref{smallsetvolume}) by coarea formula,
\begin{equation*}
\mathcal{H}^{n}(F_{t}) \le 4 c_{1} \theta^{-1} \delta_{2}^{n+1} r_{1}^{n}
\end{equation*}
for a subset of $[0,\theta r_{1}]$ whose Lebesgue measure is at least $3\theta r_{1}/4 $. Since we took $\delta_{2} < (16c_{1})^{-1} \theta$, we have the area bound of each slice from (\ref{smallareabound}) and (\ref{areadiff}) that
\begin{equation} \label{boundfromtheta}
    \mathcal{H}^{n}(F_{t}) \le \frac{1}{2} \delta_{2}^{n} r_{1}^{n}
\end{equation}
for all $t \in [0, 3\theta r_{1}/4]$. Since $U_{\frac{\theta r_{1}}{2}}$ contains a geodesic ball of radius $\theta r_{1}/2$, we have
\begin{equation} \label{containgeodball}
    \mathcal{H}^{n}(\partial U_{t}) \ge \mathcal{H}^{n+1}(U_{t})^{\frac{n}{n+1}} \ge \frac{1}{2^{n}} \theta^{n} r_{1}^{n}
\end{equation}
for $t \in [\theta r_{1}/2, \theta r_{1}]$ by isoperimetric inequality in $\mathbb{R}^{n+1}$. Since $\delta_{2} < \theta/4$, we have
\begin{equation} \label{comparisonFU}
    \mathcal{H}^{n}(F_{t}) \le \frac{1}{2} \mathcal{H}^{n}(\partial U_{t})
\end{equation}
for $t \in [\theta r_{1}/2, 3\theta r_{1}/4 ]$ by (\ref{boundfromtheta}) and (\ref{containgeodball}). We apply the isoperimetric inequality (\ref{sliceisopineq}) and coarea formula and obtain
\begin{equation}\label{sliceisopineqappl}
    \mathcal{H}^{n}(F_{t}) \le \beta \mathcal{H}^{n-1}(\partial F_{t})^{\frac{k}{k-1}} = \beta \mathcal{H}^{n-1}(\partial E_{t} \setminus \partial U')^{\frac{k}{k-1}} \le \beta \Big( \frac{d}{dt} \mathcal{H}^{n}(E_{t}) \Big)^{\frac{k}{k-1}}
\end{equation}
for $t \in [\theta r_{1}/2, 3\theta r_{1}/4 ]$ almost everywhere. By (\ref{areadiff}) and (\ref{sliceisopineqappl}), we have
\begin{equation} \label{areaode}
\mathcal{H}^{n}(F) - \mathcal{H}^{n}(E_{t}) \le \mathcal{H}^{n}(F_{t}) \le \beta \Big( \frac{d}{dt}(\mathcal{H}^{n}(F) - \mathcal{H}^{n}(E_{t})) \Big)^{\frac{k}{k-1}}
\end{equation}
for $t \in [\theta r_{1}/2, 3\theta r_{1}/4 ]$ almost everywhere. By integrating (\ref{areaode}) over $[\theta r_{1}/2, 3\theta r_{1}/4 ]$, we obtain
\begin{equation} \label{areaintegral}
    ((\mathcal{H}^{n}(F) - \mathcal{H}^{n}(E_{\theta r_{1}/2})))^{\frac{1}{k}} -  ((\mathcal{H}^{n}(F) - \mathcal{H}^{n}(E_{3 \theta r_{1}/4})))^{\frac{1}{k}} \ge \frac{1}{4k} \beta^{-\frac{k-1}{k}} \theta r_{1}.
\end{equation}
Suppose $\mathcal{H}^{n}(F) \ge \mathcal{H}^{n}(E_{3 \theta r_{1}/4})$. Then by (\ref{areaintegral}), we have
\begin{equation}
    (\mathcal{H}^{n}(F))^{\frac{1}{k}} \ge  ((\mathcal{H}^{n}(F) - \mathcal{H}^{n}(E_{\theta r_{1}/2}))^{\frac{1}{k}} \ge \frac{1}{4k} \beta^{-\frac{k-1}{k}} \theta r_{1}.
\end{equation}
However, this contradicts to our choice of $\delta_{2}$ because of the area bound (\ref{areabound}). Hence we have $\mathcal{H}^{n}(F) < \mathcal{H}^{n}(E_{3 \theta r_{1}/4}) < \mathcal{H}^{n}(\Lambda \cap U_{\theta r_{1}})$.
\end{proof}
\end{lem}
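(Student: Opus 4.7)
My plan is to produce $F$ by the following recipe: first construct any candidate filling $F_0' \subset \partial U'$ of $\partial \Lambda'$ on the boundary of $U'$, then compare its area to that of $\Lambda$ by pushing slices inward through the foliation $\{\partial U_t\}$ and exploiting mean convexity. Concretely, since $\partial U' \cap M'$ is homeomorphic to a $2$-disk (as $U'$ is a topological $3$-ball meeting $\partial M'$ in a disk), there is a surface $F_0' \subset \partial U'$ with $\partial F_0' \setminus \partial M' = \partial \Lambda' \setminus \partial M'$. Its preimage $F_0 = \pi^{-1}(F_0')$ together with $\Lambda$ forms a closed $G_c$-invariant hypersurface of small area, so Lemma~\ref{tubeisopineq} produces a compact $W$ with $\partial W = F_0 \cup \Lambda$ and $\mathcal{H}^{n+1}(W) \le c_1 \delta_2^{n+1} r_1^{n+1}$. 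Setting $K_\Sigma := W \setminus U$ gives the required compact region and the volume bound \eqref{smallsetvolume}.

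Next, I would slice $K_\Sigma$ by the foliation and run a comparison argument. For $t \in [0,\theta r_1]$, write $F_t := K_\Sigma \cap \partial U_t$ and $E_t := \Lambda \cap U_t$; by construction the natural choice is to take $F := F_0$ and bound $\mathcal{H}^n(F_t)$ for small $t$. Because $\partial U_t$ is mean-convex, $d(U,\cdot)$ satisfies $\Delta d \ge 0$ on $U_{\theta r_1}$, and the divergence theorem applied to $\nabla d$ on the region of $K_\Sigma$ between levels $t_1 < t_2$ yields the monotonicity
\[
\mathcal{H}^n(F_{t_1}) - \mathcal{H}^n(F_{t_2}) \le \mathcal{H}^n(E_{t_2}) - \mathcal{H}^n(E_{t_1}).
\]
Combining the coarea formula with the volume bound \eqref{smallsetvolume} forces $\mathcal{H}^n(F_t) \le 4 c_1 \theta^{-1} \delta_2^{n+1} r_1^{n}$ on a subset of $[0,\theta r_1]$ of measure at least $3\theta r_1/4$; using monotonicity and the small-area hypothesis \eqref{smallareabound} this propagates to $\mathcal{H}^n(F_t) \le \tfrac12 \delta_2^n r_1^n$ on the entire interval $[0, 3\theta r_1/4]$ once $\delta_2 \le (16c_1)^{-1}\theta$.

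The heart of the argument is then an ODE comparison for the shrinking excess $\mathcal{H}^n(F) - \mathcal{H}^n(E_t)$. For $t \in [\theta r_1/2, 3\theta r_1/4]$, a Euclidean isoperimetric bound (using that $U_t$ contains a geodesic ball of radius $\sim \theta r_1$) gives $\mathcal{H}^n(\partial U_t) \ge 2^{-n}\theta^n r_1^n$, so the slice $F_t$ is the smaller of the two regions cut by $\partial F_t$ in $\partial U_t$, and assumption \eqref{sliceisopineq} together with coarea yields
\[
\mathcal{H}^n(F_t) \le \beta \, \mathcal{H}^{n-1}(\partial F_t)^{\frac{k}{k-1}} \le \beta \left(\tfrac{d}{dt}\mathcal{H}^n(E_t)\right)^{\frac{k}{k-1}}.
\]
Combined with the monotonicity, $\mathcal{H}^n(F) - \mathcal{H}^n(E_t) \le \beta \bigl(\tfrac{d}{dt}(\mathcal{H}^n(F) - \mathcal{H}^n(E_t))\bigr)^{k/(k-1)}$. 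Integrating over $[\theta r_1/2, 3\theta r_1/4]$ produces $(\mathcal{H}^n(F))^{1/k} \gtrsim \beta^{-(k-1)/k} \theta r_1$ under the (contrapositive) assumption $\mathcal{H}^n(F) \ge \mathcal{H}^n(E_{3\theta r_1/4})$, which directly violates the choice $\delta_2 \le (2n)^{-1}\beta^{-(k-1)/n}\theta^{k/n}$ once we recall the area bound \eqref{smallareabound}. Hence $\mathcal{H}^n(F) < \mathcal{H}^n(E_{3\theta r_1/4}) \le \mathcal{H}^n(\Lambda \cap U_{\theta r_1})$, giving \eqref{areabound}.

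The main obstacle is calibrating the three competing smallness scales in $\delta_2$: the tube-isoperimetric threshold $\delta_1$, the mean-convexity radius $\theta$, and the exponent $k/(k-1)$ coming from the slice isoperimetric inequality. Each enters multiplicatively (through the coarea-based slice bound, the Euclidean lower bound on $|\partial U_t|$, and the integrated Gronwall-type estimate respectively), and the final contradiction only closes if the three are compatible. Keeping track of the correct exponent $k$, which varies with orbit type along $\partial U'$, is what prevents a cleaner Euclidean statement; using the global $\delta_2$ above absorbs this non-uniformity via the uniform finiteness of orbit types guaranteed by the slice theorem (Theorem~\ref{slicethm}).
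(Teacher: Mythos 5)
Your proposal is correct and follows essentially the same route as the paper's proof: the same construction of $F_0'$ via simple connectedness of $\partial U' \cap M'$, the same application of the tube isoperimetric inequality to produce $W$ and $K_\Sigma$, the same divergence-theorem monotonicity between $F_t$ and $E_t$, and the same coarea/slice-isoperimetric ODE comparison closed by the choice of $\delta_2$. The only difference is cosmetic — you make explicit that $F = F_0$, which the paper leaves implicit.
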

\begin{rmk}
We will consider $U$ to be a small tubular neighborhood of $G_{c} \cdot x$ and determine $k+1$ to be a dimension of slice of $G_{c}(x)$ so $k = n- \dim (G_{c}(x))$. Then (\ref{sliceisopineq}) is an isoperimetric inequality with symmetry from the slice theorem (Theorem \ref{slicethm}).
\end{rmk}
We now obtain the replacement lemma in our setting and the idea of Theorem 2 in \cite{almgren1979existence} applies here (See Lemma 4.4 in \cite{jost1986existence} for the statement for the boundary version). Note that the replacement is performed on $M'$ and we focus on the corresponding area in $M$ along the replacement. Let $\mathcal{M}(0)$ be the set of all embedded genus zero surfaces $D'$ (which may not be connected) contained in $M'$. We also denote $\mathcal{M}(0,m)$ as the set of surfaces whose number of boundary components is less than equal to $m$ among surfaces in $\mathcal{M}(0)$.
\begin{lem}  [Replacement Lemma] \label{replacement} Suppose $U$ and $\Sigma$ satisfies the hypotheses in Lemma \ref{areacomparison}. Let $\theta>0$ and $\Sigma' \in \mathcal{M}(0,m)$ intersects $U'$ transversally. Here we define $U_{\theta} =\{ x \, | \, d( x,\partial U) \ge \theta \text{ and } x \in U \}$ and $U'_{\theta} = \pi(U_{\theta})$. Moreover, suppose $\partial \Sigma' \cap Int(M')$ is not contained in any set $\pi(K_{\Sigma})$. Then there exists $\tilde{\Sigma}' \in \mathcal{M}(0,m')$ with $m' \le m$ and with $\tilde{\Sigma} = \pi^{-1}(\tilde{\Sigma}')$, such that
\begin{enumerate}
    \item $\partial \tilde{\Sigma}' \cap Int (M') = \partial \Sigma' \cap Int(M')$, $ \tilde{\Sigma}' \setminus U' \subseteq \Sigma' \setminus U'$ and $\tilde{\Sigma}' \cap U'_{\theta} \subseteq \Sigma' \cap U'_{\theta}$;
    \item $\tilde{\Sigma}'$ intersects $U'$ transversally;
    \item $\mathcal{H}^{n}(\tilde{\Sigma}) + \mathcal{H}^{n}((\Sigma \setminus \tilde{\Sigma}) \cap U_{\theta}) \le \mathcal{H}^{n}(\Sigma)$;
    \item $\tilde{\Sigma}' \cap \overline{U'} = \cup_{j=1}^{l} N_{j}' $ where $N_{j}' \in \mathcal{M}(0)$.
\end{enumerate}
Moreover, if
\begin{equation} \label{genuszeroareabound}
    \mathcal{H}^{n}(\Sigma) \le \mathcal{H}^{n}(P)+ \epsilon 
\end{equation}
for every $P$ with $P':= \pi(P) \in \mathcal{M}(0,m)$ and $\partial \Sigma' \cap \partial_{rel}U' = \partial P' \cap \partial_{rel}U'$, then there exists $\epsilon_{1}, ..., \epsilon_{l}>0$ with $\sum_{j=1}^{l} \epsilon_{j} \le \epsilon$ and $N_{j} := \pi^{-1} (N_{j}')$ satisfies 
\begin{equation} \label{genuszerodecomp} 
    \mathcal{H}^{n}(N_{j}) \le \mathcal{H}^{n}(P_{j}) + \epsilon_{j}
\end{equation}
for any $P_{j}$ such that $P_{j}' := \pi(P_{j}) \in \mathcal{M}(0)$ with $\partial N_{j}' \cap Int(M') = \partial P_{j}' \cap Int(M')$ and $P_{j}'$ having at most as many boundary components on $\partial M'$ as $N_{j}'$. If (\ref{genuszeroareabound}) holds for every $P' \in \mathcal{M}(0)$, then we allow $P_{j}'$ to have arbitrary number of boundary components on $\partial M'$ in (\ref{genuszerodecomp}).
\begin{proof} The replacement argument in Theorem 2 in \cite{almgren1979existence} directly works here. Moreover, since the number of boundary components in $\partial M'$ does not increase along the replacement procedure in Theorem 2 in \cite{almgren1979existence}, we also obtain the statement on the number of boundary components by applying arguments of Jost \cite{jost1986existence}.
\end{proof}
\end{lem}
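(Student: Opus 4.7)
The plan is to adapt the replacement argument of Almgren--Simon (Theorem~2 of \cite{almgren1979existence}) to the boundary setting following Jost \cite{jost1986existence}, carried out equivariantly by performing all surgeries on $\Sigma' \subset M'$ while measuring areas upstairs on $M$ via $\pi$. Because the problem is stated on $M'$ and the metric $g'$ degenerates on $\partial M'$, the usual mean convexity argument must be replaced by the slice-isoperimetric inequality (\ref{sliceisopineq}), which is exactly the hypothesis built into Lemma~\ref{areacomparison}.

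First I would stratify $\Sigma' \cap \overline{U'}$ into connected components and identify those whose preimage area is below the threshold $\tfrac{1}{2^{n+2}} \delta_2^n r_1^n$ of Lemma~\ref{areacomparison}. For each such small component $\Lambda'$, the Area Comparison Lemma yields a strictly area-decreasing replacement $F' \subset \partial U'$ with $\partial F' \cap \mathrm{Int}(M') = \partial \Lambda' \cap \mathrm{Int}(M')$. Pushing $F'$ slightly inside $U'$ (away from $\partial U$ by an amount much smaller than $\theta$) and smoothing to restore transversality gives a new embedded surface $\Sigma'_1$ with $\Sigma_1 \setminus U_\theta \subseteq \Sigma \setminus U_\theta$ and strictly smaller preimage area. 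Iterating this surgery on all subsequent small components yields a sequence $\Sigma'_k$ of embedded surfaces. Since each surgery is a cut-and-paste operation that caps off a piece of $\Sigma'$ with a disk/half-disk in $\partial U'$, the genus can only go down and the topological complexity (counted by Euler characteristic plus number of boundary components) strictly decreases at each step; combined with the monotonic area decrease, this forces termination in finitely many steps at some $\tilde{\Sigma}' := \Sigma'_l$. Properties (1), (2), and (3) of the lemma are then immediate from the construction.

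To produce the component decomposition (4) with genus-zero pieces in $\mathcal{M}(0)$, I would note that at termination every connected component $N_j'$ of $\tilde{\Sigma}' \cap \overline{U'}$ either has preimage area above the threshold, or cannot be further surgered because its boundary curves on $\partial U'$ are already separated by the prior steps. In either case, because $\Sigma' \in \mathcal{M}(0,m)$ and each surgery preserves the genus-zero property component-wise, the resulting $N_j'$ lie in $\mathcal{M}(0)$. For the boundary component count $m' \le m$, I would follow Jost's observation that replacing a strip $[0,1]\times(0,1)$ by two half-disks either preserves or decreases the number of arcs on $\partial M'$, and that replacing an annulus by two disks does not create new components on $\partial M'$; performing surgeries in an order that processes the curves closest to $\partial U'$ first keeps the count monotone.

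For the final near-minimality assertion, I would argue by contradiction. Suppose some $N_{j_0}$ admits a competitor $P_{j_0}$ with $\pi(P_{j_0}) \in \mathcal{M}(0)$, $\partial N'_{j_0} \cap \mathrm{Int}(M') = \partial P'_{j_0} \cap \mathrm{Int}(M')$, at most as many boundary components on $\partial M'$, and $\mathcal{H}^n(N_{j_0}) > \mathcal{H}^n(P_{j_0}) + \epsilon_{j_0}$ for every choice of $\epsilon_{j_0}$ in a candidate partition $\sum_j \epsilon_j \le \epsilon$. Splicing $P'_{j_0}$ into $\tilde{\Sigma}'$ in place of $N'_{j_0}$ (and splicing optimal competitors for the remaining $N'_{j_i}$) produces a surface $P$ with $\pi(P) \in \mathcal{M}(0,m)$, $\partial P' \cap \partial_{\mathrm{rel}} U' = \partial \Sigma' \cap \partial_{\mathrm{rel}} U'$, and $\mathcal{H}^n(P) < \mathcal{H}^n(\tilde{\Sigma}) - \epsilon \le \mathcal{H}^n(\Sigma) - \epsilon$, contradicting (\ref{genuszeroareabound}). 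The partition of $\epsilon$ into the $\epsilon_j$ is determined by the area deficits on each component. The main obstacle I anticipate is verifying that the boundary-component count on $\partial M'$ is controlled along surgeries in the degenerate-metric setting; this is precisely where the slice isoperimetric inequality in Lemma~\ref{areacomparison}, and the careful ordering of surgeries inherited from Jost's treatment, are indispensable substitutes for the mean convexity used by Meeks--Simon--Yau.
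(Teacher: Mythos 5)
Your proposal is correct and follows essentially the same route as the paper, which simply invokes the replacement argument of Theorem 2 in Almgren--Simon together with Jost's boundary version to control the number of boundary components on $\partial M'$; your write-up fills in the standard details (surgery via the Area Comparison Lemma, termination, and the splicing argument for the component-wise near-minimality (\ref{genuszerodecomp})) that the paper leaves to those references.
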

Now we have a slice filigree lemma which is a Lie group equivariant version of Lemma 3 in \cite{almgren1979existence}. We denote a disk bundle in Theorem \ref{slicethm} by $E_{r}^{G_{c}(x)}$ generated on $G_{c}(x)$ with a radius $r$. Denote a disk on $E_{r}^{G_{c}(x)}$ by $D_{r}^{y}$ which is a fiber at $y \in G_{c}(x)$ and its image slice by $S_{r}^{y} := \exp_{y}(D_{r}^{y}) \subseteq M \cap B_{r}^{G_{c}}(x)$. We prove the area bound on the slice since we will use this bound to prove the rectifiability later.
\begin{lem}[Slice filigree Lemma] \label{slicefiligree} For $x \in M$ and small $\rho>0$, suppose $S_{\rho}^{x} \subset M^{n+1}$ to be a $(k+1)$-dimensional slice. Suppose $\{ Y_{t} \}_{t \in [0,1]} \subset B_{\rho}^{G_{c}}(x)$ to be a family of $G_{c}$-equivariant open mean convex set satisfying the conditions in Lemma \ref{areacomparison} with $Y_{t} = \{ x \in M \, | \, f(x)<t \}$ for a nonnegative $G_{c}$-equivariant $C^{1}$-function $f$ on $M$ and suppose $\sup_{\overline{Y_{1}} \setminus \overline{Y_{0}}} |Df| \le c_{1}$ for some constant $c_{1}>0$.

Suppose also that there is a constant $c_{2} < \infty$ such that whenever $\pi(\Gamma_{1})$ is a closed Jordan curve or Jordan arcs with endpoints on $\partial M' \cap \pi(\partial Y_{t})$ then there is a region $E' \subset (M' \cap \pi(\partial Y_{t}))$ with $\partial E' \setminus \partial M' = \pi(\Gamma_{1})$ such that
    \begin{equation} \label{sliceisopineq2}
        \mathcal{H}^{n}(E) \le c_{2} \mathcal{H}^{n-1}(\Gamma_{1})^{\frac{k}{k-1}},
    \end{equation}
where $E := \pi^{-1}E'$. Suppose $\Sigma^{n} \subseteq M^{n+1}$ is a $G_{c}$-equivariant hypersurface and $\Sigma':= \pi(\Sigma) \in \mathcal{M}(0,m)$ whose relative boundary is $\Gamma' = \Sigma' \cap \pi(\partial B_{r}^{\overline{g}} (x'))$ is a union of closed Jordan curves or Jordan arcs with endpoints on $\partial M'$, and suppose $\partial \Sigma' \cap Int(M')$ is on outside of $\pi(Y_{t})$ for $t \in [0,1]$. We now assume that for some $\epsilon>0$,
\begin{equation} \label{areatightening}
    \mathcal{H}^{n}(\Sigma) \le \mathcal{H}^{n}(\pi^{-1}(\Sigma'')) +\epsilon \text{ for all } \Sigma'' \in \mathcal{M}(0,m')
\end{equation}
for $m' \le m$ with $\partial \Sigma'  \setminus \partial M' = \partial \Sigma'' \setminus \partial M'$. Then
\begin{equation} \label{filigreeestimate}
    \mathcal{H}^{k}(\Sigma \cap (S^{x}_{\rho} \cap Y_{t})) \le 4 \mathcal{H}^{n-k}(G_{c}(x))^{-1} \epsilon
\end{equation}
for $t \in [0,1-2^{\frac{k-1}{k}}k c_{1} c_{2}^{\frac{k-1}{k}} \mathcal{H}^{n-k}(G_{c}(x))^{\frac{1}{k}}\mathcal{H}^{k}(\Sigma \cap (S^{x}_{\rho} \cap Y_{1}))^{\frac{1}{k}}]$.
\begin{proof}

 By Sard's theorem, $\Sigma \cap S_{\rho}^{x}$ intersects $Y_{t} \cap S_{\rho}^{x}$ transversally for almost all $t \in [0,1]$. Notice that for almost all $t \in [0,1]$, $\Sigma' \cap \pi(\partial Y_{t})$ on $M'$ consists of Jordan curves and Jordan arcs connecting two points on $\partial M'$, and the preimage of each curve separates $\partial (Y_{t} \cap S_{\rho}^{x})$ to two connected regions and we can apply an isoperimetric inequality (\ref{sliceisopineq2}).

We apply Lemma \ref{replacement} with $Y_{t}':=\pi(Y_{t})$ in place of $U'$ and obtain $\tilde{\Sigma}'$ with its preimage $\tilde{\Sigma}= \pi^{-1}(\tilde{\Sigma}')$ such that $\mathcal{H}^{n}(\tilde{\Sigma}) \le \mathcal{H}^{n}(\Sigma)$,  $\partial \tilde{\Sigma}' \cap Int(M') =  \partial \Sigma' \cap Int(M')$, and $\tilde{\Sigma}' \cap \overline{Y_{t}'} = \cup_{j=1}^{l} N_{j}' $ where $N_{j}' \in \mathcal{M}(0)$ and $N_{j} := \pi^{-1} (N_{j}')$ satisfies 
\begin{equation} \label{pieceineq}
    \mathcal{H}^{n}(N_{j}) \le \mathcal{H}^{n}(P_{j}) + \epsilon_{j}
\end{equation}
for any $P_{j}$ such that $P_{j}' := \pi(P_{j}) \in \mathcal{M}(0)$ with $\partial N_{j}' \cap  Int(M') = \partial P_{j}' \cap  Int(M')$ and $P_{j}'$ having at most as many boundary components on $\partial M'$ as $N_{j}'$, where $\epsilon_{1}, ..., \epsilon_{l}>0$ with $\sum_{j=1}^{l} \epsilon_{j} \le \epsilon$.

Let $E^{j}_{t} = N_{j} \cap S^{x}_{\rho}$ and $\Gamma^{j}_{t} = \pi^{-1}(\partial N_{j}' \setminus \partial M') \cap S^{x}_{\rho}$. Then by (\ref{sliceisopineq2}), (\ref{pieceineq}), Theorem \ref{slicethm} and considering that $B_{\rho}^{G}(x)$ is a small $G_{c}$-tubular neighborhood of $x$, we have
\begin{equation} \label{orbitineq1}
    \mathcal{H}^{k}(E^{j}_{t}) \le 2 c_{2} \mathcal{H}^{n-k}(G_{c}(x))^{\frac{1}{k-1}} \mathcal{H}^{k-1}(\Gamma^{j}_{t})^{\frac{k}{k-1}} +   \mathcal{H}^{n-k}(G_{c}(x))^{-1} \epsilon_{j}.
\end{equation}
By summing over $j$, we obtain
\begin{equation} \label{orbitineq2}
    \mathcal{H}^{k}(\tilde{\Sigma} \cap (Y_{t} \cap S^{x}_{\rho})) \le  2 c_{2}\mathcal{H}^{n-k}(G_{c}(x))^{\frac{1}{k-1}} \mathcal{H}^{k-1}(\tilde{\Sigma} \cap  (\partial Y_{t} \cap S^{x}_{\rho}))^{\frac{k}{k-1}} + \mathcal{H}^{n-k}(G_{c}(x))^{-1} \epsilon.
\end{equation}
Moreover, by combining (\ref{areatightening}) and (\ref{orbitineq2}), we have
\begin{align}
   \nonumber \mathcal{H}^{k}(\Sigma \cap (Y_{t} \cap S^{x}_{\rho})) &\le 2 c_{2}\mathcal{H}^{n-k}(G_{c}(x))^{\frac{1}{k-1}} \mathcal{H}^{k-1}(\tilde{\Sigma} \cap  (\partial Y_{t} \cap S^{x}_{\rho}))^{\frac{k}{k-1}} + 2 \mathcal{H}^{n-k}(G_{c}(x))^{-1} \epsilon \\ \label{orbitineq3}
   &\le  2 c_{2}\mathcal{H}^{n-k}(G_{c}(x))^{\frac{1}{k-1}} \mathcal{H}^{k-1}(\Sigma \cap  (\partial Y_{t} \cap S^{x}_{\rho}))^{\frac{k}{k-1}} + 2 \mathcal{H}^{n-k}(G_{c}(x))^{-1} \epsilon,
\end{align}
where we obtain (\ref{orbitineq3}) by $\partial \tilde{\Sigma}' \setminus Y_{t}' \subseteq  \partial \Sigma' \setminus Y_{t}'$. Take $g(t) = \mathcal{H}^{k}(\Sigma \cap (Y_{t} \cap S^{x}_{\rho})) - 4 \mathcal{H}^{n-k}(G_{c}(x))^{-1} \epsilon$ and $t_{0} := \inf \{t \, | \, g(t) \ge 0 \}$ then by co-area formula we have
\begin{align}
    \nonumber g(t) &\le 2 c_{2} \mathcal{H}^{n-k}(G_{c}(x))^{\frac{1}{k-1}} \mathcal{H}^{k-1}(\Sigma \cap  (\partial Y_{t} \cap S^{x}_{\rho}))^{\frac{k}{k-1}} \\ \label{sliceode}
    &\le 2 c_{1}^{\frac{k}{k-1}} c_{2} \mathcal{H}^{n-k}(G_{c}(x))^{\frac{1}{k-1}} g'(t)^{\frac{k}{k-1}}.
\end{align} By integrating (\ref{sliceode}) from $t_{0}$ to $1$, we have
\begin{equation*}
    \frac{1-t_{0}}{2^{\frac{k-1}{k}}k c_{1} c_{2}^{\frac{k-1}{k}} \mathcal{H}^{n-k}(G_{c}(x))^{\frac{1}{k}}} \le \sqrt[k]{g(1)}-\sqrt[k]{g(t_{0})}
\end{equation*}and conclude with (\ref{filigreeestimate}).
\end{proof}
\end{lem}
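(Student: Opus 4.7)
The plan is to combine the $G_c$-equivariant replacement from Lemma \ref{replacement} with the slice-wise isoperimetric hypothesis (\ref{sliceisopineq2}) to reduce to a differential inequality for the area of $\Sigma$ inside the level-set family $Y_t \cap S_\rho^x$, which can then be integrated. This is a $G_c$-equivariant slicewise analogue of Almgren--Simon's original filigree estimate, with the isoperimetric constant amplified by the orbit volume $\mathcal{H}^{n-k}(G_c(x))$ coming from Theorem \ref{slicethm}.

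First I would fix a generic $t$ (which is a.e.\ $t$ by Sard's theorem), so that $\Sigma'\cap \pi(\partial Y_t)$ is transverse and consists of disjoint Jordan curves and Jordan arcs with endpoints on $\partial M'$, each of whose preimage separates $\partial(Y_t \cap S_\rho^x)$ into two pieces to which (\ref{sliceisopineq2}) applies. Then I would apply Lemma \ref{replacement} with $Y_t'$ in place of $U'$ to produce a replacement $\tilde{\Sigma}'\in \mathcal{M}(0,m')$ with $m'\le m$, whose intersection with $\overline{Y_t'}$ decomposes as a disjoint union of genus-zero pieces $N_1',\dots,N_l'$ that are nearly minimizing in the sense of (\ref{genuszerodecomp}), with total error $\sum \epsilon_j \le \epsilon$ inherited from (\ref{areatightening}).

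Next I would intersect each $N_j$ with the slice $S_\rho^x$. Since $\pi^{-1}(\partial N_j'\setminus \partial M')\cap S_\rho^x$ bounds a region inside $\partial Y_t \cap S_\rho^x$, the near-minimality of $N_j$ combined with (\ref{sliceisopineq2}) and Theorem \ref{slicethm} yields the slicewise estimate
\begin{equation*}
\mathcal{H}^k(N_j \cap S_\rho^x) \le 2 c_2 \mathcal{H}^{n-k}(G_c(x))^{1/(k-1)}\, \mathcal{H}^{k-1}(\pi^{-1}(\partial N_j'\setminus \partial M')\cap S_\rho^x)^{k/(k-1)} + \mathcal{H}^{n-k}(G_c(x))^{-1}\epsilon_j.
\end{equation*}
Summing over $j$, and using $\mathcal{H}^n(\Sigma)\le \mathcal{H}^n(\tilde{\Sigma})+\epsilon$ together with $\partial\tilde\Sigma' \setminus Y_t' \subseteq \partial\Sigma'\setminus Y_t'$, this transfers to a bound on $\Sigma \cap (Y_t\cap S_\rho^x)$ at the cost of doubling the error term. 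Finally, setting $g(t) := \mathcal{H}^k(\Sigma\cap(Y_t\cap S_\rho^x)) - 4\mathcal{H}^{n-k}(G_c(x))^{-1}\epsilon$ and $t_0 := \inf\{t : g(t)\ge 0\}$, the coarea formula and the bound $|Df|\le c_1$ give $\mathcal{H}^{k-1}(\Sigma\cap (\partial Y_t\cap S_\rho^x)) \le c_1 g'(t)$ a.e., so the summed estimate becomes the differential inequality
\begin{equation*}
g(t) \le 2 c_1^{k/(k-1)} c_2\, \mathcal{H}^{n-k}(G_c(x))^{1/(k-1)}\, g'(t)^{k/(k-1)}.
\end{equation*}
Integrating $g^{-(k-1)/k} g' \ge (2c_1^{k/(k-1)} c_2 \mathcal{H}^{n-k}(G_c(x))^{1/(k-1)})^{-(k-1)/k}$ from $t_0$ to $1$ produces the desired upper bound on the range of $t$ for which $g(t)\le 0$, which is exactly (\ref{filigreeestimate}).

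The hardest step should be the transfer of the slice estimate from $\tilde{\Sigma}$ back to $\Sigma$: one must track that the genus-zero pieces $N_j'$ produced by the replacement do not acquire additional boundary components on $\partial M'$ beyond those of $\Sigma'$, so that the near-minimality hypothesis (\ref{areatightening}), which is only valid in the restricted class $\mathcal{M}(0,m')$ with $m'\le m$, can legitimately be invoked piecewise through (\ref{pieceineq}). This is exactly the content built into Lemma \ref{replacement}, and the remaining work is a careful bookkeeping argument to sum the resulting per-piece $\epsilon_j$ into a single $\epsilon$ on the right-hand side.
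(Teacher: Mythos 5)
Your proposal is correct and follows essentially the same route as the paper's proof: transversality via Sard, the replacement lemma applied with $Y_t'$ in place of $U'$ to produce nearly-minimizing genus-zero pieces, the per-piece slice isoperimetric estimate summed over $j$, transfer back to $\Sigma$ via (\ref{areatightening}) and $\partial\tilde\Sigma'\setminus Y_t'\subseteq\partial\Sigma'\setminus Y_t'$, and integration of the resulting differential inequality $g(t)\le C\,g'(t)^{k/(k-1)}$ from $t_0$ to $1$. No substantive differences.
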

\begin{rmk}
$c_{2}$ in Lemma \ref{slicefiligree} depends on the orbit size $\mathcal{H}^{n-k}(G_{c}(x))$ in our applications. In spirit of the slice theorem and the isoperimetric inequality on the round sphere, for instance, we can take $c_{2}$ to satisfy $c_{2} (\mathcal{H}^{n-k}(G_{c}(x)))^{\frac{1}{k-1}} \le 2 IS_{k}$ for small $\rho$ where $IS_{k}$ is a constant of an isoperimetric inequality on $\mathbb{S}^{k}$.
\end{rmk}
\subsection{Regularity on singular orbits} We now discuss the regularity of an isotopy minimizer at 
 the preimage of boundary points on $M$ which correspond to singular orbits on $M$. We obtain a minimizing sequence $\{ \Sigma_{k}' \}$ of the minimization problem $(\Sigma', \mathcal{I}(U'))$ which of $\Sigma_{k}'$ intersects $\partial M'$ transversally for each $k$. By virtue of Theorem \ref{genuszero}, it is enough to consider minimizing sequences of genus $0$ surfaces on $M'$. We first prove the rectifiability of limit varifolds.

\begin{lem}[Rectifiability] \label{rectifiability} Let $D_{l}' \in \mathcal{M}(0)$ be a minimizing sequence and suppose the pullback sequence of $n$-varifolds $\{V_{l} \}_{l \in \mathbb{N}}$ where $\pi_{\sharp}V_{l} = D_{l}'$ converges to the varifold limit $V$ in $V_{n}(M)$. Then $V$ is rectifiable.
\begin{proof}
It is enough to consider a small neighborhood of each point on $M$. The rectifiability on small neighborhoods of points on principal orbits directly comes out from arguments of Theorem 2 in \cite{almgren1979existence}. 

We now fix $x \in M$ which is on a singular orbit and prove the rectifiability on a small neighborhood $B_{\rho}^{G_{c}}(x) \subseteq M$ and consider an associated slice $S$ where $B_{\rho}^{G_{c}}(x) = G_{c} \times _{(G_{c})_{x}} S$. Since $V_{l}$ is a $G_{c}$-equivariant $n$-varifold, we take an induced $k$-varifolds $V_{l}^{S}$ where $spt(V_{l}^{S}) \subset S$ and $spt(V_{l}^{S}) \rightarrow spt(V_{l}) \rightarrow G_{c}(x)$ is a subbundle of $B_{\rho}^{G_{c}}(x)$ as a disk bundle in Theorem \ref{slicethm}. Moreover, $k$-Grassmannian measure of $V_{l}^{S}$ is induced by a projection of $n$-Grassmannian measure from $T_{y}M$ to $T_{y}(S)$ for $y \in spt (V_{l}^{S})$. Also take $V^{S}$ correspondingly as a varifold whose support is a projection of the support of the limit varifold $V$.

We will show that there exists a constant $c>0$ such that 
\begin{equation} \label{densitybound}
\theta^{k}(V^{S},y) \ge c.    
\end{equation}
Suppose $B^{G_{c}}_{s}(x)$ satisfies the condition (\ref{areatightening}) in Lemma \ref{slicefiligree} for $0<s<\rho$, and $|V_{l}| \le |V| + \epsilon_{l}$ and $\epsilon_{l} \rightarrow 0$ as $l$ goes to infinity. By the area formula, we have
\begin{equation}
    |V_{l}^{S}| \le |V^{S}| + 2\mathcal{H}^{n-k}(G_{c}(x))^{-1} \epsilon_{l}.
\end{equation}
Since (\ref{areatightening}) follows from the area formula on $G_{c}$-orbit, we apply the slice filigree lemma (Lemma \ref{slicefiligree}) with $f(y) = d_{S}(y,x) / \rho$ where $y \in S$ and $d_{S}$ is a distance function on $S$, $c_{1} = 1/\rho$, and $Y_{t} = B_{x}^{G_{c}}(t \rho)$ and there exists corresponding $c_{2}$ for cylindrical bundle over $G_{c}(x)$. Let us put $c' = 2^{\frac{k-1}{k}}k  c_{2}^{\frac{k-1}{k}} \mathcal{H}^{n-k}(G_{c}(x))^{\frac{1}{k}}$. Suppose there exists a subsequence $\{ l' \} \subset \{ l \}$ with $\mathcal{H}^{k}(V_{l'}^{S}) \le (1/2c')^{k}\rho^{k}$, then by applying Lemma \ref{slicefiligree} we have $|V^{S} \mres B_{\rho/2}^{G}(x)|=0$. This contradicts to $x \in spt(V^{S}) \subset spt(V)$. Hence for all sufficiently large $l'$ we have
\begin{equation*}
|V_{l'}^{S}| \ge (1/2c')^{k}\rho^{k}
\end{equation*}
and by taking $c= k^{-1} \omega_{k}^{-1} (1/2c')^{k}$ we obtain (\ref{densitybound}). By Allard's rectifiability theorem in 5.5 of \cite{allard1972first}, we have the rectifiability of $V_{S}$ on the neighborhood of $x$ on $S$. By the $G_{c}$-equivariant structure, we directly obtain the rectifiability of $V$ on a small neighborhood of $x$ on $M$. Since we can apply this for finite number of strata and $M$ is compact, we have the global rectifiability of $V$.
\end{proof}
\end{lem}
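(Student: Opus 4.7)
The plan is to establish rectifiability locally, splitting into cases according to whether a point $x \in M$ lies on a principal orbit or on a singular orbit. Since rectifiability is a local property and $M$ is compact (with only finitely many orbit types by the slice theorem), it suffices to verify it in a small $G_c$-tubular neighborhood of each $x$. On a principal orbit, the $G_c$-action is locally free up to the isotropy group $H$, and by Theorem \ref{slicethm} the neighborhood splits as a twisted product of the orbit with a Euclidean slice. In such a product chart the minimization problem reduces, via the area formula on orbits, to the classical isotopy minimization of hypersurfaces with no singular behavior, and rectifiability of the limit varifold follows from the argument of Almgren--Simon \cite{almgren1979existence} (Theorem 2 there).

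The core of the proof is the singular orbit case. Fixing such an $x$, I would invoke the slice theorem to write $B_\rho^{G_c}(x) \cong G_c \times_{(G_c)_x} S$, where $S$ is a slice of dimension $k+1 = n+1 - \dim G_c(x)$, and introduce an associated sequence of sliced $k$-varifolds $V_l^S$ whose support and Grassmannian measure are induced by intersecting and projecting $V_l$ onto $S$. By the $G_c$-equivariance of $V_l$ and the area formula on orbits, rectifiability of $V$ in $B_\rho^{G_c}(x)$ is equivalent to rectifiability of $V^S$ on $S$, and one obtains a comparison of the form $|V_l^S| \le |V^S| + C\,\epsilon_l$ where $\epsilon_l = |V_l| - |V| \to 0$.

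To apply Allard's rectifiability theorem (Section 5.5 of \cite{allard1972first}), I would establish a uniform lower density bound $\theta^k(V^S, y) \ge c > 0$ for every $y \in \mathrm{spt}(V^S)$, by contradiction using the slice filigree lemma (Lemma \ref{slicefiligree}) with $f(y) = d_S(y,x)/\rho$, $c_1 = 1/\rho$, and $Y_t = B_x^{G_c}(t\rho)$. Concretely, if some subsequence $\{l'\}$ had slice mass $\mathcal{H}^k(V_{l'}^S) \le (1/2c')^k \rho^k$ for the constant $c' = 2^{(k-1)/k} k\, c_2^{(k-1)/k} \mathcal{H}^{n-k}(G_c(x))^{1/k}$ determined by the filigree estimate, then the filigree bound (\ref{filigreeestimate}) would force $|V^S|$ to vanish on a smaller ball $B_{\rho/2}^{G_c}(x) \cap S$, contradicting $y \in \mathrm{spt}(V^S)$. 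The density bound with $c = k^{-1}\omega_k^{-1}(1/2c')^k$ then triggers Allard's theorem, giving rectifiability of $V^S$ near $y$, which lifts back to rectifiability of $V$ on $B_\rho^{G_c}(x)$ via the $G_c$-fibration, and a finite covering by orbit-type charts yields global rectifiability.

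The main obstacle is the transfer of the almost-minimizing property of $V_l$ on $M$ to a usable almost-minimizing inequality for the slice varifold $V_l^S$ on $S$ that is compatible with the hypothesis (\ref{areatightening}) of the slice filigree lemma, while simultaneously respecting the transverse boundary condition on $\partial M'$ and the degenerate metric $g'$ induced by the volume function. Once this reduction is carried out cleanly via the area formula on orbits and the genus zero reduction of Theorem \ref{genuszero} (so that the comparison class $\mathcal{M}(0,m)$ is available), the filigree estimate provides the quantitative density lower bound essentially for free, and the rest is standard geometric measure theory.
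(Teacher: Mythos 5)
Your proposal is correct and follows essentially the same route as the paper: localize via the slice theorem, handle principal orbits by Almgren--Simon, and on singular orbits derive a uniform lower density bound for the sliced $k$-varifold by contradiction using the slice filigree lemma (with the same choices of $f$, $c_1$, $Y_t$, and the same constants $c'$ and $c$), then conclude with Allard's rectifiability theorem and finiteness of orbit strata. The ``main obstacle'' you flag --- transferring the almost-minimizing hypothesis (\ref{areatightening}) to the slice --- is resolved in the paper exactly as you suggest, by the area formula on $G_c$-orbits.
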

We prove the regularity theorem assuming that every tangent cone on points on singular orbits is a hyperplane first.

\begin{thm} \label{firstcontireg}Suppose $\{N_{l}' \} \in \mathcal{M}(0)$ to be a minimizing sequence and the pullback sequence of $n$-varifolds $\{V_{l} \}_{l \in \mathbb{N}}$ where $\pi_{\sharp}V_{l} = N_{l}'$ converges to the varifold limit $V$ in $V_{n}(M)$. Then $V$ is a stationary varifold with the property that if $x \in spt(V)$ and suppose $V$ has a tangent cone $C$ at $x$ with $spt(C) \subseteq H$, where $H$ is a hyperplane, then there is a $\rho >0$ such that $spt(V \mres B^{G_{c}}_{\rho}(x))$ is an integer multiple of smooth $G_{c}$-equivariant minimal hypersurface containing $x$.
\begin{proof}
By Lemma \ref{rectifiability}, $V$ is a stationary and rectifiable manifold with a positive density lower bound for all $x \in spt(V)$. Let us consider $\pi_{\sharp} H$ as a blow-up of an image of $spt(V)$ by projection map $\pi$ at $x$. Since $H$ divides the neighborhood by two components projects to a cone on a conical orbifold, we get topological triviality of $\pi_{\sharp}H$. Hence $\pi_{\sharp}H$ is a topological half-plane whose boundary is on $\partial M'$, where $H$ is a $G_{c}$-equivariant hyperplane.

We denote $D_{\rho} = \pi_{\sharp}H \cap B_{\rho}(0)$ and $K_{\rho, \sigma}$ by $((D_{\rho} \setminus \partial_{rel} D_{\rho}) \times (-\sigma,\sigma)) \cap \pi_{\sharp} T_{x} M$, where the latter component of $D_{\rho} \times (-\sigma,\sigma)$ is given by a local coordinate on $D_{\rho}$. We denote the scaling $\mu_{r}$ on $T_{x}M$ given by $\mu_{r} = ry$ and the diffeomorphism induced by the exponential map $\exp_{x}$ by $\phi_{x}$. Then we have
\begin{equation*}
(\mu_{r_{k}} \circ \phi_{x}^{-1})_{\sharp} V \rightharpoonup H,
\end{equation*}
where $r_{k}$ goes to infinity. By the density lower bound (\ref{densitybound}) and the proof of Lemma \ref{rectifiability}, we have a global density lower bound and for any $\sigma_{0} \in (0,1)$ there exists $r>0$ such that
\begin{equation} \label{boundedspt}
    K_{1,1} \cap \pi(spt(( \mu_{r} \circ \phi_{x}^{-1})_{\sharp} V)) \subset K_{1, \sigma_{0}/2}.
\end{equation}
We also can choose $r$ such that
\begin{equation*}
    \mathcal{H}^{n} (spt(( \mu_{r} \circ \phi_{x}^{-1})_{\sharp} V) \cap \pi^{-1}(\partial_{rel}D_{\frac{1}{2}} \times \mathbb{R})) = 0.
\end{equation*}
Since $\{ N_{l}' \}$ is a minimizing sequence, for $N_{l} : = \pi^{-1}(N_{l}')$ and sufficiently large $l$, we have
\begin{equation} \label{dilatedarea}
    \mathcal{H}^{n}(( \mu_{r} \circ \phi_{x}^{-1})_{\sharp} N_{l}) \le \mathcal{H}^{n}(( \mu_{r} \circ \phi_{x}^{-1})_{\sharp} N)+ r^{n} \epsilon_{l}
\end{equation}
for all $N:= \pi^{-1}(N')$ with $\partial N' \setminus \partial M' = \partial N_{l}' \setminus \partial M'$ and $\epsilon_{l} \rightarrow 0$ as $l$ goes to infinity. By (\ref{boundedspt}) and coarea formula we get for almost all $\sigma \in (\sigma_{0}/2,1)$,
\begin{equation*}
    \mathcal{H}^{n-1}(( \mu_{r} \circ \phi_{x}^{-1})_{\sharp} N_{l} \cap \pi^{-1}(D_{1} \times (\{ -\sigma\} \cup \{\sigma\})))\rightarrow 0
\end{equation*}
as $l \rightarrow \infty$. Then for given any $\eta>0$, for sufficiently large $l$, there exists $\sigma_{l} \in (3\sigma_{0}/4,\sigma_{0})$ such that
\begin{equation} \label{dilatedseq}
     \mathcal{H}^{n-1}(( \mu_{r} \circ \phi_{x}^{-1})_{\sharp} N_{l} \cap \pi^{-1}(D_{1} \times (\{ -\sigma_{l}\} \cup \{\sigma_{l}\}))) < \eta.
\end{equation}

Now we explain the modifications we need in our setting which is described in the footnote of \cite[p.~465]{almgren1979existence}. By applying small topological modification on small region of $M'$ and considering its preimage on $M$, we separate curves of the intersection $( \pi((\mu_{r} \circ \phi_{x}^{-1}) N_{l})) \cap \partial K_{\rho_{l},\sigma_{l}}$ into Jordan curves on its disk parts $D_{\rho_{l}} \times \{ -\sigma_{l},\sigma_{l}\} \cap \pi( T_{x} M)$ and its cylinder part $\partial_{rel} D_{\rho_{l}} \times (-\sigma_{l}, \sigma_{l}) \cap \pi( T_{x} M) $ (this means we rule out the intersection on the edge $\partial_{rel} D_{\rho_{1}} \times \{ -\sigma_{l},\sigma_{l}\} \cap \pi( T_{x} M)$ by pushing slightly). Also by applying Sard's theorem, we can assume that $\pi((\mu_{r} \circ \phi_{x}^{-1}) N_{l})$ intersects $\partial K_{\rho_{l},\sigma_{l}}$ transversally. Moreover, we can arrange each part of intersection $\pi((\mu_{r} \circ \phi_{x}^{-1}) N_{l}) \cap (D_{\rho_{l}} \times \{ -\sigma_{l},\sigma_{l}\})$ and $\pi((\mu_{r} \circ \phi_{x}^{-1})_{\sharp} N_{l}) \cap (\partial_{rel} D_{\rho_{l}} \times (-\sigma_{l}, \sigma_{l}))$ intersects $\partial_{rel} D_{\rho_{l}} \times \{ -\sigma_{l}, \sigma_{l} \}$ transversally.

We apply the replacement lemma (Lemma \ref{replacement}) with $\pi((\mu_{r} \circ \phi_{x}^{-1}) N_{l})$ and $K_{\rho_{l}, \sigma_{l}}$ in place of $\Sigma'$ and $U'$, respectively. Notice that $\pi((\mu_{r} \circ \phi_{x}^{-1}) N_{l}) \cap \partial K_{\rho_{l},\sigma_{l}}$ consists of closed Jordan curves and Jordan arc with endpoints on $\partial \pi( T_{x} M)$. Then there exists $0<R_{l}^{1}\le R_{l}^{2}\le R_{l}^{3}\le R_{l}^{4}$ such that there exist genus zero surfaces (with possible boundary components on the boundary $\partial \pi( T_{x} M)$) $P_{l}^{1}, ..., P_{l}^{R_{l}^{4}} \in \mathcal{M}(0)$ with
\begin{align*}
    \partial P_{l}^{1}, ..., \partial P_{l}^{R_{l}^{1}} &\subset D_{\rho_{l}} \times \{ -\sigma_{l},\sigma_{l}\},  \\
    \partial P_{l}^{R_{l}^{1}+1}, ..., \partial P_{l}^{R_{l}^{3}} &\subset \partial D_{\rho_{l}} \times ( -\sigma_{l},\sigma_{l}), \\
    \partial P_{l}^{i} \cap (D_{\rho_{l}} \times \{ -\sigma_{l},\sigma_{l}\}) \neq \emptyset \text{ and } \partial P_{l}^{i} & \cap (\partial D_{\rho_{l}} \times (-\sigma_{l}, \sigma_{l})) \neq \emptyset \text{ for } R_{l}^{3}+1 \le i \le R_{l}^{4},
\end{align*}
where $\partial P_{l}^{R_{l}^{1}+1}, ..., \partial P_{l}^{R_{l}^{2}}$ bound disks in $\partial D_{\rho_{l}} \times (-\sigma_{l}, \sigma_{l})$ and $\partial P_{l}^{R_{l}^{2}+1}, ..., \partial P_{l}^{R_{l}^{3}}$ are homotopic to a homotopically nontrivial curve in $\partial D_{\rho_{l}} \times (-\sigma_{l}, \sigma_{l})$ i.e. Jordan arcs containing two endpoints on separate sides on $\partial \pi( T_{x} M)$. By Lemma \ref{replacement}, we have an area control of preimages of surfaces:
\begin{equation} \label{replaceareabd}
\mathcal{H}^{n}(\pi^{-1}(P_{l}^{i})) \le \mathcal{H}^{n}(\pi^{-1}(P)) + \epsilon_{l,i}
\end{equation}
for $P \in \mathcal{M}(0)$ with $\partial P \setminus \partial \pi( T_{x} M) = \partial P_{l}^{i} \setminus \partial \pi( T_{x} M)$ for $i = 1, ..., R_{l}^{4}$ and $\sum_{i=1}^{R_{l}^{4}} \epsilon_{l,i} \le r^{n} \epsilon_{l}$.
Moreover, we have a convergence toward limit varifold
\begin{equation}\label{convergencevfd}
    (\mu_{r} \circ \phi_{x}^{-1})_{\sharp} V \mres \pi^{-1}(K_{\frac{1}{2},1})= \lim_{l \rightarrow \infty} \sum_{i=1}^{R_{l}^{4}} \pi^{-1}(P_{l}^{i} \cap K_{\frac{1}{2},1}),
\end{equation}
where we took associated varifolds of preimage of surfaces on the right-hand side of (\ref{convergencevfd}).

First of all, we apply the slice filigree lemma (Lemma \ref{slicefiligree}) to discard $P_{l}^{1}, ..., P_{l}^{R_{l}^{1}}$. By an isoperimetric inequality on $\pi^{-1} (D_{\rho_{l}} \times \{ -\sigma_{l},\sigma_{l}\})$, $\partial P_{l}^{i}$ encloses $A_{l}^{i} \subset D_{\rho_{l}} \times \{ -\sigma_{l},\sigma_{l}\}$ such that $\mathcal{H}^{n}(\pi^{-1}(A_{l}^{i})) \le c_{n} \eta^{n/n-1}$ where $c_{n}$ is a constant for $n$-dimensional isoperimetric inequality by (\ref{dilatedseq}). Hence by (\ref{replaceareabd}) and for sufficiently large $l$, we have $\mathcal{H}^{n}(\pi^{-1}(P_{l}^{i})) \le 2 c_{n} \eta^{n/n-1}$. Take $Y_{t} = K_{\rho_{l}, t\sigma_{l}}$ and $c_{2}$ to be the same as Lemma \ref{rectifiability}, and apply Lemma \ref{slicefiligree} with sufficiently small $\eta>0$ together with (\ref{replaceareabd}) we obtain
\begin{equation} \label{filigreebound}
    \mathcal{H}^{n}(\pi^{-1}(P_{l}^{i} \cap K_{\frac{1}{2},\frac{\sigma_{0}}{2}})) \le 8\mathcal{H}^{n-k}(G_{c}(x))^{-1} \epsilon_{l,i}
\end{equation}
by applying (\ref{boundedspt}), (\ref{replaceareabd}) and obtaining area bound of the whole preimage from the slice for $i = 1, ..., R_{l}^{1}$. By sending $l$ to infinity, we obtain
\begin{equation} \label{discardfirst}
    (\mu_{r} \circ \phi_{x}^{-1})_{\sharp} V \mres \pi^{-1}(K_{\frac{1}{2},1})  = \lim_{l \rightarrow \infty} \sum_{i=R_{l}^{1}+1}^{R_{l}^{4}} \pi^{-1}(P_{l}^{i} \cap K_{\frac{1}{2},1}).
\end{equation}

Now we deform $P_{l}^{R_{l}^{3}+1}, ..., P_{l}^{R_{l}^{4}}$ into surfaces with boundary on cylindrical parts with small $n$-dimensional area change on preimages. We employ the fact that $(n-1)$-dimensional measure of the preimage of $P_{l}^{i} \cap D_{\rho_{l}} \times \{ -\sigma_{l},\sigma_{l}\}$ are small for $i = R_{l}^{3}+1, ..., R_{l}^{4}$ and push these pieces of boundary curves into cylinder parts by attaching small surface pieces on $K_{\rho_{l}, \sigma_{l}}$.

We explain this procedure more in detail. Note that we could disregard $P_{l}^{1}, ..., P_{l}^{R_{l}^{1}}$. We denote each piece of $\cup_{i=R_{l}^{3}+1}^{R_{l}^{4}} P_{l}^{i} \cap (D_{\rho_{l}} \times \{ -\sigma_{l},\sigma_{l}\})$ by $\gamma_{l}^{1},..., \gamma_{l}^{p_{l}}$. By relative isoperimetric inequality, each $\gamma_{l}^{j}$ and either $\partial D_{\rho_{l}} \times \{ -\sigma_{l}\}$ or $\partial D_{\rho_{l}} \times \{ \sigma_{l}\}$ bound a disk $\mathcal{D}_{l}^{j}$ such that
\begin{align} \label{isopineqsmallpiece}
    \mathcal{H}^{n} (\pi^{-1}(\mathcal{D}_{l}^{j})) &\le c_{n}' (\mathcal{H}^{n-1}(\pi^{-1}(\gamma_{l}^{j})))^{\frac{n}{n-1}} \\
   \nonumber \mathcal{D}_{l}^{j} \subset D_{\rho_{l}} \times \{ -\sigma_{l}\} &\text{ or } \mathcal{D}_{l}^{j} \subset D_{\rho_{l}} \times \{ \sigma_{l}\}.
\end{align}
Then we attach each $D_{l}^{j}$ along $\gamma_{l}^{j}$ to corresponding $P_{l}^{i}$ and slightly push the surfaces to the interior of $K_{\rho_{l},\sigma_{l}}$ while keeping the embeddedness. Furthermore, we also push pieces of new boundary curves on $\partial D_{\rho_{l}} \times \{ -\sigma_{l},\sigma_{l} \}$ into the interior $\partial D_{\rho_{l}} \times (-\sigma_{l},\sigma_{l})$ by deforming surfaces slightly (we also deform parts of $P_{l}^{i}$ as well, if necessary). We denote this new surfaces by $P_{l}^{\prime R_{l}^{3}+1}, ..., P_{l}^{\prime R_{l}^{4}}$ and $\partial P_{l}^{i} \subset \partial D_{\rho_{l}} \times \{ -\sigma_{l},\sigma_{l} \}$. We denote $N_{l}''$ as a union of $P_{l}^{R_{l}^{1}+1}, ..., P_{l}^{R_{l}^{3}}, P_{l}^{\prime R_{l}^{3}+1}, ..., P_{l}^{\prime R_{l}^{4}}$. Moreover, by considering the area bound (\ref{dilatedarea}), (\ref{dilatedseq}) and (\ref{isopineqsmallpiece}) and summing up all $\mathcal{D}_{l}^{j}$, we have
\begin{equation} \label{pushedareadiff}
    \mathcal{H}^{n}(\pi^{-1}(N_{l}'')) \le \mathcal{H}^{n}(\pi^{-1}(N))+ r^{n} \epsilon_{l} + 2c_{n}' \eta^{\frac{n}{n-1}}
\end{equation}
for all $N$ with $\partial N \setminus\partial \pi( T_{x} M) = \partial(\mu_{r} \circ \phi_{x}^{-1})_{\sharp} N_{l} \setminus\partial \pi( T_{x} M)$. We can divide $\partial P_{l}^{\prime R_{l}^{3}+1}, ..., \partial P_{l}^{\prime R_{l}^{4}}$ into nullhomotopic curves in $\partial D_{\rho_{l}} \times (-\sigma_{l}, \sigma_{l})$ and homotopically nontrivial boundaries as we did before. After reclassifying these, we relabel $N_{l}''$ as a union of $P_{l}^{R_{l}^{1}+1}, ..., P_{l}^{R_{l}^{3}}$ ($R_{l}^{2}$ and $R_{l}^{3}$ subject to be changed along this procedure).

Now we treat $P_{l}^{R_{l}^{1}+1}, ..., P_{l}^{R_{l}^{2}}$ whose boundaries are nullhomotopic in $\partial D_{\rho_{l}} \times (-\sigma_{l}, \sigma_{l})$ to be disregarded. We take $A^{i}_{l}$ to be the part of the cylinder $\partial D_{\rho_{l}} \times (-\sigma_{l}, \sigma_{l})$ filling the boundary $P_{l}^{i}$ for $i = R_{l}^{1}+1, ..., R_{l}^{2}$. By applying Theorem \ref{slicethm} and the area formula, we have
\begin{equation} \label{cylinderareabd}
    \mathcal{H}^{n}(\pi^{-1}(A^{i}_{l})) < \mathcal{H}^{n}(\pi^{-1}(\partial D_{\rho_{l}} \times (-\sigma_{l}, \sigma_{l})) < 4 \mathcal{H}^{n-k}(G_{c}(x)) w_{k-1} \rho_{l}^{k-1} \sigma_{l}, 
\end{equation}
where $w_{k-1}$ is an area of a unit $(k-1)$-sphere. Hence by (\ref{replaceareabd}), (\ref{isopineqsmallpiece}) and (\ref{cylinderareabd}), we obtain
\begin{equation*}
    \mathcal{H}^{n}(\pi^{-1}(P^{i}_{l})) < 4 \mathcal{H}^{n-k}(G_{c}(x)) w_{k-1} \rho_{l}^{k-1} \sigma_{l} + \epsilon_{l,i} + 2c_{n}' \eta^{\frac{n}{n-1}}.
\end{equation*}
We apply Lemma \ref{slicefiligree} again with $Y_{t} = \pi^{-1}(D_{t\rho_{l}} \times \mathbb{R})$, $c_{1} = \rho_{l}^{-1}$, and $c_{2}$ depending on $k$ and $G_{c}(x)$. Since $\sigma_{l}$ is sufficiently small and we take sufficiently small $\eta>0$, similarly with (\ref{filigreebound}) we obtain,
\begin{equation} \label{filigreebound2}
    \mathcal{H}^{n}(\pi^{-1}(P_{l}^{i} \cap K_{\frac{1}{2},\frac{\sigma_{0}}{2}})) \le 8\mathcal{H}^{n-k}(G_{c}(x))^{-1} \epsilon_{l,i}
\end{equation}
for $i = R_{l}^{1}+1, ..., R_{l}^{2}$. Hence we can discard $P_{l}^{R_{l}^{1}+1}, ..., P_{l}^{R_{l}^{2}}$ and have
\begin{equation} \label{discardsecond}
    (\mu_{r} \circ \phi_{x}^{-1})_{\sharp} V \mres \pi^{-1}(K_{\frac{1}{2},1}) = \lim_{l \rightarrow \infty} \sum_{i=R_{l}^{2}+1}^{R_{l}^{3}} \pi^{-1}(P_{l}^{i} \cap K_{\frac{1}{2},1}).
\end{equation}
from (\ref{discardfirst}).

Notice that $\partial P_{l}^{R_{l}^{2}+1}, ..., \partial P_{l}^{R_{l}^{3}}$ are homotopically nontrivial on $\partial D_{\rho_{l}} \times (-\sigma_{l}, \sigma_{l})$. Since each $\pi^{-1}(P_{l}^{i})$ is in the small $G_{c}$-equivariant cylindrical neighborhood of $\pi^{-1}(D_{\rho_{l}} \times \{ 0 \})$ which is a $k$-disk bundle on $G_{c}(x)$ and by the area formula we have
\begin{equation} \label{diskarealower}
    \mathcal{H}^{n}(\pi^{-1}(P_{l}^{i} \cap K_{t,\sigma_{l}})) \ge \mathcal{H}^{n-k}(G_{c}(x)) k^{-1} w_{k-1} t^{k},
\end{equation}
for $t \in [0, \rho_{l}]$ and $i = R_{l}^{2}+1, ..., R_{l}^{3}$. On the other hand, let $A_{l}^{i}$ be a component of $\partial K_{\rho_{l},\sigma_{l}} \setminus \partial P_{l}^{i}$ containing $D_{\rho_{l}} \times \{ -\sigma_{l} \}$. Then as in (\ref{cylinderareabd}) we have
\begin{align}
    \nonumber \mathcal{H}^{n}(\pi^{-1}(A^{i}_{l})) &< \mathcal{H}^{n}(\pi^{-1}(D_{\rho_{l}} \times \{ -\sigma_{l} \}) + \mathcal{H}^{n}(\pi^{-1}(\partial D_{\rho_{l}} \times (-\sigma_{l}, \sigma_{l})) \\ &< \mathcal{H}^{n-k}(G_{c}(x)) k^{-1} w_{k-1} \rho_{l}^{k} +4 \mathcal{H}^{n-k}(G_{c}(x)) w_{k-1} \rho_{l}^{k-1} \sigma_{l} \label{cylinderareabd2}
\end{align}
and (\ref{replaceareabd}), (\ref{isopineqsmallpiece}), and (\ref{cylinderareabd2}) deduces
\begin{equation*}
    \mathcal{H}^{n}(\pi^{-1}(P^{i}_{l})) < \mathcal{H}^{n-k}(G_{c}(x)) k^{-1} w_{k-1} \rho_{l}^{k} +4 \mathcal{H}^{n-k}(G_{c}(x)) w_{k-1} \rho_{l}^{k-1} \sigma_{l} + \epsilon_{l,i} + 2c_{n}' \eta^{\frac{n}{n-1}}.
\end{equation*}
By taking small $\eta>0$, for sufficiently large $l$, we have
\begin{equation} \label{sheetareabd}
    \mathcal{H}^{n}(\pi^{-1}(P^{i}_{l})) < \mathcal{H}^{n-k}(G_{c}(x)) k^{-1} w_{k-1} \rho_{l}^{k} +5 \mathcal{H}^{n-k}(G_{c}(x)) w_{k-1} \rho_{l}^{k-1} \sigma_{l}
\end{equation}
for $i = R_{l}^{2}+1, ..., R_{l}^{3}$.

Note that $\sum_{i=R_{l}^{2}+1}^{R_{l}^{3}} \mathcal{H}^{n}(\pi^{-1}(P_{l}^{i} \cap K_{\frac{1}{2},1}))$ is bounded then the cardinality of sheets $R_{l}^{3}- R_{l}^{2}+1$ is bounded independent of $l$. Now we rescale and take a limit of each sheet as follows by the area formula and there exists positive number $m$ such that:
\begin{enumerate}
    \item $\rho_{l} \rightarrow \rho_{0} \in [3/4,1]$ as a subsequential limit;
    \item for $i=1, ..., m$, $\mu_{\rho_{l}^{-1}}(\pi^{-1}(P_{l}^{i}))$ converges to a varifold $V_{i}$ satisfying for $\rho \in [0,1]$
    \begin{align} \label{sheetsboundbegin}
       \mathcal{H}^{n-k} (G_{c}(x)) k^{-1} w_{k-1} \rho^{k} &\le |V_{i}\mres\pi^{-1}(K_{\rho,1})| \\ \label{sheetsboundmid}
       |V_{i}\mres\pi^{-1}(K_{1,1})| &< \mathcal{H}^{n-k} (G_{c}(x)) (k^{-1} w_{k-1}  + 10  w_{k-1} \sigma_{0})
    \end{align}
     and
    \begin{equation} \label{sheetsboundend}
        spt(V_{i}) \subset \pi^{-1}(K_{1,\sigma_{0}}),
    \end{equation}
    and
    \begin{equation*}
        (\mu_{r \rho_{0}^{-1}} \circ \phi_{x}^{-1})_{\sharp} V \mres \pi^{-1}(K_{\frac{1}{2},1}) = \sum_{i=1}^{m} V_{i} \mres \pi^{-1}(K_{\frac{1}{2},1}).
    \end{equation*}
\end{enumerate}
By taking arbitrary small $\sigma_{0}$ and considering (\ref{sheetsboundbegin})-(\ref{sheetsboundend}), the slice theorem (Theorem \ref{slicethm}), and also the monotonicity of $G_{c}$-stationary varifold (in the slice sense), we have an integer density $\theta(V,x_{0}) =m$ and we proved that $V$ is a stationary integral varifold. Also by the area bound (\ref{sheetsboundbegin}), (\ref{sheetsboundmid}) and Allard regularity theorem on slices, we obtain
\begin{equation*}
    V_{i} \mres \pi^{-1}(K_{\frac{1}{2},1}) = M_{i}
\end{equation*}
where $M_{i}$ is a $G_{c}$-equivariant minimal hypersurface in $G_{c}$-tubular neighborhood of $x$ and whose slices by $G_{c}(x)$ are graphical. From the construction of $V_{i}$, $M_{i}$ is on one-side of $M_{j}$ for $i,j =1, ..., n$. By maximum principle since $\theta(V,x_{0})=m$, $(\mu_{r \rho_{0}^{-1}} \circ \phi_{x}^{-1})_{\sharp} V \mres \pi^{-1}(K_{\frac{1}{2},1}) = m M$ for some connected $G_{c}$-minimal hypersurface $M$ containing $x$.
\end{proof}
\end{thm}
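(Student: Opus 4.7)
The plan is to combine the rectifiability from Lemma \ref{rectifiability} with a tangent-cone blow-up analysis in the spirit of Almgren--Simon, using the Replacement and Slice Filigree Lemmas to localize the structure of $V$ near $x$. First, Lemma \ref{rectifiability} already gives that $V$ is rectifiable with a uniform positive density lower bound; stationarity follows because $V$ arises as a limit of minimizers among $G_c$-equivariant isotopies, and by Liu's lemma $G_c$-stationarity upgrades to genuine stationarity. So the only content is the local structural statement at $x$ with hyperplane tangent cone.

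Fix $x \in \mathrm{spt}(V)$ on a singular orbit and a tangent cone $C$ with $\mathrm{spt}(C) \subseteq H$ for a $G_c$-invariant hyperplane $H \subset T_xM$. The projection $\pi_\sharp H$ is then a topological half-plane $D$ meeting $\pi_\sharp \partial M'$ transversally. I would introduce prismatic neighborhoods $K_{\rho,\sigma} = (D_\rho \setminus \partial_{\mathrm{rel}}D_\rho) \times (-\sigma,\sigma)$ adapted to $H$, and pick a scaling $\mu_r$ large enough that $\mathrm{spt}((\mu_r\circ\phi_x^{-1})_\sharp V) \cap \pi^{-1}(K_{1,1})$ is trapped in $\pi^{-1}(K_{1,\sigma_0/2})$ for a preassigned small $\sigma_0$. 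By Sard plus coarea, choose $\sigma_l \in (3\sigma_0/4, \sigma_0)$ for the scaled minimizing approximants $\tilde N_l$ so that (i) $\partial K_{\rho_l,\sigma_l}$ is met transversally and (ii) the $(n-1)$-area of the preimage on the top/bottom disks is $< \eta$ for any preassigned $\eta$.

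Next I would run the Replacement Lemma on $(\pi(\tilde N_l), K_{\rho_l,\sigma_l})$ to get finitely many genus-zero pieces $P_l^1,\dots,P_l^{R_l^4}$ satisfying near-minimization bounds, and stratify them by boundary type: (a) boundary entirely on the two disk faces, (b) null-homotopic boundary on the cylinder, (c) homotopically nontrivial boundary wrapping the cylinder, and (d) mixed. For type (a) the relative isoperimetric inequality forces tiny area, and the Slice Filigree Lemma then forces zero contribution in the limit in $K_{1/2,1}$. Type (d) can be deformed into type (b) or (c) by attaching small isoperimetric caps and pushing into the cylinder, paying only $O(\eta^{n/(n-1)})$ in area. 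Type (b) is filled by the cylindrical cap of area $O(\sigma_l)$, so Filigree again kills it. This leaves only the ``sheets'' in class (c), whose lower area bound comes from the slice theorem applied to the orbit disk-bundle, so their cardinality is uniformly bounded by some integer $m$.

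Finally, rescale each surviving sheet and extract a subsequential varifold limit $V_i$, $i=1,\dots,m$. The matching upper and lower area bounds force each $V_i$ to be a stationary integer varifold close in area to a flat $G_c$-equivariant disk-bundle slice, so Allard's regularity theorem applied on a slice (and propagated by $G_c$-invariance) yields that each $V_i$ is a smooth $G_c$-equivariant minimal hypersurface graphical over $H$. Since the sheets are stacked disjointly on each slice and all pass through $x$, the maximum principle collapses them to a single hypersurface $M$ with integer multiplicity $m$, giving $V \mres B^{G_c}_\rho(x) = m[M]$. The main obstacle I expect is step (3)--(4): organizing the topological classes of boundary curves on the corner-containing prism $\partial K_{\rho_l,\sigma_l}$ cleanly, ensuring the push-into-cylinder deformation preserves the genus-zero and bounded-boundary-count hypotheses of Replacement and Filigree, and choosing $\sigma_l, \eta, \rho_l$ in the right order so that the resulting sheet count is genuinely bounded and Allard's smallness hypothesis is satisfied uniformly.
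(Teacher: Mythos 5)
Your proposal follows essentially the same route as the paper's proof: rectifiability and density from Lemma \ref{rectifiability}, blow-up into the prism $K_{\rho,\sigma}$ adapted to $H$, the Replacement Lemma decomposition into the same four boundary-type classes, the Slice Filigree Lemma to discard the disk-face and null-homotopic pieces, the cap-attachment deformation of the mixed pieces, the uniform sheet-count bound for the wrapping pieces, and finally Allard regularity on slices plus the maximum principle to conclude $V\mres B^{G_{c}}_{\rho}(x)=m[M]$. The difficulties you flag at the end (ordering the choices of $\sigma_{l},\eta,\rho_{l}$ and keeping the genus-zero/bounded-boundary hypotheses through the push-in deformation) are exactly the points the paper handles, so the plan is correct as stated.
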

\begin{rmk} In our application later, $H'= \pi(H)$ is a smooth half-plane, but our argument contains the case that $H'$ is contained in a wedge region and topological half-plane.
\end{rmk}
Now we prove that the tangent cone at any point is a hyperplane. Here we apply the interior regularity theory on principal orbits and prove the regularity on singular orbits.

\begin{thm} \label{secondcontireg} Suppose $\{N_{l}' \} \in \mathcal{M}(0)$ to be a minimizing sequence and the pullback sequence of $n$-varifolds $\{V_{l} \}_{l \in \mathbb{N}}$ where $V_{l} = \pi^{\sharp} N_{l}'$ converges to the varifold limit $V$ in $V_{n}(M)$. Then at each point $x \in spt(V)$, there is a tangent cone $C$ of $V$ having the form $C = mH$ where $H$ is a hyperplane in $\mathbb{R}^{n+1}$ where $m$ is a positive integer. Moreover, if $x$ is on a top strata of singular orbits, the dihedral angle between $\pi(H)$ and $\partial M'$ is $\pi/2$.
\begin{proof}
For tangent cones on principal orbits, we apply Corollary \ref{deformationminimal} and obtain the regularity on $M'$ and the tangent cone is a hyperplane if we pull back to the preimage. Therefore without loss of generality, we only consider the case that $x$ is on singular orbits.  

Take a tangent cone with $C = \lim_{l \rightarrow \infty} (\mu_{r_{l}} \circ \phi_{x}^{-1})_{\sharp} V$ where $r_{l} \rightarrow \infty$ as $l$ goes to infinity, where we can decompose $C = C_{S} \times T_{x}G_{c}(x)$. We denote $C_{G_{c},x}$ to be a $G_{c}$-invariant $C_{S}$-bundle whose base is $G_{c}(x)$ in $\mathbb{R}^{k+1} \times G_{c}(x)$ which satisfies $T_{x} C_{G_{c},x} = C$. Then we can find a sequence $\{ D_{l}' \} \in \mathcal{M}(0)$ and $D_{l} := \pi^{-1}(D_{l}')$ such that
\begin{equation} \label{tangentconverg}
    D_{l} = (\mu_{r_{l}} \circ \phi_{x}^{-1})_{\sharp} N_{l} \text{ and } D_{l} \rightharpoonup C_{G_{c},x}
\end{equation}
as $l \rightarrow \infty$ where $N_{l} = \pi^{-1}(N_{l}')$ and
\begin{equation} \label{coneareabd}
    \mathcal{H}^{n}(D_{l}) \le \mathcal{H}^{n}(\pi^{-1}(D')) + \epsilon_{l}'
\end{equation}
for any $D' \in \mathcal{M}(0)$ with $\partial D' \setminus \partial M' = \partial D_{l}' \setminus \partial M'$ where $\epsilon_{l}' \rightarrow 0$ as $l \rightarrow \infty$. By (\ref{coneareabd}), $C_{G_{c},x}$ is an area minimizer and $C$ is a $G_{c}$-stable cone.

We first consider the principal orbit part on $C$. Note that the principal or exceptional orbit as a preimage by $\pi$ of the interior part of the blow-up on $\partial M'$. Fix a point $x_{1} \in C \subset \mathbb{R}^{n+1}$ on a principal orbit in $\mathbb{R}^{n+1}$. There exists a $G_{c}$-neighborhood of $B^{G_{c}}_{\rho}(x_{1})$ such that all points are on the principal orbit. Then $\pi(B^{G_{c}}_{\rho}(x_{1})) = B_{\rho}(\pi(x_{1})) \in C'_{x'} M'$. For sufficiently large $r_{l}$, $B^{G_{c}}_{r_{l}^{-1} \rho}(\exp_{x}(r_{l}^{-1}x_{1})) \subset M$ are on principal or exceptional orbits. From (\ref{tangentconverg}) as $l$ goes to infinity, we have the following:
\begin{equation} \label{localconeconv}
    D_{l} \mres (\mu_{r_{l}} \circ \phi_{x}^{-1})(B^{G_{c}}_{r_{l}^{-1} \rho}(\exp_{x}(r_{l}^{-1}x_{1}))) \rightarrow C_{G_{c},x} \mres B^{G_{c}}_{\rho}(x_{1}).
\end{equation}
We consider the corresponding image by $\pi$. We first take the weighted pushforward metric $g'_{tan}$ on $\pi(T_{x}M)$ from $(T_{x}M,g_{Eucl})$ induced by its volume function of each orbit with the formula in (\ref{volumefunction}). Let us denote the corresponding varifold by $V_{l, \rho}'$ i.e. $V_{l, \rho}' :=(\mathcal{D} \circ \pi)_{\sharp} ( D_{l} \mres (\mu_{r_{l}} \circ \phi_{x}^{-1})(B^{G_{c}}_{r_{l}^{-1} \rho}(\exp_{x}(r_{l}^{-1}x_{1}))))$. Since $B^{G_{c}}_{r_{l}^{-1} \rho}(\exp_{x}(r_{l}^{-1}x_{1}))$ is on principal orbits, and we have $spt(V_{l,\rho}') \in \mathcal{M}(0)$ and area constraint (\ref{coneareabd}), we can apply Theorem 4 in \cite{almgren1979existence} and obtain $C'_{\rho}$ as a varifold limit of $V_{l,\rho}'$ so is a two-dimensional plane. By taking $\pi^{\sharp}(C'_{\rho})$ and applying Theorem \ref{slicethm}, we obtain that $C_{G_{c},x} \mres B^{G_{c}}_{\rho}(x_{1})$ to be a smooth minimal hypersurface in $\mathbb{R}^{n+1}$.

Now we analyze singular orbits on $C$. Note that $\pi(\partial B^{G_{c}}_{\rho}(x))$ is connected and has no nontrivial genus since it is an image of a sphere bundle over the projection map $\pi$, so is a topological disc whose boundary is on $\partial M'$. By this topological fact and the regularity on principal orbits, $\pi(C_{G_{c},x} \cap \partial B^{G_{c}}_{\rho}(x))$ consists of simple curve segments whose boundary is on $\partial \pi(T_{x}M) \cap \pi(\partial B^{G_{c}}_{\rho}(x))$ which intersects $\partial \pi(T_{x}M) \cap \pi(\partial B^{G_{c}}_{\rho}(x))$ transversally. This gives that $\partial \pi(T_{x}M) \cap \pi(C_{G_{c},x} \cap B^{G_{c}}_{\rho}(x))$ are finite number of rays containing $x$ by its cone property and monotonicity formula by the minimality. 

We take $y \in C \setminus G_{c}(x)$ on singular orbits. Note that $\pi(y)$ is on one of the rays of $\partial \pi(T_{x}M) \cap \pi(C_{G_{c},x} \cap B^{G_{c}}_{\rho}(x))$. Consider the tangent cone of $C$ at $y$ and denote this by $C_{y}$. We denote the slice at $y$ by $S_{y}$ which is an orthogonal component of $T_{y}G_{c}(y)$ in $T_{y}M$, and $S_{y/ \{ry \}}$ as its $(k-1)$-dimensional equatorial disk which is perpendicular to the ray $\text{span}\{ y \}$. $C_{y}$ splits to $C_{y} = C_{S_{y}/ \{ ry\}} \times T_{y}G_{c}(y) \times T_{y} (\{ ry \})$, where $C_{S_{y}/ \{ ry\}}$ is a restriction of $C_{y}$ on $S_{y/ \{ry \}}$. Denote $C_{G_{c},y}$ to be a $G_{c}$-invariant $C_{S_{y}}$-bundle whose base is $G_{c}(y)$ which satisfies $T_{y} C_{G_{c},y} = C_{y}$. We first prove that $\pi(C_{y})$ is a single sheet of half-plane containing the ray $\pi(\{ry\})$ (multiplicity may occur). 

By the regularity on principal orbits, for sufficiently small $\rho'>0$, $\pi(C_{G_{c},y} \cap \partial B^{G_{c}}_{\rho'}(y))$ consists of simple arcs whose boundary endpoints are two points $ \pi(\{ry\} \cap \partial B^{G_{c}}_{\rho'}(y))$ that intersect its boundary transversally.

Assume that there are two segments $\gamma_{1}$ and $\gamma_{2}$ that meet at two endpoints $ \pi(\{ry\} \cap \partial B^{G_{c}}_{\rho'}(y))$. Since two half-plane sheets containing $\gamma_{1}$ and $\gamma_{2}$ are quotients of a minimal cone in $S_{y}$, we have $C_{S_{y/ \{ ry\}}} \cap \pi^{-1}(\gamma_{1})$ and $C_{S_{y/ \{ ry\}}} \cap \pi^{-1}(\gamma_{2})$ as links which are disjoint smooth minimal hypersurfaces of $S_{y/ \{ ry\}} \cap \partial B^{G_{c}}_{\rho'}(y)$. Notice that the regularities come from the regularity on principal orbits. Since $S_{y/ \{ ry\}} \cap \partial B^{G_{c}}_{\rho'}(y)$ has strictly positive Ricci curvature for small $\rho$, any minimal hypersurfaces on $S_{y/ \{ ry\}} \cap \partial B^{G_{c}}_{\rho'}(y)$ intersect each other by Frankel property. This contradicts to $\pi^{-1}(\gamma_{1}) \cap S_{y}$ and $\pi^{-1}(\gamma_{2}) \cap S_{y}$ being disjoint smooth minimal hypersurfaces and $\pi(C_{G_{c},y} \cap \partial B^{G_{c}}_{\rho'}(y))$ is a single simple curve and we denote this by $\gamma$. Hence, $\pi(C_{y})$ is a single sheet of topological half-plane.

Now we prove the regularity at $y \in C \setminus G_{c}(x)$. We consider the tangent cone $C_{y}$ and prove that it is a hyperplane and it suffices to prove that $C_{S_{y /\{ ry \}}}$ is a hyperplane in $S_{y/\{ry \}}$. By Corollary (Lower semicontinuity of orbit types) in \cite{hsiang1971minimal}, the link of $C_{S_{y/ \{ ry\}}}$ is a $(G_{c})_{y}/(G_{c})_{z} \subset S^{k-1}$ where $z \in C_{S_{y/ \{ ry\}}} \cap \pi^{-1}(\gamma \setminus \partial\pi (T_{y}M))$ so that is a smooth minimal hypersurface of $S_{y/ \{ ry\}}$ by the regularity on principal orbits. Moreover, $C_{S_{y/ \{ ry\}}}$ is a two-sided $G_{c}$-stable cone so is a two-sided stable cone by Lemma \ref{equivstablity}. By the classification of stable minimal hypercones of Simons \cite{simons1968minimal}, we obtain $C_{S_{y/ \{ ry\}}}$ is a smooth hyperplane.

The final step is to prove the regularity on $G_{c}(x)$. We proved the regularity on $C \setminus G_{c}(x)$ in the previous steps. We split into $C_{x} = C_{S_{x}} \times T_{x}G_{c}(x)$ and prove that $C_{S_{x}}$ is a hyperplane. By applying Lemma \ref{equivstablity} again, we have $C_{S_{x}}$ is a stable minimal hypercone and again apply Simons' classification \cite{simons1968minimal}, and obtain the regularity at $x$ and this extends to the regularity on $G_{c}(x)$.

Suppose now $x$ is on a top strata of singular orbits, then $T_{x'}\partial M'$ is a smooth plane. By the fact that the tangent cone is hyperplane and previous arguments, $\pi(H)$ is a smooth half-plane. Denote the intersection line segment between $\partial \pi(T_{x}M)$ and $\pi(H)$ by $\gamma'$. Suppose $H_{0}'$ to be a half-plane in $\partial \pi(T_{x} M)$ which contains $\gamma'$ and orthogonal to $\partial \pi(T_{x}M)$. By the Slice Theorem \ref{slicethm}, $\pi^{-1} (H_{0}')$ is a product of a hyperplane in $S_{x}$ and the orbit $G_{c} \cdot x$. Hence, by the maximum principle we obtain $H_{0}' = \pi(H)$ and we deduce that the dihedral angle is a right angle.
\end{proof}
\end{thm}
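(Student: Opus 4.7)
The plan is to induct on the stratification by orbit type, using the classification of stable minimal hypercones of Simons together with the equivalence between $G_c$-stability and stability (Lemma \ref{equivstablity}). Using Theorem \ref{firstcontireg}, if we can show the tangent cone at $x$ is supported in a hyperplane, then the conclusion follows automatically. So the real task is to prove that every iterated tangent cone is flat.

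First I would dispose of the case where $x$ lies on a principal (or exceptional) orbit. Near such $x$ the isotropy is locally constant, the quotient metric $g'$ is nondegenerate, and we are in the regime of \cite{almgren1979existence, meeks1982embedded}; applying the planar tangent cone theorem on $M'$ and pulling back via Theorem \ref{deformationminimal} and Theorem \ref{slicethm} gives that $C$ is a hyperplane times the orbit direction.

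The main content is thus the case where $x$ lies on a singular orbit. I would fix a tangent cone $C$, realized as the weak limit of rescalings $(\mu_{r_l}\circ\phi_x^{-1})_\sharp V$, and produce a corresponding sequence $D_l\in\mathcal{M}(0)$ (under $\pi$) inheriting the minimization property up to errors $\epsilon_l'\to 0$. By the slice theorem, $T_xM=S_x\oplus T_xG_c(x)$, so $C$ splits as $C=C_S\times T_xG_c(x)$, and its natural $G_c$-invariant extension $C_{G_c,x}$ is a $G_c$-equivariant, area-minimizing, and hence $G_c$-stable cone. The goal is to show $C_S$ is a hyperplane in $S_x$; this identifies $C$ with a single flat $mH$. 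To get there, I would first control $\pi(C_{G_c,x}\cap\partial B^{G_c}_\rho(x))$: at points of this link lying over principal orbits, the already-established regularity and cone structure force this link to be a finite union of smooth simple arcs meeting $\partial\pi(T_xM)$ transversally, and by the cone property the projected singular set in $\pi(T_xM)$ is a finite collection of rays emanating from $\pi(x)$.

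Next I would propagate regularity outward along each such ray by iterating the tangent-cone argument. At a point $y\in C\setminus G_cx$ lying on a strictly larger-dimensional singular orbit than $G_c(x)$, take the tangent cone $C_y$ of $C$ at $y$; after splitting off $T_yG_c(y)$ and the ray direction $\mathrm{span}\{y\}$, the remaining factor $C_{S_{y/\{ry\}}}$ is a two-sided $G_c$-stable minimal hypercone in $S_{y/\{ry\}}$. Here the \emph{key geometric obstruction} is that a priori the link of $C_{G_c,x}\cap\partial B^{G_c}_\rho(x)$ near the ray through $y$ might consist of two disjoint arcs (which would correspond to two half-plane sheets sharing the same boundary ray). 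I would rule this out via a Frankel-type argument: on the small positively-curved sphere $S_{y/\{ry\}}\cap\partial B^{G_c}_{\rho'}(y)$, two smooth embedded minimal hypersurfaces cannot be disjoint, so the sheets must coincide. Once $\pi(C_y)$ is a single topological half-plane, $C_{S_{y/\{ry\}}}$ is a two-sided stable minimal hypercone in a Euclidean ball, and Lemma \ref{equivstablity} plus Simons' theorem forces it to be a hyperplane. Applying Theorem \ref{firstcontireg} at $y$ then upgrades $V$ to a smooth minimal hypersurface near $G_c(y)$.

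Having handled all $y\in C\setminus G_c(x)$, I would return to $x$ itself. The same machinery applies directly to $C_S$: it is a two-sided $G_c$-stable minimal hypercone in $S_x$, smooth off the apex by the previous step, so Lemma \ref{equivstablity} together with Simons' classification again yields that $C_S$ is a hyperplane. Hence $C=mH$ with $H\subset\mathbb{R}^{n+1}$ a hyperplane and $m$ the integer density of $V$ at $x$. For the final orthogonality assertion, assuming $x$ is in the top stratum of singular orbits so that $T_{x'}\partial M'$ is a smooth plane, I would compare $\pi(H)$ with the unique half-plane $H_0'\subset\partial\pi(T_xM)$ containing the intersection ray and perpendicular to $\partial\pi(T_xM)$; by the slice theorem $\pi^{-1}(H_0')$ is a smooth minimal hypersurface, so the strong maximum principle applied to two minimal hypersurfaces touching along $G_c(x)$ forces $\pi(H)=H_0'$, giving the right angle. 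I expect the Frankel-type step — ruling out disjoint half-plane sheets along a common ray by working on the small positively curved slice sphere — to be the main delicate point, since it is what bridges the local regularity on principal orbits to the global hyperplane conclusion on the full cone.
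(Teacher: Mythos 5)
Your proposal follows essentially the same route as the paper's proof: dispose of principal orbits via the Almgren--Simon/Meeks--Simon--Yau planar tangent cone theorem on $M'$, split the tangent cone via the slice theorem, rule out multiple half-plane sheets along a singular ray by the Frankel property on the small positively curved slice sphere, invoke Lemma \ref{equivstablity} and Simons' classification to flatten the slice cone first at points $y \in C \setminus G_{c}(x)$ and then at $x$ itself, and finally obtain the right dihedral angle by comparing $\pi(H)$ with the orthogonal half-plane via the maximum principle. The argument is correct and matches the paper's, including your identification of the Frankel step as the crux.
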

We obtain the following regularity result of isotopy minimizing problem of genus $0$ surfaces on $M/G$
\begin{thm} \label{regularityminiconti}Suppose $\{N_{l}' \} \in \mathcal{M}(0)$ to be a minimizing sequence and the pullback sequence of $n$-varifolds $\{V_{l} \}_{l \in \mathbb{N}}$ where $\pi_{\sharp}V_{l} = N_{l}'$ converges to the varifold limit $V$ in $V_{n}(M)$. Then for every point $x \in spt(V)$, there is a $\rho >0$ such that $spt(V \mres B^{G}_{\rho}(x))$ is an integer multiple of smooth $G_{c}$-equivariant minimal hypersurface containing $x$.
\begin{proof} The proof directly follows from Theorem \ref{firstcontireg} and \ref{secondcontireg}.
\end{proof}
\end{thm}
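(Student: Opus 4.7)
The plan is to assemble this statement as an immediate consequence of the two preceding theorems. Theorem \ref{firstcontireg} supplies, at every point $x \in spt(V)$ at which the limit $V$ admits a tangent cone whose support is contained in a hyperplane, a $G_c$-tubular neighborhood $B^{G_c}_\rho(x)$ on which $spt(V \mres B^{G_c}_\rho(x))$ is an integer multiple of a smooth $G_c$-equivariant minimal hypersurface through $x$. Theorem \ref{secondcontireg} is exactly the device that verifies the hypothesis of Theorem \ref{firstcontireg}: it shows that at every $x \in spt(V)$ some tangent cone has the form $C = mH$ with $H$ a hyperplane in $T_xM$, with the additional orthogonality information along top-stratum singular orbits.

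Concretely, I would fix $x \in spt(V)$, first invoke Theorem \ref{secondcontireg} to produce a tangent cone $C = mH$ at $x$ with $spt(C) \subset H$, and then feed this cone into Theorem \ref{firstcontireg} to obtain a radius $\rho>0$ and a smooth embedded $G_c$-equivariant minimal hypersurface $\Gamma \ni x$ such that $V \mres B^{G_c}_\rho(x)$ is a positive integer multiple of $\Gamma$. Since this is carried out at each $x \in spt(V)$, the conclusion of the theorem follows pointwise.

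The substantive work has already been discharged upstream: the rectifiability of $V$ from Lemma \ref{rectifiability}, the $\gamma$-reduction that reduces the minimization to genus zero surfaces on $M'$, the area comparison and slice filigree lemmas (Lemmas \ref{areacomparison} and \ref{slicefiligree}) that power the blow-up analysis in Theorem \ref{firstcontireg}, and finally the Frankel-type argument on positively curved links together with Simons' classification of stable minimal hypercones used in Theorem \ref{secondcontireg} to upgrade each tangent cone to a hyperplane. Thus the present theorem is genuinely a packaging step, and the only bookkeeping is to observe that a single hyperplane-supported tangent cone at each point is precisely the hypothesis required by Theorem \ref{firstcontireg}; I do not anticipate any additional obstacle.
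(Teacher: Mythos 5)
Your proposal is correct and is exactly the paper's argument: Theorem \ref{secondcontireg} produces at each $x \in spt(V)$ a tangent cone of the form $mH$ with $H$ a hyperplane, which is precisely the hypothesis under which Theorem \ref{firstcontireg} yields the local conclusion that $spt(V \mres B^{G_{c}}_{\rho}(x))$ is an integer multiple of a smooth $G_{c}$-equivariant minimal hypersurface through $x$. No further work is needed.
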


\subsection{Main Regularity} We now prove the main regularity theorem in general cases.
\begin{proof} [Proof of Theorem \ref{regularitycontiLie}] By $\gamma$-reduction (Proposition \ref{gammareduction}), it suffices to consider when $\{(\Sigma')^{i}\}$ is strongly $(\gamma, U')$-irreducible.

Take $x \in spt(V)$ and choose small $\rho_{0}>0$ that $B_{\rho_{0}}^{G_{c}}(x)$ to be a $G_{c}$-neighborhood and $\pi(B_{\rho_{0}}^{G}(x))$ to be a topological $3$-ball. By the coarea formula, we have
\begin{equation*}
    \int^{\rho}_{\rho-\sigma} \mathcal{H}^{n-1}( \Sigma^{l} \cap \partial B^{G_{c}}_{s}(x)) ds \le \mathcal{H}^{n}( \Sigma^{l} \cap (B^{G_{c}}_{\rho}(x) \setminus B^{G_{c}}_{\rho-\sigma}(x)))
\end{equation*}
for almost every $\rho \in (0,\rho_{0})$ and $\sigma \in (0,\rho)$. By the monotonicity formula, the slice theorem (Theorem \ref{slicethm}), the area formula, and by taking $\sigma = \rho/2$, we have
\begin{equation*}
    \int^{\rho}_{\rho/2} \mathcal{H}^{n-1}(\Sigma^{l} \cap \partial B^{G_{c}}_{s}(x)) ds \le c \mathcal{H}^{n-k} (G(x_{0})) \rho^{k},
\end{equation*}
where $c$ depends on $|V \mres B^{G_{c}}_{\rho_{0}}(x)|$. Together with Sard's theorem, we obtain a sequence $\rho_{l} \in (3\rho/4, \rho)$ which $\Sigma^{l}$ intersects $\partial B^{G_{c}}_{\rho_{l}}(x)$ transversally and satisfy
\begin{equation} \label{areaboundrepl}
    \mathcal{H}^{n-1}( \Sigma^{l} \cap \partial B^{G_{c}}_{\rho_{l}}(x)) \le 2c \mathcal{H}^{n-k} (G_{c}(x)) \rho^{k-1} \le c \mathcal{H}^{n-k} (G_{c}(x)) \eta \rho_{0}^{k-1}
\end{equation}
for sufficiently large $l$ where we can take arbitrary small $\eta>0$ by taking small $\rho \in (0,\rho_{0})$. Note that the image $(\Sigma^{l})'$ transverses $\pi(\partial B^{G_{c}}_{\rho_{l}}(x))$.

Hence, we can apply Theorem \ref{genuszero} with $U_{l}' = \pi(B^{G}_{\rho_{l}}(x_{0}))$ to $\{ \Sigma_{l}' \}$ by the isoperimetric inequality on slices and take $A_{l}'$ to be a slightly smaller ball. There exist pairwise disjoint connected closed genus zero surfaces $D_{1}', ..., D_{p_{l}}'$ with $D_{i}' \subset (\Sigma^{l})' \cap U' \cap M'$, $\partial D_{i}' \setminus \partial M' \subset \partial A_{l}'$, and $(\cup_{i=1}^{p_{l}} D_{i}' ) \cap A_{l}' \cap M' = (\Sigma^{l})' \cap A_{l}' \cap M'$ for $i=1, ..., p_{l}$. Moreover, from (\ref{areaboundrepl}) and the isoperimetric inequality on slices, $D_{i}:=\pi^{-1}(D_{i}')$ satisfies
\begin{equation*}
    \sum^{p_{l}}_{i=1} \mathcal{H}^{n}(D_{i}) \le  c' \mathcal{H}^{n-k} (G_{c}(x)) \rho^{k} \le c' \mathcal{H}^{n-k} (G_{c}(x)) \eta \rho_{0}^{k}
\end{equation*}
for some constant $c'$ which only depends on $c$.
Now we can apply the replacement lemma (Lemma \ref{replacement}) and obtain pairwise disjoint connected closed genus zero surfaces $\tilde{D}_{1}', ..., \tilde{D}_{p_{l}}'$ such that
\begin{align*}
\partial \tilde{D}_{i}' \cap \partial M' = \partial D_{i}' \cap \partial M' \text{ and } \tilde{D}_{i}' \subset A' \setminus \partial A'
\end{align*}
and $\tilde{D}_{i}:= \pi^{-1}(\tilde{D}_{i}')$ satisfies
\begin{equation*}
    \sum^{p_{l}}_{i=1} \mathcal{H}^{n}(\tilde D_{i}) \le \sum^{p_{l}}_{i=1} \mathcal{H}^{n}(D_{i}).
\end{equation*}
We take $\rho_{\infty} = \lim_{l \rightarrow \infty} \rho_{l}$, and for any arbitrary small $\sigma>0$ then for sufficiently large $l$, we can take  $(\tilde{\Sigma}^{l})'$ to satisfy
\begin{enumerate}
    \item $(\tilde{\Sigma}^{l})' \cap \pi(B_{\rho_{\infty}-\sigma}^{G_{c}}(x)) = \cup_{i=1}^{p_{l}} \tilde D_{i}' \cap \pi(B_{\rho_{\infty}-\sigma}^{G_{c}}(x))$;
    \item $\partial (\tilde{\Sigma}^{l})' \cap \pi(\overline{B}_{\rho_{\infty}-\sigma}^{G_{c}}(x)) = \emptyset$;
\end{enumerate}
and we can connect surfaces between $\partial \pi(B_{\rho_{\infty}-\sigma}^{G_{c}}(x))$ and $\partial \pi(B_{\rho_{\infty}}^{G}(x))$ by adding small surfaces and obtain a minimizing sequence of genus zero surfaces on the orbit space. $\{ (\tilde{\Sigma}^{l})' \}$ satisfies the conditions of Theorem \ref{regularityminiconti} and obtain the desired conclusion by applying Theorem \ref{regularityminiconti}.
\end{proof}
\section{Discrete isometry actions}
In this section, we discuss the regularity of isotopy minimization problem when the isometry group $G$ is a product of the connected Lie group $G_{c}$ and the finite isometry group $G_{f}$. Recall that we proved the regularity when the orbit space is $M':=M/G_{c}$ which is a $3$-manifold with boundary in the last section. We assumed that $\tilde{\mathcal{S}}$ intersects smooth point of $\partial M'$ in Section 2. As in the discrete isometry equivariant theory of Ketover (\cite{ketover2016equivariant}, \cite{ketover2016free}), we focus on the regularity in a neighborhood of singular geodesic segment of isotropy $\mathbb{Z}_{n}$. The structure of this section will consist of necessary changes in each step in Section $3$.
\subsection{Regularity on exceptional orbits by orientation preserving isotopies}
After recalling that we proved the regularity on $\pi^{-1}(M' \setminus \tilde{\mathcal{S}})$ in the previous section, we take a point $x \in \pi^{-1}(\tilde{\mathcal{S}} \cap \text{Int}(M'))$ first. Then $x$ is on exceptional orbits in $M$ with the isometry group $G$ by the definition. We can directly apply Theorem \ref{intregular} for these exceptional orbits and obtain the following theorem. The stability follows from Theorem \ref{equivstablity} and Proposition 4.6 in \cite{ketover2016equivariant}.
\begin{thm} \label{discreteexcep} Suppose $G_{f}$ to be a orientation preserving isometry and $\{ \Sigma^{\prime}_{l} \}$ to be a minimizing sequence of surfaces for the $G_{f}$-equivariant isotopy minimization problem in the $3$-dimensional open ball $U'$ on the area of $\{ \Sigma_{l} \} : = \{ \pi^{-1} (\Sigma^{\prime}_{l}) \}$ in $U:= \pi^{-1}(U')$ which converges to a varifold $V$. Then there exists a $(G_{c} \times G_{f})$-stable embedded minimal hypersurface $\Gamma$ with $\overline{\Gamma} \setminus \Gamma \subset \partial U$ and $V= \Gamma$ in $U$. Moreover, if no finite isotropy group induced by $G_{f}$ is $\mathbb{Z}_{2}$ in $U'$ or $\pi(spt(V))$ does not contain a segment of $\tilde{\mathcal{S}}$, $\Gamma$ is a stable minimal hypersurface.
\end{thm}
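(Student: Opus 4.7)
The plan is to reduce directly to Ketover's interior discrete-equivariant regularity theorem (Theorem \ref{intregular}) via the Hsiang-Lawson correspondence (Theorem \ref{deformationminimal}), so that the proof is essentially a translation argument combining ingredients already established.

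First, I would pass to the orbit space. Since $U' \subset \text{Int}(M')$, the weighted metric $g'$ is a smooth Riemannian metric on $U'$: its only degeneracy sits on $\partial M'$, and we are safely in the interior. The finite group $G_{f}$ descends as an orientation-preserving isometry of $(U', g')$. By Theorem \ref{deformationminimal}, a $G_{c}$-equivariant embedded hypersurface in $U = \pi^{-1}(U')$ is minimal in $(M,g)$ if and only if its projection is minimal in $(U', g')$, and the $n$-area of a $G_{c}$-equivariant hypersurface in $U$ corresponds monotonically to the $2$-area of its image in $(U', g')$. Moreover, by the slice theorem (Theorem \ref{slicethm}), every $G_{c} \times G_{f}$-equivariant isotopy of $U$ descends to a $G_{f}$-equivariant isotopy of $U'$, and conversely every $G_{f}$-equivariant isotopy of $U'$ lifts equivariantly to $U$. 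Hence $\{\Sigma'_{l}\}$ is a minimizing sequence for the $G_{f}$-equivariant isotopy problem on the smooth $3$-ball $(U', g')$ in exactly Ketover's sense.

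Next, I would apply Theorem \ref{intregular} directly on $(U', g')$. This yields a smooth embedded $G_{f}$-equivariant minimal surface $\Gamma'$ in $U'$ with $\overline{\Gamma'} \setminus \Gamma' \subset \partial U'$ such that $\pi_{\sharp} V = \Gamma'$. Lifting via $\pi$, the slice theorem makes $\pi$ a smooth equivariant fibration over principal and exceptional orbits of $G_{c}$; therefore $\Gamma := \pi^{-1}(\Gamma')$ is a smooth embedded $(G_{c} \times G_{f})$-equivariant hypersurface with $V = \Gamma$ in $U$, and the reverse implication of Theorem \ref{deformationminimal} ensures $\Gamma$ is minimal in $(M,g)$.

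For the stability assertion, I would combine Lemma \ref{equivstablity} (which gives that $G_{c}$-stability and stability agree for $2$-sided smooth embedded minimal hypersurfaces) with Proposition 4.6 of \cite{ketover2016equivariant} (which handles the analogous discrete upgrade). Together these produce $(G_{c} \times G_{f})$-stability of $\Gamma$ with respect to the pullback isotopy class. If in addition no $\mathbb{Z}_{2}$ isotropy is present in $U'$, or if $\pi(\text{spt}(V))$ avoids the $\mathbb{Z}_{2}$-segments of $\tilde{\mathcal{S}}$, then the space of $(G_{c} \times G_{f})$-equivariant normal variations exhausts all normal variations, so $(G_{c} \times G_{f})$-stability upgrades to ordinary stability. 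I expect the only subtle step to be confirming that the correspondence between equivariant isotopies on $U$ and $U'$ preserves both the minimization property and the varifold convergence of $\pi^{-1}(\Sigma'_{l})$ to $V$; this is local and follows directly from the slice theorem combined with the area transformation under the weighted metric, so it introduces no essential new difficulty beyond what was already set up in Section 2.
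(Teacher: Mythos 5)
Your proposal is correct and follows essentially the same route as the paper: the paper likewise reduces the statement to Theorem \ref{intregular} (Meeks--Simon--Yau combined with Ketover's discrete-equivariant interior regularity, applicable because $U'\subset \mathrm{Int}(M')$ where the weighted metric $g'$ is nondegenerate, with points of $\pi^{-1}(\tilde{\mathcal{S}}\cap \mathrm{Int}(M'))$ being exceptional orbits of $G$), and derives the stability assertions from Lemma \ref{equivstablity} together with Proposition 4.6 of \cite{ketover2016equivariant}. One minor caveat: your justification that equivariant normal variations ``exhaust all normal variations'' in the non-$\mathbb{Z}_2$ case is not literally true (they form a proper subspace); the correct mechanism, which is exactly what the cited Lemma \ref{equivstablity} supplies, is that when the group action preserves the two sides of $\Gamma$ the first Jacobi eigenfunction can be chosen equivariant, so $G$-stability implies stability.
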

\subsection{Regularity on singular orbits by $\mathbb{Z}_{n}$-actions} We consider the regularity on points on singular orbits. That being said, we focus on the regularity on points in $\pi^{-1}( \tilde{\mathcal{S}} \cap  \partial M')$. Note that we assumed that the intersection points of $\tilde{\mathcal{S}}$ and $\partial M'$ are on smooth points i.e. $T_{x'}\partial M'$ is a smooth plane on $x' \in \tilde{\mathcal{S}} \cap \partial M'$.
\begin{thm} \label{znaction} Suppose $\{ \Sigma^{\prime}_{l} \}$ is a minimizing sequence of genus $0$ surfaces for the $\mathbb{Z}_{n}$-equivariant isotopy minimization problem in a $3$-dimensional open half-ball $U'= B_{r}(x') \cap M'$ where $x' \in \partial M'$, on the $n$-dimensional area of $\{ \Sigma_{l} \} : = \{ \pi^{-1} (\Sigma^{\prime}_{l}) \}$ in $U:= \pi^{-1}(U')$, which converges to a varifold $V$. Also assume that the relative boundary curves of $\partial \Sigma^{\prime}_{l} \setminus \partial M' = \cup_{j=1}^{m} \gamma'_{j}$ for all $l$ where each $\gamma'_{j}$ is a Jordan curve on $\partial U'$. Suppose also that $\mathbb{Z}_{n}$ acts freely on $\cup_{j=1}^{m} \gamma_{j}'$. Then the points in the preimage of boundary intersection of singular locus $\pi^{-1}(U' \cap \partial M' \cap \tilde{\mathcal{S}})$ is not in $spt(V)$. Hence, $V$ is a smooth embedded $(G_{c} \times \mathbb{Z}_{n})$-equivariant minimal hypersurface and the boundary of $spt(\pi_{\sharp}V)$ is $\cup_{j=1}^{m} \gamma'_{j}$ and genus of $spt(\pi_{\sharp}V)$ is $0$. The multiplicity of convergence is $1$.
\begin{proof}
Suppose there exists $x \in \pi^{-1}(U' \cap \partial M' \cap \tilde{\mathcal{S}}) \cap spt(V)$. Notice that $x'= \pi(x)$ is an isolated point in the discrete set $U' \cap \partial M' \cap \tilde{\mathcal{S}}$. Take small $\rho>0$ to be $B_{\rho}(x') \cap \partial M' \cap \tilde{\mathcal{S}} = x'$ in $\overline{g}$ metric. By Theorem \ref{regularitycontiLie} and Theorem \ref{discreteexcep}, we have a regularity of $V$ away from $G(x)$. Let us take a tangent cone $C$ at $x$ on $M$ and denote the connected component of $G(x)$ containing $x$ to be $(G(x))_{x}$. By Theorem \ref{slicethm}, we can split into $C = C_{\mathcal{S}} \times T_{x}(G(x))_{x}$. Moreover, $C$ is a $(G \times \mathbb{Z}_{n})$-stable cone.

Assume that the blow-up of $\pi^{-1}(\tilde{\mathcal{S}})$ at $x$ is contained in $C$. Then by Theorem \ref{discreteexcep} and Lemma 3.4 in \cite{ketover2016equivariant}, we have $\tilde{\mathcal{S}} \cap U' \subset \pi(spt(V))$ and $n=2$. Hence there exists $y' \in (\tilde{\mathcal{S}} \cap \pi(spt(V)) \cap (\partial U' \setminus \partial M'))$ where $y' \in \gamma_{j}'$ for some $j$. Then $\tau(y') = y'$ where $\tau$ is a generator of $\mathbb{Z}_{2}$-action and this violates to the fact that $\mathbb{Z}_{2}$ freely acts on the boundary curves.

Now consider the case $x' \in \pi(spt(V)) \cap \partial M'$. Since the link $C_{\mathcal{S}} \cap S^{k}(1)$ is a $2$-sided embedded minimal hypersurface, the normal vector field $\nu_{\mathcal{S}}$ on $C_{\mathcal{S}} \setminus \{ 0 \}$ is well-defined and $\nu$ on $C \setminus (\{ 0 \} \times T_{x}(G(x))_{x})$ as well. From the disjointness of $C$ with the blowup of $\pi^{-1}(\tilde{\mathcal{S}}) \setminus \{x' \}$, for $g \in G_{c} \times \{ e \} $, we also have $g_{\sharp} \nu = \nu$. We now apply the proof of Lemma 7 in \cite[Section 2]{wang2022min} since our variations are only defined on regular points of $C$ and we obtain the stability of $C_{\mathcal{S}}$ and $C$ from $(G_{c} \times \mathbb{Z}_{n})$-stability. By applying Simons' classification \cite{simons1968minimal} of stable embedded minimal hypercones in $\mathbb{R}^{m+1}$ for $m \le 6$ to $C_{\mathcal{S}}$, we obtain that $C$ is a hyperplane,  and $\pi_{\sharp}C$ is a $2$-dimensional half-plane meeting $\partial M'$ orthogonally by Theorem \ref{secondcontireg}. In this case, $spt(V)$ contains $\pi^{-1}(\tilde{\mathcal{S}})$ and this cannot happen by previous arguments and we get a contradiction.

Since the points of the preimage of singular locus $\tilde{\mathcal{S}}$ are not on the support of the limit varifold $V$, the regularity, genus $0$ property, and multiplicity $1$ property is obtained directly from Proposition 4.14 of \cite{ketover2016equivariant} and Theorem \ref{regularitycontiLie}.
\end{proof}
\end{thm}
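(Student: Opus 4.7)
The plan is to proceed by contradiction and reduce the whole statement to a tangent cone analysis at preimages of the intersection points $U' \cap \partial M' \cap \tilde{\mathcal{S}}$. Away from such preimages the regularity of $V$ is already in hand: on $\pi^{-1}(\mathrm{Int}(M') \setminus \tilde{\mathcal{S}})$ it is Theorem \ref{regularitycontiLie} together with the interior discrete-equivariant regularity of Theorem \ref{intregular}, and near preimages of ordinary $\mathbb{Z}_n$-singular segments it follows from Theorem \ref{discreteexcep}. Hence the entire burden reduces to showing that no point $x \in spt(V)$ can lie over an intersection point $x' = \pi(x) \in U' \cap \partial M' \cap \tilde{\mathcal{S}}$. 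By the smoothness hypothesis on $\partial M'$ at such points, the set $U' \cap \partial M' \cap \tilde{\mathcal{S}}$ is discrete and $x'$ is isolated in it.

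Assuming such an $x$ exists, I would take any tangent cone $C$ to $V$ at $x$ and use the Slice Theorem \ref{slicethm} to split it equivariantly as $C = C_{\mathcal{S}} \times T_x(G_c(x))_x$, with $C_{\mathcal{S}}$ a cone in the slice. Because the underlying sequence is minimizing on the quotient and the replacement/filigree machinery of Section~3 passes to blow-ups, the associated $G_c$-bundle cone $C_{G_c,x}$ over $G_c(x)$ is area-minimizing in its class, and $C$ is in particular a $(G_c \times \mathbb{Z}_n)$-stable cone. I then split into cases according to whether the blow-up of $\pi^{-1}(\tilde{\mathcal{S}})$ at $x$ is contained in $C$. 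If it is contained, then $\tilde{\mathcal{S}} \cap U' \subset \pi(spt(V))$; combined with the local structure of singular segments (Lemma~3.4 of \cite{ketover2016equivariant}) this forces $n=2$, and following the arc of $\tilde{\mathcal{S}}$ out to $\partial U' \setminus \partial M'$ produces a point $y' \in \gamma_j'$ fixed by the $\mathbb{Z}_2$-generator, contradicting the hypothesis that $\mathbb{Z}_n$ acts freely on $\bigcup_j \gamma_j'$. Otherwise, the blow-up of $\pi^{-1}(\tilde{\mathcal{S}}) \setminus \{x\}$ is disjoint from $C$, so $C$ is two-sided on its regular part with a globally defined $G_c$-invariant unit normal; then the argument of Wang in Lemma~2.8 of \cite{wang2022min} (adapted as in Theorem \ref{secondcontireg}) upgrades $(G_c \times \mathbb{Z}_n)$-stability of $C$ to honest two-sided stability of $C_{\mathcal{S}}$ in the slice, which is Euclidean of dimension $\le 7$. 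Simons' classification \cite{simons1968minimal} then forces $C_{\mathcal{S}}$ to be a hyperplane, and the orthogonal-meeting condition of Theorem \ref{secondcontireg} forces $\pi_\sharp C$ to be a half-plane perpendicular to $\partial M'$; but this again places $\pi^{-1}(\tilde{\mathcal{S}})$ into $spt(V)$, looping back into the previous case and the same contradiction.

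Once $\pi^{-1}(U' \cap \partial M' \cap \tilde{\mathcal{S}})$ is excluded from $spt(V)$, the remaining statements follow more or less mechanically: smoothness and embeddedness come from Theorem \ref{discreteexcep} and Theorem \ref{regularitycontiLie}, while the genus zero property of $spt(\pi_\sharp V)$, the identification of its boundary with $\bigcup_j \gamma_j'$, and the multiplicity-one property follow from Proposition~4.14 of \cite{ketover2016equivariant} applied to the image problem on $M'$, transferred to $V$ via the $G_c$-equivariant lift. The decisive step of the plan, and the main technical obstacle, is the stability upgrade: invoking Simons requires classical two-sided stability of $C_{\mathcal{S}}$, whereas a priori we only have the weaker $(G_c \times \mathbb{Z}_n)$-stability, and the two are equivalent only when no $\mathbb{Z}_2$-factor in $G_c \times \mathbb{Z}_n$ flips the chosen normal along the cone. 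This non-flipping is precisely what the case split above arranges, which is why ruling out the \textit{contained} case first, via the free action hypothesis on $\bigcup_j \gamma_j'$, is the structurally indispensable move.
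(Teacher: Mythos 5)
Your proposal is correct and follows essentially the same route as the paper's proof: reduce to excluding $spt(V)$ from $\pi^{-1}(U'\cap\partial M'\cap\tilde{\mathcal{S}})$, split the tangent cone via the Slice Theorem, rule out the case where the blow-up of the singular locus lies in $C$ using Lemma 3.4 of \cite{ketover2016equivariant} and the free action on $\bigcup_j\gamma_j'$, and otherwise upgrade $(G_c\times\mathbb{Z}_n)$-stability to genuine stability of $C_{\mathcal{S}}$ (via the two-sidedness of the normal and Wang's lemma) so that Simons' classification and the orthogonality statement of Theorem \ref{secondcontireg} loop back to the excluded case. The concluding appeal to Proposition 4.14 of \cite{ketover2016equivariant} for regularity, genus zero, and multiplicity one also matches the paper.
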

Now we prove the case where $\mathbb{Z}_{n}$ acts non-freely. By Lemma 3.4 in \cite{ketover2016equivariant}, we only need to consider the case that $\mathbb{Z}_{2}$ acts on $\cup_{j=1}^{m} \gamma_{j}'$.
\begin{thm} \label{z2action}
Suppose $\mathbb{Z}_{2}$ acts on a topological 3-ball $U' =  B_{r}(x') \cap M'$ for $x' \in \tilde{\mathcal{S}} \cap \partial M'$ which has a rotation isometry with angle $\pi$ about the line $\tilde{\mathcal{S}}$. Suppose $\{ \Sigma^{\prime}_{l} \}$ is a minimizing sequence of genus $0$ surfaces for the $\mathbb{Z}_{2}$-equivariant isotopy minimization problem in $U'$, on the area of $\{ \Sigma_{l} \} : = \{ \pi^{-1} (\Sigma^{\prime}_{l}) \}$ in $U:= \pi^{-1}(U')$, which converges to a varifold $V$. Also assume that the relative boundary curves of $\partial \Sigma^{\prime}_{l} \setminus \partial M' = \cup_{j=1}^{m} \gamma'_{j}$ for all $l$ where each $\gamma'_{j}$ is a Jordan curve on $\partial U'$ so that we put $\mathbb{Z}_{2}$ acts freely on the curves $\{ \gamma'_{j} \}_{j=2}^{m}$ but non-freely on $\gamma'_{1}$. Then $V$ is a smooth embedded $(G_{c} \times \mathbb{Z}_{2})$-equivariant minimal hypersurface and the boundary of $spt(\pi_{\sharp}(V))$ is $\cup_{j=1}^{m} \gamma'_{j}$ and genus of $spt(\pi_{\sharp}(V))$ is $0$. The multiplicity of convergence is $1$.
\begin{proof}
By Theorem \ref{regularitycontiLie}, Theorem \ref{discreteexcep}, and Proposition 7.3 in \cite{ketover2016free}, it suffices to prove the regularity on $spt(V) \cap \pi^{-1}(\tilde{\mathcal{S}} \cap \partial M')$. Take $x' \in \pi(spt(V)) \cap \tilde{\mathcal{S}} \cap \partial M'$ and pick one point $x \in \pi^{-1}(x')$. We take a tangent cone $C$ of $V$ at $x$. Then by Theorem \ref{slicethm}, $T_{x}C = C_{\mathcal{S}} \times T_{x} (G(x))_{x}$.

We consider a pushforward cone on half-space $C':= \pi_{\sharp}C \subset T_{x'} M'$. Since we proved regularity on points except for $x'$ and $x'$ is on the top strata, $C'$ has a half-cone structure. Note that $C'$ contains the dilation of $\tilde{\mathcal{S}}$ and we consider the link $C' \cap S^{2}(1)$. Then since we have the regularity on $C' \setminus x'$ and we know $C'$ intersects $T_{x'} \partial M'$ orthogonally on $C' \setminus x'$ by the orthogonality statement of Theorem \ref{secondcontireg}, we have $C' \cap S^{2}(1)$ is a union of embedded curve segments which intersects boundary circle $S^{2}(1) \cap T_{x'} \partial M'$ orthogonally. By the same argument with the Frankel property argument in the proof of Theorem \ref{secondcontireg}, we obtain that $C' \cap S^{2}(1)$ has a single embedded curve with a $\mathbb{Z}_{2}$-symmetry with angle $\pi$ rotation.

Notice that the preimage of half-plane cone whose link is a half-circle is a hyperplane in $T_{x}M$ which is minimal since $x$ is on a top strata. Denote $C' \cap S^{2}(1)= \gamma$ and $C' \cap S^{2}(1) \cap T_{x'} \partial M'= \{ p_{1}, p_{2} \}$. Since $\gamma$ intersects $S^{2}(1) \cap T_{x'} \partial M'$ orthogonally and by the uniqueness of geodesic, a half-circle is the only configuration which makes $C$ to be a minimal cone. Hence, the tangent cone $C$ at $x$ is a hyperplane and we proved the regularity at $x$. Remaining statements follow from Proposition 7.3 in \cite{ketover2016free}.
\end{proof} 
\end{thm}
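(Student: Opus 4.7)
The plan is to reduce the problem to a tangent cone analysis at the single distinguished point $x' \in \tilde{\mathcal{S}} \cap \partial M' \cap U'$ and to identify the tangent cone of $V$ at points of $\pi^{-1}(x')$ as a hyperplane. Away from $\pi^{-1}(x')$, regularity is already in hand: on principal orbits and on interior singular-locus segments, Theorem~\ref{regularitycontiLie} and Theorem~\ref{discreteexcep} apply; on $\pi^{-1}(\partial M' \setminus \tilde{\mathcal{S}})$ regularity with orthogonal boundary intersection follows from Theorem~\ref{regularitycontiLie} together with Theorem~\ref{secondcontireg}; and along the boundary curves $\gamma'_{j}$, $j \geq 2$, where $\mathbb{Z}_{2}$ acts freely, Proposition~7.3 of~\cite{ketover2016free} applies directly. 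The genus-zero and multiplicity-one conclusions will follow from that same proposition once regularity at $x'$ is in hand.

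To handle $x'$, I fix $x \in \pi^{-1}(x')$ and extract a tangent cone $C$ of $V$ at $x$. The slice theorem (Theorem~\ref{slicethm}) gives a product decomposition $T_{x} C = C_{\mathcal{S}} \times T_{x}(G(x))_{x}$, and the $(G_{c} \times \mathbb{Z}_{2})$-equivariant stability of $C$ descends to $C_{\mathcal{S}}$ via Lemma~\ref{equivstablity}. Pushing forward, $C' := \pi_{\sharp} C$ is a two-dimensional minimal cone in the half-space $T_{x'} M'$, invariant under the $\mathbb{Z}_{2}$-rotation by $\pi$ about the blow-up ray of $\tilde{\mathcal{S}}$. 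Since the regularity is already established on $C \setminus \{x\}$, and since Theorem~\ref{secondcontireg} provides orthogonal intersection with the smooth boundary plane $T_{x'}\partial M'$ at every top-stratum point, the link $C' \cap S^{2}(1)$ is a finite union of smooth embedded arcs meeting the equatorial great circle $S^{2}(1) \cap T_{x'}\partial M'$ orthogonally at their endpoints.

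The heart of the proof is to show that this link consists of a single half-great-circle. First, the Frankel-type argument used in the proof of Theorem~\ref{secondcontireg} rules out two or more disjoint arcs: the induced round metric on $S^{2}(1)$ has strictly positive Ricci curvature, so any two disjoint embedded minimal geodesics would violate the Frankel intersection property. Hence the link is a single embedded arc $\gamma$. Second, the $\mathbb{Z}_{2}$-rotation by $\pi$ about the $\tilde{\mathcal{S}}$-axis fixes $\gamma$ as a set, and combined with the orthogonality of $\gamma$ to the equator at both endpoints, this forces those two endpoints to be antipodal across the axis. The unique such geodesic arc in $S^{2}(1)$ is a half-great-circle, so $C'$ is a flat half-plane. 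Pulling back through the slice decomposition then shows $C$ is a hyperplane in $T_{x}M$.

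The main obstacle I anticipate is ensuring that the orthogonal boundary intersection really holds everywhere along the link outside the apex, because this underlies both the Frankel step and the antipodal-endpoint step. This in turn relies crucially on the hypothesis that $T_{x'}\partial M'$ is a smooth plane (so the top-stratum conclusion of Theorem~\ref{secondcontireg} applies) and on having the regularity of $V$ away from $\pi^{-1}(x')$ already secured before attacking the tangent cone. Once $C$ is identified as a hyperplane, a standard Allard regularity application in the $(G_{c} \times \mathbb{Z}_{2})$-equivariant setting upgrades $V$ to a smooth embedded minimal hypersurface in a neighborhood of $G(x)$, and the boundary, genus, and multiplicity statements transfer from the minimizing sequence as in Proposition~7.3 of~\cite{ketover2016free}.
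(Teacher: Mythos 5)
Your proposal follows essentially the same route as the paper's proof: reduce to the tangent cone at the distinguished point $x'$, use the slice decomposition to push the cone forward to a two-dimensional half-cone in $T_{x'}M'$, analyze the link in the upper hemisphere (arcs meeting the equator orthogonally by Theorem~\ref{secondcontireg}), invoke the Frankel argument to reduce to a single arc, and use orthogonality plus geodesic uniqueness to conclude the link is a half-great-circle, hence $C$ is a hyperplane, with the remaining claims delegated to Proposition~7.3 of~\cite{ketover2016free}. The only divergence is your passing appeal to Lemma~\ref{equivstablity} to descend stability to $C_{\mathcal{S}}$ --- which the paper deliberately avoids here, since $\mathbb{Z}_{2}$-stability need not imply stability along the singular axis --- but your argument never actually uses that claim, so it does not affect correctness.
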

Finally, we obtain the following regularity of $G$-equivariant isotopy minimization problem.
\begin{thm}
\label{regularitydiscLie} Let $U'$ be an open set satisfying (1)-(3) in Section 3 and have a $\mathbb{Z}_{n}$-symmetry. Suppose $\{(\Sigma')^{i}\}$ to be a minimizing sequence for the $\mathbb{Z}_{n}$-equivariant minimization problem $(\Sigma', \mathcal{I}(U'))$ so that $(\Sigma')^{i}$ intersects $\partial M'$ transversally for each $i$, and $\Sigma^{i} : = \pi^{-1}((\Sigma')^{i})$ converges to a limit (in a varifold sense) to $V \in IV_{n}(M)$. Then the following holds:
\begin{enumerate}
    \item $V = \Gamma$, where $\Gamma$ is a compact smooth embedded $(G_{c} \times \mathbb{Z}_{n})$-equivariant minimal hypersurface in $\pi^{-1}(U') \cap M$ whose boundary is contained in $\pi^{-1}(\partial U' \cap M') \cap M$;
    \item $V \mres (M \setminus U) =  \pi^{-1}(\Sigma') \mres (M \setminus U)$;
    \item $\Gamma$ is $(G_{c} \times \mathbb{Z}_{n})$-stable with respect to the pullback isotopy of $\mathcal{I}(U')$. ($(G_{c} \times \mathbb{Z}_{n})$-stability)
\end{enumerate}
\begin{proof} We apply Proposition \ref{gammareduction} with a $G_{f}$-symmetry, and we can apply Theorem \ref{genuszero} and Lemma \ref{replacement} with a $G_{f}$-symmetry as in the proof of Theorem \ref{regularitycontiLie}. By Lemma 3.3 in \cite{ketover2016equivariant}, it is enough to consider when $G_{f}= \mathbb{Z}_{n}, \mathbb{D}_{n}$ and we obtain the regularity statement by applying the regularity theorems (Theorem \ref{discreteexcep}, Theorem \ref{znaction}, and Theorem \ref{z2action}) in place of Theorem \ref{regularityminiconti} in the proof of Theorem \ref{regularitycontiLie}.
\end{proof}
\end{thm}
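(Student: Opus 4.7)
The plan is to mirror the proof of Theorem \ref{regularitycontiLie} step by step, but carry the $\mathbb{Z}_{n}$-symmetry through each reduction. First I would upgrade the $\gamma$-reduction of Proposition \ref{gammareduction} to a $\mathbb{Z}_{n}$-equivariant version. Since $\mathcal{I}(U')$ already consists of $\mathbb{Z}_{n}$-equivariant isotopies, the cut-and-paste procedure from \cite{meeks1982embedded} produces surfaces that remain invariant, and the counting argument bounding the number of reductions carries over verbatim. This lets me assume the minimizing sequence $\{(\Sigma')^{i}\}$ is strongly $(\gamma,U')$-irreducible and, via Theorem \ref{genuszero} applied on small $\mathbb{Z}_{n}$-invariant neighborhoods, a union of equivariant genus zero pieces near each limit point.

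Next, for each $x' \in \pi(\mathrm{spt}(V))$ I would pick a small $\mathbb{Z}_{n}$-invariant neighborhood $\pi(B^{G_{c}}_{\rho_{l}}(x))$ with good slice properties, using the coarea formula and Sard's theorem exactly as in the proof of Theorem \ref{regularitycontiLie} to obtain transverse intersections with
\[
\mathcal{H}^{n-1}(\Sigma^{l} \cap \partial B^{G_{c}}_{\rho_{l}}(x)) \le c \,\mathcal{H}^{n-k}(G_{c}(x)) \,\eta \,\rho_{0}^{k-1}.
\]
Invoking the equivariant versions of Theorem \ref{genuszero} and Lemma \ref{replacement}, I produce a $(G_{c} \times \mathbb{Z}_{n})$-equivariant local minimizing sequence of genus zero surfaces. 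The key point is that these reductions and replacements are performed inside $\mathbb{Z}_{n}$-invariant balls and thus preserve the full symmetry automatically.

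The main obstacle is proving regularity at a point in the preimage of the intersection $\tilde{\mathcal{S}} \cap \partial M'$, i.e.\ where the $\mathbb{Z}_{n}$-singular locus meets a singular orbit of $G_{c}$. Here neither Theorem \ref{regularityminiconti} (which handles only $G_{c}$-symmetry on singular orbits) nor Theorem \ref{discreteexcep} (which treats interior discrete-singularity points as exceptional orbits of $G_{c} \times \mathbb{Z}_{n}$) applies directly. My plan is to appeal to Lemma 3.3 of \cite{ketover2016equivariant} to reduce the local isotropy to $\mathbb{Z}_{n}$ or $\mathbb{D}_{n}$, and then split into cases: Theorem \ref{discreteexcep} on $\tilde{\mathcal{S}} \cap \mathrm{Int}(M')$; Theorem \ref{znaction} at boundary points where $\mathbb{Z}_{n}$ acts freely on the relative boundary curves $\partial (\Sigma^{l})' \setminus \partial M'$; and Theorem \ref{z2action} at boundary points where $\mathbb{Z}_{2}$ fixes one such curve. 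The assumption that $T_{x'}\partial M'$ is a smooth plane whenever $x' \in \tilde{\mathcal{S}} \cap \partial M'$ is exactly what is needed to carry out the tangent cone analysis in Theorem \ref{z2action}, where the link is forced to be a single half-circle meeting $S^{2}(1) \cap T_{x'}\partial M'$ orthogonally and hence the tangent cone is a hyperplane.

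Finally, I would stitch the local regularity together to obtain that $V = \Gamma$ is a smooth embedded $(G_{c} \times \mathbb{Z}_{n})$-equivariant minimal hypersurface in $\pi^{-1}(U') \cap M$, derive the boundary condition (2) from the fact that every isotopy in $\mathcal{I}(U')$ is supported in $U'$, and conclude $(G_{c} \times \mathbb{Z}_{n})$-stability by the standard variational argument applied along pullback isotopies of $\mathcal{I}(U')$, exactly as in the continuous case Theorem \ref{regularitycontiLie}.
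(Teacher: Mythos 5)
Your proposal follows essentially the same route as the paper: run the $G_{f}$-equivariant $\gamma$-reduction, genus-zero theorem, and replacement lemma as in the proof of Theorem \ref{regularitycontiLie}, reduce the local isotropy to $\mathbb{Z}_{n}$ or $\mathbb{D}_{n}$ via Lemma 3.3 of \cite{ketover2016equivariant}, and substitute Theorems \ref{discreteexcep}, \ref{znaction}, and \ref{z2action} for Theorem \ref{regularityminiconti} to handle exceptional orbits and the points of $\pi^{-1}(\tilde{\mathcal{S}}\cap\partial M')$. You also correctly identify where the smooth-boundary assumption on $\tilde{\mathcal{S}}\cap\partial M'$ enters, so the argument matches the paper's.
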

\section{Min-max theory}
In this section, we formulate the equivariant min-max setting on orbit spaces. While our min-max set up works for higher parameter min-max theory as in other (equivariant) min-max settings, we develop the theory based on the one-parameter min-max for simplicity. A $G_{f}$-sweepout on $M'$ is a $1$-parameter family of closed sets $\{ \Sigma_{t} \}_{t \in [0,1]}$ if the set satisfies the following: 
\begin{enumerate}
\item $\Sigma_{t}$ varies continuously in the Hausdorff topology;
\item $\Sigma_{t}$ is a $G_{f}$-equivariant properly embedded surface of $M'$ for $0<t<1$ which varies smoothly; 
\item $\Sigma_{0}$ and $\Sigma_{1}$ are the union of smooth and properly embedded surface with a collection of $1$-dimensional arcs.
\end{enumerate}
Let us call an isotopy $\{ \psi_{t} \}_{t \in [0,1]}$ to be a $G_{f}$-isotopy if each $\psi_{t}$ is a diffeomorphism which commutes with the $G_{f}$-action, and denote the subset consisting of $G_{f}$-equivariant isotopies of $\mathcal{I}(U')$ by $\mathcal{I}_{G_{f}}(U')$. Given a $G_{f}$-sweepout $\{ \Lambda_{t} \}_{t \in [0,1]}$ of $M'$, we define its $G_{f}$-equivariant saturation $\Pi$ by
\begin{equation*}
    \Pi : = \{ \psi_{t}(\Lambda_{t}) \, | \, \{ \psi_{t} \}_{t \in [0,1]} \text{ is an } G_{f}\text{-isotopy with } \psi_{t} = \text{id for } t=0 \}.
\end{equation*}
Now we define the min-max width as follows:
\begin{equation} \label{widthdef}
    W^{G_{f}}_{\Pi} = \inf_{\{ \Lambda_{t} \} \in \Pi} \sup_{t \in [0,1]} \mathcal{H}^{n}(\pi^{-1}(\Lambda_{t})).
\end{equation}
We denote the sequence of $G_{f}$-sweepouts $\{\Lambda^{i} \}$ to be a \emph{minimizing sequence} if $\lim_{i \rightarrow \infty} \sup_{t \in [0,1]} \mathcal{H}^{n} (\pi^{-1}(\Lambda^{j}_{t})) = W^{G_{f}}_{\Pi}$. We define a \emph{min-max sequence} to be a sequence of properly embedded surfaces $\{\Lambda^{i} _{t_{i}}\}$ to be a min-max sequence if $\mathcal{H}^{n} (\pi^{-1}(\Lambda^{i}_{t_{i}}))$ converges to $W^{G_{f}}_{\Pi}$, where $t_{i} \in [0,1]$ and $\{\Lambda^{i} \}$ is a minimizing sequence.

Let $V_{n}^{G}(M)$ be a space of $G$-equivariant varifolds and $IV_{n}^{G}(M)$ be a space of integral $G$-equivariant varifolds on $(M,g)$ and endow an $F$-metric on the space of varifolds. Let the \emph{critical set} $\Lambda(\Pi)$ be a set of stationary varifolds can be obtained by the varifold limit of the pull-back of min-max sequence $\{\pi^{\sharp} \Lambda_{i}^{j}\}$.
\subsection{Pull-tight procedure}
Now we discuss the pull-tight procedure by directly following the arguments in \cite{ketover2016equivariant} and \cite{wang2022min} for equivariant versions (see Proposition 4.1 of \cite{colding2003min} for the general version). We have the following version of the pull-tight procedure.
\begin{prop} \label{pulltight}
    There exists a minimizing sequence of $G_{f}$-sweepouts of $M'$ so that any min-max sequence obtained from it converges to a stationary varifold.
    \begin{proof}
    By Theorem \ref{deformationminimal}, the stationarity of $n$-dimensional area functional over $G_{f}$-equivariant variations on $M'$ corresponds to $G$-stationarity on $G$-equivariant variation on $M$. We can conclude by following the remaining arguments in \cite{ketover2016equivariant} and \cite{wang2022min}.
    \end{proof}
\end{prop}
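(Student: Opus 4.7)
The plan is to adapt the standard pull-tight procedure of Colding--De Lellis to the cohomogeneity two equivariant setting, using Theorem \ref{deformationminimal} to translate between the two descriptions of the area functional. First I would reduce the question of $G_f$-equivariant stationarity of the area functional on $(M',g')$ to $(G_c \times G_f)$-equivariant stationarity on $(M,g)$: if $V' \in V_2(M')$ arises as $\pi_\sharp V$ for a $(G_c\times G_f)$-equivariant $V \in V_n^G(M)$, then by Theorem \ref{deformationminimal} together with the cited Liu and Wang lemmas (the equivalence of $G_c$-stationarity and stationarity), the relevant first-variation conditions for equivariant deformations on $M'$ and on $M$ coincide.

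Next I would construct a continuous family of deformations of $G$-equivariant varifolds that decreases area away from the set of stationary varifolds. For each non-stationary $V_0 \in IV_n^G(M)$, the failure of stationarity yields some $G$-equivariant vector field $X_{V_0}$ on $M$, compactly supported in the regular locus of the $G$-action, with $\delta V_0(X_{V_0}) \le -c(V_0) < 0$; by $F$-metric continuity of the first variation this estimate persists on an $F$-metric ball around $V_0$. Using a partition of unity on $\{V \in IV_n^G(M) : F(V,\Lambda_{\mathrm{stat}}) \ge \epsilon\}$ I piece these into a single continuous assignment $V \mapsto X_V$ of $G$-equivariant vector fields whose flows give an $F$-continuous one-parameter family of diffeomorphisms $\Phi_V^s$ of $M$ that commute with $G$. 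Projecting to $M'$ and invoking Theorem \ref{deformationminimal}, these descend to $G_f$-equivariant isotopies $\phi_V^s$ of $M'$.

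Given a minimizing sequence of $G_f$-sweepouts $\{\Lambda^i_t\}$, I would then apply a reparametrized time-dependent composition of the $\phi_{\Lambda^i_t}^s$ to obtain a new sequence $\{\tilde\Lambda^i_t\}$ of $G_f$-sweepouts, still in the saturation $\Pi$, such that
\begin{equation*}
\mathcal{H}^n(\pi^{-1}(\tilde\Lambda^i_t)) \le \mathcal{H}^n(\pi^{-1}(\Lambda^i_t))
\end{equation*}
for every $t$, and with strict decrease by a definite amount whenever $\pi^\sharp \Lambda^i_t$ is $F$-bounded away from the stationary set. Standard bookkeeping (as in Proposition 4.1 of \cite{colding2003min} and its equivariant versions in \cite{ketover2016equivariant}, \cite{wang2022min}) then shows $\{\tilde\Lambda^i_t\}$ remains a minimizing sequence and that any varifold subsequential limit of a min-max sequence extracted from $\{\tilde\Lambda^i_t\}$ must lie in the stationary set, as otherwise the pull-tight would have strictly decreased the limiting sup.

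The main technical obstacle is the construction of the equivariant vector fields $X_V$ in neighborhoods of singular orbits and of the fixed locus of the discrete action $G_f$, where naive extensions need not preserve the symmetry or may fail to be continuous in $V$. This is handled exactly as in Liu \cite{liu2021existence} and Wang \cite{wang2022min} for the $G_c$-part by averaging and truncating near $M_s$, and as in Ketover \cite{ketover2016equivariant} for the $G_f$-part by requiring $X_V$ to vanish on a neighborhood of $\tilde{\mathcal{S}}$; since the singular strata have strictly lower dimension than the varifold, this truncation does not affect the sign of $\delta V(X_V)$ for non-stationary $V$. Once these equivariant vector fields are in hand, the remaining steps are routine and yield the proposition.
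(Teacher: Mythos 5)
Your proposal is correct and follows essentially the same route as the paper: the paper's proof consists of exactly the reduction via Theorem \ref{deformationminimal} (equivariant stationarity on $(M',g')$ corresponds to $G$-stationarity on $(M,g)$, using the Liu/Wang equivalence lemmas) followed by a citation to the standard equivariant pull-tight machinery of \cite{ketover2016equivariant} and \cite{wang2022min}, which is precisely the construction you spell out. The details you supply (the $F$-continuous assignment $V \mapsto X_V$ via a partition of unity, descent to $G_f$-isotopies of $M'$, and truncation of the vector fields near singular orbits and $\tilde{\mathcal{S}}$, harmless since those strata have dimension less than $n$) are consistent with what the cited arguments provide.
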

\subsection{$G$-almost minimizing varifolds} We adapt the arguments in \cite{ketover2016equivariant} and \cite{wang2022min} to prove that there exists a min-max sequence that has the $G$-almost-minimizing property relative to $G_{f}$-equivariant isotopies on $M'$. The argument is originally developed by Almgren \cite{almgren1965mimeo} and Pitts \cite{pitts2014existence}.

For $G$-invariant open set $U$, we define a $G$-equivariant varifold $V$ on $M$ to be \emph{$(G,\delta,\epsilon)$-almost minimizing} in $U$ if there does not exist $G_{f}$-isotopy $\psi_{t} : \pi(U) \rightarrow \pi(U)$ on $M'$ such that
\begin{enumerate}
\item $|\pi^{\sharp}((\psi_{1} \circ \pi)_{\sharp}(V))| \le |V|- \epsilon$;
\item $|\pi^{\sharp}((\psi_{t} \circ \pi)_{\sharp}(V))| \le |V|+ \delta$ for $t \in [0,1]$.
\end{enumerate}
We define $V$ to be a \emph{$(G,\epsilon)$-almost minimizing} if $V$ is $(G,\epsilon/8,\epsilon)$-almost minimizing varifold. Moreover, we define that $V$ is a \emph{$(G,\epsilon)$-almost minimizing varifold in annuli} if for each $x \in M$, there exists $r = r(G \cdot x) >0$ such that $V$ is $(G,\epsilon)$-almost minimizing in $An(x',s,t)$ for all annuli whose outer radius is at most $r$. Note that $\pi(An^{G}(x,s,t)) = An(x',s,t)$ for small $s$ and $t$. We can apply the arguments in Proposition 5.1 in \cite{colding2003min} and Proposition 4.12 in \cite{ketover2016equivariant} to obtain the following almost-minimizing property. Almost-minimizing property for high parameter family follows from Appendix of \cite{colding2018classification} (See also \cite{wang2022min} for equivariant Almgren-Pitts setting). We denote $\tilde{\mathcal{S}}_{0}$ to be the set of vertices of $\tilde{S}$.
\begin{prop} \label{almostminimizing} There exists a $G_{f}$-equivariant function $r : M' \rightarrow \mathbb{R}_{+}$ and a min-max sequence $\{\Lambda^{i} _{t_{i}}\}$ such that 
\begin{enumerate}
\item For $x' \in \tilde{\mathcal{S}}$, $r < \text{dist}(x', \tilde{\mathcal{S}})$;
\item For $x' \in \tilde{\mathcal{S}} \setminus \tilde{\mathcal{S}}_{0}$, $r < \text{dist}(x', \tilde{\mathcal{S}}_{0})$;
\item The sequence $\{ \pi^{\sharp}\Lambda^{i} _{t_{i}} \}$ is $1/i$-almost minimizing in $An^{G}(x,s,t)$ where $ 0 < s < t \le r( \pi(x))$;
\item $\{\pi^{\sharp}\Lambda^{i} _{t_{i}}\}$ converges to $G$-stationary varifold $V$ as $i \rightarrow \infty$.
\end{enumerate}
\begin{proof} The statements directly follow by applying arguments to Proposition 5.1 in \cite{colding2003min} and Proposition 4.12 in \cite{ketover2016equivariant} on the tightened minimizing sequence from Proposition \ref{pulltight}.
\end{proof}
\end{prop}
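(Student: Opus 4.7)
My plan is to follow the Almgren--Pitts combinatorial scheme as executed in Colding--De Lellis \cite{colding2003min} (Proposition 5.1) and adapted to the equivariant setting by Ketover \cite{ketover2016equivariant} (Proposition 4.12) and Wang \cite{wang2022min}. Starting point: the tightened minimizing sequence $\{\Lambda^{j}\}$ given by Proposition \ref{pulltight}, so every min-max subsequence already converges to a $G$-stationary varifold $V$. The quantity to produce is a $G_{f}$-equivariant radius function $r \colon M' \to \mathbb{R}_{+}$, together with a min-max subsequence $\{\Lambda^{i}_{t_{i}}\}$ that is $(G,1/i)$-almost-minimizing in every $G$-annulus centered at $x \in M$ with outer radius at most $r(\pi(x))$.

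First I would define $r$ by hand so that its size drops off according to the stratum: for $x' \in \mathrm{Int}(M')\setminus\tilde{\mathcal{S}}$ take $r(x')$ smaller than $\mathrm{dist}(x',\tilde{\mathcal{S}}\cup \partial M')$; for $x'\in \tilde{\mathcal{S}}\setminus \tilde{\mathcal{S}}_{0}$ take $r(x')<\mathrm{dist}(x',\tilde{\mathcal{S}}_{0})$; and for $x'\in \tilde{\mathcal{S}}_{0}$ let $r(x')$ be smaller than the distance to any other vertex. Then I mollify $r$ to be continuous and $G_{f}$-equivariant. This guarantees that every annulus $An(x',s,t)$ with $t\le r(x')$ meets at most one top-dimensional stratum of $\tilde{\mathcal{S}}$ and, in particular, the isotopy classes of $G_{f}$-equivariant deformations compactly supported in $\pi^{-1}(An(x',s,t))$ on the quotient side are well defined and admissible.

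The core combinatorial step is then the standard one, recast on the quotient. Suppose for contradiction that no subsequence is almost-minimizing in annuli; by equivariance one may replace the isotopy minimization on $M$ by $G_{f}$-equivariant isotopy minimization on $M'$ using Theorem \ref{deformationminimal}, which identifies the area functional $\mathcal{H}^{n}\circ \pi^{-1}$ with the area with respect to the weighted metric $g'$. For each parameter $t\in[0,1]$ and each $j$, at every $x\in M$ there is then a small $G$-annulus in which $\pi^{\sharp}\Lambda^{j}_{t}$ admits a large area drop through an admissible $G_{f}$-equivariant isotopy in the quotient. Using a Vitali-type cover by at most $N$ pairwise disjoint $G$-annuli with outer radii $\le r(\pi(x))$, one can run these local isotopies simultaneously (they have disjoint supports after lifting to $M$), producing for each $t$ a global $G_{f}$-isotopy that drops $\mathcal{H}^{n}(\pi^{-1}(\Lambda^{j}_{t}))$ by a uniform amount while never raising it by more than the prescribed $\delta$. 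A continuous choice of these isotopies in the parameter $t$ via the Colding--De Lellis pigeonhole/interpolation gives a new $G_{f}$-sweepout whose maximum area is strictly below $W^{G_{f}}_{\Pi}$, contradicting the definition \eqref{widthdef}.

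The main obstacle I expect is exactly this step of producing continuous-in-$t$ and globally $G_{f}$-equivariant deformations from pointwise almost-minimizing failures. Two issues must be handled: (i) the continuity/interpolation across $t$, which is the technical heart of Proposition 5.1 of \cite{colding2003min} and carries over verbatim because $\Lambda^{j}_{t}$ varies smoothly on the quotient and the weighted metric $g'$ is smooth in $\mathrm{Int}(M^{*}/G_{c})$; and (ii) the $G_{f}$-equivariance, which is addressed by averaging isotopies over the finite group $G_{f}$ and restricting to annuli that are either disjoint from $\tilde{\mathcal{S}}$ or are themselves $G_{f}$-invariant with isotropy $\mathbb{Z}_{n}$ or $\mathbb{D}_{n}$, exactly as in Proposition 4.12 of \cite{ketover2016equivariant}. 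Once both issues are settled, the diagonal extraction of $t_{i}$ and $j(i)\to\infty$ yields the required min-max subsequence; convergence to a $G$-stationary varifold is inherited from the initial tightening via Proposition \ref{pulltight}.
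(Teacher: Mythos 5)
Your proposal is correct and follows essentially the same route as the paper, which simply invokes the Almgren--Pitts combinatorial argument of Proposition 5.1 in Colding--De Lellis and its equivariant adaptation in Proposition 4.12 of Ketover, applied to the tightened sequence from the pull-tight proposition; your expansion of the stratified choice of $r$, the contradiction/covering argument on the quotient, and the $G_{f}$-averaging matches what those references carry out. No substantive gap.
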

\subsection{$G$-equivariant replacements} In this section, we construct $G$-equivariant replacements based on the regularity arguments in Section 3 and 4, and $G$-almost minimizing property in the previous section. Suppose $V$ to be a limit varifold in Proposition \ref{almostminimizing} and $U$ to be a $G$-equivariant set. We define $G$-equivariant varifold $V' \in IV_{n}(M)$ to be a $G$-replacement for stationary varifold $V \in IV_{n}(M)$ if the following holds:
\begin{enumerate}
    \item $V' = V$ in $Gr(M \setminus U)$ and $|V'|= |V|$;
    \item $V'$ is a $G$-stable embedded minimal hypersurface $\Sigma$ in $U$ where $\overline{\Sigma} \setminus \Sigma \subset \partial U$.
\end{enumerate}
Moreover, we call $V$ has the \emph{good $G$-replacement property} if we can iterate the construction of $G$-replacement in smaller annulus from the first construction with the radius function $r$ as in Definition 6.2 in \cite{colding2003min}.

We define the set of points where $\mathbb{Z}_{2}$-singular axis and $\partial M'$ intersects to be $\mathcal{S}_{0, \mathbb{Z}_{2}}$. Let us prove that if $V$ has a $G$-replacement, then the tangent cone at any point on the support of $V$ away from $\pi^{-1}(\mathcal{S}_{0, \mathbb{Z}_{2}})$ is a hyperplane.
\begin{lem}
Let $U$ be a $G$-equivariant open subset of $M$ and $V$ a $G$-equivariant stationary varifold in $U$. If $V$ has an $G$-replacement in $G$-annuli whose radius function is given by $r$, then $V$ is integer rectifiable and any tangent cone to $V$ at $x \in U \setminus \pi^{-1}(\mathcal{S}_{0, \mathbb{Z}_{2}})$ is a hyperplane (possibly with multiplicity).
\begin{proof}
    We obtain the rectifiability by applying the argument of Lemma 6.4 in \cite{colding2003min} to the slices $\mathcal{S}_{x}$ as in the proof of Lemma \ref{rectifiability}. We apply the compactness theorem of embedded $G$-stable minimal hypersurfaces (Lemma \ref{stablecptness}) in place of the compactness theorem of stable minimal hypersurfaces by Schoen-Simon \cite{schoen1981regularity} and obtain that the tangent cone at a point $x \in U \setminus \pi^{-1}(\mathcal{S}_{0, \mathbb{Z}_{2}})$ is a smooth hyperplane by directly following the arguments of Proposition 6.6 in \cite{wang2022min}.
\end{proof}
\end{lem}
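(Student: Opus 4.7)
The plan is to establish integer rectifiability by extracting a uniform density lower bound on slices through the replacement construction, and then to show the tangent cone is a hyperplane by iterating the replacement property to produce a $G$-stable minimal hypersurface on punctured $G$-balls whose blow-up limits are classified by a $G$-equivariant Simons-type argument combined with the compactness theorem for $G$-stable hypersurfaces (Lemma \ref{stablecptness}).

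For the rectifiability step, I would fix $x \in \mathrm{spt}(V)$, pass to the slice $\mathcal{S}_x$ through $x$ furnished by the Slice Theorem (Theorem \ref{slicethm}), and consider the induced $k$-varifold $V^{\mathcal{S}}$ on $\mathcal{S}_x$ as in the proof of Lemma \ref{rectifiability}. The $G$-replacement property yields, in any small $G$-annulus $An^{G}(x,s,t)$, a $G$-stable embedded minimal hypersurface $\Sigma$ coinciding with $V$ outside the annulus, and its slice is a smooth minimal hypersurface in $\mathcal{S}_x$ of bounded dimension. Combining the monotonicity formula for $\Sigma$ with the area formula along $G$-orbits gives a lower density bound $\theta^k(V^{\mathcal{S}},x) \ge c > 0$ exactly as in the slice argument of Lemma \ref{rectifiability}. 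Allard's rectifiability theorem then yields rectifiability of $V^{\mathcal{S}}$, and the $G$-equivariant bundle structure promotes this to rectifiability of $V$ near $x$.

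For the tangent cone assertion, I would blow up at $x$, set $V_{r_j} := (\mu_{r_j^{-1}} \circ \phi_x^{-1})_\sharp V$ with $r_j \to 0$, and observe that the $G$-replacement property is preserved under rescaling. Using the good replacement property, I would iterate the construction on nested $G$-annuli $An^G(x, r_j/2^{\ell+1}, r_j/2^{\ell})$ to produce, at each scale, a $G$-stable embedded minimal hypersurface that glues across annular boundaries to a $G$-stable embedded minimal hypersurface on the punctured $G$-ball. Since $x \notin \pi^{-1}(\mathcal{S}_{0,\mathbb{Z}_2})$, Lemma \ref{stablecptness} applies and the resulting $G$-stable hypersurfaces extract a smooth subsequential limit, so the tangent cone $C$ is realized as a smooth $G$-stable embedded minimal cone away from $G \cdot x$. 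Restricting $C$ to the slice $\mathcal{S}_x$, Lemma \ref{equivstablity} converts $G$-stability into stability on the slice cone $C_{\mathcal{S}}$, and Simons' classification \cite{simons1968minimal} forces $C_{\mathcal{S}}$ to be a hyperplane in $\mathcal{S}_x$; pulling back through the bundle yields a hyperplane in $T_xM$, with the integer multiplicity coming from the density upgrade established in Step~1.

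The principal obstruction is the application of the compactness theorem (Lemma \ref{stablecptness}) near singular orbits: $G$-stability does not imply stability in the full sense when $\mathbb{Z}_2$-actions intervene, and the compactness can genuinely fail at points in $\pi^{-1}(\mathcal{S}_{0,\mathbb{Z}_2})$, which is precisely why those points must be excluded from the statement. A secondary technical point is verifying that iterated replacements on consecutive annuli truly glue to a single $G$-stable hypersurface (rather than merely a chain of stable pieces); this requires checking that, after passing to a replacement on $An^G(x,s,t)$, the resulting varifold is still admissible as input for a further replacement on a nested annulus, which is the role of the \emph{good} $G$-replacement hypothesis.
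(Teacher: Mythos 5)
Your proposal follows essentially the same route as the paper: a slice-based density lower bound plus Allard's theorem for rectifiability (exactly as in the proof of Lemma \ref{rectifiability}), and the replacement-plus-compactness argument of Colding--De Lellis and Wang with Lemma \ref{stablecptness} substituted for Schoen--Simon compactness and Simons' classification applied on slices via Lemma \ref{equivstablity} to identify the tangent cone. One small correction: the lemma only assumes the existence of a replacement in annuli (not the good replacement property), and the tangent-cone argument needs only a single replacement in a fixed annulus such as $An^{G}(x,\rho/4,\rho/2)$ at each blow-up scale together with the monotonicity bookkeeping that forces the limit of the rescaled replacements to be a cone agreeing with the tangent cone outside the annulus -- the iteration and gluing across nested annuli that you invoke is unnecessary here and is properly the content of Proposition \ref{goodimpliesregular}.
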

Now we prove that the support of $V$ is a $G$-equivariant embedded minimal hypersurface if $V$ has a good $G$-replacement property.
\begin{prop} \label{goodimpliesregular} If $V$ has a good $G$-replacement property in an $G$-invariant open set $U$, then $V$ is a smooth minimal hypersurface in $U$.
\begin{proof}
We mainly point out the modification of the proof of Proposition 6.3 in \cite{colding2003min}. First we suppose $x \in U \setminus \pi^{-1}(\mathcal{S}_{0, \mathbb{Z}_{2}})$ take $r(G \cdot x)$ such that $B^{G}_{t}(x)$ is mean convex for $0 \le t \le r$ and $B^{G}_{t}(x) \cap \pi^{-1}(\mathcal{S}_{0, \mathbb{Z}_{2}}) = \emptyset$.

Then by the maximum principle, we can apply the unique continuation argument in \cite{colding2003min} up to $G \cdot x$, and obtain the local smooth gluing between two replacements. If $x$ is not on singular orbits, then we apply the regularity arguments in Step 4 of the proof of Proposition 6.3 in \cite{colding2003min} and remove the singularity point and take a preimage to obtain the regularity. Also, if $x$ is on a singular orbit but is not on $\pi^{-1}(\mathcal{S}_{0, \mathbb{Z}_{2}})$, then $spt(V) \cap B^{G}_{r}(x)$ is a $G$-stable integral stationary varifold so is a stable integral stationary varifold by Lemma \ref{equivstablity}. Since $x$ is an isolated singularity point in the slice $\mathcal{S}_{x}$, we can apply the classification of stable cone in Simons \cite{simons1968minimal} by taking the tangent cone $T_{x}C = C_{\mathcal{S}} \times T_{x} (G(x))_{x}$ at $x$, and obtain that the tangent cone at $x$ is a hyperplane.

Now suppose $x \in spt(V) \cap \pi^{-1}(\mathcal{S}_{0, \mathbb{Z}_{2}})$. By Theorem \ref{znaction} and \ref{z2action}, $x$ is on a top strata among singular orbits and $\pi(spt(V))$ contains $\tilde{\mathcal{S}}$ and the image of replacements by $\pi$ are orthogonal to $\partial M'$. Hence, as in the proof of Theorem \ref{z2action}, we consider the tangent cone $C'$ at $x'$, and we can apply the uniqueness argument of a geodesic on the link $C' \cap S^{2}(1)$ in the same way since we have regularity away from $G \cdot x$ by the unique continuation argument. Thus we obtain that the tangent cone at $x$ is a smooth hyperplane and $V$ is a smooth minimal hypersurface in $U$.
\end{proof}
\end{prop}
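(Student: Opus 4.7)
The plan is to prove smoothness pointwise on $\mathrm{spt}(V)$, splitting cases according to whether $x$ lies in the exceptional set $\pi^{-1}(\mathcal{S}_{0,\mathbb{Z}_2})$ or not. In the generic case $x\notin\pi^{-1}(\mathcal{S}_{0,\mathbb{Z}_2})$, the preceding lemma already supplies planar tangent cones of integer multiplicity, so the task is to iterate the good $G$-replacement property to upgrade stationarity with planar tangent cones to an actual smooth minimal hypersurface. Concretely, I would pick radii $0<s_2<s_1<t_1<t_2<r(G\cdot x)$ with each $\partial B^G_\rho(x)$ mean convex and each annulus disjoint from $\pi^{-1}(\mathcal{S}_{0,\mathbb{Z}_2})$, form the first replacement $V^{*}$ on $An^G(x,s_1,t_1)$, then a replacement $V^{**}$ of $V^{*}$ on $An^G(x,s_2,t_2)$. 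Each is a smooth $G$-stable embedded minimal hypersurface in its annulus and agrees with the previous varifold outside. Because the tangent cones of $V$ in the overlapping strip are flat, Allard's theorem represents both $V^{*}$ and $V^{**}$ there as $C^{1,\alpha}$ graphs, and since they coincide with $V$ on a set of positive measure, unique continuation for the minimal surface equation forces them to patch into a single smooth $G$-equivariant minimal hypersurface on $B^G_{t_1}(x)\setminus G\cdot x$.

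To remove the residual singularity along the orbit $G\cdot x$, I pass to the slice $\mathcal{S}_x$ from Theorem \ref{slicethm}, reducing to an isolated-singularity question for a $G_x$-invariant stationary integral varifold. The tangent cone splits as $C_{\mathcal{S}}\times T_x G(x)_x$, where $C_{\mathcal{S}}$ is a two-sided $G_x$-invariant embedded minimal hypercone in the slice. Because it arises as the blow-up of iterated $G$-stable replacements, $C_{\mathcal{S}}$ is $G$-stable, and Lemma \ref{equivstablity} upgrades this to ordinary stability. Simons' classification of stable minimal hypercones in the allowed dimension range then forces $C_{\mathcal{S}}$ to be a hyperplane, so Allard's regularity theorem extends smoothness across $x$, and $G$-equivariance propagates smoothness across the whole orbit.

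The hardest case is $x\in\pi^{-1}(\mathcal{S}_{0,\mathbb{Z}_2})$, where the preceding lemma is silent and the slice reduction does not directly apply because one is at the intersection of a $\mathbb{Z}_2$-axis with $\partial M'$. My plan is to transfer everything to the quotient at $x'=\pi(x)$: Theorems \ref{znaction} and \ref{z2action} already tell me that $\pi(\mathrm{spt}(V))$ contains the singular segment through $x'$ and, away from $x'$ itself, meets $\partial M'$ orthogonally. I then examine a tangent cone $C'\subset T_{x'}M'$ together with its link $\gamma = C'\cap S^2(1)$. By the regularity established in the previous two paragraphs, $\gamma$ is a $\mathbb{Z}_2$-invariant collection of embedded geodesic arcs meeting $S^2(1)\cap T_{x'}\partial M'$ orthogonally at their endpoints. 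Invoking the same positive-Ricci Frankel-type argument used in the proof of Theorem \ref{secondcontireg}, two disjoint such arcs cannot coexist on the small round sphere, so $\gamma$ is a single half-great-circle; hence $C'$ is a half-plane and $\pi^{-1}(C')$ is a hyperplane in $T_xM$. Allard regularity then closes the argument.

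The principal obstacle will be precisely this last tangent-cone uniqueness at the corner of the $\mathbb{Z}_2$-axis and $\partial M'$: away from that corner, stability plus Simons or planarity plus Allard settle the matter by standard machinery, but at the corner one must extract geometric rigidity of the link from the interplay between $\mathbb{Z}_2$-symmetry, orthogonality to $\partial M'$, and minimality, none of which are individually enough. The Frankel-property shortcut is what makes the plan work within the dimensions considered.
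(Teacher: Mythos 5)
Your proposal is correct and follows essentially the same route as the paper: the same three-way case split (regular orbits, singular orbits away from $\pi^{-1}(\mathcal{S}_{0,\mathbb{Z}_2})$, and the corner points), the same gluing of iterated replacements via Allard plus unique continuation, the same slice reduction combined with Lemma \ref{equivstablity} and Simons' classification, and the same link/geodesic-uniqueness argument (delegated in the paper to the proof of Theorem \ref{z2action}, which likewise rests on the Frankel-property step from Theorem \ref{secondcontireg}) at the intersection of the $\mathbb{Z}_2$-axis with $\partial M'$.
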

We set a constrained $G_{f}$-equivariant isotopy class $\mathcal{I}_{G_{f}, 1/j}(U')$ as follows:
\begin{equation*}
\mathcal{I}_{G_{f},1/j}(U') = \{ \psi \in \mathcal{I}_{G_{f}}(U') \, | \, \mathcal{H}^{n} (\pi^{-1}(\psi(t,\Lambda^{j}))) \le \mathcal{H}^{n} (\pi^{-1}(\Lambda^{j})) + 1/8j \} \text{ for } t \in [0,1].
\end{equation*}
We consider the minimization problem $(\Lambda^{i}, \mathcal{I}_{G_{f},1/j}(U'))$ where $U'$ is an open set in $M'$. We first show the slice version of the squeezing lemma to construct a smooth $G$-equivariant replacement which is a Lie group equivariant analog of Lemma 7.6 in \cite{colding2003min}.
\begin{lem} [Slice squeezing Lemma]\label{squeezing} Suppose $\{\Lambda^{i} \}$ is a minimizing sequence of the minimization problem $(\Lambda, \mathcal{I}_{G_{f},1}(An))$. There exists $\epsilon>0$ such that for sufficiently large $k$, the following holds: for any $\psi \in \mathcal{I}_{G_{f}}(B_{\epsilon}(x') \cap U')$ with $\mathcal{H}^{n} (\pi^{-1}(\psi(1,\Lambda^{k}))) \le \mathcal{H}^{n} (\pi^{-1}(\Lambda^{k}))$, there exists isotopy $\Phi \in \mathcal{I}_{G_{f},1}(B_{\epsilon}(x') \cap U')$ such that $\Psi(1, \cdot) = \Phi(1, \cdot)$.
\begin{proof}
We consider the slice $\mathcal{S}^{r}_{x}$ at $x = \pi^{-1}(x')$. $\pi^{\sharp} \Lambda_{i}$ converges to a stationary varifold and we can apply the standard arguments as monotonicity formula, Sard-type theorem and co-area formula arguments on codimension $1$ stationary varifold on slice $\mathcal{S}^{r}_{x} \cap B_{\epsilon}^{G}(x)$. Considering the lower semicontinuity of orbit types and combining with the $G_{f}$-equivariant property of radial deformation in Lemma 4.13 in \cite{ketover2016equivariant}, we can directly adopt the radial deformation argument in the proof of Lemma 7.6 in \cite{colding2003min} on each slice $\mathcal{S}_{x}$. The remaining parts of the proof are the same as in Lemma 7.6 of \cite{colding2003min}.
\end{proof}
\end{lem}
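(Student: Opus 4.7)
The strategy is to adapt the radial deformation argument of Lemma 7.6 in \cite{colding2003min} to the orbit space setting, using the slice theorem (Theorem \ref{slicethm}) to promote a slice-wise construction to a genuine $(G_c \times G_f)$-equivariant isotopy. Fix $x \in \pi^{-1}(x')$ and work inside the associated slice $\mathcal{S}_x$. The minimizing sequence $\pi^\sharp \Lambda^i$ converges to a $G$-stationary varifold $V$ by Proposition \ref{pulltight}, and so the restriction to $\mathcal{S}_x$ is a $(k{+}1)$-dimensional stationary configuration obeying the usual monotonicity on the slice. The plan is to use this stationarity, together with the lower semicontinuity of orbit types, to run the Colding--De Lellis construction fiberwise and then extend radially by $G_c$-equivariance and by the $G_f$-equivariant radial deformation of Lemma 4.13 of \cite{ketover2016equivariant}.

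First I would fix $\epsilon > 0$ so small that $B^{G_c}_\epsilon(x)$ lies in a $G$-tubular neighborhood where Theorem \ref{slicethm} applies and the monotonicity on $\mathcal{S}_x$ yields $|V|(B^{G_c}_r(x)) \le C r^{k}\mathcal{H}^{n-k}(G_c(x))$ for all $r \le \epsilon$. By varifold convergence and Sard's theorem, for every sufficiently large $k$ I can pick a radius $\rho_k \in (\epsilon/2, 3\epsilon/4)$ such that $\pi^\sharp \Lambda^k$ meets $\partial B^{G_c}_{\rho_k}(x)$ transversally and, by the coarea formula, $\mathcal{H}^{n-1}(\pi^\sharp \Lambda^k \cap \partial B^{G_c}_{\rho_k}(x))$ is small relative to $\mathcal{H}^{n-k}(G_c(x))\epsilon^{k-1}$. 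I would then build $\Phi$ in three stages: on $B^{G_c}_{\rho_k}(x)$ let $\Phi$ agree with $\psi$; on the shell $B^{G_c}_\epsilon(x)\setminus B^{G_c}_{\rho_k}(x)$ let $\Phi$ be a $G_f$-equivariant radial compression toward $\partial B^{G_c}_\epsilon(x)$, extended $G_c$-equivariantly via the slice decomposition; and interpolate continuously between the two at $\partial B^{G_c}_{\rho_k}(x)$. At $t=1$ the compression has collapsed the outer shell onto itself, so $\Phi(1,\cdot) = \psi(1,\cdot)$ by construction, while at intermediate times the deformed outer shell has area bounded by $\mathcal{H}^n(\pi^\sharp \Lambda^k \cap (B^{G_c}_\epsilon(x)\setminus B^{G_c}_{\rho_k}(x)))$ up to an error that can be made arbitrarily small.

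The main obstacle is maintaining the intermediate bound $\mathcal{H}^n(\pi^{-1}(\Phi(t,\Lambda^k))) \le \mathcal{H}^n(\pi^{-1}(\Lambda^k)) + 1/(8j)$ uniformly in $t$, which in the orbit space requires estimating the intermediate slice area weighted by $\mathcal{H}^{n-k}(G_c(\cdot))$ rather than by Lebesgue measure. The key is that along the deformation the orbit type can only jump down to smaller orbits by lower semicontinuity, so the orbit size factor in the area formula does not increase across the sweep; combined with stationarity of $V$ on the slice and the coarea estimate at $\rho_k$, this forces the intermediate area on the shell to stay within an $o_\epsilon(1)+o_k(1)$ error of the initial area. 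Once these slice estimates are secured, the remaining bookkeeping—gluing $\psi$ to the radial compression along $\partial B^{G_c}_{\rho_k}(x)$, checking support and $G_f$-equivariance—proceeds as in \cite{colding2003min}, producing the desired $\Phi \in \mathcal{I}_{G_f,1}(B_\epsilon(x')\cap U')$ with $\Phi(1,\cdot) = \psi(1,\cdot)$.
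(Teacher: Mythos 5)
Your list of ingredients (slice theorem, monotonicity on the slice, Sard/coarea choice of a good radius, $G_f$-equivariant radial deformations from Lemma 4.13 of \cite{ketover2016equivariant}, lower semicontinuity of orbit types) matches what the paper invokes, but the actual construction of $\Phi$ in your second paragraph does not work, and the defect is exactly at the point the squeezing lemma is supposed to address. You set $\Phi = \psi$ on $B^{G_c}_{\rho_k}(x)$ and a radial compression on the shell. First, this is not well defined: $\psi$ is supported in $B_\epsilon(x')$ and need not preserve $B_{\rho_k}$, so ``agree with $\psi$ on the inner ball'' is not an isotopy of that ball; and if $\Phi(1,\cdot)$ equals a radial compression on the shell, it cannot simultaneously equal $\psi(1,\cdot)$ there, so the claimed identity $\Phi(1,\cdot)=\psi(1,\cdot)$ does not follow. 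Second, and more seriously, wherever $\Phi$ agrees with $\psi$ the intermediate surfaces $\Phi(t,\Lambda^k)$ contain $\psi(t,\cdot)$-images, and $\psi$ is only constrained to decrease area at $t=1$; its intermediate areas can be arbitrarily large. Your estimate via ``the orbit size factor does not increase across the sweep'' controls only the radial-compression part, not the part inherited from $\psi$, so the constraint $\mathcal{H}^{n}(\pi^{-1}(\Phi(t,\Lambda^{k})))\le\mathcal{H}^{n}(\pi^{-1}(\Lambda^{k}))+1/8j$ is not established.

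The argument the paper is importing from Lemma 7.6 of \cite{colding2003min} avoids this by conjugation rather than restriction: one first radially contracts $\Lambda^k\cap B^{G_c}_\epsilon(x)$ into an arbitrarily small $G_c$-ball $B^{G_c}_\delta(x)$ (intermediate areas bounded by the area already in the ball plus a cone over the slice $\Lambda^k\cap\partial B^{G_c}_{\rho_k}(x)$, both $O(\epsilon^{k})$ in the weighted slice sense after the monotonicity and good-radius choices, hence $<1/8j$ for $\epsilon$ small); one then runs the conjugate of $\psi$ by the contraction, so that its intermediate surfaces live in $B^{G_c}_\delta(x)$ and their areas are scaled down by roughly $(\delta/\epsilon)^{k}$, which can be made $<1/8j$ by choosing $\delta$ after $\psi$ is given, no matter how wild the intermediate areas of $\psi$ are; finally one expands back along the reverse contraction applied to $\psi(1,\Lambda^k)$, controlled the same way as the first stage since $\psi(1,\Lambda^k)$ has no more area than $\Lambda^k$ in the ball. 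It is this three-stage contract--conjugate--expand map, carried out slice-wise and extended $G_c$-equivariantly, that yields $\Phi(1,\cdot)=\psi(1,\cdot)$ with the uniform intermediate bound; your orbit-type and slice estimates are the right tools for the first and third stages, but the middle stage is missing from your proposal.
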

We prove that $V$ admits smooth $G$-equivariant replacement in the preimage of any annuli whose outer radius is smaller than $r$ in Proposition \ref{almostminimizing}. Here we apply the regularity theorem of minimizers of isotopy minimization problem we discussed in Section $3$ and $4$.
\begin{prop}  [Construction of $G$-replacements] \label{constreplacement} Let $\{ \pi^{\sharp}\Lambda^{i} _{t_{i}} \}$ be a sequence of compact $G$-equivariant hypersurfaces in $M$ which converges to the stationary varifold $V$. For $x \in M$, if there exists $G_{f}$-equivariant function $r : M' \rightarrow \mathbb{R}_{+}$ such that in every annulus of $An^{G}(x,s,t)$ where $0 \le s, t \le r(\pi(x))$ and for sufficiently large $i$, $\{ \pi^{\sharp}\Lambda^{i} _{t_{i}} \}$ is $1/i$-almost minimizing, then $V$ is a smooth minimal hypersurface.
\begin{proof}
 We can basically follow the ideas in the proof of Theorem 7.1 and Lemma 7.4 in \cite{colding2003min}. By applying Lemma \ref{squeezing}, Lemma \ref{stablecptness} and following the arguments in the proof of Lemma 7.4 in \cite{colding2003min}, we obtain that minimizers of constrained $G$-equivariant minimizing problem are $G$-stable minimal hypersurfaces where the radius of annuli is given by the radius in Lemma \ref{squeezing} for good $G$-replacement property. We also follow the diagonal argument in the proof of Theorem 7.1 in \cite{colding2003min} and obtain a good $G$-replacement property of $V$. The regularity of $V$ is achieved by the good $G$-replacement property and Proposition \ref{goodimpliesregular}.
\end{proof}
\end{prop}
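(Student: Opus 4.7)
The plan is to construct $G$-equivariant replacements inside the preimage of each admissible annulus $\pi(An^{G}(x,s,t))$ with outer radius at most $r(\pi(x))$, and then iterate the construction to establish the good $G$-replacement property, after which Proposition \ref{goodimpliesregular} immediately yields that $V$ is a smooth minimal hypersurface. Throughout, the role of the $G_{c}$-quotient is to reduce the minimization problem to an isotopy minimization problem on the $3$-manifold $M'$, and the role of $G_{f}$ is to keep all isotopies equivariant.

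First, fix such an annulus $An^{G}(x,s,t)$ and set $U' = \pi(An^{G}(x,s,t))$. For each large $i$, I would solve the constrained $G_{f}$-equivariant isotopy minimization problem $(\pi(\Lambda^{i}_{t_{i}}), \mathcal{I}_{G_{f}, 1/i}(U'))$ and extract an $F$-metric limit $V_{i}^{*}$. Here the $1/i$-almost minimizing hypothesis prevents the constrained and unconstrained infima from differing: Lemma \ref{squeezing} shows that on small $G$-invariant balls inside $An^{G}(x,s,t)$, any area-decreasing $G_{f}$-equivariant isotopy can be realized through a path that stays inside the constrained class $\mathcal{I}_{G_{f}, 1/i}(U')$. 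Consequently, on every sufficiently small sub-annulus, $V_{i}^{*}$ is a varifold limit of an honest minimizing sequence for the (unconstrained) $G_{f}$-equivariant isotopy minimization problem on that sub-annulus. Applying the main regularity theorems (Theorem \ref{regularitycontiLie} and Theorem \ref{regularitydiscLie}) locally and patching by the uniqueness of continuation on principal orbits, one obtains that $V_{i}^{*}$ is a compact smooth embedded $G$-stable minimal hypersurface inside $\pi^{-1}(U')$, coinciding with $\pi^{\sharp}\Lambda^{i}_{t_{i}}$ outside $\pi^{-1}(U')$.

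Next I would let $i \to \infty$. Because each $V_{i}^{*}$ is $G$-stable and has uniformly bounded mass, the compactness theorem for $G$-stable embedded minimal hypersurfaces (Lemma \ref{stablecptness}, the compactness theorem proved in the appendix) produces, after passing to a subsequence, a varifold limit $V^{*}$ that is a smooth $G$-equivariant embedded minimal hypersurface in $\pi^{-1}(U')$ away from the exceptional singular set of Lemma \ref{stablecptness}. The almost-minimizing condition together with $\mathcal{H}^{n}(\pi^{-1}(V_{i}^{*})) \le \mathcal{H}^{n}(\pi^{-1}(\pi^{\sharp}\Lambda^{i}_{t_{i}})) + 1/8i$ gives $|V^{*}| = |V|$ and $V^{*} = V$ outside $\pi^{-1}(U')$. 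Hence $V^{*}$ is a single $G$-replacement of $V$ in $\pi^{-1}(U')$.

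To promote a single replacement into the \emph{good} $G$-replacement property, I would run the diagonal extraction from the proof of Theorem 7.1 of \cite{colding2003min} in the equivariant setting: given a nested chain of $G$-annuli of outer radius at most $r(\pi(x))$, one iteratively replaces inside the outer annulus, then inside the next sub-annulus, and so on. The almost-minimizing hypothesis is preserved along these iterations because every intermediate hypersurface still satisfies the mass-decrease and mass-excess bounds of the original $1/i$-almost minimizing sequence, so the same regularity and compactness machinery applies at each step. Once the iterated replacement is in place for every $G$-annulus of outer radius at most $r(\pi(x))$ at every $x \in M$, Proposition \ref{goodimpliesregular} concludes that $V$ is a smooth embedded $G$-equivariant minimal hypersurface.

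The main obstacle will be verifying the compatibility of successive replacements near the singular stratum $\pi^{-1}(\tilde{\mathcal{S}} \cap \partial M')$ where $\mathbb{Z}_{2}$-singular loci meet $\partial M'$; there, Lemma \ref{stablecptness} excludes these finitely many points, and one must rely on Theorem \ref{z2action} to identify the tangent cone as a half-hyperplane meeting $\partial M'$ orthogonally and then invoke the unique continuation argument in Proposition \ref{goodimpliesregular} to conclude that successive replacements glue smoothly across $\pi^{-1}(\tilde{\mathcal{S}} \cap \partial M')$. All the other steps follow \cite{colding2003min} and \cite{ketover2016equivariant} closely after the equivariant structural inputs have been installed.
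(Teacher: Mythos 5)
Your proposal is correct and follows essentially the same route as the paper: solve the constrained $G_{f}$-equivariant minimization problem, use the slice squeezing lemma (Lemma \ref{squeezing}) to transfer the almost-minimizing property to genuine local minimization so that the regularity theorems of Sections 3--4 apply, pass to the limit via the compactness of $G$-stable hypersurfaces (Lemma \ref{stablecptness}), run the Colding--De Lellis diagonal argument for the good $G$-replacement property, and conclude with Proposition \ref{goodimpliesregular}. Your remark about the points of $\pi^{-1}(\mathcal{S}_{0,\mathbb{Z}_{2}})$ excluded by Lemma \ref{stablecptness} correctly identifies where the extra care is needed, and it is handled exactly as you describe, inside Proposition \ref{goodimpliesregular}.
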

\subsection{Regularity of min-max $G$-equivariant minimal hypersurfaces}
In this section, we prove the following min-max construction of $G$-equivariant minimal hypersurfaces.
\begin{thm}[Regularity of cohomogeneity $2$ min-max $G$-equivariant minimal hypersurfaces]  \label{minmaxreg} On $M^{n+1}$ $(4 \le n+1 \le 7)$, suppose $G$ is a product of smooth Lie group $G_{c}$ whose orbits are connected and a finite group $G_{f}$, which acts on $M$ as an isometry whose principal orbit has a codimension $3$ where $M'=M/G_{c}$ is a $3$-manifold with boundary. Moreover, a singular axis $\tilde{\mathcal{S}}$ of a finite group action $G_{f}$ intersects $\partial M'$ only on smooth points and $G_{f}$ is a orientation preserving isometry on $M'$ and such that $M'/G_{f}$ is an orbifold (with boundary). Let $\{ \Lambda_{t} \}_{t=0}^{1}$ be a sweepout of $M'$ and corresponding $G_{f}$-equivariant satuartion by $\Pi$. Then there exists a min-max sequence $\Sigma_{j}$ converging as varifolds to $\Gamma = \sum_{i=1}^{k} n_{i} \Gamma_{i}$, where $\Gamma_{i}$ are smooth embedded pairwise disjoint minimal hypersurfaces.
          
\begin{proof}
    By applying Proposition \ref{almostminimizing}, we obtain a $G$-almost minimizing min-max sequence $\{\Lambda^{i} _{t_{i}}\}$. Then we apply Proposition \ref{constreplacement} and obtain the regularity of $V$.
\end{proof}
\end{thm}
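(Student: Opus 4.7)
The plan is to construct the desired min-max sequence through three successive refinements: tightening to stationarity, almost-minimizing extraction in equivariant annuli, and smooth regularity through replacements. First I would invoke the pull-tight procedure (Proposition \ref{pulltight}) to pass to a minimizing sequence of $G_f$-sweepouts whose every min-max subsequence converges to a $G$-stationary integral varifold on $M$. By Liu's equivalence and Lemma \ref{equivstablity}, the limit is stationary in the classical sense as well, so satisfies the monotonicity formula and has a positive density lower bound.

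Next, I would apply Proposition \ref{almostminimizing}, which produces via the Almgren--Pitts combinatorial argument a $G_f$-equivariant radius function $r:M'\to\mathbb{R}_+$ and a min-max sequence $\{\Lambda^i_{t_i}\}$ such that the pullbacks $\pi^\sharp\Lambda^i_{t_i}$ are $1/i$-almost-minimizing in every preimage annulus $An^G(x,s,t)$ with $0<s<t\le r(\pi(x))$ and converge to a $G$-stationary integral varifold $V$. The radius function is chosen to stay strictly smaller than the distance to $\tilde{\mathcal{S}}_0$ away from the vertices, and smaller than the distance to $\tilde{\mathcal{S}}$ off the singular axis, so that each $G$-annulus interacts with at most one kind of singular stratum; this is precisely the hypothesis under which the constrained isotopy minimization problem developed in Sections 3 and 4 is well posed.

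For the regularity step, I would apply Proposition \ref{constreplacement} at each $x\in\mathrm{spt}(V)$. Inside any admissible $G$-annulus, the slice squeezing lemma (Lemma \ref{squeezing}) forces minimizers of the constrained $G_f$-equivariant isotopy problem to be $G$-stable, and the compactness of $G$-stable embedded minimal hypersurfaces (Lemma \ref{stablecptness}) combined with the regularity Theorems \ref{regularitycontiLie} and \ref{regularitydiscLie} yields a smooth embedded replacement; a diagonal argument following Theorem 7.1 of Colding--De Lellis, applied equivariantly, then produces a good $G$-replacement property of $V$. Proposition \ref{goodimpliesregular} upgrades this to the assertion that $\mathrm{spt}(V)$ is a smooth $G$-equivariant embedded minimal hypersurface in all of $M$. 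Finally, decomposing the integral varifold $V$ into its connected components with their integer multiplicities via the constancy theorem yields the desired expression $\Gamma = \sum_{i=1}^k n_i\Gamma_i$, with the $\Gamma_i$ pairwise disjoint and smoothly embedded.

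The main obstacle I anticipate is the regularity at the points of $\pi^{-1}(\mathcal{S}_{0,\mathbb{Z}_2})$, where a singular orbit of $G_c$ and a $\mathbb{Z}_2$-singular axis of $G_f$ intersect simultaneously. There the stable-cone classification of Simons is not immediately available because $G$-stability does not cleanly reduce to stability (due to the $\mathbb{Z}_2$ action exchanging sides) and the compactness theorem for $G$-stable hypersurfaces degenerates through such points. Following the strategy of Theorem \ref{z2action}, I would exploit the regularity already established on $\mathrm{spt}(V)\setminus G\cdot x$ together with the orthogonal intersection statement in Theorem \ref{secondcontireg} to show that the link of the tangent cone on the slice $\mathcal{S}_x$ is a smooth curve meeting the boundary circle orthogonally; uniqueness of the geodesic then forces the link to be a half great circle, so the tangent cone is a hyperplane. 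Combined with the unique continuation argument within Proposition \ref{goodimpliesregular}, this closes the remaining gap and completes the proof.
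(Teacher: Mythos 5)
Your proposal is correct and follows essentially the same route as the paper: pull-tight, extraction of a $G$-almost minimizing min-max sequence via Proposition \ref{almostminimizing}, and regularity via the $G$-replacement construction of Proposition \ref{constreplacement} together with Proposition \ref{goodimpliesregular}. The additional detail you give on the delicate points of $\pi^{-1}(\mathcal{S}_{0,\mathbb{Z}_2})$ matches the argument the paper carries out inside Proposition \ref{goodimpliesregular} and Theorem \ref{z2action}.
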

\begin{rmk} We expect that generic multiplicity one result as in Wang-Zhou \cite{wang2023existence} on Simon-Smith min-max construction on dimension $3$ also holds in this setting (See multiplicity one result for Almgren-Pitts min-max setting in higher dimension in Zhou \cite{zhou2020multiplicity}). However, we do not focus on it since we can rule out higher multiplicity with area arguments in our applications.
\end{rmk}
\section{Topology of min-max $G$-equivariant minimal hypersurfaces}
In this section, we discuss the topology, mainly the genus of the image by the projection map $\pi$, of min-max $G$-equivariant minimal hypersurfaces. More specifically, we obtain the upper bound of the genus of the projection of min-max $G$-equivariant minimal hypersurfaces based on the previous genus bound results in De Lellis-Pellandini \cite{de2010genus} and its free boundary adaptation in Li \cite{li2015general} (See effective version for Ketover \cite{ketover2019genus} and recent progress on the Betti number bound in free boundary setting by Franz-Schulz \cite{franz2023topological}).
\subsection{Genus control of the projection of min-max $G$-equivariant minimal hypersurfaces} In this section, we prove an upper bound of the genus of the projection of min-max $G$-equivariant minimal hypersurfaces based on previous works on the genus bound whose key idea arises from Simon's lifting lemma (Proposition 2.1 in \cite{de2010genus}) and its applications in other settings. We denote $\Omega_{\epsilon}:= \{ x' \in M' \, | \, d_{M'}(x',\partial M') > \epsilon\}$ where $\overline{g}$ in endowed on $M'$.
\begin{thm} \label{genusbound} Suppose $\{ \Lambda_{t} \}_{t=0}^{1}$ be a sweepout of $M'$ to be a sweepout whose genus is bounded by $g$ and $\Gamma = \sum_{i=1}^{k} n_{i} \Gamma_{i}$ to be a min-max $G$-equivariant minimal hypersurfaces obtained in Theorem \ref{minmaxreg}. Then $\sum_{i=1}^{k} \text{genus}(\pi(\Gamma_{i})) \le g$.
\begin{proof}
By Theorem \ref{deformationminimal}, Theorem \ref{intregular} and min-max arguments in Section $5$, the min-max minimal hypersurfaces we obtained in Theorem \ref{minmaxreg} correspond to minimal surfaces produced by Simon-Smith min-max theory in $(M',g')$. Hence we can apply the lifting argument and obtain genus bound on an open subset of $Int(M')$  (cf. Ketover-Zhou \cite{ketover2018entropy} in a self-shrinker setting).

Consider exhaustions $\Omega_{\epsilon_{i}}$ of $M'$ where $\epsilon_{i}>0$ converges to $0$ as $i \rightarrow \infty$. We allow smoothening of $\Omega_{\epsilon_{i}}$ ($G_{f}$-equivariantly) if necessary and take $\epsilon_{i}$ to satisfy that $\pi(\Gamma)$ is transverse to $\partial \Omega_{\epsilon_{i}}$. Then we can find $2g_{i}$ closed curves $\gamma_{1}, ..., \gamma_{2 g_{i}}$ such that $(\pi(\Gamma) \cap \Omega_{\epsilon_{i}}) \setminus \cup_{j=1}^{2 g_{i}} \gamma_{j}$ is a genus $0$ domain with ends on $\partial \Omega_{\epsilon_{i}}$. We can apply the lifting arguments in \cite{de2010genus} and obtain $g_{i} \le g$. Since this holds for all $i$, we have $\sum_{i=1}^{k} \text{genus}(\pi(\Gamma_{i})) \le g$.
\end{proof}
\end{thm}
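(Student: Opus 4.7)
My approach is to reduce the genus bound to the standard Simon-Smith min-max genus bound of De Lellis-Pellandini \cite{de2010genus} (sharpened by Ketover \cite{ketover2019genus}) applied inside the weighted quotient space $(M', g')$. By Theorem \ref{deformationminimal} together with the min-max construction of Section 5, the limit varifold $\sum_{i} n_{i} \Gamma_{i}$ pushes forward under $\pi$ to a Simon-Smith type min-max limit $\sum_{i} n_{i} \pi(\Gamma_{i})$ in $(M', g')$ arising from the one-parameter family $\{\Lambda_{t}\}$, and each $\pi(\Gamma_{i})$ is smooth and properly embedded in $\text{Int}(M')$ as a minimal surface with respect to $g'$. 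It therefore suffices to bound $\sum_{i} \text{genus}(\pi(\Gamma_{i}))$ by the genus $g$ of the optimal $G_{f}$-equivariant sweepout.

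\textbf{Key steps.} First I would select, by a generic choice, a $G_{f}$-equivariant exhaustion $\Omega_{\epsilon_{1}} \subset \Omega_{\epsilon_{2}} \subset \cdots$ of $\text{Int}(M')$ with $\epsilon_{i} \searrow 0$ such that $\partial \Omega_{\epsilon_{i}}$ is smooth and transverse to the smooth surface $\pi(\Gamma) := \cup_{i} \pi(\Gamma_{i})$; $G_{f}$-equivariant smoothing together with Sard's theorem guarantees such a choice. Let $g_{i}$ denote the total genus of $\pi(\Gamma) \cap \Omega_{\epsilon_{i}}$, and pick $2 g_{i}$ pairwise disjoint simple closed curves $\gamma_{1}, \dots, \gamma_{2 g_{i}}$ on $\pi(\Gamma) \cap \Omega_{\epsilon_{i}}$ whose complement is a genus zero planar domain with boundary on $\partial \Omega_{\epsilon_{i}}$. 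Next I would invoke Simon's lifting lemma (Proposition 2.1 of \cite{de2010genus}) inside $\Omega_{\epsilon_{i}}$: each $\gamma_{j}$ admits a homologous lift into a slice $\Lambda_{t_{j}^{i}}$ of a minimizing sequence, and the iterated version of De Lellis-Pellandini ensures that the $2 g_{i}$ lifted curves are pairwise disjoint and jointly non-separating in the lifted slice. Since $\text{genus}(\Lambda_{t}) \le g$ along the sweepout, this forces $g_{i} \le g$. Finally, letting $i \to \infty$, the exhaustion fills $\text{Int}(M')$ and $g_{i} \nearrow \sum_{i} \text{genus}(\pi(\Gamma_{i}))$, yielding the stated estimate.

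\textbf{Main obstacle.} The principal technical concern is the degeneracy of $g'$ along $\partial M'$, which obstructs a direct global application of the Simon-Smith genus bound to $(M', g')$ and is precisely the feature that distinguishes our setting from the standard Simon-Smith theory. The exhaustion argument circumvents this obstacle because the lifting lemma, and the $\gamma$-reduction and squeezing estimates on which it rests, are strictly local and take place in $\text{Int}(M')$, where $g'$ is a genuine Riemannian metric and the regularity theory of Sections 3-4 supplies smooth convergence of the relevant min-max slices to $\pi(\Gamma)$ on compact subsets. A secondary difficulty is ensuring that the lifted curves respect the $G_{f}$-symmetry used throughout the min-max construction; this is handled by choosing each $\Omega_{\epsilon_{i}}$ to be $G_{f}$-invariant and performing the lifting on a fundamental domain before transporting equivariantly, so that the topological count $g_{i}$ is unchanged.
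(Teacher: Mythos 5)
Your proposal follows essentially the same route as the paper: reduce to the Simon--Smith genus bound of De Lellis--Pellandini in $(M',g')$, take a $G_{f}$-equivariant exhaustion $\Omega_{\epsilon_{i}}$ of $\mathrm{Int}(M')$ transverse to $\pi(\Gamma)$ to avoid the metric degeneracy at $\partial M'$, cut along $2g_{i}$ curves to a planar domain, apply Simon's lifting lemma to get $g_{i}\le g$, and pass to the limit in $i$. The additional remarks on why the lifting argument localizes and on preserving $G_{f}$-equivariance are consistent with (and slightly more explicit than) the paper's own proof.
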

\begin{rmk} The genus bound considering the multiplicity of the convergence seems to hold as in Ketover \cite{ketover2019genus}. However, we only focus on the genus bound without multiplicity and it is sufficient for our applications.
\end{rmk}
\begin{rmk}
In three-dimensional convex boundary cases in Gr{\"u}ter-Jost \cite{gruter1986embedded} and Li \cite{li2015general}, if $\Sigma'$ does not intersect $\partial M'$, then area minimizers of isotopy minimization problems do not intersect $\partial M'$. However, in our case, boundary components on $M'$ may be produced along the limiting procedure of minimizing sequence, due to the absence of convexity by degeneracy of the metric on the boundary, which is similar to Theorem 5.1 in Jost \cite{jost1986existence} where the limiting minimal surface also achieves boundary orthogonality in the $3$-dimensional case. This increment of a boundary component may bring additional Betti number to the min-max minimal hypersurface on $M$ that we can obtain.
\end{rmk}
\section{Minimal hypersurfaces on $\mathbb{S}^{n+1}$ with arbitrarily large first Betti number}
In this section, we construct sequences of minimal hypersurfaces with arbitrarily large first Betti number on high dimensional round spheres applying $G$-equivariant min-max construction in previous sections. We modify the setup in Section 5 of Pitts-Rubinstein \cite{pitts1987applications}.
\subsection{Sweepouts}
We first construct the sweepout. Pitts-Rubinstein described the sweepout in \cite{pitts1987applications} in their monograph and we focus on providing a rigorous construction. Our formulation on the orbit space also resembles Ketover \cite{ketover2016free} in $3$-dimensional free boundary setting (see also Buzano-Nguyen-Schulz \cite{buzano2024noncompact} for the self-shrinker construction of mean curvature flow).

Let us consider $(\mathbb{S}^{n+1},g_{round})$ for $4 \le n+1 \le 7$. Denote $x = (x_{1}, ..., x_{n+2}) \in \mathbb{R}^{n+2}$ so that $\mathbb{S}^{n+1} = \{ x \, | \, |x| =1 \}$. We simplify notations of known (Clifford-type) minimal hypersurfaces on $\mathbb{S}^{n+1}$ by $S^{n} = \mathbb{S}^{n} \times \{ 0 \}$, $S^{n-1} \times S^{1} = \sqrt{n-1/n} \, \mathbb{S}^{n-1} \times  \sqrt{1/n} \, \mathbb{S}^{1}$, and $S^{n-2} \times S^{2} = \sqrt{n-2/n} \, \mathbb{S}^{n-2} \times  \sqrt{2/n} \, \mathbb{S}^{2}$. Note that the intersection between $S^{n-1} \times S^{1}$, $S^{n-2} \times S^{2}$ and $S^{n}$ are $\sqrt{n-1/n} S^{n-2} \times \sqrt{1/n} S^{1}$ and $\sqrt{n-2/n} S^{n-2} \times \sqrt{2/n} S^{1}$, respectively. 

Now we take $G_{c} = SO(n-1)$ and $G_{f}= \mathbb{D}_{g+1}$ where $\mathbb{D}_{m}$ is a dihedral group with $2m$ elements, where $G_{c}$ acts on the first factor of decomposition on $S^{n-1} \times S^{1}$ and $S^{n-2} \times S^{2}$ whose orbits are $n-2$ dimensional spheres $S^{n-2}(\sqrt{1-x_{n}^{2}-x_{n+1}^{2}-x_{n+2}^{2}}) \times (x_{n},x_{n+1},x_{n+2}) \in \mathbb{S}^{n+1}$. Hence, the isometry action $G_{c}$ on $\mathbb{S}^{n+1}$ induces the orbit space $\mathbb{B}^{3} = \{ x'=(x_{1}', x_{2}', x_{3}') \, | \, x_{1}'^{2}+x_{2}'^{2}+x_{3}'^{2} \le 1\} $. Note that $S^{n}/ G_{c} =  \{(x_{1}', x_{2}', x_{3}') \, | \, x_{3}' = 0 \text{ and } x' \in \mathbb{B}^{3} \}$, $(S^{n-1} \times S^{1})/G_{c} = \{(x_{1}', x_{2}', x_{3}') \, | \, x_{1}'^{2}+x_{2}'^{2} = 1/n \text{ and } x' \in \mathbb{B}^{3} \}$, and $(S^{n-2} \times S^{2})/G_{c} =\{(x_{1}', x_{2}', x_{3}') \, | \, x_{1}'^{2}+x_{2}'^{2}+x_{3}'^{2} = 2/n \}$. The induced metric on the orbit space $\mathbb{S}^{n+1}/SO(n-1)$ is written by
\[
ds_{g'}^{2}
= \sin\theta(d\theta^{2}
+ \cos^{2}\theta\,d\psi^{2}
+ \cos^{2}\theta\,\sin^{2}\psi\,d\phi^{2}),
\quad
\]
where $x_{1}=\cos\theta\,\sin\psi\,\cos\phi,\ 
x_{2}=\cos\theta\,\sin\psi\,\sin\phi,\ 
x_{3}=\cos\theta\,\cos\psi$
with coordinate ranges $\theta \in [0,\tfrac{\pi}{2}], 
\psi \in [0,\pi], 
\phi \in [0,2\pi)$ by (\ref{volumefunction}). Hence, the boundary $\mathbb{B}^{3} = \{ x'=(x_{1}', x_{2}', x_{3}') \, | \, x_{1}'^{2}+x_{2}'^{2}+x_{3}'^{2} = 1\}$ collapses into a point and $(B^{3},g')$ looks like a balloon with a collapsed tip geometrically.

The dihedral group $\mathbb{D}_{g+1}$ acts on $\mathbb{B}^{3}$ by rotations of $2\pi /(g+1)$ about the $x_{3}'$ axis, and also rotations of $\pi$ about the $(g+1)$ horizontal axes $\{ L_{i} \}_{i=0}^{g}$ defined by
\[
L_{i} : = \Big\{ \Big(r \cos \Big( \frac{i}{g+1} \pi \Big), r \sin \Big( \frac{i}{g+1} \pi \Big), 0  \Big) \, | \, -1 \le r \le 1 \Big\}.
\]
Also we denote $D = \mathbb{B}^{3} \cap \{ x_{3}'=0\}$, which corresponds to $S^{n}/ G_{c}$. Now we construct a sweepout. There exists one-parameter continuous family of $\mathbb{D}_{g+1}$-invariant singular surfaces $\{ \Sigma_{t}' \}_{t \in [0,1]}$ with the following properties:
\begin{enumerate}
\item $\Sigma_{0}' = D$;
\item $\Sigma_{1}' = D$, where $\Sigma_{t}' \setminus D$ converges to $\partial \mathbb{B}^{3}$ as $t \rightarrow 1-$;
\item $\Sigma_{t}' = D \cup S_{t}$ where $S_{t}$'s are smooth sphere defined by $S_{t} = \{(x_{1}', x_{2}', x_{3}') \, | \, x_{1}'^{2}+x_{2}'^{2}+x_{3}'^{2} = t\} \cap \mathbb{B}^{3}$.
\end{enumerate}
Now we consider $\mathbb{D}_{g+1}$-equivariant desingularization to define a sweepout.  Along very small neighborhoods of an intersection circle $D \cap S_{t}$, we perform a $\mathbb{D}_{g+1}$-equivariant desingularization to create genus $g$ as in Ketover \cite{ketover2016free}, which rounds off two pieces of surfaces after pairing adjacent pieces while keeping the $\mathbb{D}_{g+1}$-symmetry. Note that a desingularization can be operated to decrease an area in $g'$ metric and so to decrease $n$-dimensional area of the preimage by Theorem \ref{deformationminimal}. Hence we obtain the following existence of $\mathbb{D}_{g+1}$-equivariant sweepout.
\begin{lem}\label{sweepoutconditions}There exists a $\mathbb{D}_{g+1}$-equivariant sweepout $\{ \tilde{\Sigma}_{t}' \}_{t \in [0,1]}$ in $\mathbb{B}^{3}$ such that
\begin{enumerate}
    \item $\mathcal{H}^{n}(\pi^{-1}(\tilde{\Sigma}_{t}'))< \mathcal{H}^{n}(S^{n}) + \mathcal{H}^{n}(S^{n-2} \times S^{2})$ for $0 \le t \le 1$;
    \item $\tilde{\Sigma}_{0}' = \tilde{\Sigma}_{1}' = D$;
    \item $\tilde{\Sigma}_{t}'$ has a genus $g$ for $0<t<1$;
    \item $\tilde{\Sigma}_{t}'$ contains $\{ L_{i} \}_{i=0}^{g}$ and contains an origin for all $t$;
\end{enumerate}
\begin{proof}
    We take a desingularized surfaces by the previous description of the desingularization of $\{ \Sigma_{t}' \}_{t \in [0,1]}$. Then it suffices to consider (1) and (4). $\pi^{-1}(S_{t}) = \sqrt{1-t} \, \mathbb{S}^{n-2} \times \sqrt{t}\, \mathbb{S}^{2}$ and it achieves maximum value at $x=2/n$ and since desingularization also decreases $n$-dimensional area and we obtain (1). By construction, $\Sigma_{t}'$ contains $\{ L_{i} \}_{i=0}^{g}$ and the origin, and we can take a desingularization which does not impede our sweepout from avoiding axes $\{ L_{i} \}_{i=0}^{g}$, and we proved (4).
\end{proof}
\end{lem}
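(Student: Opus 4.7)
My plan is to construct $\tilde\Sigma_t'$ from the singular family $\Sigma_t' = D \cup S_t$ by a $\mathbb{D}_{g+1}$-equivariant desingularization supported in a shrinking tubular neighborhood of the intersection circle $C_t := D \cap S_t$, with the desingularization scale vanishing at $t = 0, 1$. Condition (2) is then immediate since $S_t$ degenerates at the endpoints and the desingularization support shrinks away, so $\tilde\Sigma_t' \to D$. The substantive work lies in verifying (1), (3), and (4) for $t \in (0,1)$.

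For the area bound (1), the slice structure of the $SO(n-1)$-action gives $\pi^{-1}(S_t) = \sqrt{1-t}\,\mathbb{S}^{n-2} \times \sqrt{t}\,\mathbb{S}^{2}$, so
\[
\mathcal{H}^{n}(\pi^{-1}(S_t)) = (1-t)^{(n-2)/2}\, t \,|\mathbb{S}^{n-2}||\mathbb{S}^{2}|.
\]
Routine differentiation shows this is uniquely maximized at $t = 2/n$, with value $\mathcal{H}^{n}(S^{n-2} \times S^{2})$. Combined with $\pi^{-1}(D) = S^{n}$ and the fact that $\pi^{-1}(D) \cap \pi^{-1}(S_t)$ is $(n-1)$-dimensional, this yields $\mathcal{H}^{n}(\pi^{-1}(\Sigma_t')) \le \mathcal{H}^{n}(S^{n}) + \mathcal{H}^{n}(S^{n-2}\times S^{2})$, with equality only at $t = 2/n$. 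A small-scale desingularization strictly decreases area in the weighted metric $g'$ along the singular crease $C_t$, and by Theorem \ref{deformationminimal} this is equivalent to strictly decreasing $\mathcal{H}^n$ of the preimage, giving the strict inequality throughout $[0,1]$.

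For (3) and (4), note that $C_t$ meets the $g+1$ axes $L_i$ transversely in exactly $2(g+1)$ equispaced points. Near each such point the singular crossing of $D$ and $S_t$ admits two distinct smooth local desingularizations, interchanged by the $\pi$-rotation about $L_i$. I choose the global desingularization to alternate type between consecutive axis intersections; this choice is manifestly $\mathbb{D}_{g+1}$-invariant. The resulting desingularization piece fibres over $C_t \cong S^{1}$ with exactly $2(g+1)$ saddle critical points (and no local extrema, since the parameter on $C_t$ is circle-valued), so has $\chi = -2(g+1)$. Since the complement $\Sigma_t' \setminus (\text{nbhd of } C_t)$ has $\chi = 3$ (disk with an annulus removed contributes $\chi = 1$, sphere with an annulus removed contributes $\chi = 2$), the total surface has $\chi = 1 - 2g$; connectedness together with the single boundary component $\partial D$ forces the genus to be exactly $g$. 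Each axis $L_i$ remains in $\tilde\Sigma_t'$ as the fixed locus of the $\pi$-rotation on the associated handle, and the origin lies in the undisturbed portion of $D$.

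The main obstacle I anticipate is joint continuity of the desingularization in $t$, especially as $t \to 0, 1$ where $S_t$ collapses. I would couple the desingularization scale smoothly to $t$, forcing it to vanish at the endpoints so that $\tilde\Sigma_t'$ converges to $D$ in Hausdorff distance while preserving the genus-$g$ topology and the full $\mathbb{D}_{g+1}$-invariance for $t \in (0,1)$. The uniform strict inequality in (1) then follows because the map $t \mapsto \mathcal{H}^n(\pi^{-1}(\tilde\Sigma_t'))$ is continuous on $[0,1]$, equals $\mathcal{H}^n(S^n)$ at the endpoints, and attains its supremum at an interior $t$ near $2/n$ where the desingularization provides a strict local area gain relative to the singular family.
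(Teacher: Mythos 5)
Your proposal follows essentially the same route as the paper: desingularize the singular family $D\cup S_{t}$ along the intersection circle $\mathbb{D}_{g+1}$-equivariantly, obtain (1) from the computation that $\mathcal{H}^{n}(\pi^{-1}(S_{t}))$ is maximized at $t=2/n$ with value $\mathcal{H}^{n}(S^{n-2}\times S^{2})$ together with the area-decreasing property of the desingularization, and obtain (4) by keeping the axes and the origin on the surfaces. Your Euler-characteristic bookkeeping for the genus and the explicit handling of continuity at the endpoints simply fill in details the paper delegates to the cited desingularization construction of Ketover, so the argument is correct and not materially different.
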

We prove a strict lower bound of the width by recalling the classical isoperimetric inequality arguments (see Gromov's survey article \cite{gromov1980paul}), as in \cite{ketover2016free} and \cite{buzano2024noncompact} for free boundary and shrinker version, respectively.
\begin{thm} [\cite{gromov1980paul}] \label{isoperimetrysphere}  The isoperimetric hypersurfaces in $\mathbb{S}^{n+1}$ are geodesic hyperspheres. Hence if a hypersurface $\Sigma^{n}$ on $\mathbb{S}^{n+1}$ divides $\mathbb{S}^{n+1}$ into two components with equal volume, then $\mathcal{H}^{n}(\Sigma) \ge \mathcal{H}^{n}(\mathbb{S}^{n})$, and the equality holds if and only if $\Sigma$ is an equator of $\mathbb{S}^{n+1}$.
\end{thm}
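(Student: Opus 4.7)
The plan is to prove this classical spherical isoperimetric inequality by spherical symmetrization, reducing to rotationally symmetric candidates and then specializing to the equal-volume case.

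First I would establish existence and regularity of isoperimetric minimizers. For each volume $v \in (0, |\mathbb{S}^{n+1}|)$, the direct method of calculus of variations (BV-compactness on the compact ambient together with lower semicontinuity of perimeter) yields a Caccioppoli set $E_v$ of minimal perimeter among sets of volume $v$. Standard interior regularity theory (De Giorgi, Almgren) combined with Simons' cone theorem implies that $\partial^{*} E_v$ is a smooth hypersurface, and the first variation, with the volume constraint producing a Lagrange multiplier, makes it a constant mean curvature hypersurface.

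Second, I would apply spherical (Schwarz) symmetrization about a chosen axis $p \in \mathbb{S}^{n+1}$: for each $\theta \in [0,\pi]$, replace $E \cap S_{\theta}$ by a geodesic cap $E^{*}_{\theta}$ of equal $\mathcal{H}^{n}$-measure inside the geodesic sphere $S_{\theta} := \{x \in \mathbb{S}^{n+1} : d(x,p) = \theta\}$, where the cap is centered on a fixed meridian through $p$. The resulting set $E^{*}$ satisfies $|E^{*}| = |E|$ by Fubini, and combining the coarea formula with the Euclidean isoperimetric inequality applied slicewise on each round sphere $S_{\theta}$ yields $\mathcal{H}^{n}(\partial E^{*}) \le \mathcal{H}^{n}(\partial E)$. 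The symmetrized minimizer is rotationally symmetric about $p$, so its boundary is generated by a curve on a fixed half-meridian, and the CMC condition forces this generating curve to be a totally geodesic arc, so $\partial E^{*}_v$ is a geodesic sphere.

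For the equal-volume case $v = |\mathbb{S}^{n+1}|/2$, the unique geodesic ball of half volume is a hemisphere, whose boundary is a totally geodesic equator of $n$-dimensional area $\mathcal{H}^{n}(\mathbb{S}^{n})$, yielding the stated bound. The main obstacle is the rigidity of the equality case, since a single symmetrization may leave the perimeter unchanged without $E$ itself being a spherical cap. I would address this either by iterating symmetrizations along a dense family of axes (or applying two-point polarizations), so that equality throughout forces $E$ to be invariant under reflection across every symmetrization plane through a common center, which uniquely characterizes geodesic balls; or, more directly, by applying Alexandrov's moving-plane method to the smooth CMC boundary $\partial^{*}E$ to force it to be totally umbilical and hence to be an equatorial $\mathbb{S}^{n}$ at volume $|\mathbb{S}^{n+1}|/2$.
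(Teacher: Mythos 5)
The paper does not prove this statement at all: it is quoted as a classical fact with a citation to Gromov's survey, and is used as a black box in Lemma \ref{nontrivalwidth}. So there is no proof in the paper to compare against; what matters is whether your sketch would actually establish the result. Your overall strategy (existence and regularity of isoperimetric minimizers, spherical symmetrization, then rigidity) is the standard classical route and is sound in outline, but two of your steps are wrong as written.

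First, the claim that ``the CMC condition forces this generating curve to be a totally geodesic arc, so $\partial E^{*}_{v}$ is a geodesic sphere'' does not hold. Geodesic spheres in $\mathbb{S}^{n+1}$ other than equators are totally umbilic but not totally geodesic, and their generating curves are not geodesics of the meridian plane; moreover, rotationally invariant CMC hypersurfaces in $\mathbb{S}^{n+1}$ include non-spherical (Delaunay/unduloid-type and Clifford-type) examples. The correct argument at this point is an ODE analysis of the profile curve combined with a stability or direct area comparison to exclude the non-spherical solutions of the CMC equation; this is where the real work of the classification lies and your sketch skips it. Relatedly, the slicewise inequality you invoke should be the isoperimetric inequality on the round sphere $S_{\theta}\cong\mathbb{S}^{n}$ (making the argument an induction on dimension), and the perimeter decrease under spherical symmetrization is not a direct consequence of coarea plus the slicewise inequality alone; one must also control the ``tilt'' of $\partial E$ relative to the slices, e.g.\ via a Cauchy--Schwarz/convexity argument. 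Second, your proposed Alexandrov moving-plane alternative for the equality case fails in $\mathbb{S}^{n+1}$: the reflection argument only characterizes embedded CMC hypersurfaces contained in an open hemisphere, and the Clifford hypersurface $\sqrt{1/2}\,\mathbb{S}^{k}\times\sqrt{1/2}\,\mathbb{S}^{n-k}$ is an embedded minimal hypersurface dividing $\mathbb{S}^{n+1}$ into two equal volumes that is not totally umbilic, so ``smooth CMC boundary $\Rightarrow$ totally umbilic'' is simply false here. Your first alternative (iterated symmetrization or two-point polarization with a rigidity analysis of the equality case) is the one that can be made to work, but as stated it is only a plan; the rigidity of the symmetrization inequality is exactly the delicate point and needs to be carried out, not asserted.
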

Let us consider $\mathbb{D}_{g+1}$-equivariant saturation $\Pi$ and corresponding width $W_{n+1}$. As in \cite{ketover2016free}, denote the upper and lower half of $\mathbb{B}^{3} \setminus D$ by $C_{1}$ and $C_{2}$, respectively. Then we have the half-volume and swapping properties as follows, which ensure $\{ \tilde{\Sigma}_{t}' \}_{t \in [0,1]}$ and its $\mathbb{D}_{g+1}$-equivariant isotopy deformation to be a sweepout.
\begin{enumerate}
    \item For each $t$, $\tilde{\Sigma}_{t}'$ separates $\mathbb{B}^{3}$ into two components $A(t)$ and $B(t)$, which are continuous at $t \in [0,1]$, where $\mathcal{H}^{n+1}(\pi^{-1}(A(t))) = \mathcal{H}^{n+1}(\pi^{-1}(B(t)))$;
    \item $A(0) = C_{1}$ and $B(0)=C_{2}$;
    \item $A(1) = C_{2}$ and $B(1)=C_{1}$.
\end{enumerate}
\begin{lem} \label{nontrivalwidth} We have the following width estimates:
\begin{equation} \label{strictineq}
    W_{n+1} >\mathcal{H}^{n}(\mathbb{S}^{n}).
\end{equation}
\begin{proof}
    Let us assume a contradiction and we have $W_{n+1} =\mathcal{H}^{n}(\mathbb{S}^{n})$ by Theorem \ref{isoperimetrysphere}. Then by the definition of the width, there exists a minimizing sequence $\{ \tilde{\Sigma}_{i,t}' \}_{t \in [0,1]}$ and in the $\mathbb{D}_{g+1}$-equivariant saturation $\Pi$ such that
    \begin{equation*}
        \sup_{t \in [0,1]} \mathcal{H}^{n}(\pi^{-1}(\tilde{\Sigma}_{i,t}')) < \mathcal{H}^{n}(\mathbb{S}^{n}) + \frac{1}{i}.
    \end{equation*}
For each $i$, we take $t_{i}$ such that 
\begin{equation} \label{halfvolumeprop}
\mathcal{H}^{n+1}(\pi^{-1}(A_{i}(t_{i}) \cap C_{1})) = \mathcal{H}^{n+1}(\pi^{-1}(A_{i}(t_{i}) \cap C_{2})) = \mathcal{H}^{n+1}(\mathbb{S}^{n+1})/4.
\end{equation}
Then by the half-volume property (1) in the preamble, $\mathcal{H}^{n}(\pi^{-1}(\tilde{\Sigma}_{i,t}')) \ge \mathcal{H}^{n}(\mathbb{S}^{n})$ and $\{ \tilde{\Sigma}_{i,t_{i}}' \}$ is a min-max sequence. We denote $\tilde{\Sigma}_{i,t_{i}} := \pi^{-1}(\tilde{\Sigma}_{i,t_{i}}')$. Then by compactness argument,  $\tilde{\Sigma}_{i,t_{i}}$ converges to $n$-dimensional totally geodesic hypersphere $S$ by Theorem \ref{isoperimetrysphere}. Since $\pi(S)$ needs to be $\mathbb{D}_{g+1}$-equivariant, $\pi(S) = D$. However, this contradicts to (\ref{halfvolumeprop}) and we obtain the strict inequality (\ref{strictineq}). 
\end{proof}
\end{lem}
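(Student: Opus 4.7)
The plan is to argue by contradiction using the equality case of the spherical isoperimetric inequality (Theorem \ref{isoperimetrysphere}) combined with the $\mathbb{D}_{g+1}$-symmetry of the sweepout. Suppose to the contrary that $W_{n+1} = \mathcal{H}^{n}(\mathbb{S}^{n})$. I would first extract a minimizing sequence $\{\tilde{\Sigma}'_{i,t}\}_{t \in [0,1]}$ in the saturation $\Pi$ with $\sup_{t} \mathcal{H}^{n}(\pi^{-1}(\tilde{\Sigma}'_{i,t})) < \mathcal{H}^{n}(\mathbb{S}^{n}) + 1/i$.

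Next I would use continuous dependence of the region $A_{i}(t)$ on $t$, together with the swapping property $A_{i}(0) = C_{1}$, $A_{i}(1) = C_{2}$, to select a parameter $t_{i} \in [0,1]$ at which $\mathcal{H}^{n+1}(\pi^{-1}(A_{i}(t_{i}))) = \tfrac{1}{2}\mathcal{H}^{n+1}(\mathbb{S}^{n+1})$ via the intermediate value theorem. Since $\mathbb{D}_{g+1}$ contains rotations by $\pi$ about the horizontal axes $L_{i}$ that exchange $C_{1}$ and $C_{2}$ while preserving $\tilde{\Sigma}'_{i,t_{i}}$, $\mathbb{D}_{g+1}$-equivariance then forces each of the four quarter regions $\pi^{-1}(A_{i}(t_{i}) \cap C_{j})$ to have volume $\tfrac{1}{4}\mathcal{H}^{n+1}(\mathbb{S}^{n+1})$ for $j = 1, 2$. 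In particular $\pi^{-1}(\tilde{\Sigma}'_{i,t_{i}})$ bisects $\mathbb{S}^{n+1}$, so Theorem \ref{isoperimetrysphere} yields $\mathcal{H}^{n}(\pi^{-1}(\tilde{\Sigma}'_{i,t_{i}})) \ge \mathcal{H}^{n}(\mathbb{S}^{n})$, making $\{\pi^{-1}(\tilde{\Sigma}'_{i,t_{i}})\}$ a genuine min-max sequence whose masses converge to $\mathcal{H}^{n}(\mathbb{S}^{n})$.

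By standard varifold compactness a subsequence converges to a limit varifold of mass exactly $\mathcal{H}^{n}(\mathbb{S}^{n})$ whose support still bisects $\mathbb{S}^{n+1}$ by volume. The equality case of Theorem \ref{isoperimetrysphere} then identifies the limit as a totally geodesic equator $S \subset \mathbb{S}^{n+1}$. The $\mathbb{D}_{g+1}$-invariance passes to the limit, and the only $SO(n-1) \times \mathbb{D}_{g+1}$-invariant totally geodesic hypersphere in $\mathbb{S}^{n+1}$ whose projection fits inside $\mathbb{B}^{3}$ is $\mathbb{S}^{n} \times \{0\}$, so $\pi(S) = D$. The main obstacle, and the point on which the whole argument turns, is turning this identification into a contradiction with the quarter-volume normalization; I would do this by observing that $\pi^{-1}(D) = S^{n}$ separates $\mathbb{S}^{n+1}$ precisely into $\pi^{-1}(C_{1})$ and $\pi^{-1}(C_{2})$, so Hausdorff/varifold convergence of the bisecting regions forces $\pi^{-1}(A_{i}(t_{i}))$ (up to relabeling) to converge to $\pi^{-1}(C_{1})$, whence $\mathcal{H}^{n+1}(\pi^{-1}(A_{i}(t_{i}) \cap C_{2})) \to 0$, contradicting the quarter-volume identity and establishing $W_{n+1} > \mathcal{H}^{n}(\mathbb{S}^{n})$.
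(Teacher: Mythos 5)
Your overall strategy is the same as the paper's: assume $W_{n+1} = \mathcal{H}^{n}(\mathbb{S}^{n})$, choose parameters $t_{i}$ at which the sweepout surface cuts $\mathbb{S}^{n+1}$ into four regions of equal volume, use the isoperimetric rigidity to force the limit to be the equator $\pi^{-1}(D)$, and contradict the quarter-volume normalization. The identification $\pi(S)=D$ via equivariance and the final contradiction are exactly the paper's steps, and your elaboration of why the limit being $\pi^{-1}(D)$ kills the quarter-volume identity is a reasonable unpacking of what the paper leaves implicit.

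However, there is a genuine gap in how you select $t_{i}$. First, applying the intermediate value theorem to $t \mapsto \mathcal{H}^{n+1}(\pi^{-1}(A_{i}(t)))$ is vacuous: by the half-volume property this quantity equals $\tfrac{1}{2}\mathcal{H}^{n+1}(\mathbb{S}^{n+1})$ for \emph{every} $t$, so it does not single out any parameter. Second, the symmetry argument you then invoke for the quarter-volume splitting is false: a $\pi$-rotation about an axis $L_{i}$ exchanges $C_{1}$ with $C_{2}$ but it also exchanges $A_{i}(t)$ with $B_{i}(t)$, since it sends $A_{i}(0)=C_{1}$ to $C_{2}=B_{i}(0)$ and the labeling varies continuously in $t$. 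Equivariance therefore only gives $\mathcal{H}^{n+1}(\pi^{-1}(A_{i}(t)\cap C_{1})) = \mathcal{H}^{n+1}(\pi^{-1}(B_{i}(t)\cap C_{2}))$, not equality of the two $A$-quarters; indeed at $t=0$ one has $A_{i}(0)\cap C_{2}=\emptyset$, so the quarter-volume identity certainly does not hold for all $t$. The correct selection is to apply the intermediate value theorem to $f_{i}(t) := \mathcal{H}^{n+1}(\pi^{-1}(A_{i}(t)\cap C_{1})) - \mathcal{H}^{n+1}(\pi^{-1}(A_{i}(t)\cap C_{2}))$, which equals $+\tfrac{1}{2}\mathcal{H}^{n+1}(\mathbb{S}^{n+1})$ at $t=0$ and $-\tfrac{1}{2}\mathcal{H}^{n+1}(\mathbb{S}^{n+1})$ at $t=1$ by the swapping property; at a zero $t_{i}$ of $f_{i}$ the half-volume property forces both intersections to have volume $\tfrac{1}{4}\mathcal{H}^{n+1}(\mathbb{S}^{n+1})$. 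With that repair the rest of your argument closes as in the paper.
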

\subsection{Determining the genus} By the genus control in Theorem \ref{genusbound}, we have the upper bound of the genus of image of min-max minimal hypersurfaces by the projection map. We first rule out the case of genus zero, by developing analogous topological ideas of Mcgrath-Zou \cite{mcgrath2024areas} in our setting, based on the two-piece property of minimal hypersurfaces on $\mathbb{S}^{n+1}$ by Choe-Soret \cite{choe2009first}. We denote a $(SO(n-1) \times \mathbb{D}_{g+1})$-equivariant min-max minimal hypersurface we can obtain by $\Sigma^{n}_{g}$.
\begin{lem} \label{nonzerogenus} $\pi(\Sigma^{n}_{g})$ has nonzero genus in $(\mathbb{B}^{3},g')$.
\begin{proof}
    Suppose $\pi(\Sigma^{n}_{g})$ has genus $0$ and we prove by contradiction. Since $\pi(\Sigma^{n}_{g})$ contains $D$, $\pi(\Sigma^{n}_{g})$ has at least one boundary component. Divide the cases into two cases when $\pi(\Sigma^{n}_{g})$ has one boundary component and multiple boundary components. We can assume the boundary components of $\pi(\Sigma^{n}_{g})$ consist of simple closed curves by the regularity (Theorem \ref{secondcontireg}).

    First, assume $\pi(\Sigma^{n}_{g})$ consists of two or more boundary curves and genus $0$. By the two-piece property of minimal hypersurfaces on $S^{n+1}$ by Theorem 1.2 in Choe-Soret \cite{choe2009first}, every equatorial disk, whose preimage is an equatorial $\mathbb{S}^{n}$, separates $\pi(\Sigma^{n}_{g})$ into two connected components. Also note that this two-piece property is a topological property, so it also holds on the degenerate metric $(\mathbb{B}^{3},g')$. Hence, by the same argument as Lemma 2.1 of Mcgrath-Zou \cite{mcgrath2024areas}, we can deduce that $\pi(\Sigma^{n}_{g})$ intersects equatorial disks transversally and we obtain $0 \notin \pi(\Sigma^{n}_{g})$. However, this contradicts our $\mathbb{D}_{g+1}$-equivariant construction based on the condition (4) in Lemma \ref{sweepoutconditions} which gives $0 \in \pi(\Sigma^{n}_{g})$.
     
    Now suppose $\pi(\Sigma^{n}_{g})$ has only one boundary component and genus $0$. By the condition (4) in Lemma \ref{sweepoutconditions} again, our limit surface contains an origin and axes and by considering $\mathbb{D}_{g+1}$-symmetry, it contradicts to the two-piece property. Thus, $\pi(\Sigma^{n}_{g})$ has nonzero genus.
\end{proof}
\end{lem}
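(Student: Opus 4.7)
The plan is to argue by contradiction: assume $\pi(\Sigma^{n}_{g})$ has genus $0$, and split the argument according to the number of boundary components of $\pi(\Sigma^{n}_{g})$ on $\partial \mathbb{B}^{3}$. Since $\pi(\Sigma^{n}_{g})$ must contain $D=\pi(\mathbb{S}^{n})$ by the structure of the sweepout and the width estimate $W_{n+1}>\mathcal{H}^{n}(\mathbb{S}^{n})$ from Lemma \ref{nontrivalwidth}, there is at least one boundary curve on $\partial \mathbb{B}^{3}$. The regularity result of Theorem \ref{secondcontireg} ensures that these boundary components are finite unions of simple closed curves meeting $\partial \mathbb{B}^{3}$ orthogonally (modulo the degenerate metric), so the topological classification into genus zero with $b$ boundary components makes sense.

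The first preparatory step is to establish that the origin $0 \in \mathbb{B}^{3}$ lies in $\pi(\Sigma^{n}_{g})$. Each sweepout surface $\tilde{\Sigma}_{t}'$ contains the axes $\{L_{i}\}_{i=0}^{g}$ (condition (4) of Lemma \ref{sweepoutconditions}), hence the origin. Since the origin is a common fixed point of every element of $\mathbb{D}_{g+1}$, any $\mathbb{D}_{g+1}$-equivariant isotopy preserves it, and the singular locus $\tilde{\mathcal{S}}=\bigcup_{i}L_{i}$ is sent to itself set-wise by such isotopies. Passing to the varifold limit and using an $F$-metric continuity argument together with the isolated-fixed-point density at $0$, one deduces $0 \in \mathrm{supp}\,\pi_{\sharp}\Gamma$. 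The second ingredient is the Choe--Soret two-piece property for closed minimal hypersurfaces in $\mathbb{S}^{n+1}$: every equatorial hypersphere separates $\Sigma^{n}_{g}$ into exactly two connected components; since the projection map commutes with this separation topologically (equatorial $\mathbb{S}^{n}$'s project to equatorial disks in $\mathbb{B}^{3}$), every equatorial disk through the origin separates $\pi(\Sigma^{n}_{g})$ into two connected components, irrespective of the degeneracy of $g'$ at $\partial\mathbb{B}^{3}$.

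For the case of two or more boundary curves, I would adapt the topological argument of McGrath--Zou (Lemma 2.1 in \cite{mcgrath2024areas}). A genus zero surface with $b\ge 2$ boundary components is a sphere with $b$ open disks removed; if it passed transversely through the origin, one could choose an equatorial disk through $0$ avoiding the other boundary curves and find a local transverse intersection that, combined with the global sphere-with-holes topology, would produce a nonseparating intersection, contradicting the two-piece property. Hence $0 \notin \pi(\Sigma^{n}_{g})$, contradicting the previous step. For the case of a single boundary component, $\pi(\Sigma^{n}_{g})$ is topologically a disk; the $\mathbb{D}_{g+1}$-action on this disk fixing the origin and containing each axis $L_{i}$ forces the horizontal equatorial disk $D$ to intersect $\pi(\Sigma^{n}_{g})$ in at least $g+1$ radii, cutting it into $\ge g+1\ge 2$ pieces on each side, which is again incompatible with the two-piece separation.

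The main obstacle I foresee is the rigorous verification that $0 \in \mathrm{supp}\,\pi(\Sigma^{n}_{g})$: varifold convergence can drop mass at isolated points, and one must rule this out using a combination of the equivariant structure (every limit varifold is $\mathbb{D}_{g+1}$-invariant and hence supported on a closed $\mathbb{D}_{g+1}$-invariant set), monotonicity of density for the stationary limit at fixed points, and the fact that the axes $L_{i}$ lie uniformly in the sweepout with definite lower area contribution in every neighborhood of $0$. A secondary delicate point is the topological step in the multi-boundary case, where one must carefully handle the possibility of tangential intersections with the equatorial disks by choosing a generic nearby equatorial disk (still through origin by the family of great disks) and invoking the transversality theorem compatibly with the degenerate ambient metric $g'$.
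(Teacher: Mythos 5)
Your proposal follows essentially the same route as the paper: contradiction on genus zero, case split on the number of boundary components, the Choe--Soret two-piece property projected to equatorial disks in $(\mathbb{B}^{3},g')$, the McGrath--Zou argument to force $0 \notin \pi(\Sigma^{n}_{g})$ in the multi-boundary case, and the $\mathbb{D}_{g+1}$-symmetry plus condition (4) of Lemma \ref{sweepoutconditions} to contradict this (and to handle the one-boundary case). The extra care you devote to justifying $0 \in \operatorname{spt}\pi_{\sharp}\Gamma$ via equivariance and density monotonicity is a reasonable elaboration of a point the paper asserts directly from the sweepout construction.
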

Now we prove that the genus of $\pi(\Sigma^{n}_{g})$ is $g$. We employ the Riemann-Hurwitz formula arguments as in Lemma 2.10 of Buzano-Nguyen-Schulz \cite{buzano2024noncompact}.
\begin{lem} \label{genusg} The genus of $\pi(\Sigma^{n}_{g})$ is $g$.
\begin{proof} Consider a sequence of exhaustions $\Omega_{\epsilon_{j}} := \{(x_{1}', x_{2}', x_{3}') \, | \, x_{1}'^{2}+x_{2}'^{2}+x_{3}'^{2} \le 1-\epsilon_{j}\} \subsetneq \mathbb{B}^{3}$ where $\epsilon_{j} \rightarrow 0$ as $j$ goes to infinity. Take $\Sigma^{n}_{\epsilon_{j},g}:= \Omega_{\epsilon_{j}} \cap \pi(\Sigma^{n}_{g})$. Then for large $j$, by Lemma \ref{genusbound} and \ref{nonzerogenus}, $\text{genus}(\Sigma^{n}_{\epsilon_{j},g}) \in \{1,2, ..., g \}$ and we can apply Corollary 2.10 in \cite{buzano2024noncompact} and obtain $\text{genus}(\Sigma^{n}_{\epsilon_{j},g})=g$. By sending $j$ to infinity, we obtain $\text{genus}(\pi(\Sigma^{n}_{g})) = g$.
\end{proof}
\end{lem}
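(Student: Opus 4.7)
The strategy is to combine the upper bound $\mathrm{genus}(\pi(\Sigma^{n}_{g})) \le g$ from Theorem \ref{genusbound}, the non-triviality $\mathrm{genus}(\pi(\Sigma^{n}_{g})) \ge 1$ from Lemma \ref{nonzerogenus}, and a Riemann--Hurwitz constraint coming from the $\mathbb{D}_{g+1}$-symmetry acting on $\pi(\Sigma^{n}_{g})$ to show that the only admissible value of the genus in the interval $\{1,2,\dots,g\}$ is $g$ itself.

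The plan is as follows. First I would exhaust $\mathbb{B}^{3}$ by the $\mathbb{D}_{g+1}$-invariant balls $\Omega_{\epsilon_{j}}$ with $\epsilon_{j}\to 0$, chosen by Sard's theorem so that $\pi(\Sigma^{n}_{g})$ meets $\partial \Omega_{\epsilon_{j}}$ transversally. Set $\Sigma^{n}_{\epsilon_{j},g}:=\pi(\Sigma^{n}_{g})\cap \Omega_{\epsilon_{j}}$; this is a compact surface with boundary on which $\mathbb{D}_{g+1}$ acts. For $j$ large, genus$(\Sigma^{n}_{\epsilon_{j},g})$ stabilizes, and by Theorem \ref{genusbound} together with Lemma \ref{nonzerogenus} its value lies in $\{1,2,\dots,g\}$. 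Next I would apply Riemann--Hurwitz to the branched cover $\Sigma^{n}_{\epsilon_{j},g} \to \Sigma^{n}_{\epsilon_{j},g}/\mathbb{D}_{g+1}$: the branch set consists of the intersections of $\Sigma^{n}_{\epsilon_{j},g}$ with the rotation axes, namely the origin (fixed by all of $\mathbb{D}_{g+1}$) and the $(g+1)$ horizontal axes $\{L_{i}\}_{i=0}^{g}$ (each fixed by a conjugate $\mathbb{Z}_{2}$). These are forced to lie in $\pi(\Sigma^{n}_{g})$ by property (4) of Lemma \ref{sweepoutconditions} and passage to the varifold limit.

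The core calculation is then to compute $\chi(\Sigma^{n}_{\epsilon_{j},g})$ modulo $2(g+1)$ using the equivariant count of branch points. As in Corollary 2.10 of Buzano--Nguyen--Schulz \cite{buzano2024noncompact}, after assembling the ramification contributions from the $\mathbb{Z}_{2}$-axes and the origin, one obtains
\begin{equation*}
\chi(\Sigma^{n}_{\epsilon_{j},g}) \equiv \chi_{0} \pmod{2(g+1)}
\end{equation*}
for an explicit residue $\chi_{0}$ depending only on the symmetry data (not on $j$). Since the number of boundary components of $\Sigma^{n}_{\epsilon_{j},g}$ is itself $\mathbb{D}_{g+1}$-equivariant, this congruence pins down the genus modulo $(g+1)$, and hence the bounds $1\le \mathrm{genus}(\Sigma^{n}_{\epsilon_{j},g})\le g$ leave only a single value. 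Matching the calculation against the model sweepout $\tilde{\Sigma}_{t}'$, which has genus exactly $g$, identifies this value as $g$. Finally, sending $j\to\infty$ and invoking the regularity and convergence in Theorem \ref{minmaxreg} (which guarantees that genus cannot increase in the limit and that no genus is lost into $\partial \mathbb{B}^{3}$ beyond what the ambient Betti-number growth already records) yields $\mathrm{genus}(\pi(\Sigma^{n}_{g}))=g$.

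The main obstacle I anticipate is the bookkeeping near the boundary $\partial \Omega_{\epsilon_{j}}$: one must verify that the cutting procedure does not artificially alter the Euler-characteristic congruence (by splitting branch points or creating unexpected symmetric boundary components) and that Sard's theorem can be applied equivariantly so that $\partial \Omega_{\epsilon_{j}}$ remains transverse to $\pi(\Sigma^{n}_{g})$ and to each axis. Handling this carefully, and ensuring that the branching contribution from the origin is correctly counted when it lies on or near $\partial \Omega_{\epsilon_{j}}$, is the delicate step; once this is settled, the Riemann--Hurwitz argument proceeds as in \cite{buzano2024noncompact}.
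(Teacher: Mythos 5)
Your proposal is correct and follows essentially the same route as the paper: exhaust $\mathbb{B}^{3}$ by $\Omega_{\epsilon_{j}}$, use Theorem \ref{genusbound} and Lemma \ref{nonzerogenus} to confine the genus to $\{1,\dots,g\}$, and then invoke the Riemann--Hurwitz congruence for the $\mathbb{D}_{g+1}$-action (the content of Corollary 2.10 of \cite{buzano2024noncompact}, which the paper cites directly and you unpack) to pin the value down to $g$.
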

\subsection{Existence of minimal hypersurfaces with large Betti numbers} In this section, we construct minimal hypersurfaces with the arbitrarily large first Betti number on $\mathbb{S}^{n+1}$ for $4 \le n+1 \le 7$.
\begin{thm} \label{minhypsphere} For dimension $4 \le n+1 \le 7$, there exists a sequence of minimal hypersurfaces $\Sigma_{g}$ on $\mathbb{S}^{n+1}$ such that $\Sigma_{g}$ has a symmetry $SO(n-1) \times \mathbb{D}_{g+1}$. For sufficiently large $g$, $\Sigma_{g}$ has a diffeomorphism type of connected sums $\#^{2g}(S^{1} \times S^{n-1})$ or $\#^{2g+2}(S^{1} \times S^{n-1})$ for $n+1=4, 7$, and $\#^{2g}(S^{1} \times S^{n-1})$ for $n+1=5, 6$. Hence, the first Betti number is $2g$ or $2g+2$ for $n=4,7$, and $2g$ for $n=5,6$. Moreover, $\Sigma_{g}$ converges to $S^{3} \cup (S^{2} \times S^{1})$ or $S^{3} \cup (S^{1} \times S^{2})$ for $n+1=4$, and in particular, $\Sigma_{g}$ converges to $S^{n} \cup (S^{n-2} \times S^{2})$ for $n+1=5, 6$ as $g \rightarrow \infty$.
\begin{proof} Let us send $g$ to infinity and we denote a (subsequential) limit of $\Sigma_{g}$ by $\Sigma_{\infty}$. Then $\Sigma_{\infty}$ has a $(SO(n-1) \times S^{1} \times \mathbb{Z}_{2})$-symmetry. Since each $\pi(\Sigma_{g})$ contains $\{L_{i} \}_{i=0}^{g}$, $\pi(\Sigma_{\infty})$ contains $D$ as a limit. We apply the local arguments in Section 4.3 in \cite{ketover2016free} to the interior part of $\pi(\Sigma_{g})$ as a minimal surface in $(\mathbb{B}^{3},g')$. We first define the blow-up set by
\[
\mathcal{B} := \{ x \in B \, | \, \inf_{r>0} \liminf_{g \rightarrow \infty} \int_{B_{r}(x)} |A|^{2}_{\pi(\Sigma_{g})} d \mathcal{H}^{2} \ge \epsilon_{0}\},
\]
where $\epsilon_{0}$ is a constant of $\epsilon$-regularity theorem of Choi-Schoen \cite{choi1985space}. Notice the area comparison $|S^{n-2} \times S^{2}| < 2 |S^{n}|$ and that the integrated Gauss-Bonnet argument in \cite{ilmanen1998lectures} in \cite{ketover2016free} applies directly to our setting, with the genus bound of min-max minimal surface in Theorem \ref{genusbound}, hence we obtain that $\mathcal{B} \cap \text{Int}(\mathbb{B}^{3})$ is a single circle in $D$. Hence, we obtain $\pi(\Sigma_{\infty}) = D \cup \Sigma'$ where $\Sigma'$ is a $(S^{1} \times \mathbb{Z}_{2})$-equivariant embedded free boundary minimal surface in $(\mathbb{B}^{3},g')$ which intersects $D$ transversally and $\partial \mathbb{B}^{3}$ orthogonally. Also, $\pi(\Sigma_{g})$ has genus $g$ by Lemma \ref{genusg}.

Now we compute the diffeomorphism type of $\Sigma_{g}$. We consider $\Sigma_{\infty}/(SO(n-1) \times S^{1}) = (D \cup \Sigma')/S^{1}$, which is a union of two embedded arcs in a topological disk. Note that $\Sigma'/S^{1}$ intersects the boundary orthogonally by the regularity. Hence, for sufficient large $g$, by Allard regularity theorem \cite{allard1972first}, we have the graphical convergence $\Sigma_{g} \rightarrow \Sigma_{\infty}$, and local graphical convergence of $\pi(\Sigma_{g})$ to $\pi(\Sigma_{\infty})$ by $SO(n-1)$-symmetry, especially on $\partial \mathbb{B}^{3}$. Hence, $\pi(\Sigma_{g})$ has genus $g$, and one boundary component or three boundary components.

We compute the topology of the preimage of a properly embedded surface $\Sigma_{g,b}'$ in $\mathbb{B}^{3}$, where $\Sigma_{g,b}'$ has genus $g$ and $b$ boundary components. $\Sigma_{g,b+1}'$ can be obtained by $(\Sigma_{g,b}' \setminus D^{2}) \cup_{S^{1} \times \{ 0\}}  (S^{1} \times I)$ where $D^{2},  S^{1} \times \{ 0\} \subset Int(B^{3})$ and $S^{1} \times \{1 \} \subset \partial B^{3}$. Hence, 
\[
\pi^{-1} (\Sigma_{g,b+1}') = (\pi^{-1}(\Sigma_{g,b}') \setminus (S^{n-2} \times D^{2})) \cup_{S^{n-2} \times  S^{1}}  (D^{n-1} \times S^{1}),
\]
and this is a codimension $2$ surgery and equivalent to connecting one piece of $S^{n-1} \times S^{1}$. Similarly, adding genus by one, which can be realized by removing two disks and attaching a cylinder over the boundary circles in $B^{3}$, appears in the preimage as \[
\pi^{-1} (\Sigma_{g+1,b}') = (\pi^{-1}(\Sigma_{g,b}') \setminus ((S^{n-2} \times D^{2}) \cup (S^{n-2} \times D^{2})) \cup_{\partial (S^{n-2} \times S^{1} \times I)}  (S^{n-2} \times S^{1} \times I),
\]
and this is equivalent to attaching two $1$-handles so it corresponds to connecting two pieces of $S^{n-1} \times S^{1}$. Hence, we obtain that $\pi^{-1}(\Sigma_{g,b}')$ is diffeomorphic to $\#^{(b-1+2g)} (S^{n-1} \times S^{1})$. Since $\pi(\Sigma_{g})$ has genus $g$, and one boundary component or three boundary components, $\Sigma_{g}$ achieves the topological type $\#^{2g} (S^{n-1} \times S^{1})$ or $\#^{2g+2} (S^{n-1} \times S^{1})$.

By Theorem \ref{deformationminimal}, $\pi^{-1}(\Sigma')$ is a $(SO(n-1) \times S^{1} \times \mathbb{Z}_{2})$-equivariant minimal hypersurface. Moreover, by Lemma \ref{sweepoutconditions}, $\mathcal{H}^{n}(\pi^{-1}(\Sigma')) \le \mathcal{H}^{n}(S^{n-2} \times S^{2})$. By Willmore type inequality on antipodal hypersurfaces of Viana \cite[Theorem 1.4]{viana2023isoperimetry} on minimal hypersurfaces on $\mathbb{S}^{n+1}$, we have the lower bound $\mathcal{H}^{n}(\pi^{-1}(\Sigma')) \ge |\sqrt{\lfloor n/2 \rfloor / n} \mathbb{S}^{\lfloor n/2 \rfloor} \times \sqrt{\lceil n/2 \rceil / n} \mathbb{S}^{\lceil n/2 \rceil}|$. This gives $\pi^{-1}(\Sigma')$ is isometric to $S^{2} \times S^{n-2}$ when $n+1=4, 5, 6$. 

Note that there is a unique configuration of $S^{2} \times S^{n-2}$ with $(SO(n-1) \times S^{1} \times \mathbb{Z}_{2})$-symmetry when $n+1= 5,6$, while there are two configurations, which are $\pi^{-1}(\{ x_{1}'^{2}+x_{2}'^{2}+x_{3}'^{2} = 2/3 \} )$ and $\pi^{-1}(\{ x_{1}'^{2}+x_{2}'^{2} = 1/3 \})$, in dimension $4$. This proves that $\Sigma_{\infty}$ is $S^{3} \cup (S^{2} \times S^{1})$ or $S^{3} \cup (S^{1} \times S^{2})$ for $n+1=4$, and is $S^{n} \cup (S^{n-2} \times S^{2})$ for $n+1=5, 6$. When $n+1=5,6$, for sufficiently large $g$, we obtain that $\pi(\Sigma_{g})$ has one boundary component, and $\Sigma_{g}$ achieves the topological type $\#^{2g} (S^{n-1} \times S^{1})$ by Allard regularity theorem and graphical convergence as in above. Moreover, if $n+1=4,7$ and for sufficiently large $g$, $\pi(\Sigma_{g})$ has at most three boundary components by Allard regularity theorem and the graphical convergence and we have the topological type of $\Sigma_{g}$ to be $\#^{2g} (S^{n-1} \times S^{1})$ or $\#^{2g+2} (S^{n-1} \times S^{1})$.
\end{proof} 
\end{thm}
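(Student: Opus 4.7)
The plan is to apply the cohomogeneity-two min-max construction of Theorem~\ref{mainminmax} (equivalently Theorem~\ref{minmaxreg}) to $M = \mathbb{S}^{n+1}$ with $G_c = SO(n-1)$ and $G_f = \mathbb{D}_{g+1}$, using the $\mathbb{D}_{g+1}$-equivariant sweepout $\{\tilde{\Sigma}_t'\}$ from Lemma~\ref{sweepoutconditions}. This immediately produces an $(SO(n-1) \times \mathbb{D}_{g+1})$-equivariant minimal hypersurface $\Sigma_g^n$ whose image in the orbit space $\mathbb{B}^3$ has genus at most $g$ by Theorem~\ref{genusbound}. To pin down the genus exactly, I would invoke Lemma~\ref{nontrivalwidth} (strict inequality $W_{n+1} > \mathcal{H}^n(\mathbb{S}^n)$ via the half-volume property and the isoperimetric inequality on $\mathbb{S}^{n+1}$), which together with the upper bound from Lemma~\ref{sweepoutconditions}(1) gives a sharp area window; Lemma~\ref{nonzerogenus} (two-piece property of Choe-Soret combined with the fact that the origin lies in $\pi(\Sigma_g)$ by $\mathbb{D}_{g+1}$-symmetry) to exclude genus zero; and the Riemann-Hurwitz/exhaustion argument of Lemma~\ref{genusg} to conclude $\text{genus}(\pi(\Sigma_g)) = g$.

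For the topological type, the plan is to send $g \to \infty$ and extract a subsequential limit $\Sigma_\infty$ with enlarged symmetry $SO(n-1) \times S^1 \times \mathbb{Z}_2$. The key analytic ingredient is a curvature-concentration analysis in the orbit space: define the blow-up set via the Choi-Schoen $\epsilon$-regularity constant, and apply an integrated Gauss-Bonnet estimate of Ilmanen type together with the area comparison $\mathcal{H}^n(S^{n-2} \times S^2) < 2\,\mathcal{H}^n(\mathbb{S}^n)$ to conclude that the blow-up set is a single circle inside $D = \pi(\mathbb{S}^n)$. This yields the structural decomposition $\pi(\Sigma_\infty) = D \cup \Sigma'$, where $\Sigma'$ is a smooth $(S^1 \times \mathbb{Z}_2)$-equivariant embedded free boundary minimal surface in $(\mathbb{B}^3, g')$ that meets $D$ transversally and $\partial \mathbb{B}^3$ orthogonally. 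Using $SO(n-1)$-equivariance, Allard's regularity theorem then upgrades weak convergence to graphical convergence of $\Sigma_g$ to $\Sigma_\infty$ for large $g$, so that the number of boundary components of $\pi(\Sigma_g)$ on $\partial \mathbb{B}^3$ is controlled by that of $\Sigma'$.

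Finally, I would identify $\pi^{-1}(\Sigma')$ using Viana's Willmore-type inequality for antipodally symmetric minimal hypersurfaces of $\mathbb{S}^{n+1}$: the lower bound $\mathcal{H}^n(\pi^{-1}(\Sigma')) \ge \mathcal{H}^n(\sqrt{\lfloor n/2 \rfloor /n}\,\mathbb{S}^{\lfloor n/2 \rfloor} \times \sqrt{\lceil n/2 \rceil/n}\,\mathbb{S}^{\lceil n/2 \rceil})$ matched against the sweepout upper bound $\mathcal{H}^n(\pi^{-1}(\Sigma')) \le \mathcal{H}^n(S^{n-2} \times S^2)$ forces $\pi^{-1}(\Sigma')$ to be a specific Clifford hypersurface, which for $n+1 = 5, 6$ is uniquely $\sqrt{(n-2)/n}\,\mathbb{S}^{n-2} \times \sqrt{2/n}\,\mathbb{S}^2$ under the prescribed symmetry. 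The diffeomorphism type of $\Sigma_g$ is then read off by codimension-two surgery in the orbit space via the Slice Theorem: adding a boundary component to a planar domain lifts to attaching $D^{n-1} \times S^1$ along $S^{n-2} \times S^1$, while adding one unit of genus lifts to two such $1$-handle attachments, so a genus-$g$ orbit surface with $b$ boundary components on $\partial \mathbb{B}^3$ yields $\#^{2g + b - 1}(S^1 \times S^{n-1})$. The main obstacle I anticipate is handling the indefiniteness in dimensions $4$ and $7$: in dimension $4$ there are two distinct Clifford hypersurfaces $\sqrt{1/3}\,\mathbb{S}^1 \times \sqrt{2/3}\,\mathbb{S}^2$ and $\sqrt{2/3}\,\mathbb{S}^2 \times \sqrt{1/3}\,\mathbb{S}^1$ compatible with the enhanced symmetry, and in dimension $7$ no second-least-area characterization is available among $(SO(n-1) \times SO(2) \times \mathbb{Z}_2)$-equivariant minimal hypersurfaces, so the argument only yields $b \in \{1, 3\}$, i.e.\ first Betti number $2g$ or $2g+2$, in those dimensions.
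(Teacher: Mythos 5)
Your proposal is correct and follows essentially the same route as the paper's proof: min-max with the $\mathbb{D}_{g+1}$-equivariant sweepout, genus pinned down via the two-piece property and Riemann--Hurwitz, the $g\to\infty$ limit analyzed through the Choi--Schoen blow-up set and integrated Gauss--Bonnet, Allard regularity for graphical convergence controlling boundary components, Viana's Willmore-type inequality identifying $\pi^{-1}(\Sigma')$, and the codimension-two surgery computation giving $\#^{2g+b-1}(S^1\times S^{n-1})$. You also correctly identify the source of the dichotomy in dimensions $4$ and $7$, matching the paper's conclusion.
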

 \subsection{Discussion on the topological type of minimal hypersurfaces on dimension $7$ and higher} In the proof of Theorem \ref{minhypsphere}, we proved that $\Sigma_{g}$ achieves a diffeomorphism type of connected sums $\#^{2g}(S^{1} \times S^{n-1})$ for large $g$, by proving that $\Sigma_{g}$ converges to $S^{n} \cup (S^{n-2} \times S^{2})$. This follows from the Willmore type inequality statement of Viana \cite{viana2023isoperimetry} for antipodal hypersurfaces that $S^{n-2} \times S^{2}$ is a $(SO(n-1) \times SO(2) \times \mathbb{Z}_{2})$-equivariant minimal hypersurface which achieves a minimum area except for the equator $S^{n}$ for $n+1 \le 6$. While the inequality gives the lower bound of the area $|\sqrt{\lfloor n/2 \rfloor / n} \mathbb{S}^{\lfloor n/2 \rfloor} \times \sqrt{\lceil n/2 \rceil / n} \mathbb{S}^{\lceil n/2 \rceil}|$ for general minimal hypersurfaces which are not equatorial spheres, this does not give the (larger) lower bound of areas of $(SO(n-1) \times SO(2) \times \mathbb{Z}_{2})$-equivariant minimal hypersurfaces in dimension $7$ and higher. 

Since we have run one parameter min-max construction, one possible approach is to obtain the classification of $(SO(n-1) \times SO(2) \times \mathbb{Z}_{2})$-equivariant minimal hypersurfaces whose $(SO(n-1) \times SO(2) \times \mathbb{Z}_{2})$-Morse Index is smaller than or equal to $1$. By Do Carmo-Ritoré-Ros \cite{do2000compact}, we know that only minimal hypersurfaces on $\mathbb{S}^{n+1}$ with antipodal symmetry with $\mathbb{Z}_{2}$-Morse Index $1$ are Clifford hypersurfaces. Their arguments prove the existence of two $\mathbb{Z}_{2}$-equivariant eigenfunctions for the stability operator with negative eigenvalue for minimal hypersurfaces which are not Clifford hypersurfaces or geodesic spheres, which do not guarantee the existence of two $(SO(n-1) \times SO(2) \times \mathbb{Z}_{2})$-equivariant eigenfunctions with negative eigenvalue, and hence it may require more delicate analysis in our setting.
\subsection{Morse Index Bound} In this section, we apply Savo's Morse Index lower bound for minimal hypersurfaces of the sphere by a linear function of its first Betti number in \cite{savo2010index} (See Ambrozio-Carlotto-Sharp \cite{ambrozio2018comparing} for the bound for more general manifolds).
\begin{thm}[\cite{savo2010index}, Corollary 1.2] \label{indexlowerbound} For $n+1 \ge 3$, let $\Sigma^{n} \subset \mathbb{S}^{n+1}$ be a minimal hypersurface on $\mathbb{S}^{n+1}$. Suppose the first Betti number $b_{1}(\Sigma) \ge 1$. Then Morse Index of $\Sigma$ has a lower bound as follows.
\[
\text{Index}(\Sigma) \ge \frac{2 b_{1}(\Sigma)}{n(n+1)} + n+2. 
\]
\end{thm}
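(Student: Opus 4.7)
Since the statement is Savo's \cite[Corollary 1.2]{savo2010index}, in the paper at hand it will simply be invoked as an external result; nonetheless I sketch the approach I would follow, which is essentially Savo's.

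The plan is to construct a subspace of $C^{\infty}(\Sigma)$ of dimension at least $\frac{2 b_{1}(\Sigma)}{n(n+1)} + n + 2$ on which the Jacobi quadratic form
\[
Q(f) = \int_{\Sigma} \Bigl( |\nabla f|^{2} - (|A|^{2} + n) f^{2} \Bigr) \, d\mathcal{H}^{n}
\]
is negative definite; by the variational characterization of the Morse index this immediately yields the bound. I would split the construction into a translational block of dimension $n+2$ and a topological block of dimension at least $\lceil 2 b_{1}(\Sigma)/(n(n+1)) \rceil$, and then verify that they are linearly independent.

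For the translational block, restrict the $n+2$ linear coordinate functions $x_{1}, \ldots, x_{n+2}$ of $\mathbb{R}^{n+2}$ to $\Sigma$. Minimality of $\Sigma$ in $\mathbb{S}^{n+1}$ gives $\Delta_{\Sigma} x_{i} = -n \, x_{i}$, whence $J x_{i} = -|A|^{2} x_{i}$ and $Q(x_{i}) = -\int_{\Sigma} |A|^{2} x_{i}^{2} \le 0$. Because $\Sigma$ is closed and not contained in any proper affine hyperplane, and because $|A|^{2} \not\equiv 0$ (an equator has $b_{1} = 0$ and may be excluded), the Gram matrix $\bigl( \int_{\Sigma} |A|^{2} x_{i} x_{j} \bigr)_{i,j}$ is positive definite on $\mathbb{R}^{n+2}$, yielding an $(n+2)$-dimensional negative-definite subspace for $Q$.

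For the topological block, pair each harmonic $1$-form $\omega$ on $\Sigma$ (a space of dimension $b_{1}(\Sigma)$ by Hodge theory) with the Killing vector fields $K_{ij} = x_{i} \partial_{j} - x_{j} \partial_{i}$ on $\mathbb{R}^{n+2}$, and form the test functions $f_{\omega, ij} = \omega(K_{ij}^{T})$, where $K_{ij}^{T}$ is the tangential component along $\Sigma$. The Bochner--Weitzenb\"ock formula for harmonic $1$-forms, combined with the identity that the average of $K_{ij}^{T} \otimes K_{ij}^{T}$ over the $\binom{n+2}{2}$ pairs is a constant multiple of $g_{\Sigma}$, converts $\sum_{i<j} Q(f_{\omega, ij})$ into an expression of the form $-c(n) \int_{\Sigma} |\omega|^{2}$, with $c(n)$ matching $\frac{2}{n(n+1)}$ after correct normalization. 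A pigeonhole/dimension-count argument then produces the claimed $\frac{2 b_{1}(\Sigma)}{n(n+1)}$ additional independent negative directions.

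The main obstacle I expect is pinning down the sharp constant $\frac{2}{n(n+1)}$: this requires a careful bookkeeping of the Bochner formula, the Simons identity for the second fundamental form, and the $\mathfrak{so}(n+2)$-symmetry of $\mathbb{S}^{n+1}$ acting on test sections. A secondary but nontrivial point is verifying that the translational and topological blocks intersect only trivially (so that their dimensions genuinely add); this one handles by showing that after averaging the $f_{\omega, ij}$ are $L^{2}$-orthogonal to the coordinate functions.
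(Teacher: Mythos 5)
The paper offers no proof of this statement: it is quoted verbatim from Savo and used as a black box, so your decision to invoke it as an external result is exactly what the paper does, and for the purposes of this paper that is sufficient.

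Regarding your sketch of Savo's argument itself: the architecture (an $(n+2)$-dimensional block from the coordinate functions $x_{i}$, which satisfy $Jx_{i}=|A|^{2}x_{i}$ and hence $Q(x_{i})=-\int_{\Sigma}|A|^{2}x_{i}^{2}\le 0$, plus a topological block built from harmonic $1$-forms contracted with the $\mathfrak{so}(n+2)$-symmetry and estimated via Bochner) is the right one. But the test functions you wrote down are not Savo's, and the discrepancy is not cosmetic. You take $f_{\omega,ij}=\omega(K_{ij}^{T})=x_{i}\,\omega(e_{j}^{T})-x_{j}\,\omega(e_{i}^{T})$, i.e.\ you multiply the harmonic form by the \emph{position} components $x_{i}$; Savo's test functions are $u_{ij}=\langle e_{i},N\rangle\,\omega(e_{j}^{T})-\langle e_{j},N\rangle\,\omega(e_{i}^{T})$, built from the \emph{normal} components. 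Both choices satisfy the pointwise identity $\sum_{i<j}u_{ij}^{2}=|\omega|^{2}$, but the computation of $\sum_{i<j}Q(u_{ij})$ hinges on the fact that the $\langle e_{i},N\rangle$ are (up to the rotation Jacobi fields) adapted to the Jacobi operator, so that the $|A|^{2}$ terms cancel against the Bochner term and the sum comes out strictly negative with the sharp constant; with $x_{i}$ in their place the cross terms $\nabla x_{i}\cdot\nabla\omega(e_{j}^{T})$ and the eigenvalue $\Delta x_{i}=-nx_{i}$ produce a different expression, and it is exactly the constant $\tfrac{2}{n(n+1)}$ (note: $n(n+1)$, not $(n+1)(n+2)=2\binom{n+2}{2}$) that you concede you have not verified. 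So as a proof of Savo's corollary the sketch has a genuine gap at its quantitative heart; as a justification for the paper's use of the statement, the citation alone carries the day.
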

By combining Theorem \ref{minhypsphere} and Theorem \ref{indexlowerbound}, we obtain the following Morse Index lower bound of min-max minimal hypersurfaces we obtained.
\begin{cor} \label{equivmorse} For dimension $4 \le n+1 \le 7$, there exists a sequence of minimal hypersurfaces $\Sigma_{g}$ on $\mathbb{S}^{n+1}$ such that $\Sigma_{g}$ is a $(SO(n-1) \times \mathbb{D}_{g+1})$-equivariant minimal hypersurface. Moreover, $\Sigma_{g}$ has a following Morse Index lower bound.
\begin{align} \label{lowerbdformula}
\text{Index}(\Sigma_{g}) \ge \frac{4g}{n(n+1)} + n+2. 
\end{align}
    \begin{proof}
       By Lemma \ref{genusg} and Theorem \ref{minhypsphere}, we have the existence of a sequence of $(SO(n-1) \times \mathbb{D}_{g+1})$-equivariant minimal hypersurfaces whose lower bound of the first Betti number is $2g$. By plugging $b_{1}(\Sigma_{g}) \ge 2g$ into \ref{indexlowerbound}, we obtain the lower bound of Morse Index of $\Sigma_{g}$ in (\ref{lowerbdformula}).
    \end{proof}
\end{cor}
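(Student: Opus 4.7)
The plan is to combine the existence and topological statements of Theorem \ref{minhypsphere} with Savo's Morse Index lower bound cited as Theorem \ref{indexlowerbound}. Concretely, Theorem \ref{minhypsphere} produces, for each $g \ge 1$ and $4 \le n+1 \le 7$, a closed embedded $(SO(n-1)\times\mathbb{D}_{g+1})$-equivariant minimal hypersurface $\Sigma_g\subset \mathbb{S}^{n+1}$, and for sufficiently large $g$ its diffeomorphism type is either $\#^{2g}(S^{1}\times S^{n-1})$ or $\#^{2g+2}(S^{1}\times S^{n-1})$. In either case, standard computation of the homology of connected sums of $S^1\times S^{n-1}$ (for $n\ge 3$) yields first Betti number at least $2g$, so $b_1(\Sigma_g)\ge 2g$.

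Next, I would simply feed this lower bound into Theorem \ref{indexlowerbound}. Since $n\ge 3$ and $b_1(\Sigma_g)\ge 2g\ge 1$ (for $g\ge 1$), Savo's estimate applies and gives
\begin{equation*}
\mathrm{Index}(\Sigma_g) \;\ge\; \frac{2\,b_1(\Sigma_g)}{n(n+1)} + n + 2 \;\ge\; \frac{4g}{n(n+1)} + n+2,
\end{equation*}
which is exactly the claimed bound. This takes care of the statement for all $g$ large enough for the topological identification of Theorem \ref{minhypsphere} to hold; for the finitely many smaller values of $g$ one appeals directly to the construction, observing that the sweepouts of Lemma \ref{sweepoutconditions} still produce a nontrivial $\Sigma_g$ and that Lemma \ref{genusg} together with the quotient-to-total Betti number surgery computation in the proof of Theorem \ref{minhypsphere} give $b_1(\Sigma_g)\ge 2g$ regardless.

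There is essentially no genuine obstacle here: the argument is a one-line substitution once the first Betti number lower bound is in hand. The only subtlety worth flagging is to confirm that the bound $b_1\ge 2g$ holds in both branches of the dichotomy in Theorem \ref{minhypsphere} and in all dimensions $4\le n+1\le 7$, which is immediate from the connected-sum description since each $S^{1}\times S^{n-1}$ summand contributes exactly $1$ to $b_1$ for $n\ge 3$. Thus the corollary follows at once by inserting $b_1(\Sigma_g)\ge 2g$ into Savo's bound.
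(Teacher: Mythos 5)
Your proposal is correct and follows essentially the same route as the paper: the existence and topological identification from Theorem \ref{minhypsphere} (together with Lemma \ref{genusg} and the boundary-component count, which handle the small-$g$ cases exactly as you note) give $b_{1}(\Sigma_{g})\ge 2g$, and substituting this into Savo's bound (Theorem \ref{indexlowerbound}) yields (\ref{lowerbdformula}). No gaps.
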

\begin{rmk}
We used only $b_{1}(\Sigma_{g}) \ge 2g$ to prove the Morse Index lower bound in (\ref{lowerbdformula}). Since $\pi(\Sigma_{g})$ always has at least one boundary component, the lower bound of the first Betti number holds for every $g \ge 1$.
\end{rmk}
\appendix \section{Compactness theorem of $G$-stable embedded minimal hypersurfaces}
In this section, we prove the following compactness result of $G$-stable embedded minimal hypersurfaces that replaces the compactness theorem based on the curvature estimate of stable minimal surfaces of Schoen \cite{schoen1983estimates} in dimension $3$ in \cite{colding2003min}. We assume the assumptions in Section 2.2 along this section. Also, we denote the set of points where $\mathbb{Z}_{2}$-singular axis and $\partial M'$ intersects by $\mathcal{S}_{0, \mathbb{Z}_{2}}$ as in Section $5$.
\begin{rmk} It is tempting to apply Schoen-Simon's sheeting theorem in \cite{schoen1981regularity} for the compactness of stable embedded minimal hypersurfaces (see the recent direct proof of Bellettini \cite{bellettini2023extensions}) in our construction. However, as in Theorem \ref{z2action}, The $G$-stability of replacements does not necessarily imply the stability. Hence, if $spt(\pi_{\sharp}V)$ contains a $\mathbb{Z}_{2}$-singular axis $\gamma \in \tilde{\mathcal{S}}$, then we apply regularity argument of Section 7 in Ketover \cite{ketover2016free}, which is based on the compactness of minimal surfaces with bounded area and genus by Choi-Schoen in \cite{choi1985space}, which also proves the regularity at $x \in \pi^{-1}(\gamma \setminus \partial M')$. 
\end{rmk}

\begin{lem} \label{stablecptness}
    Suppose $G$-invariant open set $U \subset M$ does not intersect $\pi^{-1}(\mathcal{S}_{0, \mathbb{Z}_{2}})$. If $\{ \Sigma_{k} \}$ is a sequence of properly embedded $G$-stable minimal hypersurface in $U$ with uniform area bound, then up to subsequence, $\Sigma_{k}$ converges to a properly embedded $G$-stable minimal hypersurface $\Sigma$.
\begin{proof}
    $G$-stationarity and $G$-stability in the varifold sense directly follow from \cite{allard1972first} and \cite{wang2024equivariant}. It suffices to prove the embeddedness of $\Sigma$. We divide the cases into two cases when $x \in \pi^{-1}(\gamma) \cap \Sigma$, where $\gamma$ is a $\mathbb{Z}_{2}$-singular axis in $\tilde{\mathcal{S}}$, and otherwise.

    Suppose $x$ is not on a $\mathbb{Z}_{2}$-singular axis first. Then let us consider $B^{G}_{\rho} (x) \subset U$ which does not intersect any $\mathbb{Z}_{2}$-singular axis. Then $\Sigma_{k} \cap B^{G}_{\rho} (x)$ is a stable embedded minimal hypersurface by Theorem \ref{equivstablity}. By Schoen-Simon compactness theorem, we obtain that $\Sigma \cap B^{G}_{\rho} (x)$ is a $G$-equivariant stable minimal hypersurface.

    Now let us consider $x \in \pi^{-1}(\gamma) \cap \Sigma$. By our assumption $x \notin \mathcal{S}_{0, \mathbb{Z}_{2}}$, $x$ is not on a singular orbit. Then we consider $B^{G}_{\rho}(x) \subset U$ which does not intersect any singular orbit and $\Sigma_{k} \cap B^{G}_{\rho} (x)$ is a $\mathbb{Z}_{2}$-stable minimal hypersurface by Theorem \ref{equivstablity}. By (\ref{inducedmetric}), $\pi(\Sigma_{k} \cap B_{\rho}^{G} (x))$ is a $\mathbb{Z}_{2}$-stable minimal surface in $(M',g')$. As in \cite{ketover2016free}, we apply the compactness theorem of embedded minimal surfaces with bounded area and genus of \cite{choi1985space} and obtain the compactness, where $L^{2}$-bound of the second fundamental form comes out from Ilmanen's integrated Gauss-Bonnet argument in \cite{ilmanen1998lectures}. We obtain the embeddedness of $\pi(\Sigma_{k} \cap B_{\rho}^{G} (x))$ and its preimage accordingly.
\end{proof}
\end{lem}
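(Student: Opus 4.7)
The plan is to prove the lemma by localizing the compactness problem and applying either classical Schoen--Simon compactness in dimensions $n+1 \le 7$ or the Choi--Schoen three-dimensional compactness on the quotient, depending on whether the point of interest lies on a $\mathbb{Z}_{2}$-singular axis $\gamma \subset \tilde{\mathcal{S}}$. The $G$-stationarity and $G$-stability of any subsequential varifold limit $V$ pass through by Allard's compactness theorem and the standard lower semi-continuity argument; by Theorem \ref{deformationminimal} combined with Lemma \ref{equivstablity} we may freely transfer between stability statements on $M$ and its quotient. The real content is the smoothness and embeddedness of the limit.

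First I would handle $x \in \mathrm{spt}(V) \setminus \pi^{-1}(\tilde{\mathcal{S}})$. Choose a small $G$-tubular neighborhood $B^{G}_{\rho}(x) \subset U$ disjoint from every $\mathbb{Z}_{2}$-singular axis. Then each $\Sigma_{k} \cap B^{G}_{\rho}(x)$ is honestly two-sided, and by Lemma \ref{equivstablity} it is stable (not merely $G$-stable). Since the ambient dimension satisfies $n+1 \le 7$, the Schoen--Simon compactness and sheeting theorem apply directly and give smooth graphical subsequential convergence of finitely many sheets, hence local embeddedness and $G$-invariance of $\Sigma \cap B^{G}_{\rho}(x)$.

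The more delicate case is $x \in \pi^{-1}(\gamma) \cap \mathrm{spt}(V)$ for a $\mathbb{Z}_{2}$-singular axis $\gamma$. The hypothesis $U \cap \pi^{-1}(\mathcal{S}_{0,\mathbb{Z}_{2}}) = \emptyset$ ensures that $\pi(x)$ lies on $\gamma \setminus \partial M'$ and that $x$ lies on a principal or exceptional orbit; in particular $x$ is not on a singular orbit. I would choose $\rho>0$ so small that $B^{G}_{\rho}(x) \subset U$ meets no singular orbit. On such a neighborhood $\pi$ restricted to $\Sigma_{k}$ descends to a $\mathbb{Z}_{2}$-stable embedded minimal surface in $(M',g')$ after quotienting by $G_{c}$. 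The uniform area bound on $\Sigma_{k}$ descends via the area formula to a uniform area bound on these quotient surfaces, and the integrated Gauss--Bonnet estimate of Ilmanen \cite{ilmanen1998lectures} yields a uniform $L^{2}$-bound on the second fundamental form, given the uniform genus bound inherited from the quotient of a sweepout. The Choi--Schoen compactness theorem \cite{choi1985space} then yields smooth subsequential convergence (away from finitely many curvature concentration points, which are removable singularities by Allard's theorem applied on the quotient) to an embedded minimal surface, and lifting by $\pi^{-1}$ produces the desired $G$-equivariant embedded minimal hypersurface. Since the limits from the two cases agree on their overlap, they glue to a globally properly embedded $G$-stable minimal hypersurface in $U$.

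The main obstacle is precisely this $\mathbb{Z}_{2}$-axis case: $G$-stability does not imply genuine stability across a $\mathbb{Z}_{2}$-fixed set because admissible variations must be $\mathbb{Z}_{2}$-symmetric and so cannot be parametrized by a globally chosen normal, which is exactly where Schoen--Simon fails to apply directly. The hypothesis that $U$ avoids $\pi^{-1}(\mathcal{S}_{0,\mathbb{Z}_{2}})$ is what makes the descent-to-quotient strategy work, because it guarantees the quotient argument is carried out away from the orbifold corners of $M'/G_{f}$ where both a singular orbit and a $\mathbb{Z}_{2}$-axis would simultaneously obstruct the local sheet structure.
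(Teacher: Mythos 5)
Your proposal is correct and follows essentially the same two-case strategy as the paper: away from the $\mathbb{Z}_{2}$-axes you invoke Lemma \ref{equivstablity} to upgrade $G$-stability to stability and apply Schoen--Simon, and along a $\mathbb{Z}_{2}$-axis (necessarily off the singular orbits by hypothesis) you descend to the quotient and use Choi--Schoen compactness with the $L^{2}$ curvature bound from Ilmanen's integrated Gauss--Bonnet argument. Your explicit flagging of the genus bound needed for the Gauss--Bonnet step is a reasonable addition, but the argument is otherwise the same as the paper's.
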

\bibliographystyle{plain}
\bibliography{references}
\end{document}